\title[A Scale-Critical Trapped Surface Formation Criterion]{A Scale-Critical Trapped Surface Formation Criterion: \\A New Proof via Signature for Decay Rates}
\date{\today}
\author{Xinliang An}
\address{Department of Mathematics, National University of Singapore, Singapore 119076}
\email{matax@nus.edu.sg}
\theoremstyle{definition}
\newtheorem{lemma}{Lemma}[section]
\newtheorem{proposition}[lemma]{Proposition}
\newtheorem{theorem}[lemma]{Theorem}
\newtheorem{corollary}[lemma]{Corollary}
\newtheorem{remark}{Remark}
\numberwithin{equation}{section}
\begin{document}

\newcommand{\ub}{\underline{u}}
\newcommand{\Cb}{\underline{C}}
\newcommand{\Lb}{\underline{L}}
\newcommand{\Lh}{\hat{L}}
\newcommand{\Lbh}{\hat{\Lb}}
\newcommand{\phib}{\underline{\phi}}
\newcommand{\Phib}{\underline{\Phi}}
\newcommand{\Db}{\underline{D}}
\newcommand{\Dh}{\hat{D}}
\newcommand{\Dbh}{\hat{\Db}}
\newcommand{\omb}{\underline{\omega}}
\newcommand{\omh}{\hat{\omega}}
\newcommand{\ombh}{\hat{\omb}}
\newcommand{\Pb}{\underline{P}}
\newcommand{\chib}{\underline{\chi}}
\newcommand{\chih}{\hat{\chi}}
\newcommand{\chibh}{\hat{\chib}}

\newcommand{\alb}{\underline{\alpha}}
\newcommand{\zeb}{\underline{\zeta}}
\newcommand{\beb}{\underline{\beta}}
\newcommand{\etb}{\underline{\eta}}
\newcommand{\Mb}{\underline{M}}
\newcommand{\oth}{\hat{\otimes}}

\def\a {\alpha}
\def\b {\beta}
\def\ab {\alphab}
\def\bb {\betab}
\def\nab {\nabla}

\def\ub {\underline{u}}
\def\th {\theta}
\def\Lb {\underline{L}}
\def\Hb {\underline{H}}
\def\chib {\underline{\chi}}
\def\chih {\hat{\chi}}
\def\chibh {\hat{\underline{\chi}}}
\def\omegab {\underline{\omega}}
\def\etab {\underline{\eta}}
\def\betab {\underline{\beta}}
\def\alphab {\underline{\alpha}}
\def\Psib {\underline{\Psi}}
\def\hot{\widehat{\otimes}}
\def\Phib {\underline{\Phi}}
\def\thb {\underline{\theta}}
\def\t {\tilde}
\def\st {\tilde{s}}

\def\d {\delta}
\def\f {\frac}
\def\i {\infty}
\def\l {\bigg(}
\def\r {\bigg)}
\def\S {S_{u,\underline{u}}}
\def\o{\omega}
\def\O{\Omega}
\def\be{\begin{equation}\begin{split}}
\def\en{\end{split}\end{equation}}

\def\od{\omega^{\dagger}}
\def\ombd{\underline{\omega}^{\dagger}}
\def\K{K-\frac{1}{|u|^2}}
\def\ut{\frac{1}{|u|^2}}
\def\Kb{K-\frac{1}{(u+\underline{u})^2}}
\def\M{\mathcal}
\def\p{\psi}

\def\D{\Delta}
\def\T{\Theta}
\def\s{S_{u',\underline{u}'}}
\def\Hu{H_u^{(0,\underline{u})}}
\def\Hbu{\underline{H}_{\underline{u}}^{(u_{\infty},u)}}
\def\ee{(\eta,\underline{\eta})}

\def\at{a^{\f12}}
\def\sigmac{\check{\sigma}}
\def\p{\psi}
\def\q{\underline{\psi}}
\def\ls{\leq}
\def\de{\delta}
\def\ls{\lesssim}
\def\oo{\Omega\mbox{tr}\chib-\frac{2}{u}}
\def\om{\omega}
\def\Om{\Omega}
\renewcommand{\div}{\mbox{div }}
\newcommand{\curl}{\mbox{curl }}
\newcommand{\trchb}{\mbox{tr} \chib}
\def\trch{\mbox{tr}\chi}

\newcommand{\Ls}{{\mathcal L} \mkern-10mu /\,}
\newcommand{\eps}{{\epsilon} \mkern-8mu /\,}

\newcommand{\tr}{\mbox{tr}}

\newcommand{\xib}{\underline{\xi}}
\newcommand{\psib}{\underline{\psi}}
\newcommand{\rhob}{\underline{\rho}}
\newcommand{\thetab}{\underline{\theta}}
\newcommand{\gammab}{\underline{\gamma}}
\newcommand{\nub}{\underline{\nu}}
\newcommand{\lb}{\underline{l}}
\newcommand{\mub}{\underline{\mu}}
\newcommand{\Xib}{\underline{\Xi}}
\newcommand{\Thetab}{\underline{\Theta}}
\newcommand{\Lambdab}{\underline{\Lambda}}
\newcommand{\vphb}{\underline{\varphi}}

\newcommand{\ih}{\hat{i}}
\newcommand{\ui}{u_{\infty}}
\newcommand{\shb}{L^2_{sc}(\underline{H}_{\ub}^{(u_{\infty},u)})}
\newcommand{\sh}{L^2_{sc}(H_{u}^{(0,\ub)})}
\newcommand{\Rb}{\underline{\mathcal{R}}}
\newcommand{\tc}{\widetilde{\tr\chib}}

\newcommand{\tcL}{\widetilde{\mathscr{L}}}

\newcommand{\sRic}{Ric\mkern-19mu /\,\,\,\,}
\newcommand{\sL}{{\cal L}\mkern-10mu /}
\newcommand{\sLh}{\hat{\sL}}
\newcommand{\sg}{g\mkern-9mu /}
\newcommand{\seps}{\epsilon\mkern-8mu /}
\newcommand{\sd}{d\mkern-10mu /}
\newcommand{\sR}{R\mkern-10mu /}
\newcommand{\snab}{\nabla\mkern-13mu /}
\newcommand{\sdiv}{\mbox{div}\mkern-19mu /\,\,\,\,}
\newcommand{\scurl}{\mbox{curl}\mkern-19mu /\,\,\,\,}
\newcommand{\slap}{\mbox{$\triangle  \mkern-13mu / \,$}}
\newcommand{\sGamma}{\Gamma\mkern-10mu /}
\newcommand{\somega}{\omega\mkern-10mu /}
\newcommand{\somb}{\omb\mkern-10mu /}
\newcommand{\spi}{\pi\mkern-10mu /}
\newcommand{\sJ}{J\mkern-10mu /}
\renewcommand{\sp}{p\mkern-9mu /}
\newcommand{\su}{u\mkern-8mu /}


\maketitle
\begin{abstract}
We provide a simple, self-contained proof of a trapped surface formation theorem that sharpens the previous results both of Christodoulou and An-Luk. Our argument is based on a systematic extension of the scale-critical arguments in An-Luk, to connect Christodoulou's short-pulse method and Klainerman-Rodnianski's signature counting argument to the peeling properties previously used in small-data results such as Klainerman-Nicolo. This in particular allows us to avoid elliptic estimates and geometric renormalizations, and gives us our new technical simplifications.

\end{abstract}

\section{Introduction}

\subsection{Background}
In this paper, we study the evolution of Einstein vacuum equations 
\begin{equation}\label{1.1}
\mbox{Ric}_{\mu\nu}=0
\end{equation}
for a (3+1) dimensional Lorentzian manifold $(\mathcal{M}, g)$.

\begin{minipage}[!t]{0.5\textwidth}
\begin{tikzpicture}[scale=0.7]
\fill[black!15!white] (0,2)--(1,1)--(2,2)--(0,4)--(0,2);
\draw [white](3,-1)-- node[midway, sloped, below,black]{$H_{u_{\infty}}(u=u_{\infty})$}(4,0);
\draw [white](1,1)-- node[midway,sloped,above,black]{$H_u$}(2,2);
\draw [white](2,2)--node [midway,sloped,above,black] {$\Hb_{1}(\ub=1)$}(4,0);
\draw [white](1,1)--node [midway,sloped, below,black] {$\Hb_{0}(\ub=0)$}(3,-1);
\draw [dashed] (0, 4)--(0, -4);
\draw [dashed] (0, -4)--(4,0)--(0,4);
\draw [dashed] (0,0)--(2,2);
\draw [dashed] (0,-4)--(4,0);
\draw [dashed] (0,2)--(3,-1);
\draw [very thick] (1,1)--(3,-1)--(4,0)--(2,2)--(1,1);
\fill[black!15!white] (1,1)--(3,-1)--(4,0)--(2,2)--(1,1);
\draw [->] (3.3,-0.6)-- node[midway, sloped, above,black]{$e_4$}(3.6,-0.3);
\draw [->] (1.4,1.3)-- node[midway, sloped, below,black]{$e_4$}(1.7,1.6);
\draw [->] (3.3,0.6)-- node[midway, sloped, below,black]{$e_3$}(2.7,1.2);
\draw [->] (2.4,-0.3)-- node[midway, sloped, above,black]{$e_3$}(1.7,0.4);
\end{tikzpicture}
\end{minipage} 
\hspace{0.02\textwidth} 
\begin{minipage}[!t]{0.5\textwidth}
\begin{tikzpicture}[scale=0.7]
\draw [white](3,-1)-- node[midway, sloped, below,black]{$H_{u_{\infty}}(u=u_{\infty})$}(4,0);
\draw [white](1,1)-- node[midway,sloped,above,black]{$H_{u}$}(2,2);
\draw [white](2,2)--node [midway,sloped,above,black] {$\Hb_{1}(\ub=1)$}(4,0);
\draw [white](1,1)--node [midway,sloped, below,black] {$\Hb_{0}(\ub=0)$}(3,-1);
\draw [dashed] (0, 4)--(0, -4);
\draw [dashed] (0, -4)--(4,0)--(0,4);
\draw [dashed] (0,0)--(2,2);
\draw [dashed] (0,-4)--(4,0);
\draw [dashed] (0,2)--(3,-1);
\draw [very thick] (1,1)--(3,-1)--(4,0)--(2,2)--(1,1);
\fill[black!35!white] (1,1)--(3,-1)--(4,0)--(2,2)--(1,1);
\draw [->] (3.3,-0.6)-- node[midway, sloped, above,black]{$e_4$}(3.6,-0.3);
\draw [->] (1.4,1.3)-- node[midway, sloped, below,black]{$e_4$}(1.7,1.6);
\draw [->] (3.3,0.6)-- node[midway, sloped, below,black]{$e_3$}(2.7,1.2);
\draw [->] (2.4,-0.3)-- node[midway, sloped, above,black]{$e_3$}(1.7,0.4);
\end{tikzpicture}
\end{minipage}

We will introduce coordinates $u$ and $\ub$ in $(\mathcal{M}, g)$ through a \textit{double null foliation}\footnote{The detailed construction of double null foliation will be explained in Section \ref{secdnf}.}. With coordinates $u, \ub$, characteristic initial data will be prescribed along incoming null hypersurface $\Hb_0$, where $\ub=0$, and outgoing null hypersurface $H_{u_{\infty}}$, where $u=u_{\infty}$.

If the characteristic initial data are small enough, by Christodoulou-Klainerman's monumental work \cite{Chr-Kl} we have {\color{black}completeness of all forward geodesics}, which {\color{black}implies} that no singularity would form  in the light gray region above. On the other hand, if the initial data are large, in their domain of influence (gray region above) a geometric object, \textit{trapped surface}\footnote{A 2-surface is called a trapped surface if its area element is infinitesimally decreasing along both families of null geodesics emanating from the surface}, may form dynamically. In 1965, Penrose proved the celebrated incompleteness theorem:
\begin{theorem}(Penrose \cite{Penrose})

For spacetime $(\mathcal{M}, g)$ containing a non-compact Cauchy hypersurface and $g$ satisfying (\ref{1.1}),  if $M$ contains a compact trapped surface, then it is future causally geodesically incomplete. 
\end{theorem}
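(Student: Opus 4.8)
The plan is to argue exactly as in Penrose's original paper: assume for contradiction that $(\mathcal{M},g)$ is future null geodesically complete, and use the Raychaudhuri equation together with this completeness to force the Cauchy hypersurface to be compact. To set up, recall that since $\mathcal{M}$ carries a Cauchy hypersurface $\Sigma$ it is globally hyperbolic, hence time-orientable, and $J^{+}(S)$ is closed for the compact trapped surface $S$. As $S$ is compact and both its null expansions $\trch$ and $\trchb$ are strictly negative on $S$ by hypothesis, there is $c>0$ with $\trch\le-c$ and $\trchb\le-c$ on $S$.

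The heart of the argument is to show that the achronal boundary $E^{+}(S):=\partial J^{+}(S)$ is \emph{compact}. Every point of $E^{+}(S)\setminus S$ lies on a future null geodesic issuing orthogonally from $S$, and such a generator stays on the boundary only up to its first focal point relative to $S$ (after a focal point it enters the open set $I^{+}(S)$). Since the vacuum equation $\mbox{Ric}=0$ implies the null energy condition, along each such generator the expansion $\theta$ of the congruence of orthogonal null geodesics satisfies the Raychaudhuri inequality $\frac{d\theta}{d\lambda}\le-\tfrac12\theta^{2}$ with $\theta(0)\le-c$; integrating $\frac{d}{d\lambda}(1/\theta)\ge\tfrac12$ shows that $\theta\to-\infty$ at some affine parameter $\lambda_{\star}\le 2/c$, and by completeness the geodesic reaches $\lambda_{\star}$, so it has a focal point there and has already left the boundary. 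Hence $E^{+}(S)$ lies inside the image, under the (globally defined, by completeness) exponential map, of the compact set $\{(p,\lambda n):p\in S,\ n\ \text{a future null normal of}\ S,\ 0\le\lambda\le 2/c\}$, the bundle of future null normals being a double cover of the compact $S$. So $E^{+}(S)$ is precompact; and $E^{+}(S)=J^{+}(S)\setminus I^{+}(S)$ is closed, since $J^{+}(S)$ is closed and $I^{+}(S)$ is open. Thus $E^{+}(S)$ is compact. Standard causality theory further gives that $E^{+}(S)$ is a nonempty (the noncompactness of $\Sigma$ keeps $J^{+}(S)$ from being all of $\mathcal{M}$) achronal, closed, topological $3$-manifold without boundary, ruled by null geodesics normal to $S$.

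To conclude, fix a smooth timelike vector field $T$; each of its integral curves is inextendible timelike, so meets $\Sigma$ exactly once, defining a continuous map $\rho:\mathcal{M}\to\Sigma$. On $E^{+}(S)$ the map $\rho$ is injective, because two of its points lying on one integral curve of $T$ would be chronologically related, violating achronality. A continuous injection between topological $3$-manifolds without boundary is open by invariance of domain, so $\rho(E^{+}(S))$ is open in $\Sigma$; it is also compact, being a continuous image of the compact $E^{+}(S)$, hence closed. Since $\Sigma$ is connected and $E^{+}(S)\ne\emptyset$, we get $\rho(E^{+}(S))=\Sigma$, so $\Sigma$ is compact, contradicting the hypothesis. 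Therefore $(\mathcal{M},g)$ is future null geodesically incomplete, and a fortiori future causally geodesically incomplete.

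The main obstacle is the compactness of $E^{+}(S)$: that is the one place where the trapped surface, the energy condition and the completeness assumption are used jointly, via the focusing of null geodesics encoded in Raychaudhuri's equation. Everything afterward is point-set and causal topology — achronality plus invariance of domain — and the remaining technical point is the structure theory of achronal boundaries needed to see that $E^{+}(S)$ is a topological $3$-manifold generated by null geodesics orthogonal to $S$.
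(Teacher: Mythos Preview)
Your proof is a correct and complete rendition of Penrose's original argument: Raychaudhuri focusing forces the null generators of $E^{+}(S)$ to terminate within bounded affine parameter, yielding compactness of $E^{+}(S)$, and then the timelike flow plus invariance of domain contradicts noncompactness of $\Sigma$. One minor quibble: the nonemptiness of $E^{+}(S)$ does not need the noncompactness of $\Sigma$; the surface $S$ itself lies in $E^{+}(S)$ whenever $S$ is achronal (and if $S$ is not achronal one passes to the achronal boundary $\partial I^{+}(S)$, which is nonempty because $S\not\subset I^{+}(S)$ for a spacelike $2$-surface).

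As for comparison with the paper: there is nothing to compare. The paper does not prove this theorem; it merely states Penrose's 1965 result as background motivation (citing \cite{Penrose}) before turning to its actual subject, which is the \emph{formation} of trapped surfaces from characteristic initial data. So your write-up stands on its own as the standard proof, and there is no alternative argument in the paper to weigh it against.
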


Therefore, in this setting, proving singularity formation in general relativity is transferred into deriving trapped surface formation. And it is crucial to design initial data prescribed along $\Hb_0$ and $H_{u_{\infty}}$. In order to form a trapped surface, according to stability of Minkowski, the initial data picked cannot be small. Moreover we cannot prescribe spherically symmetric data along both $\Hb_0$ and $H_{u_{\infty}}$ either. This is due to a classic theorem of Birkhoff: spherically symmetric Einstein vacuum spacetimes must be either (flat) Minkowskian\footnote{Metric of Minkowskian spacetime in spherical coordinates: $g_{M}=-dt^2+dr^2+r^2(d\theta^2+\sin\theta^2 d\phi^2)$.}\,\footnote{Metric of Minkowskian spacetime in stereographic coordinates: $g_{M}=-dt^2+dr^2+\f{4r^4}{(r^2+\theta_1^2+\theta_2^2)^2}(d\theta_1^2+d\theta_2^2)$. In Section \ref{sec rescale}, we will do a scaling argument in stereographic coordinates.} or (static) Schwarzschild\footnote{Metric of Schwarzschild spacetime: $g_S=-(1-\f{2M}{r})dt^2+({1-\f{2M}{r}})^{-1}dr^2+r^2(d\theta^2+\sin\theta^2 d\phi^2)$. Here $M$ is a {\color{black}constant}. In $g_S$ all metric components are independent of $t$, Schwarzschild spacetime is static, i.e. not changing with $t$.} metrics.  Hence, \textit{large and non-spherically symmetric} initial data are required. At the same time, solving (\ref{1.1}) with large initial data is really hard. For general large data problem, for the evolution of Einstein vacuum equations, we only have local existence result. However, forming a trapped surface at a later time requires {\color{black} a mathematical result \textit{beyond} local existence}.  These render the problem of trapped surface formation to be a really hard one. And it was open for a long time. \\

In 2008, Christodoulou solved this long-standing open problem with a 587-page monumental work \cite{Chr:book}. He designed an open set of large initial data, which have a special structure, called \textit{short pulse}  ansatz.  In particular, this ansatz allows one to consider a hierarchy of large and small quantities, parametrized by a small parameter $\d$. For initial data these quantities behave differently, being of various sizes in term of $\d$. And their sizes form a hierarchy. But for each quantity, surprisingly, its size is almost preserved by the nonlinear evolution. 
Therefore, once this hierarchy is designed for initial data, it remains for later time. With this philosophy, despite being a large data problem, a long time global existence theorem can be established. Moreover, these initial conditions indeed lead to trapped-surface formation in the future of the characteristic initial data prescribed along $\Hb_0$ and $H_{u_\infty}$.\\

Einstein vacuum equations are a nonlinear hyperbolic system, containing many unknowns. Christodoulou controlled all of them term by term. Later, two systematical approaches by Klainerman-Rodnianski \cite{KR:Trapped} and An \cite{An:Trapped} were provided to simplify Christodoulou's main result in \cite{Chr:book}. In \cite{KR:Trapped}, an index $s_1$ called \textit{signature for short pulse} is introduced. With this index, Klainerman and Rodnianski systematically tracked the $\d$-weights used in the estimates. And they gave a simplified and shorter proof of $\d$-hierarchy's almost preserving in a finite region. In \cite{Chr:book}, besides $\d$-weights, Christodoulou also employed weights related to decay and prove his main theorem that \textit{a trapped surface could form dynamically with initial data prescribed arbitrary dispersed at past null infinity}.  In \cite{An:Trapped}, An introduced a new index $s_2$ called \textit{signature for decay rates}. With the help of this new index, An extended Klainerman and Rodnianski's result \cite{KR:Trapped} from a finite region to an infinite region and re-proved Christodoulou's main theorem in \cite{Chr:book} with around 120 pages. The proof in \cite{An:Trapped} {\color{black} is still quite long because of}:
\begin{itemize}
\item \underline{Obstruction I}: Even with the systematical approach as in \cite{KR:Trapped}, there are still a few anomalous terms in $\d$-weights. To deal with $\d$-anomaly, it takes some pages. Moreover, the $\d$-anomaly would be more severe when using more angular derivatives. Hence in \cite{KR:Trapped} Klainerman and Rodnianski tried to use the least amount of angular derivatives and they didn't use angular derives with order higher than 2. 
\item \underline{Obstruction II}: In \cite{An:Trapped}, An wanted to re-prove the main result in \cite{Chr:book} with the same amount of angular derivatives used in Christodoulou's proof. In both \cite{Chr:book} and \cite{An:Trapped}, two angular derivatives of curvature components are employed. For energy estimates with such limited angular derives, to avoid losing of derivatives we have to go through an additional technical-and-difficult section \textit{elliptic estimates for the third derivatives of Ricci coefficients}. This also prolongs the proof. 

\end{itemize}

In this paper, we find new ways to avoid both obstructions: 
\subsection{New Ingredients}

\begin{enumerate}
\item In Einstein vacuum spacetimes, peeling ties conformal compactification and plays an important role in small data problems (see \cite{KN:peeling} and reference therein). In this paper, via systematically capturing peeling properties with signature for decay rates $s_2$, we find that peeling would also be vital for problems in large data regime.  \\

\item 
In all preceding works, a colored region on the left in the below is considered, where $\d$ is a small positive parameter and all a priori estimates are established with $\d$ and $|u|$ weights. For example, for geometric quantities $\chih, \rho$ (to be defined in Section \ref{define geometric quantities}) we have
$$\|\chih\|_{L^{\infty}(\S)}\leq \f{\d^{-\f12}}{|u|} \,\, \mbox{in \cite{Chr:book, KR:Trapped}, and} \,\, \|\rho\|_{L^{\infty}(\S)}\leq \f{\d a}{|u|^3} \,\, \mbox{in \cite{A-L}.}  $$
In this paper, with a large positive universal number $a$, we consider a different spacetime region (the colored region on the right). And for the characteristic initial data, we construct a new hierarchy based on geometric peeling properties:  we design new weighted norms, and the weights are only depending on index $s_2$ (\textit{signature for decay rates}), which was introduced by An in \cite{An:Trapped}. Since we don't use parameter $\d$, we don't need the index $s_1$ (signature for short-pulse) any more. \textit{With these new norms and new approach, we can avoid all the $\d$-anomaly.}  (This overcomes \underline {Obstruction I}.)

\begin{minipage}[!t]{0.5\textwidth}
\begin{tikzpicture}[scale=0.7]
\draw [white](3,-1)-- node[midway, sloped, below,black]{$H_{u_{\infty}}(u=u_{\infty})$}(4,0);
\draw [white](1,1)-- node[midway,sloped,above,black]{$H_{-1}$}(2,2);
\draw [white](2,2)--node [midway,sloped,above,black] {$\Hb_{\delta}(\ub=\delta)$}(4,0);
\draw [white](1,1)--node [midway,sloped, below,black] {$\Hb_{0}(\ub=0)$}(3,-1);
\draw [dashed] (0, 4)--(0, -4);
\draw [dashed] (0, -4)--(4,0)--(0,4);
\draw [dashed] (0,0)--(2,2);
\draw [dashed] (0,-4)--(4,0);
\draw [dashed] (0,2)--(3,-1);
\draw [very thick] (1,1)--(3,-1)--(4,0)--(2,2)--(1,1);
\fill[black!35!white] (1,1)--(3,-1)--(4,0)--(2,2)--(1,1);
\draw [->] (3.3,-0.6)-- node[midway, sloped, above,black]{$e_4$}(3.6,-0.3);
\draw [->] (1.4,1.3)-- node[midway, sloped, below,black]{$e_4$}(1.7,1.6);
\draw [->] (3.3,0.6)-- node[midway, sloped, below,black]{$e_3$}(2.7,1.2);
\draw [->] (2.4,-0.3)-- node[midway, sloped, above,black]{$e_3$}(1.7,0.4);
\end{tikzpicture}
\end{minipage}
\hspace{0.02\textwidth} 
\begin{minipage}[!t]{0.5\textwidth}
\begin{tikzpicture}[scale=0.7]
\draw [white](3,-1)-- node[midway, sloped, below,black]{$H_{u_{\infty}}(u=u_{\infty})$}(4,0);
\draw [white](1,1)-- node[midway,sloped,above,black]{$H_{-\f{a}{4}}$}(2,2);
\draw [white](2,2)--node [midway,sloped,above,black] {$\Hb_{1}(\ub=1)$}(4,0);
\draw [white](1,1)--node [midway,sloped, below,black] {$\Hb_{0}(\ub=0)$}(3,-1);
\draw [dashed] (0, 4)--(0, -4);
\draw [dashed] (0, -4)--(4,0)--(0,4);
\draw [dashed] (0,0)--(2,2);
\draw [dashed] (0,-4)--(4,0);
\draw [dashed] (0,2)--(3,-1);
\draw [very thick] (1,1)--(3,-1)--(4,0)--(2,2)--(1,1);
\fill[black!35!white] (1,1)--(3,-1)--(4,0)--(2,2)--(1,1);
\draw [->] (3.3,-0.6)-- node[midway, sloped, above,black]{$e_4$}(3.6,-0.3);
\draw [->] (1.4,1.3)-- node[midway, sloped, below,black]{$e_4$}(1.7,1.6);
\draw [->] (3.3,0.6)-- node[midway, sloped, below,black]{$e_3$}(2.7,1.2);
\draw [->] (2.4,-0.3)-- node[midway, sloped, above,black]{$e_3$}(1.7,0.4);
\end{tikzpicture}
\end{minipage}   

\item In this paper, we also employ and generalize a direct method {\color{black}introduced in \cite{Hol} and used in \cite{An:Trapped} and other papers} for deriving energy estimates. The direct method is based on pairing second Bianchi equations $D_{[\iota}R_{\nu\tau]\varphi\lambda}=0$, putting suitable weight for each equation,  and doing integration by parts to cancel the borderline terms.\footnote{The classic approach to deriving energy estimates with Bel-Robinson tensors as in \cite{Chr:book} and \cite{KR:Trapped} is avoid, since for higher order energy estimates there are many more terms (including borderline terms) from deformation tensors would appear. } This approach works well even for higher order energy estimates for Einstein vacuum equations, since potential borderline terms are cancelled and there is no new type of borderline term popping up. In this paper, we adopt and generalize this method and give a \textit{systematical} approach for deriving energy estimates with angular {\color{black}derivatives} of any (high) order. This enables us to use Sobolev's inequality directly. 
\textit{We avoid all technical and long calculations for elliptic estimates.} (This overcomes  \underline{Obstruction II}.)
\begin{remark} 
In \cite{A-A} we are extending the method and result of this paper to the Einstein-Maxwell system. And there we notice that, even for Einstein-Maxwell system, the additional elliptic-estimate part cannot be avoided. The simplification of avoiding elliptic estimates in this paper is because of the special Ricci-flat structures of Einstein vacuum equations. 
\end{remark}

\item In both \cite{KR:Trapped} and \cite{An:Trapped}, the following H\"older's inequality in scale invariant norms is crucial. 
$$\|\phi_1\cdot\phi_2\|_{L_{sc}^{2}(\S)}\leq\frac{\d^{\f12}}{|u|}\|\phi_1\|_{L_{sc}^{\infty}(S)}\|\phi_2\|_{L_{sc}^{2}(\S)}.$$
This inequality tells us if all terms are normal (their scale invariant norms are of size $1$), the nonlinear terms are lower order compared with linear terms. Hence, in the proof we only need to track the linear terms and few anomalous terms, which reduces the workload significantly.  For $|u|\geq 1$, the smallness gained in above inequality is coming from $\d$ being sufficient small. \\

\noindent While for here, when rewriting H\"older's inequality in the new scale invariant norms, we have  (see \ref{Holder's})  
\begin{equation}\label{new holder}
\|\phi_1\cdot\phi_2\|_{L_{sc}^{2}(\S)}\leq\frac{1}{|u|}\|\phi_1\|_{L_{sc}^{\infty}(S)}\|\phi_2\|_{L_{sc}^{2}(\S)}.
\end{equation}
For $|u|\geq a/4$ and $a$ being sufficiently large, the smallness gained in (\ref{new holder}) is coming from $|u|$ weight. In another word,  in the new spacetime region, \textit{the peeling property (encoded in scale invariant norms through the signature for decay rates $s_2$) provides the crucial gain of smallness. And signature $s_2$ captures the information of geometric peeling properties in a systematical way.}  \\

\noindent Putting all the ingredients together, in Section \ref{sec setting}-Section \ref{TSF} we obtain a very direct and short (self-contained) proof, showing that a trapped surface ($S_{-a/4, 1}$) could form in evolution.\textit{The new ansatz and hierarchies designed in this paper are interesting extensions of the established short-pulse method.} \\

\item The results above are also related to a scale-critical theorem near the center. In Section \ref{sec rescale} we observe a new coordinate transformation (rescaling). Under this rescaling, we establish a correspondence between the following two regions:

\begin{minipage}[!t]{0.5\textwidth}
\begin{tikzpicture}[scale=0.7]
\draw [white](3,-1)-- node[midway, sloped, below,black]{$H_{u_{\infty}}(u=u_{\infty})$}(4,0);
\draw [white](1,1)-- node[midway,sloped,above,black]{$H_{-\f{a}{4}}$}(2,2);
\draw [white](2,2)--node [midway,sloped,above,black] {$\Hb_{1}(\ub=1)$}(4,0);
\draw [white](1,1)--node [midway,sloped, below,black] {$\Hb_{0}(\ub=0)$}(3,-1);
\draw [dashed] (0, 4)--(0, -4);
\draw [dashed] (0, -4)--(4,0)--(0,4);
\draw [dashed] (0,0)--(2,2);
\draw [dashed] (0,-4)--(4,0);
\draw [dashed] (0,2)--(3,-1);
\draw [very thick] (1,1)--(3,-1)--(4,0)--(2,2)--(1,1);
\fill[black!35!white] (1,1)--(3,-1)--(4,0)--(2,2)--(1,1);
\end{tikzpicture}
\end{minipage}
\hspace{0.02\textwidth} 
\begin{minipage}[!t]{0.5\textwidth}
\begin{tikzpicture}[scale=0.7]
\draw [white](3,-1)-- node[midway, sloped, below,black]{$H_{u_{\infty}}(u=\d u_{\infty})$}(4,0);
\draw [white](1,1)-- node[midway,sloped,above,black]{$H_{-\f{\d a}{4}}$}(2,2);
\draw [white](2,2)--node [midway,sloped,above,black] {$\Hb_{1}(\ub=\d)$}(4,0);
\draw [white](1,1)--node [midway,sloped, below,black] {$\Hb_{0}(\ub=0)$}(3,-1);
\draw [dashed] (0, 4)--(0, -4);
\draw [dashed] (0, -4)--(4,0)--(0,4);
\draw [dashed] (0,0)--(2,2);
\draw [dashed] (0,-4)--(4,0);
\draw [dashed] (0,2)--(3,-1);
\draw [very thick] (1,1)--(3,-1)--(4,0)--(2,2)--(1,1);
\fill[black!35!white] (1,1)--(3,-1)--(4,0)--(2,2)--(1,1);
\end{tikzpicture}
\end{minipage}   
We can translate mathematical results in one picture (left) into the other (right). Since all the estimates derived in Section \ref{sec setting}-Section \ref{TSF} are uniform for $u_{\infty}$, in the right picture we could keep $\d$ and let $u_{\infty}\rightarrow -\infty$, which gives \textit{a scale-critical trapped surface formation criterion from past null infinity}. Here $S_{-\d a/4, \, \d}$ is a tiny trapped surface formed with radius $\d a$. \textit{This is the (sharp) scale critical extension of Christodoulou's monumental work \cite{Chr:book}}. If we let $a=\d^{-1}$, we then recover Christodoulou's main theorem in \cite{Chr:book}.\\

\item In \cite{A-L}, there are three parameters $\{a, b, \d\}$ satisfying $1\ll b\leq \at \leq \d^{-\f12}$. With renormalization techniques, in \cite{A-L} An and Luk derived results scale-critical for $\d$ and also sharp for $a$.  If we let $b=\at$, in Section \ref{rescale bounds} we will see that \textit{by using signature $s_2$ (peeling property), the new approach in this paper would not only systematically sharpen a priori estimates obtained by An-Luk in \cite{A-L}, but also it avoids the technical geometric renormalizations in \cite{A-L} completely.} This paper serves as a more intrinsic and more concise reproof and extension of \cite{A-L} (assuming $b=\at$).\\

For a note on the development of this direction, {\color{black}by designing and employing a different hierarchy}, in \cite{A-L} An and Luk {\color{black}improved \cite{Chr:book} and} proved the first scale-critical result for Einstein vacuum equations. With the same small parameter $\d$, with relatively larger initial data Christodoulou formed a trapped surface of radius 1; while with much smaller initial data An and Luk formed a  trapped surface of radius $\d a$, where $a$ is a universal large constant like $1000$.\footnote{Letting $a=\d^{-1}$, in a finite region they recover Christodoulou's main result of \cite{Chr:book}. } {\color{black}An and Luk want to form a tiny trapped surface with radius $\d a$}, hence they have to deal with the region very close to {\color{black}the center}. In this region all the geometric quantities have growth rates. To bounded these growth rates, they employed weighted estimates as well as several crucial geometric renormalizations. 

Since \cite{A-L} is scale critical, one can keep $a$ as a universal constant and let $\d\rightarrow 0$. Hence a series of trapped surfaces (with radius shrinking to $0$) are obtained. In \cite{An: AH}, An further explored this idea. Together with an elliptic approach to identify the boundary, An showed that a whole black hole region could emerge dynamically from just a ``point" $O$ in the spacetime. For an open set of initial data, this boundary (apparent horizon) is proved to be smooth and spacelike except at $O$. The second law of black hole mechanics is further verified and a conjecture of Ashtekar was proved in \cite{An: AH}.  \\

\item Since An-Luk only dealed with finite spacetime regions in \cite{A-L}, the main result  (Theorem \ref{thm3}) in this paper could also be viewed as  an extension of \cite{A-L}. (There is another way to extend  An-Luk \cite{A-L} by doing a rescaling. In Section \ref{rescale and comparison} we outline that approach and make the comparison.)\\

\end{enumerate}

In summary, the approach in this paper synthesizes new ideas outlined above and 
captures the geometric structures of Einstein vacuum equations in a systematical way via signature for decay rates $s_2$. \textit{The intrinsic\footnote{{\color{black}It comes from spacetime conformal compactification. See \cite{KN:peeling}.}} peeling property plays a crucial role!} Now the new proof of trapped surface formation is self-contained and is less than 50 pages. It simplifies and extends Christodoulou's monumental work \cite{Chr:book} to a (sharp) scale-critical result. It gives another proof, a systematical improvement and an extension of one of the main conclusions in \cite{A-L} by An-Luk. It also has a few very interesting applications \cite{An:TSA}.  

\subsection{Other Related Results}
Besides the results described above, many other improvement or extensions of \cite{Chr:book} have also been achieved. 

In \cite{Chr:book}, Christodoulou required both a homogenous upper and a homogenous lower bound for his short-pulse initial data. The upper bound ensures the semi-global existence of Einstein vacuum equations up to the region, where a trapped surface is about to emerge. The homogenous lower bound is used to confirm trapped-surface formation.   With the same initial data upper bound as in \cite{Chr:book}, in \cite{K-L-R} Klainerman, Luk and Rodnianski relaxed the lower bound requirement vastly. They replaced \textit{inf} by \textit{sup} and obtained a remarkable anisotropic result.

For Einstein vacuum equations, interested readers are also referred to \cite{Dafermos, Le, L-Y, L-R, R-S} and references therein. For Einstein equations coupled with matter, Yu \cite{Yu1, Yu2} extended the result of \cite{KR:Trapped} and obtained similar results for Einstein-Maxwell system with signature for short-pulse. In a recent paper \cite{L-L} by Li and Liu, they studied Einstein-scalar field system and  an almost scale-critical trapped surface formation criterion is achieved.
\\

Next, we start to explain the physical intuition behind trapped surface formation.
\subsection{Heuristic Argument}\label{heuristic}
We consider a spacetime region foliated by incoming and outgoing null hypersurfaces, i.e. $\Hb_{\ub}$ and $H_u$, respectively. Here $\Hb_{\ub}$ and $H_u$ are level sets of two optical functions, which satisfy 
$$g^{\mu\nu}\partial_{\mu}u\partial_{\nu}u=0, \quad \mbox{and} \quad g^{\mu\nu}\partial_{\mu}\ub\partial_{\nu}\ub=0.$$
For the colored region, we have $u_{\infty}\leq u \leq -a/4 <0$ and $0\leq \ub \leq 1$.
Here each point ($S_{u,\ub}=H_u \cap \Hb_{\ub}$) in the Penrose's diagram is corresponding to a 2-sphere.\\

\begin{minipage}[!t]{0.4\textwidth}
\begin{tikzpicture}[scale=0.8]
\draw [white](3,-1)-- node[midway, sloped, below,black]{$H_{u_{\infty}}(u=u_{\infty})$}(4,0);
\draw [white](1,1)-- node[midway,sloped,above,black]{$H_u$}(2,2);
\draw [white](2,2)--node [midway,sloped,above,black] {$\Hb_{\delta}(\ub=\delta)$}(4,0);
\draw [white](1,1)--node [midway,sloped, below,black] {$\Hb_{0}(\ub=0)$}(3,-1);
\draw [dashed] (0, 4)--(0, -4);
\draw [dashed] (0, -4)--(4,0)--(0,4);
\draw [dashed] (0,0)--(2,2);
\draw [dashed] (0,-4)--(4,0);
\draw [dashed] (0,2)--(3,-1);
\draw [very thick] (1,1)--(3,-1)--(4,0)--(2,2)--(1,1);
\fill[black!35!white] (1,1)--(3,-1)--(4,0)--(2,2)--(1,1);
\draw [->] (3.3,-0.6)-- node[midway, sloped, above,black]{$e_4$}(3.6,-0.3);
\draw [->] (1.4,1.3)-- node[midway, sloped, below,black]{$e_4$}(1.7,1.6);
\draw [->] (3.3,0.6)-- node[midway, sloped, below,black]{$e_3$}(2.7,1.2);
\draw [->] (2.4,-0.3)-- node[midway, sloped, above,black]{$e_3$}(1.7,0.4);
\end{tikzpicture}
\end{minipage}
\hspace{0.02\textwidth} 
\begin{minipage}[!t]{0.5\textwidth}
Let $e_3$ and $e_4$ be null vectors and be tangent to each $\Hb_{\ub}$ and $H_u$, respectively. Moreover, we require $g(e_3, e_4)=-2$.  These $\{e_3,e_4\}$ are called a null pair.\\

On each $\S$, we also define {\color{black}$\{e_A, e_B\}_{A, B=1,2}$ an arbitrary tangent frame on it.} \\

We then define null second fundamental forms $\chi_{AB}$, $\chib_{AB}$ associated with $\S$:
$$\chi_{AB}:=g(D_{e_A}e_4, e_B), \quad \chib_{AB}:=g(D_{e_A}e_3, e_B).$$

\end{minipage}

We further decompose $\chi_{AB}$ and $\chib_{AB}$ into trace part $\tr\chi$, $\tr\chib$ and traceless part $\chih_{AB}, \,\, \chibh_{AB}$:
$$\chi_{AB}=\f12\tr\chi \cdot \gamma_{AB}+\chih_{AB}, \quad \quad \chib_{AB}=\f12\tr\chib \cdot \gamma_{AB}+\chibh_{AB},$$
where $\gamma_{AB}$ is the induced metric on $\S$.

A \textit{trapped surface} is a $2$-sphere, of which both null expansions are negative, i.e.
$$\tr\chi<0 \, \, \mbox{\textit{and}} \, \, \tr\chib<0 \, \, \mbox{\textit{hold pointwisely on}} \, \, \S.$$

We will prescribe Minkowkian data along $\Hb_0$, i.e. each $S_{u,0}$ is a standard $2$-sphere embedded in Minkowski spacetime with radius $|u|$. For Minkowskian data, we have
$$\tr\chi(u,0)=\frac{2}{|u|}, \quad \tr\chib(u,0)=-\frac{2}{|u|}.$$

It is easy to show that $\tr\chib$ is always negative: for initial data along $H_{u_{\infty}}$, we have $\tr\chib(u_{\infty},\ub)=-{2}/{|u_{\infty}|}+l.o.t.\footnote{In this article, we use $l.o.t.$ to mean lower order terms.}<0$. Moreover $\tr\chib$ is decreasing along $e_3$ direction
$$\nab_3\tr\chib=-\f12(\tr\chib)^2-|\chibh|^2+l.o.t.,$$
this implies $\tr\chib<0$ in the whole colored region.

For $\chi$, from $\mbox{Ric}_{44}=0$, we have two transport equations:
\begin{equation}\label{trapped surface 1}
\nabla_4\tr\chi+\frac{1}{2}(\tr\chi)^2=-|\hat{\chi}|^{2}+l.o.t.,
\end{equation}
and
\begin{equation}\label{trapped surface 2}
\nabla_3\hat{\chi}+\frac{1}{2}\tr{\underline{\chi}}\hat{{\chi}}=l.o.t..
\end{equation}
Using $\nab_4 \tr\chi\leq -|\chih|^2,$ we have
$$\tr\chi(u,\ub)\leq \tr\chi(u,0)-\int_0^{\ub}|\chih|^2(u,{\ub}')d{\ub}'=\frac{2}{|u|}-\int_0^{\ub}|\chih|^2(u,{\ub}')d{\ub}'.$$
With the derived fact $\tr\chib=-{2}/{|u|}+l.o.t.$, \eqref{trapped surface 2} would {\color{black} imply} 
$$|u|^2|\chih|^2(u,\ub)= |u_{\infty}|^2|\chih|^2(u_{\infty},\ub)+l.o.t.$$
These imply that along $H_{-a/4}$
\begin{equation}\label{TP1}
\begin{split}
\tr\chi(-\f14a,\ub)\leq& \tr\chi(-\f14a,0)-\int_0^{\ub}|\chih|^2(-\f14a,{\ub}')d{\ub}'+l.o.t.\\
=&\frac{2}{|\f14 a|}-\frac{|u_{\infty}|^2}{|\f14 a|^2}\int_0^{\ub}|\chih|^2(u_{\infty},{\ub}')d{\ub}'+l.o.t.
\end{split}
\end{equation}
If we choose $\chih(u_{\infty}, \ub)$ along $H_{u_{\infty}}$ such that 
\begin{equation}\label{chih infinity}
\|\chih\|_{L^{\infty}(S_{u_{\infty},\ub})} \approx \f{\at}{|u_{\infty}|}, \quad \mbox{and} \quad |u_{\infty}|^2\int_0^{1}|\chih|^2(u_{\infty}, \ub')d\ub'\geq a.
\end{equation}
Then from (\ref{TP1}) we arrive at  
$$\tr\chi(-\f14a,1)\leq\frac{2}{|\f14 a|}-\frac{|u_{\infty}|^2}{|\f14 a|^2}\int_0^{1}|\chih|^2(u_{\infty},{\ub}')d{\ub}'+l.o.t.<\f{8}{a}-\f{16}{a}+l.o.t.< 0. $$
Hence, $S_{-a/4, 1}$ is a trapped surface.
\begin{remark}
In the argument above, choosing $\chih$ which satisfies (\ref{chih infinity}) is crucial. We make the following choice
\begin{equation}\label{chih initial}
|\chih|(u_{\infty}, \ub)\approx \at/|u_{\infty}|,
\end{equation}
 which will provide a new hierarchy (in terms of $a$ and $u$) for all geometric components.  
\end{remark}

At the same time, to rigorously verify this heuristic argument, we need to overcome two main difficulties:
\begin{enumerate}
\item We need to make sure that all lower order terms listed above are truly negligible compared with main terms.
Since Einstein vacuum equations are a coupled system of many geometric quantities, this requires detailed understandings of nonlinear interaction of all geometric quantities.

\item We need to prove a global-existence theorem in the large data regime. The physical intuition behind is that \textit{focusing of gravitational waves leads to trapped-surface formation}. With arbitrary dispersed data at past null infinity, we need to ensure that the gravitational radiation can go sufficiently far inside from past null infinity. From PDE point of view, this means to establish a global existence result for Einstein vacuum equations (\ref{1.1}) with no symmetric assumption. This will be a large data problem for an energy super-critical hyperbolic system. \\
\end{enumerate}

In the below we outline an approach, which overcomes these two difficulties. 

\subsection{Geometric Quantities and Signature for Decay Rates}\label{define geometric quantities} 
In our dynamical spacetime, different curvature components and Ricci coefficients would behave distinguishingly. We hence decompose them with respect to a null frame $e_3, e_4$ and a frame $e_1, e_2$ tangent to the 2-sphere $S_{u,\ub}$.

Denote the indices $A,B$ to be $1,2$. With frames $\{e_3, e_4, e_A,  e_B\}$, we define the (null) curvature components:
 \begin{equation}
\begin{split}
\a_{AB}&=R(e_A, e_4, e_B, e_4),\quad \, \,\,   \ab_{AB}=R(e_A, e_3, e_B, e_3),\\
\b_A&= \frac 1 2 R(e_A,  e_4, e_3, e_4) ,\quad \bb_A =\frac 1 2 R(e_A,  e_3,  e_3, e_4),\\
\rho&=\frac 1 4 R(e_4,e_3, e_4,  e_3),\quad \sigma=\frac 1 4  \,^*R(e_4,e_3, e_4,  e_3).
\end{split}
\end{equation}
Here $\, ^*R$ is the Hodge dual of $R$. 

\noindent Denote $D_A:=D_{e_{A}}$. We define Ricci coefficients:

 \begin{equation}
\begin{split}
&\chi_{AB}=g(D_A e_4,e_B),\, \,\, \quad \chib_{AB}=g(D_A e_3,e_B),\\
&\eta_A=-\frac 12 g(D_3 e_A,e_4),\quad \etab_A=-\frac 12 g(D_4 e_A,e_3),\\
&\omega=-\frac 14 g(D_4 e_3,e_4),\quad\,\,\, \omegab=-\frac 14 g(D_3 e_4,e_3),\\
&\zeta_A=\frac 1 2 g(D_A e_4,e_3),
\end{split}
\end{equation}
We further decompose $\chi$ and $\chib$ into trace and traceless part. Denote $\chih_{AB}$ and $\chibh_{AB}$ are the traceless part of $\chi_{AB}$ and $\chib_{AB}$ respectively. 

To capture the information of their behaviours, to each $\phi\in \{\a,\beta,\rho,\sigma, \beb,\ab, \chi,\chib,\zeta,\eta,\etb,\omega,\omb\}$ we assign a number called \textit{signatures} $s_2(\phi)$ to it. The rule is the following:

\begin{equation}\label{signature 21}
s_2(\phi):=0\cdot{N_4}(\phi)+0.5\cdot{N_A}(\phi)+1\cdot{N_3}(\phi)-1.
\end{equation}
Here $N_4(\phi)$ is the number of times $e_4$ appears in the definition of $\phi$. 
Similarly we define $N_3(\phi)$ and $N_A(\phi)$ where $A=1,2$.

For example, in the definition for $\etab_A=-\f12 g(D_4 e_A, e_3)$,  we have one $e_4$, one $e_A$ and one $e_3$. Hence
$$N_4(\etab_A)=1,\quad N_A(\etab_A)=1, \quad N_3(\etab_A)=1.$$ 
According to (\ref{signature 21}), $\etab_A$ has signature
$$s_2(\etab_A)=0\cdot 1+0.5\cdot 1+1\cdot 1-1=0.5.$$ 
Similarly, for $\chi_{AB}=g(D_A e_4, e_B)$ we have
$$N_4(\chi_{AB})=1, \quad N_A(\chi_{AB})=2, \quad N_3(\chi_{AB})=0.$$ 
Hence (\ref{signature 21}) implies
$$s_2(\chi_{AB})=0\cdot 1+0.5\cdot 2+1\cdot 0-1=0.$$ 
Gather these signatures, we have the \textit{signature table}:\\

\begin{tabular}{|r|r|r|r|r|r|r|r|r|r|r|r|r|r|r|r|r|}
  \hline
     & $\alpha$ & $\beta$ & $\rho$ & $\sigma$ & $\underline{\beta}$ & $\underline{\alpha}$ & $\chi$ & $\omega$ & $\zeta$ & $\eta$ & $\underline{\eta}$ & $\mbox{tr}\underline{\chi}$ & $\hat{\underline{\chi}}$ & $\underline{\omega}$ \\
  $s_2$ & 0 & 0.5 & 1 & 1 & 1.5 & 2 & 0 & 0 & 0.5 & 0.5 & 0.5 & 1 & 1 & 1 \\
  \hline
\end{tabular} \\

Based on signature $s_2(\phi)$, we then define \textit{scale invariant norms}: 

\begin{equation}\label{si norms}
\begin{split}
\|\phi\|_{L^{\infty}_{sc}(\S)}:=&a^{-s_2(\phi)}|u|^{2s_2(\phi)+1}\|\phi\|_{L^{\infty}(\S)},\\
\|\phi\|_{L^{2}_{sc}(\S)}:=&a^{-s_2(\phi)}|u|^{2s_2(\phi)}\|\phi\|_{L^{2}(\S)}.
\end{split}
\end{equation}

\begin{remark} A main reason for using scale invariant norms is that for most geometric quantities $\phi$, we will show that $\|\phi\|_{L^{\infty}_{sc}(\S)}$ and $\|\phi\|_{L^{2}_{sc}(\S)}$ are of size $1$. Later we call these $\phi$ normal terms. Through the definitions in (\ref{si norms}), the $a$-weights and $u$-weights are naturally built in the norms. Furthermore, one important identity holds for nonlinear interactions.  From the definition of signature\footnote{More details will be provided in Section \ref{Signatures}.}, we have
$$s_2(\phi_1\cdot\phi_2)=s_2(\phi_1)+s_2(\phi_2).$$
With it, we could rewrite H\"older's inequality in scale invariant norms and obtain

$$\|\phi_1\cdot\phi_2\|_{L^2_{sc}(\S)}\leq \f{1}{|u|}\|\phi_1\|_{L^{\infty}_{sc}(\S)}\|\phi_2\|_{L^{2}_{sc}(\S)}.$$
In the spacetime region studied, we have $1/|u|\leq 1/\at \ll 1.$  Since $a$ is a large universal number, the above inequality tells us, if all the terms are normal, then the nonlinear interactions can be treated as lower order terms. Therefore, only rare anomalous terms are left for further analysis. 
\end{remark}
\begin{remark}
Using signature $s_2$ will also help a lot in deriving (higher order) energy estimates, which are the core of the global existence result. 
\end{remark}

\subsection{Main Results}
In Sections \ref{secbasic}-\ref{energy estimate} we will first derive
\begin{theorem}(An Existence Result near Past Null Infinity)\label{main.thm1}

\begin{minipage}[!t]{0.27\textwidth}
\begin{tikzpicture}[scale=0.66]
\draw [white](3,-1)-- node[midway, sloped, below,black]{$H_{u_{\infty}}(u=u_{\infty})$}(4,0);
\draw [white](1,1)-- node[midway,sloped,above,black]{$H_{-\f{a}{4}}$}(2,2);
\draw [white](2,2)--node [midway,sloped,above,black] {$\Hb_{1}(\ub=1)$}(4,0);
\draw [white](1,1)--node [midway,sloped, below,black] {$\Hb_{0}(\ub=0)$}(3,-1);
\draw [dashed] (0, 4)--(0, -4);
\draw [dashed] (0, -4)--(4,0)--(0,4);
\draw [dashed] (0,0)--(2,2);
\draw [dashed] (0,-4)--(4,0);
\draw [dashed] (0,2)--(3,-1);
\draw [very thick] (1,1)--(3,-1)--(4,0)--(2,2)--(1,1);
\fill[black!35!white] (1,1)--(3,-1)--(4,0)--(2,2)--(1,1);
\draw [->] (3.3,-0.6)-- node[midway, sloped, above,black]{$e_4$}(3.6,-0.3);
\draw [->] (2.4,-0.3)-- node[midway, sloped, above,black]{$e_3$}(1.7,0.4);
\end{tikzpicture}
\end{minipage}
\hspace{0.01\textwidth} 
\begin{minipage}[!t]{0.63\textwidth}

Given $\M I^{(0)}$, there exists a sufficiently large $a_0=a_0(\M I^{(0)})$. For $0<a_0<a$, with initial data:
\begin{itemize}
\item $\sum_{i\leq 10, k\leq 3}a^{-\frac12}\|\nab^{k}_4(|u_{\infty}|\nab)^{i}\chih_0\|_{L^{\infty}(S_{u_{\infty},\ub})}\leq \M I^{(0)}$ 

\noindent along $u=u_{\infty}$, 
   
\item Minkowskian initial data  along $\ub=0$,

\end{itemize}
Einstein vacuum equations (\ref{1.1}) admit a unique smooth solution in the colored region:
$$u_{\infty}\leq u \leq -a/4, \quad 0\leq \ub \leq 1.$$
\end{minipage}   

\end{theorem}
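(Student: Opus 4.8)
The plan is to set up a bootstrap (continuity) argument on the slab $u_{\infty} \le u \le -a/4$, $0 \le \ub \le 1$, exactly in the spirit of Christodoulou's short-pulse method and An-Luk, but with the new scale-invariant norms weighted only by $a$ and $|u|$ through the signature $s_2$. First I would introduce, for each Ricci coefficient $\phi \in \{\chih, \tr\chi, \omega, \eta, \etab, \zeta, \chibh, \trchb - \tfrac{2}{u}, \omb\}$ and each curvature component $\Psi \in \{\alpha, \beta, \rho, \sigma, \beb, \ab\}$, the norms $\mathcal{O}$, $\mathcal{R}$ controlling up to (say) ten angular derivatives plus a few $\nabla_3, \nabla_4$ derivatives in the $L^{\infty}_{sc}$ and $L^2_{sc}(\S)$, $L^2_{sc}(H_u)$, $L^2_{sc}(\Hb_{\ub})$ norms of (\ref{si norms}). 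The bootstrap assumption is that all of these are bounded by a large constant $C = C(\mathcal{I}^{(0)})$ independent of $a$ (with the handful of anomalous components allowed a fixed extra $a$-power). The heart of the continuity argument is then: (i) first establish these bounds on the initial hypersurfaces $H_{u_{\infty}}$ and $\Hb_0$ directly from the prescribed data and the Minkowskian data (here the decay in $|u_{\infty}|$ of $\chih_0$ is built into the $s_2$-weights, so the norms start at size $O(\mathcal{I}^{(0)})$); (ii) recover \emph{improved} estimates for the Ricci coefficients by integrating the null structure (transport) equations along $e_3$ or $e_4$, using the scale-invariant H\"older inequality (\ref{new holder}) to show all nonlinear interactions gain a factor $1/|u| \le 1/\at \ll 1$ and hence are strictly lower order; (iii) recover improved curvature estimates from the energy estimates, which (by the earlier results in the excerpt, the direct pairing of the Bianchi equations $D_{[\iota}R_{\nu\tau]\varphi\lambda}=0$ with suitable $a,|u|$-weights and integration by parts) close with no loss of derivatives; (iv) conclude that the bootstrap set is open, closed, and nonempty, hence the solution extends to the whole slab, with smoothness following from standard propagation-of-regularity / local existence for the characteristic initial value problem (Rendall).

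The key steps in order: (1) fix the double-null foliation, the frame, and the first variation / metric equations (Section \ref{secdnf} and the basic setup section); (2) write down the complete list of null structure equations and Bianchi equations, and rewrite every one of them in scale-invariant form, tabulating for each term which signature it carries — this is where the signature table above does its bookkeeping and where one sees that the weights are consistent; (3) prove the initial-data estimates on $H_{u_{\infty}}$ (from the $\mathcal{I}^{(0)}$-bound, propagated in $\ub$ by the $\nabla_4$ equations) and on $\Hb_0$ (trivial, Minkowskian); (4) the transport estimates for Ricci coefficients, treating the anomalous quantities (I expect $\tr\chi$, $\chih$, and possibly $\ab$ relative to their naive sizes) by hand — exactly the few terms that remain after (\ref{new holder}) kills the rest; (5) the angular-derivative commutation: commuting $\snab^k$ through the transport and Bianchi equations, checking that commutators are themselves lower order in $1/|u|$ (this is the step that replaces the elliptic estimates of \cite{An:Trapped}); (6) the energy estimates via the direct method, organized so that the borderline terms cancel in the integration by parts and the remaining bulk terms are absorbed using the bootstrap assumptions and $1/|u| \ll 1$; (7) assemble the improved bounds, beating the bootstrap constant $C$ by choosing $a \ge a_0(\mathcal{I}^{(0)})$, and run the continuity argument.

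I expect the main obstacle to be step (6) together with the interplay between (4) and (6): making sure that when one peels off $|u|$-weights there is genuinely no borderline term in the energy identities that fails to be controlled — in particular the terms involving $\alpha$ and $\ab$ (which sit at the extreme ends of the signature table, $s_2 = 0$ and $s_2 = 2$) and the $\tr\chi$-anomaly feeding back into the $e_4$-direction estimates. One must check that the weight one attaches to each Bianchi pair is simultaneously the ``right'' one for both the $\int_{H_u}$ and $\int_{\Hb_{\ub}}$ flux terms and that the signs work out after integration by parts; this is the technical core and the place where the choice of $s_2$-weights (rather than $\d$-weights) has to be shown to be self-consistent. A secondary subtlety is uniformity in $u_{\infty}$: every estimate and in particular the choice of $a_0$ must be independent of $u_{\infty} \to -\infty$, which is exactly what later allows the rescaling in Section \ref{sec rescale} to produce the scale-critical result; one checks this by noting that $|u_{\infty}|$ never appears undominated once the data are measured in the scale-invariant norms.
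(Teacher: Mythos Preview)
Your outline matches the paper's approach: a bootstrap on the scale-invariant norms $\mathcal{O}, \mathcal{R}, \underline{\mathcal{R}}$, closed by (a) transport estimates for Ricci coefficients (Section~\ref{secRicci}), (b) $L^2(\S)$ curvature estimates (Section~\ref{secCurvatureL2}), and (c) paired-Bianchi energy estimates via Propositions~\ref{ee 1}--\ref{ee 3} (Section~\ref{energy estimate}), with all smallness supplied by $1/|u|\le 4/a$ in the scale-invariant H\"older inequality. Two points of detail to adjust: first, the bootstrap norms carry only angular derivatives $(\at\nab)^i$ and never $\nabla_3$ or $\nabla_4$---these are eliminated directly through the structure and Bianchi equations, which is exactly what lets the paper bypass elliptic estimates; second, your list of anomalies is off: $\tr\chi$ is normal in this hierarchy and $\ab$ is not anomalous, whereas the actual anomalies are $\chih$ and $\alpha$ (each larger by a factor $\at$) together with $\chibh$, $\tr\chib$, $\tc$, which are \emph{smaller} than their $s_2$-predicted size by $\at/|u|$, $a/|u|^2$, $a/|u|$ respectively (see (\ref{O i infty})--(\ref{R i Hb})).
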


We then verify the heuristic argument with estimates derived in Sections \ref{secbasic}-\ref{energy estimate}. In Section \ref{TSF}, we prove

\begin{theorem}(Formation of Trapped Surfaces)\label{main.thm2}

\begin{minipage}[!t]{0.27\textwidth}
\begin{tikzpicture}[scale=0.66]
\draw [white](3,-1)-- node[midway, sloped, below,black]{$H_{u_{\infty}}(u=u_{\infty})$}(4,0);
\draw [white](1,1)-- node[midway,sloped,above,black]{$H_{-\f{a}{4}}$}(2,2);
\draw [white](2,2)--node [midway,sloped,above,black] {$\Hb_{1}(\ub=1)$}(4,0);
\draw [white](1,1)--node [midway,sloped, below,black] {$\Hb_{0}(\ub=0)$}(3,-1);
\draw [dashed] (0, 4)--(0, -4);
\draw [dashed] (0, -4)--(4,0)--(0,4);
\draw [dashed] (0,0)--(2,2);
\draw [dashed] (0,-4)--(4,0);
\draw [dashed] (0,2)--(3,-1);
\draw [very thick] (1,1)--(3,-1)--(4,0)--(2,2)--(1,1);
\fill[black!35!white] (1,1)--(3,-1)--(4,0)--(2,2)--(1,1);
\draw [->] (3.3,-0.6)-- node[midway, sloped, above,black]{$e_4$}(3.6,-0.3);
\draw [->] (2.4,-0.3)-- node[midway, sloped, above,black]{$e_3$}(1.7,0.4);
\end{tikzpicture}
\end{minipage}
\hspace{0.01\textwidth} 
\begin{minipage}[!t]{0.63\textwidth}

Given $\M I^{(0)}$, there exists a sufficiently large $a_0=a_0(\M I^{(0)})$.  For $0<a_0<a$, solving Einstein vacuum equations (\ref{1.1}) with initial data:
\begin{itemize}
\item $\sum_{i\leq 10, k\leq 3}a^{-\frac12}\|\nab^{k}_4(|u_{\infty}|\nab)^{i}\chih_0\|_{L^{\infty}(S_{u_{\infty},\ub})}\leq \M I^{(0)}$ 

\noindent along $u=u_{\infty}$,
   
\item Minkowskian initial data  along $\ub=0$,

\item $\int_0^1|u_{\infty}|^2|\chih_0|^2(u_{\infty}, \ub')d\ub'\geq a$ for every direction   

\noindent along $u=u_{\infty}$,
\end{itemize}
we have that $S_{-a/4,1}$ is a trapped surface.
\end{minipage}   

\end{theorem}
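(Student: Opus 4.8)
\textbf{Proof proposal for Theorem \ref{main.thm2} (Formation of Trapped Surfaces).}

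The plan is to run the heuristic argument of Section \ref{heuristic} as a rigorous computation, using Theorem \ref{main.thm1} as the existence input so that all geometric quantities live in the closed region $u_\infty \le u \le -a/4$, $0 \le \ub \le 1$, and satisfy the scale-invariant a priori bounds established in Sections \ref{secbasic}--\ref{energy estimate}. First I would record the two transport equations controlling $\tr\chi$ and $\chih$, namely the Raychaudhuri-type equation $\nab_4\tr\chi + \tfrac12(\tr\chi)^2 = -|\chih|^2 - 2\omega\tr\chi$ (equivalently $\nab_4\tr\chi + \tfrac12(\tr\chi)^2 = -|\chih|^2 + \text{l.o.t.}$) and the $\nab_3\chih$ equation \eqref{trapped surface 2}. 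The first step is to show $\tr\chi \le 0$ is \emph{not} what we want pointwise everywhere, but rather that $\tr\chi$ stays close to $2/|u|$ on $H_{u_\infty}$ and its deviation is governed entirely by the integrated shear; concretely, integrating $\nab_4\tr\chi \le -|\chih|^2 + \text{l.o.t.}$ from $\ub=0$ (where Minkowskian data give $\tr\chi(u,0) = 2/|u|$) yields
\[
\tr\chi(u,\ub) \le \frac{2}{|u|} - \int_0^{\ub} |\chih|^2(u,\ub')\,d\ub' + \text{l.o.t.},
\]
where the lower-order error must be shown to be $o(1/a \cdot |u|^{-1})$ using the scale-invariant smallness from \eqref{new holder}.

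The second step is to transport the shear bound from $H_{u_\infty}$ to $H_{-a/4}$. Using \eqref{trapped surface 2}, i.e. $\nab_3\chih + \tfrac12\tr\chib\,\chih = \text{l.o.t.}$, together with the estimate $\tr\chib = -2/|u| + \text{l.o.t.}$ (which follows from $\nab_3\tr\chib = -\tfrac12(\tr\chib)^2 - |\chibh|^2 + \text{l.o.t.}$ and the Minkowskian value $\tr\chib(u,0) = -2/|u|$, plus the fact that along $H_{u_\infty}$ we have $\tr\chib(u_\infty,\ub) = -2/|u_\infty| + \text{l.o.t.}$), one gets that $|u|^2|\chih|^2$ is essentially conserved along $e_3$:
\[
|u|^2 |\chih|^2(u,\ub) = |u_\infty|^2 |\chih|^2(u_\infty,\ub) + \text{l.o.t.}
\]
Integrating in $\ub$ and setting $u = -a/4$, $\ub = 1$, and using the initial-data hypothesis $\int_0^1 |u_\infty|^2 |\chih_0|^2(u_\infty,\ub')\,d\ub' \ge a$ for every direction, I would conclude
\[
\tr\chi(-\tfrac{a}{4}, 1) \le \frac{2}{a/4} - \frac{|u_\infty|^2}{(a/4)^2}\int_0^1 |\chih|^2(u_\infty,\ub')\,d\ub' + \text{l.o.t.} \le \frac{8}{a} - \frac{16}{a} + \text{l.o.t.} < 0.
\]
Combined with $\tr\chib(-a/4, 1) < 0$ (from the monotonicity of $\tr\chib$ along $e_3$ shown above), both null expansions of $S_{-a/4,1}$ are pointwise negative, so it is a trapped surface by definition.

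The main obstacle is the rigorous control of the "l.o.t." terms at every stage: one has to verify that the nonlinear and lower-order contributions in the $\nab_4\tr\chi$, $\nab_3\chih$, and $\nab_3\tr\chib$ equations — terms like $\omega\tr\chi$, curvature contributions, and quadratic Ricci-coefficient products — are genuinely negligible, of relative size $O(1/a)$ or better, uniformly on the slab. This is exactly where the scale-invariant norm machinery pays off: by the a priori estimates all relevant $\phi$ are "normal" (their $L^\infty_{sc}$ and $L^2_{sc}$ norms are $O(1)$), and the refined Hölder inequality \eqref{new holder} forces every nonlinear product to carry a gain of $1/|u| \le 4/a \ll 1$. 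So the proof reduces to bookkeeping: list each error term, assign its signature, check it is normal, and collect the $1/a$ smallness. A secondary subtlety is that the estimate $\tr\chib = -2/|u| + \text{l.o.t.}$ must be propagated \emph{both} in the $e_3$ direction (from $\Hb_0$) and consistently with its value on $H_{u_\infty}$; one should check these are compatible, which again follows from the transport equation for $\tr\chib$ and the smallness of its source term. Since $u_\infty$ enters only through $|u_\infty|^2|\chih_0|^2$, which is bounded above and below by hypotheses independent of $u_\infty$, the argument is uniform in $u_\infty$, as needed for the later scale-critical statement.
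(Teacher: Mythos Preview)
Your proposal is correct and matches the paper's approach: propagate $|u|^2|\chih|^2$ along $e_3$ via the $\nab_3\chih$ equation, then integrate the Raychaudhuri equation in $\ub$ at $u=-a/4$, with $\tr\chib<0$ coming directly from the pointwise bound $|\tr\chib+2/|u||\le 1/|u|^2$. The only details the paper makes more explicit are the $\Omega$ weights (it works with $\Omega^2|\chih|^2_\gamma$ and $\Omega^{-1}\tr\chi$ to obtain exact transport identities) and the shift vector $b^A$ appearing when converting $\nab_3$ to $\partial/\partial u$, which are precisely the ``l.o.t.'' bookkeeping you anticipated.
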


In Section \ref{sec rescale}, we will describe a new coordinate transformation. With it we convert above results into our main conclusion
\begin{theorem}(A Scale-Critical Trapped Surface Formation Criterion from Past Null Infinity)\label{thm3}

\begin{minipage}[!t]{0.27\textwidth}
\begin{tikzpicture}[scale=0.66]
\draw [white](3,-1)-- node[midway, sloped, below,black]{$H_{u_{\infty}}(u=u_{\infty})$}(4,0);
\draw [white](1,1)-- node[midway,sloped,above,black]{$H_{-\f{\d a}{4}}$}(2,2);
\draw [white](2,2)--node [midway,sloped,above,black] {$\Hb_{\d}(\ub=\d)$}(4,0);
\draw [white](1,1)--node [midway,sloped, below,black] {$\Hb_{0}(\ub=0)$}(3,-1);
\draw [dashed] (0, 4)--(0, -4);
\draw [dashed] (0, -4)--(4,0)--(0,4);
\draw [dashed] (0,0)--(2,2);
\draw [dashed] (0,-4)--(4,0);
\draw [dashed] (0,2)--(3,-1);
\draw [very thick] (1,1)--(3,-1)--(4,0)--(2,2)--(1,1);
\fill[black!35!white] (1,1)--(3,-1)--(4,0)--(2,2)--(1,1);
\draw [->] (3.3,-0.6)-- node[midway, sloped, above,black]{$e_4$}(3.6,-0.3);
\draw [->] (2.4,-0.3)-- node[midway, sloped, above,black]{$e_3$}(1.7,0.4);
\end{tikzpicture}
\end{minipage}
\hspace{0.01\textwidth} 
\begin{minipage}[!t]{0.65\textwidth}

Given $\M I^{(0)}$, for fixed $\d$ there exists a sufficiently large $a_0=a_0(\M I^{(0)},\d)$.  For $0<a_0<a$, solving Einstein vacuum equations (\ref{1.1}) with initial data:
\begin{itemize}
\item $\sum_{i\leq 10, k\leq 3}a^{-\frac12}\|(\d\nab_4)^k(|u_{\infty}|\nab)^{i}\chih_0\|_{L^{\infty}(S_{u_{\infty},\ub})}\leq \M I^{(0)}$ 

\noindent along $u=u_{\infty}$,
   
\item Minkowskian initial data  along $\ub=0$,

\item $\int_0^{\delta}|u_{\infty}|^2|\chih_0|^2(u_{\infty}, \ub')d\ub'\geq \d a$ for every direction   

\noindent along $u=u_{\infty}$,
\end{itemize}
we have that $S_{-\d a/4,\d}$ is a trapped surface.
\end{minipage}   
\end{theorem}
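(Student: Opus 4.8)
The plan is to derive Theorem \ref{thm3} from Theorems \ref{main.thm1} and \ref{main.thm2} by the coordinate rescaling introduced in Section \ref{sec rescale}, exploiting the fact that all the a priori estimates of Sections \ref{secbasic}--\ref{energy estimate} are uniform in $u_\infty$. First I would record the scaling symmetry of the Einstein vacuum equations: if $(\mathcal{M},g)$ solves \eqref{1.1}, then so does the pulled-back metric under the dilation $(u,\ub)\mapsto(\d^{-1}u,\d^{-1}\ub)$ of the optical functions, with the null frame rescaled by $e_3\mapsto e_3$, $e_4\mapsto\d e_4$ (equivalently $e_3\mapsto\d^{-1}e_3$, $e_4\mapsto e_4$, up to the overall normalization $g(e_3,e_4)=-2$). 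Under this map a solution on the region $\{u_\infty\le u\le -a/4,\ 0\le\ub\le1\}$ with parameter pair $(a,u_\infty)$ corresponds to a solution on $\{\d u_\infty\le u\le -\d a/4,\ 0\le\ub\le\d\}$, and the data transform so that $\nab_4$ picks up a factor $\d$ (hence the $(\d\nab_4)^k$ in the statement), while the angular and $e_3$ structure, and the integral $\int|u_\infty|^2|\chih|^2$, are scale-invariant in the appropriate sense. I would verify that the trapped surface $S_{-a/4,1}$ of Theorem \ref{main.thm2} maps precisely to $S_{-\d a/4,\d}$, and that the defining inequalities $\tr\chi<0$, $\tr\chib<0$ are preserved (they are, since negativity of a scalar is scale-independent).

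Concretely, the proof would proceed as follows. Given $\d>0$ fixed and initial data as in Theorem \ref{thm3}, apply the inverse rescaling $(u,\ub)\mapsto(\d u,\d\ub)$ at the level of characteristic data: this produces data on $H_{u_\infty}$ (relabeled) and $\Hb_0$ satisfying exactly the hypotheses of Theorem \ref{main.thm1}, with the bound $\sum_{i\le10,k\le3}a^{-1/2}\|\nab_4^k(|u_\infty|\nab)^i\chih_0\|_{L^\infty}\le\M I^{(0)}$ and $\int_0^1|u_\infty|^2|\chih_0|^2\,d\ub'\ge a$. Here the key point is that the smallness parameter controlling the existence argument is $1/\at$, coming from the $|u|$-weights in \eqref{new holder}, and $|u|\ge a/4$ throughout; this is why $a_0$ can be chosen depending on $\M I^{(0)}$ and $\d$ but the estimates themselves do not deteriorate as $u_\infty\to-\infty$. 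Then Theorems \ref{main.thm1} and \ref{main.thm2} give a smooth solution on $\{u_\infty\le u\le-a/4,\ 0\le\ub\le1\}$ in which $S_{-a/4,1}$ is trapped. Rescaling forward by $(u,\ub)\mapsto(\d u,\d\ub)$ transports this to a solution of \eqref{1.1} on the region depicted, in which $S_{-\d a/4,\d}$ is trapped. Finally, since the estimates are uniform in $u_\infty$, one may let $u_\infty\to-\infty$ to obtain the criterion with data prescribed at past null infinity; for the statement as written (finite $u_\infty$) this last limiting step is not even needed.

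The main obstacle I anticipate is bookkeeping the precise weights under the rescaling: one must check that each geometric quantity $\phi$ transforms by the factor dictated by its signature $s_2(\phi)$ — equivalently, that the scale invariant norms in \eqref{si norms} are genuinely invariant under the simultaneous replacement $a\mapsto a$, $u\mapsto\d u$, $\ub\mapsto\d\ub$ combined with the frame rescaling — and that the $\d$-dependence therefore appears only through the ansatz on $\nab_4\chih_0$ (the $(\d\nab_4)^k$ factors) and through the $a_0$ threshold. This amounts to confirming that the hierarchy built on $s_2$ in Sections \ref{secbasic}--\ref{energy estimate} is exactly the rescaled image of itself; the signature relation $s_2(\phi_1\cdot\phi_2)=s_2(\phi_1)+s_2(\phi_2)$ and the transport/Bianchi structure make this a routine but careful verification rather than a new analytic difficulty. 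A secondary point worth spelling out is that $\ub=0$ is a fixed point of the dilation, so the Minkowskian data on $\Hb_0$ (spheres of radius $|u|$) is automatically compatible on both sides; and that $u_\infty$ being kept finite means no issue of constructing data at actual null infinity arises — that refinement is only mentioned as the $u_\infty\to-\infty$ corollary.
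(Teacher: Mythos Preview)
Your proposal is correct and matches the paper's approach: Theorem~\ref{thm3} is obtained from Theorem~\ref{main.thm2} by the coordinate dilation $(u,\ub,\theta^1,\theta^2)\mapsto(\d u,\d\ub,\d\theta^1,\d\theta^2)$ of Section~\ref{sec rescale}, under which $g'=\d^2 g$ and the trapped sphere $S_{-a/4,1}$ is carried to $S_{-\d a/4,\d}$. One small correction to your frame bookkeeping: because all four coordinates (including the angular ones) are dilated, the paper shows $e'_3=\d^{-1}e_3$, $e'_4=\d^{-1}e_4$, $e'_A=\d^{-1}e_A$ uniformly (not the asymmetric $e_3\mapsto e_3$, $e_4\mapsto\d e_4$ you wrote), which is what yields $\Gamma'=\d^{-1}\Gamma$ and $\Psi'=\d^{-2}\Psi$ in Propositions~\ref{Prop rescale 1}--\ref{Prop rescale 2}.
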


\begin{remark}
Theorem \ref{thm3} is an extension of An-Luk \cite{A-L} to allow characteristic initial data prescribed at very far away (at $u=u_{\infty}$). At the same time, Theorem \ref{thm3} could also be viewed as a scale-critical extension of Christodoulou \cite{Chr:book}. 
\end{remark}
In Theorem \ref{thm3}, if we further choose $a=4c\cdot\d^{-1}$, where $0< c\leq 1$ being of size $1$, we then have

\begin{corollary}\label{Corollary1.5} (Recovery of Christodoulou's monumental work \cite{Chr:book})

\begin{minipage}[!t]{0.27\textwidth}
\begin{tikzpicture}[scale=0.66]
\draw [white](3,-1)-- node[midway, sloped, below,black]{$H_{u_{\infty}}(u=u_{\infty})$}(4,0);
\draw [white](1,1)-- node[midway,sloped,above,black]{$H_{-c}$}(2,2);
\draw [white](2,2)--node [midway,sloped,above,black] {$\Hb_{\d}(\ub=\d)$}(4,0);
\draw [white](1,1)--node [midway,sloped, below,black] {$\Hb_{0}(\ub=0)$}(3,-1);
\draw [dashed] (0, 4)--(0, -4);
\draw [dashed] (0, -4)--(4,0)--(0,4);
\draw [dashed] (0,0)--(2,2);
\draw [dashed] (0,-4)--(4,0);
\draw [dashed] (0,2)--(3,-1);
\draw [very thick] (1,1)--(3,-1)--(4,0)--(2,2)--(1,1);
\fill[black!35!white] (1,1)--(3,-1)--(4,0)--(2,2)--(1,1);
\draw [->] (3.3,-0.6)-- node[midway, sloped, above,black]{$e_4$}(3.6,-0.3);
\draw [->] (2.4,-0.3)-- node[midway, sloped, above,black]{$e_3$}(1.7,0.4);
\end{tikzpicture}
\end{minipage}
\hspace{0.01\textwidth} 
\begin{minipage}[!t]{0.65\textwidth}

Given $\M I^{(0)}$ and constant $c$ (where $0<c \leq 1$ being of size $1$), there exists a sufficiently small $\d_0=\d_0(\M I^{(0)},c)$.  For $0<\d<\d_0\ll c$, solving Einstein vacuum equations (\ref{1.1}) with initial data:
\begin{itemize}
\item $\sum_{i\leq 10, k\leq 3}\d^{\frac12}\|(\d\nab_4)^k(|u_{\infty}|\nab)^{i}\chih_0\|_{L^{\infty}(S_{u_{\infty},\ub})}\leq \M I^{(0)}$ 

\noindent along $u=u_{\infty}$,
   
\item Minkowskian initial data  along $\ub=0$,

\item $\int_0^{\delta}|u_{\infty}|^2|\chih_0|^2(u_{\infty}, \ub')d\ub'\geq 4c$ for every direction   

\noindent along $u=u_{\infty}$,
\end{itemize}
we have that $S_{-c, \d}$ is a trapped surface.
\end{minipage}

Note: we could also obtain a priori bounds that are in line with \cite{Chr:book} by Christodoulou. 

\end{corollary}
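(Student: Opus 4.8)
The plan is to derive Corollary \ref{Corollary1.5} from Theorem \ref{thm3} purely by specialization, so no new PDE analysis is required. I would begin by substituting $a = 4c\cdot\delta^{-1}$ into the hypotheses of Theorem \ref{thm3}. The largeness requirement ``$0 < a_0 < a$'' for $a_0 = a_0(\mathcal I^{(0)}, \delta)$ becomes $0 < a_0 < 4c\delta^{-1}$, i.e. $\delta < 4c/a_0$; setting $\delta_0 := 4c/a_0(\mathcal I^{(0)}, c)$ (noting that once $a$ is expressed through $\delta$ and $c$, the threshold $a_0$ can be taken to depend only on $\mathcal I^{(0)}$ and $c$) converts this into the stated condition $0 < \delta < \delta_0 \ll c$. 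Next, in the initial data norm I would replace the weight $a^{-1/2}$ by $(4c\delta^{-1})^{-1/2} = (4c)^{-1/2}\delta^{1/2}$, and absorb the harmless size-$1$ constant $(4c)^{-1/2}$ into $\mathcal I^{(0)}$ (or note it only changes $a_0$, hence $\delta_0$), yielding the bound $\sum_{i\le 10, k\le 3}\delta^{1/2}\|(\delta\nabla_4)^k(|u_\infty|\nabla)^i\chih_0\|_{L^\infty(S_{u_\infty,\ub})}\le \mathcal I^{(0)}$.

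I would then treat the lower bound: the hypothesis ``$\int_0^\delta |u_\infty|^2|\chih_0|^2\, d\ub' \ge \delta a$ for every direction'' becomes, after the substitution, $\int_0^\delta |u_\infty|^2|\chih_0|^2\, d\ub' \ge \delta\cdot 4c\delta^{-1} = 4c$, exactly as stated. Finally, the conclusion ``$S_{-\delta a/4,\,\delta}$ is a trapped surface'' becomes ``$S_{-\delta\cdot 4c\delta^{-1}/4,\,\delta} = S_{-c,\,\delta}$ is a trapped surface''. Likewise the picture's label $H_{-\delta a/4}$ becomes $H_{-c}$, consistent with the diagram accompanying the corollary. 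Thus every hypothesis and conclusion of Theorem \ref{thm3} transforms verbatim into those of Corollary \ref{Corollary1.5}.

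The remaining assertion is the \emph{Note}: that one also recovers a priori bounds consistent with those in \cite{Chr:book}. Here the point is that the scale-invariant norms of Section \ref{define geometric quantities}, built from $a$ and $|u|$ weights via the signature $s_2$, when specialized to $a = 4c\delta^{-1}$ and restricted to $|u| \sim c$ of size $1$, reduce (up to size-$1$ constants) to the $\delta$-weighted short-pulse norms of \cite{Chr:book}. Concretely, $\|\phi\|_{L^\infty(S)} \lesssim a^{s_2(\phi)}|u|^{-2s_2(\phi)-1}$ becomes $\lesssim \delta^{-s_2(\phi)}$ on $|u|\sim c$, which matches the hierarchy of $\delta$-powers Christodoulou assigns to each curvature component and Ricci coefficient; this follows by comparing the signature table above with the $\delta$-weights in \cite{Chr:book}. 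I would state this as a translation of norms rather than re-prove anything.

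The only real subtlety — and the step I would be most careful about — is the bookkeeping of how the threshold constant depends on parameters: in Theorem \ref{thm3} the largeness threshold is $a_0(\mathcal I^{(0)}, \delta)$, and one must check that after writing $a = 4c\delta^{-1}$ this can be recast as a \emph{smallness} threshold $\delta_0$ depending only on $\mathcal I^{(0)}$ and $c$, not circularly on $\delta$ itself. This works because the estimates of Sections \ref{secbasic}--\ref{energy estimate} are uniform in $u_\infty$ and the relevant smallness is always extracted from the factor $1/|u| \le 1/a^{1/2} \ll 1$ in the scale-invariant Hölder inequality \eqref{new holder}; with $a = 4c\delta^{-1}$ and $|u|$ of size $c$, this factor is $\sim \delta^{1/2}/c^{1/2}$, so the required smallness is genuinely a smallness in $\delta$ relative to $c$, which is precisely the content of $0<\delta<\delta_0 \ll c$. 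Once this dependence is verified, the corollary is immediate.
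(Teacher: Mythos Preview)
Your proposal is correct and follows exactly the paper's approach: the paper presents Corollary \ref{Corollary1.5} as an immediate consequence of Theorem \ref{thm3} obtained by setting $a=4c\cdot\delta^{-1}$, with no separate proof given. Your write-up is in fact more thorough than the paper's, which simply states the substitution and the resulting corollary; your careful check of how the threshold $a_0(\mathcal I^{(0)},\delta)$ converts to a smallness condition $\delta_0(\mathcal I^{(0)},c)$ and your translation of the a priori bounds for the Note are both sound and fill in details the paper leaves implicit.
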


\section{Setting, Equations and Notations} \label{sec setting}

\subsection{Double Null Foliation}\label{secdnf}
We construct a double null foliation in a neighborhood of $S_{u_{\infty},0}$ as follows: 

\begin{minipage}[!t]{0.35\textwidth}
\begin{tikzpicture}
\draw [white](3,-1)-- node[midway, sloped, below,black]{$H_{u_{\infty}}(u=u_{\infty})$}(4,0);

\draw [white](2,2)--node [midway,sloped,above,black] {$\Hb_1(\ub=1)$}(4,0);
\draw [white](1,1)--node [midway,sloped, below,black] {$\Hb_{0}(\ub=0)$}(3,-1);
\draw [dashed] (0, 4)--(0, -4);
\draw [dashed] (0, -4)--(4,0)--(0,4);
\draw [dashed] (0,0)--(2,2);
\draw [dashed] (0,-4)--(2,-2);
\draw [dashed] (0,2)--(3,-1);
\draw [very thick] (1,1)--(3,-1)--(4,0)--(2,2)--(1,1);
\fill [black!35!white]  (1,1)--(3,-1)--(4,0)--(2,2)--(1,1);
\draw [white](1,1)-- node[midway,sloped,above,black]{$H_{u}$}(2,2);
\draw [->] (3.3,-0.6)-- node[midway, sloped, above,black]{$L'$}(3.6,-0.3);
\draw [->] (1.4,1.3)-- node[midway, sloped, below,black]{$L'$}(1.7,1.6);
\draw [->] (3.3,0.6)-- node[midway, sloped, below,black]{$\Lb'$}(2.7,1.2);
\draw [->] (2.4,-0.3)-- node[midway, sloped, above,black]{$\Lb'$}(1.7,0.4);
\end{tikzpicture}
\end{minipage}
\hspace{0.02\textwidth} 
\begin{minipage}[!t]{0.55\textwidth}

Starting from a point $p$ on $2$-sphere $S_{u_{\infty}, 0}$, in $2$-dimensional $T_p^{\perp}S_{u_{\infty}, 0}$, we could find two future-directed vectors $L'_p, \underline{L}'_p$ such that 
$$g(L'_p, L'_p)=0, \, \, g(\underline{L}'_p, \underline{L}'_p)=0, \,  \, g(L'_p, \Lb'_p)=-2.\footnote{ $\{L'_p, \Lb'_p\}$ are uniquely determined up to a scaling factor $\lambda>0$: \quad $\{L'_p, \Lb'_p\}\rightarrow \{\lambda L'_p, \lambda^{-1}\Lb'_p\}$. }$$ Based on $p$ and along $L'_p$ direction, a unique geodesic $l_p$ is sent out.  We extend $L'$ along $l_p$ such that 
$D_{L'}L'=0$. We then have $l_p$ is null. This is because $g(L'_p, L'_p)=0$ and
$$L'(g(L', L'))=2g(D_{L'}L', L')=0.$$
We hence have $g(L',L')=0$ along $l_p$.
Gathering all the $\{l_p\}$ together, we then have an outgoing null hypersurface called $H_{u_{\infty}}$. Similarly, we obtain the incoming null hypersurface $\Hb_0$ emitting from $S_{u_{\infty},0}$.\\

Note that, by above construction, for each point $q$ on $H_{u_{\infty}}$ or $\Hb_0$, in $T_q H_{u_{\infty}}$ or $T_q \Hb_0$, there is the preferred null vector $L'_q$ or $\Lb'_q$ associated with $q$. \\
\end{minipage}

We define function $\O$ to be 1 on $S_{u_{\infty},0}$ and extend $\O$ as a continuous function along $H_{u_{\infty}}$ and $\Hb_0$. \footnote{For a general double null foliation, we have the gauge freedom of choosing how to extend $\O$ along $H_{u_{\infty}}$ and $\Hb_0$. In this paper, we extend $\O\equiv 1$ on both $H_{u_{\infty}}$ and $\Hb_0$.} We consider vector fields 
$$L=\O^2 L' \, \,\, \mbox{along}\, \, \, H_{u_{\infty}}, \,\, \mbox{and} \, \, \Lb=\O^2 \Lb' \, \, \,\mbox{along}\, \, \, \Hb_0,$$
and define functions
$$\ub \,\, \mbox{on} \,\, H_{u_{\infty}}\,\,\, \mbox{satisfying}\, \, \, L\ub=1 \,\, \mbox{and} \,\, \ub=0\, \,\mbox{on}  \,\, S_{u_{\infty}, 0},$$
$$u \, \, \mbox{on} \, \, \Hb_0 \, \, \, \mbox{satisfying} \, \, \,\Lb u=1 \, \, \mbox{and} \, \, u=u_{\infty}\, \,\mbox{on} \, \, S_{u_{\infty}, 0}.$$
Let $S_{u_{\infty}, \ub'}$ be the embedded $2$-surface on $H_{u_{\infty}}$, such that $\ub=\ub'$. 
Similarly, define $S_{u', 0}$ to be the embedded $2$-surface on $\Hb_0$, such that $u=u'$. 

At each point $q$ on $2$-surface $S_{u_{\infty}, \ub'}$, we already have the preferred outgoing null vector $L'_q$ tangent to $H_{u_{\infty}}$. Hence, at $q$ we can also fix a unique incoming null vector $\Lb_q'$ via requiring
$$g(\Lb'_q, \Lb'_q)=0 \quad \mbox{and} \quad g(\Lb'_q, L'_q)=-2\O^{-2}|_q.$$
There exists a unique geodesic $\underline{l}_q$ emitting from $q$ with direction $\Lb'$. We then extend $\Lb'$ along $\underline{l}_q$ through $D_{\Lb'}\Lb'=0$.  Gathering all the $\{\underline{l}_q\}$ for $q\in S_{u_{\infty}, \ub'}$, we have constructed the incoming null hypersurface $\Hb_{\ub'}$ emitting from $S_{u_{\infty}, \ub'}$. Similarly, from $S_{u',0}$ we also construct the outgoing null hypersurface $H_{u'}$. We further define $2$-sphere $S_{u', \ub'}:=H_{u'}\cap \Hb_{\ub'}$. 

At each point $p$ of $S_{u',\ub'}$, we define positive-valued function $\O$ via 
\begin{equation}\label{define Omega}
g(L'_p, \Lb'_p)=:-2\O^{-2}| _p.
\end{equation}
Note $L'_p$ is well-defined on $H_{u'}$, along an outgoing null geodesic $l$ passing through $p$;  $\Lb'_p$ is also well-defined on $\Hb_{\ub'}$, along an incoming null geodesic $\underline{l}$ crossing $p$. \\

These $3$-dimensional incoming null hypersurfaces $\{\Hb_{\ub'}\}_{0\leq \ub' \leq 1}$ and outgoing null hypersurfaces $\{H_{u'}\}_{u_{\infty}\leq u' \leq -a/4}$ together with their intersections $S_{u', \ub'}=H_{u'}\cap \Hb_{\ub'}$ give us the so called \textit{double null foliation}. \\

On $\S$, by (\ref{define Omega}) we have $g(L', \Lb')=-2\O^{-2}$. Thus, $g(\O L', \O \Lb')=-2$. Throughout this paper we will work with the normalized null pair $(e_3, e_4)$:
$$e_3:=\Omega\Lb', \quad e_4:=\Omega L', \,\, \mbox{and} \quad g(e_3, e_4)=-2.$$

Moreover, for the characteristic initial data, we choose the following gauge:
$$\Omega\equiv 1 \quad\mbox{on $H_{u_{\infty}}$ and $\Hb_0$}.$$
\begin{remark}
Functions $u$ and $\ub$ defined above also satisfy the eikonal equations
$$g^{\mu\nu}\partial_\mu u\partial_\nu u=0,\quad g^{\mu\nu}\partial_\mu\ub\partial_\nu \ub=0.$$
And it is straight forward to check
$$L'^\mu=-2g^{\mu\nu}\partial_\nu u,\quad \Lb'^\mu=-2g^{\mu\nu}\partial_\nu \ub, \quad  L\ub=1, \quad \Lb u=1.  $$
Here $\Lb:=\Omega^2\Lb', \quad L:=\Omega^2 L'$ are also called equivariant vector fields. 
\end{remark}

\subsection{The Coordinate System}\label{coordinates}
We will use a coordinate system $(u,\ub, \theta^1, \theta^2)$. Here $u$ and $\ub$ are defined above. To get $(\theta^1, \theta^2)$ for each point on $\S$, we follow the approach in Chapter 1 of \cite{Chr:book}: we first define a coordinate system $(\theta^1, \theta^2)$ on $S_{u_{\infty},0}$. Since $S_{u_{\infty},0}$ is the standard $2$-sphere in Minkowskian spacetime, here we use the coordinates of stereographic projection. Then we extend this coordinate system to $\Hb_0$ by requiring
$$\Ls_{\Lb} \theta^A=0\mbox{ on $\Hb_0$}. \footnote{On $\Hb_0$, we have $\O=1$ and $\Ls_{\Lb} \theta^A=\f{\partial}{\partial u}\theta^A$.}$$
Here $\Ls_L$ is the restriction of the Lie derivative to $TS_{u,\ub}$. In another word,  given a point $p$ on $S_{u_{\infty}, 0}$, assuming $l_p$ is the incoming null geodesics on $\Hb_0$ emitting from $p$, then all the points along $l_p$ have the same angular coordinate $(\theta^1, \theta^2)$.
We further extend this coordinate system from $\Hb_0$ to the whole spacetime under requirement
$$\Ls_L \th^A=0,$$ 
i.e. all the points along the same outgoing null geodesics (along $L$) on $H_u$ have the same angular coordinate. We hence have established a coordinate system in a neighborhood of $S_{u_{\infty}, 0}$. With this coordinate system, we can rewrite $e_3$ and $e_4$ as 
$$e_3=\Omega^{-1}\left(\frac{\partial}{\partial u}+d^A\frac{\partial}{\partial \th^A}\right), e_4=\Omega^{-1}\frac{\partial}{\partial \ub}.$$
And the Lorentzian metric $g$ takes the form
\begin{equation}\label{equation g}
g=-2\O^2(du\otimes d\ub+d\ub\otimes du)+\gamma_{AB}(d\theta^A-d^A du)\otimes(d\theta^B-d^B du).
\end{equation}
We require {\color{black}$d^A$ to satisfy $d^A=0$} on $\Hb_0$.

\subsection{Equations}\label{seceqn}
We then decompose curvature components and Ricci coefficients with respect to  null frames $e_3, e_4$ and frames $e_1, e_2$ tangent to the 2-sphere $S_{u,\ub}$. Denote the indices $A,B$ to be $1,2$. With frame $\{e_3, e_4, e_A,  e_B\}$, we define null curvature components:
 \begin{equation}\label{def curvatures}
\begin{split}
\a_{AB}&=R(e_A, e_4, e_B, e_4),\quad \, \,\,   \ab_{AB}=R(e_A, e_3, e_B, e_3),\\
\b_A&= \frac 1 2 R(e_A,  e_4, e_3, e_4) ,\quad \bb_A =\frac 1 2 R(e_A,  e_3,  e_3, e_4),\\
\rho&=\frac 1 4 R(e_4,e_3, e_4,  e_3),\quad \sigma=\frac 1 4  \,^*R(e_4,e_3, e_4,  e_3). 
\end{split}
\end{equation}
Here $\, ^*R$ stands for the Hodge dual of $R$. Denote $D_A:=D_{e_{A}}$. We define Ricci coefficients:

\begin{equation}\label{def Ricci coefficients}
\begin{split}
&\chi_{AB}=g(D_A e_4,e_B),\, \,\, \quad \chib_{AB}=g(D_A e_3,e_B),\\
&\eta_A=-\frac 12 g(D_3 e_A,e_4),\quad \etab_A=-\frac 12 g(D_4 e_A,e_3),\\
&\omega=-\frac 14 g(D_4 e_3,e_4),\quad\,\,\, \omegab=-\frac 14 g(D_3 e_4,e_3),\\
&\zeta_A=\frac 1 2 g(D_A e_4,e_3).
\end{split}
\end{equation}
Let $\gamma_{AB}$ be the induced metric on $\S$, we further decompose $\chi, \chib$ into
$$\chi_{AB}=\f12\tr\chi\cdot \gamma_{AB}+\chih_{AB}, \quad \chib_{AB}=\f12\tr\chib\cdot \gamma_{AB}+\chibh_{AB}.$$
Here $\chih_{AB}$ and $\chibh_{AB}$ are the corresponding traceless parts. \\

Denote $\nab$ to be the induced covariant derivative operator on $S_{u,\ub}$. And let $\nab_3$ and $\nab_4$ to be the projections of covariant derivatives $D_3$ and $D_4$ to $S_{u,\ub}$.
By the definitions of Ricci coefficients, one can verify:
\begin{equation}
\begin{split}
&\omega=-\frac 12 \nab_4 (\log\Omega),\qquad \omegab=-\frac 12 \nab_3 (\log\Omega),\\
&\eta_A=\zeta_A +\nab_A (\log\Omega),\quad \etab_A=-\zeta_A+\nab_A (\log\Omega).
\end{split}
\end{equation}
\\ 

We then define several different contractions between tensors.  Let
$$(\phi^{(1)}\hot\phi^{(2)})_{AB}:=\phi^{(1)}_A\phi^{(2)}_B+\phi^{(1)}_B\phi^{(2)}_A-\gamma_{AB}(\phi^{(1)}\cdot\phi^{(2)}) \quad\mbox{for one forms $\phi^{(1)}_A$, $\phi^{(2)}_A$,}$$
$$(\phi^{(1)}\wedge\phi^{(2)})_{AB}:=\eps^{AB}(\gamma^{-1})^{CD}\phi^{(1)}_{AC}\phi^{(2)}_{BD}\quad\mbox{for symmetric $2$-tensors $\phi^{(1)}_{AB}$, $\phi^{(2)}_{AB}$}.$$
Here $\eps$ is the volume form associated to the metric $\gamma$. For simplicity, we employ  $\phi^{(1)}\cdot\phi^{(2)}$ to represent an arbitrary contraction of the tensor product of $\phi^{(1)}$ and $\phi^{(2)}$ with respect to the metric $\gamma$. We also use $\div$, $\curl$ and $\tr$ operators. For totally symmetric tensors, define these operators by
$$(\div\phi)_{A_1...A_r}:=\nabla^B\phi_{BA_1...A_r},$$
$$(\curl\phi)_{A_1...A_r}:=\eps^{BC}\nabla_B\phi_{CA_1...A_r},$$
$$(\mbox{tr}\phi)_{A_1...A_{r-1}}:=(\gamma^{-1})^{BC}\phi_{BCA_1...A_{r-1}}.$$
{\color{black}
We also define by $^*$ for $1$-forms and symmetric $2$-tensors respectively as follows (note that on $1$-forms this is the Hodge dual on $\S$):
$$
{^*}\phi_A:=\gamma_{AC}\eps^{CB}\phi_B,
$$
$$
{^*}\phi_{AB}:=\gamma_{BD}\eps^{DC}\phi_{AC}.
$$
And define the operator $\nab\hot$ on a $1$-form $\phi_A$ by
$$
(\nab\hot\phi)_{AB}:=\nab_A\phi_B+\nab_B\phi_A-\gamma_{AB}\div\phi.
$$
}
\\

We are ready to state the transport equations for curvature components and Ricci coefficients. Rewrite the second Bianchi equations $D_{[\iota}R_{\nu\tau]\varphi\lambda}=0$ with null frames, we arrive at 
\begin{equation}
\label{eq:null.Bianchi}
\begin{split}
&\nab_3\alpha+\frac 12 \trchb \alpha=\nabla\hot \beta+ 4\omegab\alpha-3(\chih\rho+^*\chih\sigma)+
(\zeta+4\eta)\hot\beta, \\
&\nab_4\beta+2\trch\beta = \div\alpha - 2\omega\beta +  \eta \alpha,\\
&\nab_3\beta+\trchb\beta=\nabla\rho + 2\omegab \beta +^*\nabla\sigma +2\chih\cdot\betab+3(\eta\rho+^*\eta\sigma),\\
&\nab_4\sigma+\frac 32\trch\sigma=-\div^*\beta+\frac 12\chibh\cdot ^*\alpha-\zeta\cdot^*\beta-2\etab\cdot
^*\beta,\\
&\nab_3\sigma+\frac 32\trchb\sigma=-\div ^*\betab+\frac 12\chih\cdot ^*\alphab-\zeta\cdot ^*\betab-2\eta\cdot 
^*\betab,\\
&\nab_4\rho+\frac 32\trch\rho=\div\beta-\frac 12\chibh\cdot\alpha+\zeta\cdot\beta+2\etab\cdot\beta,\\
&\nab_3\rho+\frac 32\trchb\rho=-\div\betab- \frac 12\chih\cdot\alphab+\zeta\cdot\betab-2\eta\cdot\betab,\\
&\nab_4\betab+\trch\betab=-\nabla\rho +^*\nabla\sigma+ 2\omega\betab +2\chibh\cdot\beta-3(\etab\rho-^*\etab\sigma),\\
&\nab_3\betab+2\trchb\, \betab=-\div\alphab-2\omegab\betab+\etab \cdot\alphab,\\
&\nab_4\alphab+\frac 12 \trch\alphab=-\nabla\hot \betab+ 4\omega\alphab-3(\chibh\rho-^*\chibh\sigma)+
(\zeta-4\etab)\hot \betab.
\end{split}
\end{equation}
Here $^*$ denotes the Hodge dual on $S_{u,\ub}$. The above transport equations for curvature are called \textit{null Bianchi equations}.

We then rewrite $\mbox{Ric}_{\mu\nu}=0$ with null frames.  For $\chi$ and $\chib$ we have 
\begin{equation}
\label{null.str1}
\begin{split}
\nab_4 \trch+\frac 12 (\trch)^2&=-|\chih|^2-2\omega \trch,\\
\nab_4\chih+\trch \chih&=-2 \omega \chih-\alpha,\\
\nab_3 \trchb+\frac 12 (\trchb)^2&=-2\omegab \trchb-|\chibh|^2,\\
\nab_3\chibh + \trchb\,  \chibh&= -2\omegab \chibh -\alphab,\\
\nab_4 \trchb+\frac1 2 \trch \trchb &=2\omega \trchb +2\rho- \chih\cdot\chibh +2\div \etab +2|\etab|^2,\\
\nab_4\chibh +\frac 1 2 \trch \chibh&=\nab\widehat{\otimes} \etab+2\omega \chibh-\frac 12 \trchb \chih +\etab\widehat{\otimes} \etab,\\
\nab_3 \trch+\frac1 2 \trchb \trch &=2\omegab \trch+2\rho- \chih\cdot\chibh+2\div \eta+2|\eta|^2,\\
\nab_3\chih+\frac 1 2 \trchb \chih&=\nab\widehat{\otimes} \eta+2\omegab \chih-\frac 12 \trch \chibh +\eta\widehat{\otimes} \eta.
\end{split}
\end{equation}
For the remaining Ricci coefficients, we have
\begin{equation}
\label{null.str2}
\begin{split}
\nabla_4\eta&=-\chi\cdot(\eta-\etab)-\b,\\
\nabla_3\etab &=-\chib\cdot (\etab-\eta)+\bb,\\
\nabla_4\omegab&=2\omega\omegab+\frac 34 |\eta-\etab|^2-\frac 14 (\eta-\etab)\cdot (\eta+\etab)-
\frac 18 |\eta+\etab|^2+\frac 12 \rho,\\
\nabla_3\omega&=2\omega\omegab+\frac 34 |\eta-\etab|^2+\frac 14 (\eta-\etab)\cdot (\eta+\etab)- \frac 18 |\eta+\etab|^2+\frac 12 \rho.\\
\end{split}
\end{equation}
\noindent These above transport equations for Ricci coefficients are call \textit{null structure equations}.

\begin{remark}
In this article, we will also need another form of equation $\nab_4\tr\chib$
\begin{equation*}\label{eqn 4 trchib}
\begin{split}
&\nab_4(\tr\chib+\f{2}{|u|})+\f12\tr\chi(\tr\chib+\f{2}{|u|})\\
=&\f{1}{|u|}\tr\chi-\f{4}{|u|}\o+2\o(\tr\chib+\f{2}{|u|})+2\rho-\chih\cdot\chibh+2\div\etb+2|\etb|^2\\
=&\f12(\tr\chib+\f{2}{|u|})\tr\chi-\f12\tr\chib\tr\chi+2\tr\chib\o+2\rho-\chih\cdot\chibh+2\div\etb+2|\etb|^2,
\end{split}
\end{equation*}
and another form of $\nab_3\tr\chib$
\begin{equation}\label{eqn 2 trchib}
\begin{split}
\nab_3(\tr\chib+\f{2}{|u|})+\tr\chib(\tr\chib+\f{2}{|u|})=\f{2}{|u|^2}(\O^{-1}-1)+\f12(\tr\chib+\f{2}{|u|})(\tr\chib+\f{2}{|u|})-2\omb\tr\chib-|\chibh|^2.
\end{split}
\end{equation} 

\end{remark}

When embedding $\S$ into $4$-dimensional spacetime, we have Gauss-Codazzi equations and in null frames we have 

\begin{equation}
\label{null.str3}
\begin{split}
\div\chih&=\frac 12 \nabla \trch - \frac 12 (\eta-\etab)\cdot (\chih -\frac 1 2 \trch\cdot\gamma) -\beta,\\
\div\chibh&=\frac 12 \nabla \trchb + \frac 12 (\eta-\etab)\cdot (\chibh-\frac 1 2   \trchb\cdot\gamma) +\betab,\\
\curl\eta &=-\curl\etab=\sigma +\frac 1 2\chibh \wedge\chih,\\
K&=-\rho+\frac 1 2 \chih\cdot\chibh-\frac 1 4 \trch \trchb.
\end{split}
\end{equation}
Here $K$ is Gaussian curvature of spheres $S_{u,\ub}$.

\subsection{Integration} Let $U$ be a coordinate patch on $\S$. Denote $p_U$ to be the corresponding partition of unity. 
For a function $\phi$,  we define its integration on $S_{u,\ub}$, $H_u$ and $\Hb_{\ub}$ via 
\begin{equation}\label{int S}
\int_{S_{u,\ub}} \phi :=\sum_U \int_{-\infty}^{\infty}\int_{-\infty}^{\infty}\phi\cdot p_U\cdot \sqrt{\det\gamma}\,d\th^1 d\th^2,
\end{equation}
$$\int_{H_{u}^{(0,\ub)}} \phi :=\sum_U \int_0^{\ub}\int_{-\infty}^{\infty}\int_{-\infty}^{\infty}\phi\cdot 2p_U\cdot\Omega\cdot\sqrt{\det\gamma}\,d\th^1 d\th^2d\ub',$$
$$\int_{H_{\ub}^{(\ui,u)}} \phi :=\sum_U \int_{u_{\infty}}^{u}\int_{-\infty}^{\infty}\int_{-\infty}^{\infty}\phi\cdot 2p_U\cdot\Omega\cdot\sqrt{\det\gamma}\,d\th^1 d\th^2du'.$$
Let $D_{u ,\ub}$ be the region $u_{\infty}\leq u'\leq u$, $0\leq \ub'\leq \ub$. We define the integration of $\phi$ in $D_{u,\ub}$ as
\begin{equation*}
\begin{split}
\int_{D_{u,\ub}} \phi :=&\sum_U \int_{u_{\infty}}^u\int_0^{\ub}\int_{-\infty}^{\infty}\int_{-\infty}^{\infty}\phi\cdot p_U\cdot \O^2\cdot\sqrt{-\det g}\, d\th^1 d\th^2 du' d\ub'.\\
\end{split}
\end{equation*}
We further define the $L^p$ ($1\leq p < \infty$) norms for an arbitrary tensorfield $\phi$:
$$||\phi||_{L^p(S_{u,\ub})}^p:=\int_{S_{u,\ub}} <\phi,\phi>_\gamma^{p/2},$$
$$||\phi||_{L^p(H_u)}^p:=\int_{H_{u}} <\phi,\phi>_\gamma^{p/2},$$
$$||\phi||_{L^p(\Hb_{\ub})}^p:=\int_{\Hb_{\ub}} <\phi,\phi>_\gamma^{p/2}.$$
When $p=\infty$, we define the $L^\infty$ norm by
$$||\phi||_{L^\infty(S_{u,\ub})}:=\sup_{\th\in S_{u,\ub}} <\phi,\phi>_\gamma^{1/2}(\th).$$
We also employ mixed-type norms in this paper: 
$$||\phi||_{L^2_{\ub}L^\infty_u L^p(\S)}:=\left(\int_0^{1}(\sup_{u_{\infty}\leq u \leq -\f{a}{4}}||\phi||_{L^p(S_{u,\ub'})})^2d\ub'\right)^{\frac{1}{2}},$$
$$||\phi||_{L^2_{u}L^\infty_{\ub} L^p(\S)}:=\left(\int_{u_{\infty}}^{-\f{a}{4}}(\sup_{0\leq \ub \leq 1}||\phi||_{L^p(S_{u',\ub})})^2du'\right)^{\frac{1}{2}}.$$
\begin{remark}
In this paper the following Minkowski's inequality will be used frequently:
$$||\phi||_{L^\infty_{\ub}L^2_{u} L^p(\S)}\leq||\phi||_{L^2_{u}L^\infty_{\ub} L^p(\S)}.$$  
\end{remark}

\subsection{Definition of Signatures} \label{Signatures}
As explained in heuristics, we want to prescribe $\chih$ such that
$$|\chih|(u_{\infty}, \ub)\approx \f{\at}{|u_{\infty}|} \quad \mbox{ along } \quad H_{u_{\infty}}.$$ Following the same procedures explained in details in Chapter 2 of \cite{Chr:book}, we obtain the following estimates on $H_{u_{\infty}}$:
\begin{equation}\label{initial hierarchy}
\begin{split}
&|\a|\lesssim \f{\at}{|u_{\infty}|}, \quad |\beta|\lesssim \f{\at}{|u_{\infty}|^2}, \quad |\rho|\lesssim \f{a}{|u_{\infty}|^3},\\
&|\sigma|\lesssim \f{a}{|u_{\infty}|^3}, \quad |\beb|\lesssim \f{a}{|u_{\infty}|^4}, \quad |\ab|\lesssim \f{a^{\f32}}{|u_{\infty}|^5},\\
&|\o|\lesssim \f{1}{|u_{\infty}|}, \quad |\tr\chi|\lesssim \f{1}{|u_{\infty}|}, \quad |\eta|\lesssim \f{\at}{|u_{\infty}|^2}, \quad |\etb|\lesssim \f{\at}{|u_{\infty}|^2},\\
&|\tr\chib-\f{2}{u_{\infty}}|\lesssim \f{a}{|u_{\infty}|^3}\lesssim \f{1}{|u_{\infty}|^2}, \quad |\omb|\lesssim \f{a}{|u_{\infty}|^3}, \quad |\chibh|\lesssim \f{\at}{|u_{\infty}|^2}.
\end{split}
\end{equation}
Note that near $S_{u_{\infty}, 0}$ all geometric quantities have decay rates and \textit{they obey peeling property} (see \cite{KN:peeling}). In PDE estimates, it will be hard to track these $|u|$ and $a$ weights term by term. We hope to design a ``\textit{scale invariant norm} -$L^{\infty}_{sc}(\S)$'' with $|u|$ and $a$ weights built in, such that for most geometric quantities $\phi$, we have 
$$\|\phi\|_{L^{\infty}_{sc}(\S)}\lesssim 1.$$
To achieve this, first we need to find some connections between the definitions of various geometric quantities in (\ref{def curvatures}), (\ref{def Ricci coefficients})  and the $|u_{\infty}|$-weights, $a$-weights listed above. 

By relaxing the above estimate for $\beb \mbox{ and } \ab$
\begin{equation}\label{relax beb ab}
\begin{split}
&\mbox{from }\quad |\beb|\lesssim \f{a}{|u_{\infty}|^4} \quad \mbox{to } \quad |\beb|\lesssim \f{a^{\f32}}{|u_{\infty}|^4},\\
&\mbox{from }\quad |\ab|\lesssim \f{a^{\f32}}{|u_{\infty}|^5} \quad \mbox{to } \quad |\ab|\lesssim \f{a^2}{|u_{\infty}|^5},
\end{split}
\end{equation}
and keeping the other estimates for now, we find a systematical way to define $L^{\infty}_{sc}(\S)$. 

We first introduce \textit{signature for decay rates}: to $\phi\in \{\a,\beta,\rho,\sigma, K, \beb,\ab, \chi,\chib,\zeta,\eta,\etb,\omega,\omb, \gamma\}$, 
we assign signatures $s_2(\phi)$ according to the rule:
\begin{equation*}\label{signature 2}
s_2(\phi):=0\cdot{N_4}(\phi)+\frac{1}{2}\cdot{N_a}(\phi)+1\cdot{N_3}(\phi)-1.
\end{equation*}
$N_4(\phi)$ is the number of times $e_4$ appears in the definition of $\phi$. 
Similarly we define $N_3(\phi)$ and $N_a(\phi)$ where $a=1,2$. Following the definition, we then have the \textit{signature table}\\

\begin{tabular}{|r|r|r|r|r|r|r|r|r|r|r|r|r|r|r|r|r|}
  \hline
     & $\alpha$ & $\beta$ & $\rho$ & $\sigma$ & $K$ & $\underline{\beta}$ & $\underline{\alpha}$ & $\chi$ & $\omega$ & $\zeta$ & $\eta$ & $\underline{\eta}$ & $\mbox{tr}\underline{\chi}$ & $\hat{\underline{\chi}}$ & $\underline{\omega}$ & $\gamma$ \\
  $s_2$ & 0 & 0.5 & 1 & 1 & 1 & 1.5 & 2 & 0 & 0 & 0.5 & 0.5 & 0.5 & 1 & 1 & 1 & 0 \\
  \hline
\end{tabular} \\
\begin{remark}
With above definition, we also have
\begin{equation}\label{signature of derivative}
s_2(\nabla_4{\phi})=s_2(\phi), \quad s_2(\nabla{\phi})=s_2(\phi)+\frac{1}{2}, \quad s_2(\nabla_3{\phi})=s_2(\phi)+1.
\end{equation}
\end{remark}

\subsection{Scale Invariant Norms} \label{Scale invariant norms}

For any horizontal tensor-field $\phi$ with signature $s_2(\phi)$, we further define \textit{scale invariant norms on $S_{u,\ub}$}:
\begin{equation}\label{scale invariant norms}
\begin{split}
\|\phi\|_{L_{sc}^{\infty}(\S)}:=&a^{-s_2(\phi)}|u|^{2s_2(\phi)+1}\|\phi\|_{L^{\infty}(\S)},\\
\|\phi\|_{L_{sc}^{2}(\S)}:=&a^{-s_2(\phi)}|u|^{2s_2(\phi)}\|\phi\|_{L^{2}(\S)},\\
\|\phi\|_{L^1_{sc}(\S)}:=&a^{-s_2(\phi)}|u|^{2s_2(\phi)-1}\|\phi\|_{L^1(\S)}.
\end{split}
\end{equation}
For convenience, along $H_u^{(0,\ub)}$ and $\Hb_{\ub}^{(u_{\infty},u)}$ we also define
\textit{scale invariant norms along null hypersurfaces}
\begin{equation}\label{scale invariant norms 2}
\begin{split}
\|\phi\|^2_{L^2_{sc}(H_u^{(0,\underline{u})})}:=&
\int_0^{\underline{u}}\|\phi\|^2_{L^2_{sc}(S_{u,\underline{u}'})}d\underline{u}',\\
\|\phi\|^2_{L^2_{sc}(\underline{H}_{\underline{u}}^{(u_{\infty},u)})}:=&
\int_{u_{\infty}}^{u}{\frac{a}{|u'|^2}}\|\phi\|^2_{L^2_{sc}(S_{u',\underline{u}})}du'.
\end{split}
\end{equation}

\begin{remark}
Let $\phi\in \{\beta,\rho,\sigma, \beb,\ab, \tr\chi, \tr\chib+\f{2}{|u|}, \eta,\etb,\omega,\omb\}$. After relaxing estimates for $\beb, \ab$ as in (\ref{relax beb ab}), along $H_{u_{\infty}}$, (\ref{initial hierarchy}) could be rewritten in scale invariant norms
\begin{equation}\label{initial scale invariant}
\begin{split}
\f{1}{\at}\|\chih\|_{L^{\infty}_{sc}(S_{u_{\infty},\ub})}&+\f{\at}{|u_{\infty}|}\|\chibh\|_{L^{\infty}_{sc}(S_{u_{\infty},\ub})}+\f{a}{|u_{\infty}|^2}\|\tr\chib\|_{L^{\infty}_{sc}(S_{u_{\infty},\ub})}\\
&+\f{1}{\at}\|\a\|_{L^{\infty}_{sc}(S_{u_{\infty},\ub})}+\|\phi\|_{L^{\infty}_{sc}(S_{u_{\infty},\ub})}\leq 1.
\end{split}
\end{equation}
For most geometric terms, their scale invariant norms are of size 1. But for $\chih$ and $\a$ it  requires an additional smallness $1/\at$, for $\chibh$ it requires an additional smallness $\at/|u_{\infty}|$ and for $\tr\chib$ it requires an additional smallness $a/|u_{\infty}|^2$.  We hence call $\chih, \chibh, \tr\chib, \a$ \textit{anomalous terms}.
\end{remark}

\subsection{Conservation of Signatures}
A key property of signature $s_2$ is that the induced metric $\gamma_{ab}$ on $\S$ satisfies $s_2(\gamma_{ab})=0$. This ensures \textit{signature conservation}:
\begin{equation}\label{signature conservation 2}
s_2(\phi_1\cdot{\phi_2})=s_2(\phi_1)+s_2(\phi_2).
\end{equation}
For example, we have one of the null structure equations
\begin{equation}\label{nab 3 o}
\nabla_3{\omega}=2\omega\underline{\omega}+\frac{3}{4}|\eta-\underline{\eta}|^2+\frac{1}{4}(\eta-\underline{\eta})\cdot(\eta+\underline{\eta})-\frac{1}{8}|\eta+\underline{\eta}|^2+\frac{1}{2}\rho.
\end{equation}
From signature table and (\ref{signature of derivative}), it can be read that all the nonlinear terms and linear terms have the same signature $s_2$, that is 1:
$$s_2(\o\omb)=s_2(\omega)+s_2(\omb)=0+1=1, \quad s_2(\eta\cdot\eta)=s_2(\eta)+s_2(\eta)=\f12+\f12=1,$$
$$s_2(\eta\cdot\etb)=s_2(\eta)+s_2(\etb)=\f12+\f12=1, \quad s_2(\etb\cdot\etb)=s_2(\etb)+s_2(\etb)=\f12+\f12=1,$$
$$s_2(\nab_3\o)=s_2(\o)+1=0+1=1, \quad  s_2(\rho)=1.$$
This delightful fact is true not only for the equation of $\nab_3 \o$, but also true for all null structure equations, null Bianchi equations and constrain equations \footnote{That's because (\ref{1.1}) is a geometric PDE system and it respects some natural scalings.}.  When using scale invariant norms, this key feature enables us to treat all the terms on the right hand side of (\ref{nab 3 o}) as one term, since they share the same signature $s_2$. 

Moreover, when using scale invariant norms to rewrite H\"{o}lder's inequalities, we get
\subsection{H\"older's Inequality in Scale Invariant Norms}
\begin{equation}\label{Holder's}
\begin{split}
\|\phi_1\cdot\phi_2\|_{L_{sc}^{2}(\S)}\leq&\frac{1}{|u|}\|\phi_1\|_{L_{sc}^{\infty}(S)}\|\phi_2\|_{L_{sc}^{2}(\S)},\\
\|\phi_1\cdot\phi_2\|_{L^1_{sc}(\S)}\leq&\f{1}{|u|}\|\phi_1\|_{L^{\infty}_{sc}(\S)}\|\phi_2\|_{L^1_{sc}(\S)},\\
\|\phi_1\cdot\phi_2\|_{L^1_{sc}(\S)}\leq&\f{1}{|u|}\|\phi_1\|_{L^{2}_{sc}(\S)}\|\phi_2\|_{L^2_{sc}(\S)}.
\end{split}
\end{equation}
Note in the region of study we have $1/|u|\leq 4/a<<1$. This means if all terms are normal, the nonlinear terms in (\ref{nab 3 o}) or in other equations could be treated as lower order terms. This will simply the proof a lot.

\subsection{Norms} Here we define norms, which will be used throughout the paper. 

\noindent Let 
\begin{equation}\label{psi Psi}
\t \psi\in\{\o, \tr\chi, \eta, \etb, \omb\}, \t \Psi\in \{\b,\rho, \sigma,\beb,\ab\}, \t \Psi'\in \{\rho, \sigma,\beb,\ab\}.
\end{equation}
We also denote $\tc:=\tr\chib+\f{2}{|u|}$.

\noindent For $0\leq i \leq 6$, we define
\begin{equation}\label{O i infty}
\begin{split}
\mathcal{O}_{i,\infty}(u,\underline{u}):= &\f{1}{\at}\|(\at\nab)^i\chih\|_{L^{\infty}_{sc}(\S)}+\|(\at\nab)^i\t \psi\|_{L^{\infty}_{sc}(\S)}+\f{\at}{|u|}\|(\at\nab)^i\chibh\|_{L^{\infty}_{sc}(\S)}\\
&+\f{a}{|u|^2}\|(\at\nab)^i\tr\chib\|_{L^{\infty}_{sc}(\S)}+\f{a}{|u|}\|(\at\nab)^i \tc\|_{L^{\infty}_{sc}(\S)},
\end{split}
\end{equation}

\begin{equation}\label{R i infty}
\begin{split}
\mathcal{R}_{i,\infty}(u,\underline{u}):= \f{1}{\at}\|(\at\nab)^i\a\|_{L^{\infty}_{sc}(\S)}+\|(\at\nab)^i\t \Psi\|_{L^{\infty}_{sc}(\S)}.
\end{split}
\end{equation}
For $0\leq i \leq {\color{black}10}$, we define
\begin{equation}\label{O i 2}
\begin{split}
\mathcal{O}_{i,2}(u,\underline{u}):=& \f{1}{\at}\|(\at\nab)^i\chih\|_{L^{2}_{sc}(\S)}+\|(\at\nab)^i\t \psi\|_{L^{2}_{sc}(\S)}+\f{\at}{|u|}\|(\at\nab)^i\chibh\|_{L^{2}_{sc}(\S)}\\
&+\f{a}{|u|^2}\|(\at\nab)^i\tr\chib\|_{L^{2}_{sc}(\S)}+\f{a}{|u|}\|(\at\nab)^i \tc\|_{L^{2}_{sc}(\S)}.
\end{split}
\end{equation}
For $0\leq i \leq 9$, we define
\begin{equation}\label{R i 2}
\begin{split}
\mathcal{R}_{i,2}(u,\underline{u}):= \f{1}{\at}\|(\at\nab)^i\a\|_{L^{2}_{sc}(\S)}+\|(\at\nab)^i\t \Psi\|_{L^{2}_{sc}(\S)}.
\end{split}
\end{equation}
For $0\leq i \leq 10$, we define
\begin{equation}\label{R i H}
\mathcal{R}_i(u,\underline{u}):=\f{1}{\at}\|(\at\nab)^i\alpha\|_{L^{2}_{sc}(H_u^{(0,\ub)})}+\|(\at\nab)^i\t\Psi\|_{L^{2}_{sc}(H_u^{(0,\ub)})},
\end{equation}

\begin{equation}\label{R i Hb}
\mathcal{\underline{R}}_i(u,\underline{u}):=\f{1}{\at}\|(\at\nab)^i \beta\|_{L^{2}_{sc}(\underline{H}_{\ub}^{(u_{\infty}, u)})}+\|(\at\nab)^i\t\Psi'\|_{L^{2}_{sc}(\underline{H}_{\ub}^{(u_{\infty}, u)})}.
\end{equation}
We then set $\mathcal{O}_{i,\i},\, \mathcal{O}_{i,2},\, \mathcal{R}_{i,\infty},\, \mathcal{R}_{i, 2},\, \mathcal{R}_i,\, \underline{\mathcal{R}}_i$ to be the supremum over $u,\ub$ in our spacetime region of $\mathcal{O}_{i,\i}(u,\ub), \mathcal{O}_{i,2}(u,\ub), \mathcal{R}_{i,\infty}(u,\ub), \mathcal{R}_{i, 2}(u,\ub), \mathcal{R}_i(u,\ub), \underline{\mathcal{R}}_i(u,\ub)$, respectively. Finally, we define $\mathcal{O}, \mathcal{R}$:
$$\mathcal{O}:=\sum_{i\leq 6}(\mathcal{O}_{i,\infty}+\mathcal{R}_{i,\infty})+\sum_{i\leq 9}(\M O_{i,2}+ \M R_{i,2}),$$
$$\mathcal{R}:=\sum_{i\leq 10}\mathcal{R}_i+\mathcal{\underline{R}}_i.$$
And let $\mathcal{O}^{(0)}, \mathcal{R}^{(0)}, \underline{\M R}^{(0)}$ be the corresponding norms of the initial hypersurfaces $H_{u_{\infty}}$ and $\Hb_0$. 

\noindent Lastly, we define the initial data quantity
$$\mathcal{I}^{(0)}:=\sup_{0\leq \ub \leq 1}\mathcal{I}^{(0)}(\ub),$$
where
\begin{equation*}
\begin{split}
\mathcal{I}^{(0)}(\ub):=&\f{|u_{\infty}| }{\at}\|\chih_0\|_{L^{\infty}(S_{u_{\infty},\ub})}+\sum_{0\leq k \leq 10,}\sum_{0\leq m \leq 20}\f{1}{\at}\|(|u_{\infty}|\nab)^{m}(\nab_4)^k\chih_0\|_{L^2(S_{u_{\infty},\ub})}.
\end{split}
\end{equation*}
Here $\chih_0$ denotes $\chih$ along $H_{u_{\infty}}^{(0,\ub)}$. 

\subsection{Notation}
We collect the notations that are employed for convenience throughout the article:
\begin{itemize}
\item We denote $\sup_{u,\ub}$ to be the supremum over all values of $u,\ub$, where $u_{\infty}\leq u\leq -\f{a}{4}$ and $0\leq \ub \leq 1$. 
\item If $A$ and $B$ are two quantities, we often use $A\lesssim B$ meaning that there exists a constant $C>0$, which is independent of $a$, such that $A\leq CB$. Whenever there is no danger of confusion, we substitute $\leq$ for $\lesssim$. 
\item For equations involving many terms, the coefficients on the left are kept precise. Whenever there is no danger of confusion, the coefficients on the right are allowed to vary up to a nonzero constant.
\item We will employ $(\,,\,)$ to denote sum of all terms, which have one of the components in the bracket. For instance, the notation $\phi_1(\phi_2, \phi_3)$ means 
the sum of all terms in the form of $\phi_1\phi_2$ or $\phi_1\phi_3$.
\item Denote $D$ to be the spacetime region $\{(u,\ub)\,\,| \,\, u_{\infty}\leq u \leq -a/4, \quad 0\leq\ub\leq 1\}.$

\item {\color{black} For integers $i_1\geq 0$ and $i_2\geq 1$, sometimes we use $\nab^{i_1}\p^{i_2}$ to express a product of $i_2$ terms: 
$$\nab^{i_1}\p^{i_2}=\nab^{j_1}\p\cdot\nab^{j_2}\p\cdot\cdot\cdot\nab^{j_{i_2}}\p, \mbox{ where } j_1, j_2,..., j_{i_2}\in \mathbb{N} \mbox{ and } i_1=j_1+j_2+...+j_{i_2}.$$
Here we assume that $j_{i_2}$ is the largest number. }
\end{itemize} 

\section{The Preliminary Estimates}\label{secbasic}  
\subsection{An Approach of Bootstrap}
In this article, we will employ a bootstrap argument to derive uniform upper bounds of $\M O, \M R, \underline{\M R}$ for nonlinear Einstein vacuum equations. Along $H_{u_{\infty}}$ and $\Hb_{0}$, by analysing characteristic initial data we have 
\begin{equation}\label{EBA0}
\M O^{(0)}+\M R^{(0)}+\underline{\M R}^{(0)}\ls \M I^{(0)}.
\end{equation}
Here we have $\M I^{(0)}\lesssim 1$. Our goal is to show that in $D=\{(u,\ub)\, | \, u_{\infty}\leq u \leq -a/4, \,\, 0\leq \ub \leq 1\}$ we have
\begin{equation}\label{EBA1}
\M O(u,\ub)+\M R(u,\ub)+\underline{\M R}(u,\ub)\ls \M I^{(0)}+(\M I^{(0)})^2+1.
\end{equation}
Once these uniform bounds are obtained, by {\color{black} characteristic-initial-data local existence result}\footnote{See full details in Chapter 16 of \cite{Chr:book} or \cite{Luk} or Section 10 of \cite{Dafermos} for a beautiful exposition. {\color{black}}}, the solutions can always be extended a bit towards the future direction of $u$. Hence, uniform estimate (\ref{EBA1}) for $u_{\infty}\leq u \leq -a/4$ implies global existence of Einstein vacuum equations in $D=\{(u,\ub)\, | \, u_{\infty}\leq u \leq -a/4, \,\, 0\leq \ub \leq 1\}$.

To derive the uniform bound (\ref{EBA1}), we make bootstrap assumptions 
\begin{equation}\label{BA.0}
 \M O(u, \ub)\leq O, \quad  \M R(u,\ub) +\underline{\M R}(u,\ub) \leq R.
\end{equation}
Here $O$ and $R$ are large numbers, such that 
$$\M I^{(0)}+(\M I^{(0)})^2+1\ll O, \quad \M I^{(0)}+(\M I^{(0)})^2+1\ll R, \quad \mbox{but} \quad (O+R)^{20}\leq {a^{\f{1}{16}}}.$$
\noindent We also define $\Upsilon=\{u\,|\,\, u_{\infty}\leq u \leq-a/4 \mbox{ and } (\ref{BA.0}) \mbox{ hold for every } 0\leq \ub \leq 1\}$. First, we hope to prove
$\Upsilon=[u_{\infty}, -a/4].$ At $u=u_{\infty}$, we have (\ref{EBA0}). By continuity of solutions (via local existence), for small $\epsilon>0$  it holds for $u_{\infty}\leq u \leq u_{\infty}+\epsilon$
$$ \M O^{(0)}\ls \M I^{(0)}\ll O, \quad  \M R^{(0)} +\underline{\M R}^{(0)} \ls \M I^{(0)}\ll R,$$
$$ \M O(u, \ub)\ls 2\,\M I^{(0)}\ll O, \quad  \M R(u,\ub) +\underline{\M R}(u,\ub) \ls 2\,\M I^{(0)}\ll R.$$ This implies $[u_{\infty}, u_{\infty}+\epsilon]\subseteq \Upsilon$ and $\Upsilon$ is not empty. Since $\Upsilon\subseteq [u_{\infty}, -a/4]$, if we are able to prove that $\Upsilon$ is a set being both open and closed, then we prove $\Upsilon= [u_{\infty}, -a/4]$. Closeness follows from uniform estimates and continuity of solutions in $u$ variable, which doesn't rise a challenge.  Efforts are dedicated to verifying that $\Upsilon$ is open. 

The main parts of this paper are to show that for any $u\in\Upsilon$ we have \footnote{See Remark \ref{bootstrap openness}.}
$$\M O(u,\ub)\ls \M I^{(0)}+\M R(u,\ub)+\Rb(u,\ub)+1+\f{C_1}{a^{\f18}}\cdot (O+R)^{20},\quad \M R(u,\ub)+\underline{\M R}(u,\ub)\ls \M I^{(0)}+(\M I^{(0)})^2+1+\f{C_2}{a^{\f18}}\cdot (O+R)^{20}.$$ Here $C_1$ and $C_2$ are integers independent of $a$ and basically count how many terms popping up in the estimates. By employing $(O+R)^{20}\leq {a^{\f{1}{16}}}$ and further requiring $a$ to be sufficiently large, we obtain
\begin{equation}\label{EBA3}
\begin{split}
&\M R(u,\ub)+\underline{\M R}(u,\ub)\ls \M I^{(0)}+(\M I^{(0)})^2+1+\f{C_2}{a^{\f18}}\cdot a^{\f{1}{16}}\ls \M I^{(0)}+(\M I^{(0)})^2+1,\\
&\M O(u,\ub)\ls \M I^{(0)}+\M R(u,\ub)+\Rb(u,\ub)+1+\f{C_1}{a^{\f18}}\cdot a^{\f{1}{16}}\ls \M I^{(0)}+(\M I^{(0)})^2+1.
\end{split}
\end{equation}
These are improvements of the upper bounds in bootstrap assumptions (\ref{BA.0}):
$$\M O(u,\ub)\leq O, \,\, \M R(u,\ub)+\underline{\M R}(u,\ub)\leq R,$$ where $\M I^{(0)}+(\M I^{(0)})^2+1\ll O, \, \, \M I^{(0)}+(\M I^{(0)})^2+1\ll R.$
By continuity of solutions via local existence, $\Upsilon$ could be extended a bit towards larger $u$. This implies $\Upsilon$ being open. Together with $\Upsilon$ being closed and non-empty, we have $\Upsilon=[u_{\infty}, -a/4]$. Thus, for the whole region $D=\{(u,\ub)\, | \, u_{\infty}\leq u \leq -a/4, \,0\leq \ub \leq 1\}$, estimates in (\ref{BA.0}) hold. They imply (\ref{EBA3}) and bounds in (\ref{EBA1}):
$$\M O(u,\ub)+\M R(u,\ub)+\underline{\M R}(u,\ub)\ls \M I^{(0)}+(\M I^{(0)})^2+1\, \mbox{ in } \,D.$$

\subsection{Estimates for Metric Components}\label{metric}
We derive bound for $\Omega$ first:
\begin{proposition}\label{Omega}
Under the assumptions of Theorem \ref{main.thm1} and bootstrap assumption \eqref{BA.0}, we have
$$\|\Omega-1\|_{L^\i(\S)}\ls \f{O}{|u|}.$$
\end{proposition}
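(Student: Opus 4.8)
The plan is to integrate a transport equation for $\log\Omega$ in the $e_4$-direction, starting from the incoming cone $\Hb_0$, on which the prescribed gauge forces $\Omega\equiv 1$. The starting point is the identity $\omega=-\f12\nab_4(\log\Omega)$ recalled in Section~\ref{seceqn}, combined with the coordinate formula $e_4=\Omega^{-1}\partial_{\ub}$. Since $\nab_4$ acts on the scalar $\log\Omega$ simply as $e_4$, these give the ODE $\partial_{\ub}\log\Omega=-2\,\Omega\,\omega$ along the $\partial_{\ub}$-integral curves, which stay inside one $H_u$ and fix the angular coordinates $\th^1,\th^2$. Integrating in $\ub$ from $0$, where $\log\Omega=0$, yields the representation
$$\log\Omega(u,\ub,\th)=-2\int_0^{\ub}(\Omega\,\omega)(u,\ub',\th)\,d\ub',$$
with $|u|$ constant along the integration.

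Next I would estimate the integrand. Since $\omega\in\t\psi$ and $s_2(\omega)=0$, unwinding the definition of the scale-invariant norms turns the bootstrap bound \eqref{BA.0} into $\|\omega\|_{L^\infty(S_{u,\ub'})}=|u|^{-1}\|\omega\|_{L^\infty_{sc}(S_{u,\ub'})}\le |u|^{-1}\M O_{0,\infty}(u,\ub')\le O/|u|$ for all $0\le\ub'\le 1$. The one factor still to be controlled is $\Omega$ itself on the right-hand side; for this I would run a short continuity argument in $\ub$. Let $I$ be the set of $\ub_*\in[0,1]$ for which $\|\Omega-1\|_{L^\infty(S_{u,\ub'})}\le\f12$ holds for every $\ub'\le\ub_*$; then $0\in I$ (because $\Omega=1$ there), $I$ is closed, and on $I$ one has $\|\Omega\|_{L^\infty}\le\f32$, so the representation formula gives $|\log\Omega(u,\ub,\th)|\le 3\int_0^{\ub}(O/|u|)\,d\ub'\le 3O/|u|$ for $\ub\le\ub_*$.

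To close, I would use that $|u|\ge a/4$ and that the bootstrap requirement $(O+R)^{20}\le a^{1/16}$ forces $O/|u|\ll 1$; hence $|\log\Omega|\ll 1$ on $I$, so $\|\Omega-1\|_{L^\infty(S_{u,\ub})}\le 2|\log\Omega|\le 6O/|u|<\f12$ there. This strictly improves the defining inequality of $I$, so $I$ is open as well, forcing $I=[0,1]$, and the proposition follows: $\|\Omega-1\|_{L^\infty(\S)}\ls O/|u|$ throughout $D$. I expect no genuine obstacle here — this is essentially a warm-up estimate, and the smallness of $O/|u|$ (coming from $|u|\ge a/4$ with $a$ large) makes the $\Omega$-nonlinearity in the transport equation harmless; the only points needing a little care are the coordinate bookkeeping $\nab_4\log\Omega=\Omega^{-1}\partial_{\ub}\log\Omega$ and the genuine use of the prescribed data $\Omega=1$ on $\Hb_0$ as the boundary value of the integration.
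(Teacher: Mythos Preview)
Your argument is correct, but the paper's proof is slicker on exactly the point where you had to work hardest. Instead of writing the transport equation for $\log\Omega$, the paper writes it for $\Omega^{-1}$: from $\omega=-\f12\nab_4(\log\Omega)=\f12\Omega\nab_4\Omega^{-1}$ and $e_4=\Omega^{-1}\partial_{\ub}$ one gets the \emph{linear} ODE $\partial_{\ub}\Omega^{-1}=2\omega$. Integrating from $\ub=0$ (where $\Omega^{-1}=1$) gives $\|\Omega^{-1}-1\|_{L^\infty(\S)}\ls O/|u|$ directly, with no $\Omega$ appearing on the right-hand side and hence no continuity argument needed; the bound on $\Omega-1$ then follows from $|\Omega-1|\le|\Omega|\,|\Omega^{-1}-1|$. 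Your route via $\log\Omega$ keeps an $\Omega$ in the integrand and forces the auxiliary bootstrap in $\ub$, which is harmless here because $O/|u|\ll1$, but is an avoidable complication.
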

\begin{proof}
Consider the equation
\begin{equation}\label{Omegatransport}
 \omega=-\frac{1}{2}\nabla_4\log\Omega=\frac{1}{2}\Omega\nabla_4\Omega^{-1}=\frac{1}{2}\frac{\partial}{\partial \ub}\Omega^{-1}.
\end{equation}
We integrate respect to $d\ub$.  On $\Hb_0$ we have $\Omega^{-1}=1$ and this leads to
\begin{equation}\label{Omega -1}
||\Omega^{-1}-1||_{L^\infty(S_{u,\ub})}\ls \int_0^{\ub}||\omega||_{L^\infty(S_{u,\ub'})}d\ub'\ls \f{O}{|u|}.
\end{equation}
Here we have used the bootstrap assumption \eqref{BA.0}: 
$$\|\o\|_{L^{\infty}_{sc}(\S)}\leq O \Leftrightarrow \|\o\|_{L^{\infty}(\S)}\leq \f{O}{|u|}.$$
Finally, notice that 
$$\|\Omega-1\|_{L^\i(\S)}\leq\|\Omega\|_{L^\i(\S)}\|\Omega^{-1}-1\|_{L^\i(\S)}\ls(1+\f{O}{|u|})^{-1}\cdot\f{O}{|u|}\ls \f{O}{|u|}.$$
\end{proof}

We then move to control induced metric $\gamma$ on $\S$:
\begin{proposition}\label{gamma}
Under the assumptions of Theorem \ref{main.thm1} and the bootstrap assumptions \eqref{BA.0}, for metric $\gamma$ on $\S$ we have
$$c'\leq \det\gamma\leq C'. $$
Here $C'$ and $c'$ are constants depending only on initial data. Moreover, in $D$
$$|\gamma_{AB}|,|(\gamma^{-1})^{AB}|\leq C'.$$
\end{proposition}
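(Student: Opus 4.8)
The plan is to derive an evolution equation for the metric components $\gamma_{AB}$ along the outgoing direction and integrate it, controlling the right-hand side with the already-established bound on $\chi$ from the bootstrap assumptions. First I would recall that with the coordinate system $(u,\ub,\theta^1,\theta^2)$ fixed in Section \ref{coordinates}, the angular vector fields $\partial/\partial\theta^A$ are Lie-transported along $L=\Omega e_4$, so that the restricted Lie derivative of $\gamma$ satisfies
\begin{equation*}
\frac{\partial}{\partial\ub}\gamma_{AB}=2\Omega\chi_{AB},
\end{equation*}
which is the standard first variation formula for a null hypersurface foliation (see e.g. Chapter 1 of \cite{Chr:book}). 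Since on $\Hb_0$ the data is Minkowskian, $\gamma_{AB}$ is the round metric of radius $|u|$ there, so $\det\gamma$ and $\gamma_{AB}$ start comparable to explicit, initial-data-dependent quantities.

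Next I would integrate this transport equation in $\ub$ from $0$ to $\ub\le 1$. Writing $\gamma_{AB}(\ub)=\gamma_{AB}(0)+\int_0^{\ub}2\Omega\chi_{AB}\,d\ub'$, I need an $L^\infty$ bound on $\Omega\chi_{AB}$. Here I use Proposition \ref{Omega} to get $\|\Omega\|_{L^\infty(\S)}\lesssim 1$, and the bootstrap assumption \eqref{BA.0} applied to $\trch$ and $\chih$: from the signature table $s_2(\trch)=s_2(\chih)=0$, so the scale-invariant norms translate to $\|\trch\|_{L^\infty(\S)}\lesssim O/|u|$ and $\|\chih\|_{L^\infty(\S)}\lesssim \at O/|u|$. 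Multiplying by $\gamma_{AB}$ (whose size I am trying to control — see below), this gives $\|\partial_{\ub}\gamma_{AB}\|_{L^\infty(\S)}\lesssim \at O\cdot\|\gamma\|_{L^\infty(\S)}/|u|$. Since $|u|\ge a/4\gg \at O$ in our region (recall $(O+R)^{20}\le a^{1/16}$), a Grönwall argument in $\ub$ over the interval $[0,1]$ closes: $\|\gamma_{AB}(\ub)\|_{L^\infty}\le \|\gamma_{AB}(0)\|_{L^\infty}\exp(C\at O/|u|)\le 2\|\gamma_{AB}(0)\|_{L^\infty}$, hence $|\gamma_{AB}|\le C'$ with $C'$ depending only on the initial data. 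The determinant: differentiating $\log\det\gamma$ gives $\partial_{\ub}\log\det\gamma=\gamma^{AB}\partial_{\ub}\gamma_{AB}=2\Omega\trch$, and since $\|\trch\|_{L^\infty}\lesssim O/|u|$ with $\int_0^1 d\ub'/|u|\lesssim 1/|u|\ll 1$, we get $|\log(\det\gamma(\ub)/\det\gamma(0))|\lesssim O/|u|\lesssim 1$, so $c'\le\det\gamma\le C'$ with $c',C'$ initial-data constants. Finally the upper bound on $(\gamma^{-1})^{AB}$ follows from Cramer's rule: $(\gamma^{-1})^{AB}$ is a polynomial in the $\gamma_{CD}$ divided by $\det\gamma$, so $|\gamma_{AB}|\le C'$ together with $\det\gamma\ge c'>0$ yields $|(\gamma^{-1})^{AB}|\le C'$ after enlarging $C'$.

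\textbf{Main obstacle.} The one point requiring a little care is the apparent circularity in the Grönwall step: the bound on $\partial_{\ub}\gamma$ involves $\chih$, whose $L^\infty$ norm is $\at O/|u|$, which is \emph{large} (of size $\at$) rather than of size $1$, so naively one worries the metric could grow by a factor $e^{\at}$. The resolution is precisely that the integration is over the short $\ub$-interval $[0,1]$ while $1/|u|\le 4/a$, so the accumulated factor is $\exp(C\at O/|u|)\le\exp(4C\at O/a)=\exp(4CO/\at)$, which tends to $1$ as $a\to\infty$ since $O\ll\at$ by the bootstrap constraint $(O+R)^{20}\le a^{1/16}$. So the smallness is genuinely coming from the thinness of the slab in $\ub$ combined with the largeness of $|u|$, exactly as in the heuristic that peeling/decay supplies the gain of smallness. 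A secondary, purely bookkeeping point is that the transport equation $\partial_{\ub}\gamma_{AB}=2\Omega\chi_{AB}$ holds on each coordinate patch $U$ with the fixed $(\theta^1,\theta^2)$ coordinates, and one should note that the number of patches and the transition functions are determined by the initial sphere $S_{u_\infty,0}$ alone, so the constants $c',C'$ depend only on the initial data and not on $a$ or $u_\infty$.
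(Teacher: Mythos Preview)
Your proposal is correct and follows essentially the same approach as the paper: both start from the first variation formula $\partial_{\ub}\gamma_{AB}=2\Omega\chi_{AB}$, control $\det\gamma$ via $\partial_{\ub}\log\det\gamma=2\Omega\trch$ together with $\|\trch\|_{L^\infty}\lesssim O/|u|$, control the coordinate components $\gamma_{AB}$ using $\|\chih\|_{L^\infty}\lesssim \at O/|u|$ and the smallness $\at O/|u|\leq 4O/\at$, and then recover $(\gamma^{-1})^{AB}$ from the determinant lower bound. The only cosmetic difference is that you run an explicit Gr\"onwall for $\gamma_{AB}$, whereas the paper integrates directly and absorbs the $\Lambda$-dependent term back into the left-hand side; your ``Main obstacle'' paragraph correctly identifies the one nontrivial point (the $\at$-largeness of $\chih$) and resolves it exactly as the paper does.
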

\begin{proof}
We employ the first variation formula $\Ls_L\gamma=2\Omega\chi.$
In coordinates, it states
\begin{equation}\label{ub gamma}
\frac{\partial}{\partial \ub}\gamma_{AB}=2\Omega\chi_{AB}.
\end{equation}
This implies
$$\frac{\partial}{\partial \ub}\log(\det\gamma)=2\Omega\trch.$$
Let $\gamma_0(u,\ub,\th^1,\th^2)=\gamma(u,0,\th^1,\th^2)$. Then with $|2\O\tr\chi|\leq O/|u|$ it follows
$$\f{\det \gamma}{\det \gamma_0}=e^{\int_0^{\ub}2\O\tr\chi d\ub'}\leq e^{\f{O}{a}}.$$
Via Taylor expansion, this implies
\begin{equation}\label{detgaper}
|\det\gamma-\det(\gamma_0)|\leq \det(\gamma_0)|1-e^{\f{O}{a}}|\ls \f{O}{a},
\end{equation}
which gives lower and upper bound for $\det \gamma$.
For $\gamma$, denote $\Lambda$ to be the greater eigenvalue. We have
\begin{equation*}\label{La}
\Lambda\leq\sup_{A,B=1,2}\gamma_{AB},
\end{equation*}

$$\sum_{A,B=1,2}|\chi_{AB}|\leq\Lambda ||\chi||_{L^\infty(S_{u,\ub})},$$

$$|\gamma_{AB}-(\gamma_0)_{AB}|\leq \int_0^{\ub}|\chi_{AB}|d\ub'\leq\Lambda\f{a^{\f12}}{|u|}O\ls\f{O}{\at}.$$
We hence bound $|\gamma_{AB}|$ from above. We further bound $|(\gamma^{-1})^{AB}|$ from above by using the upper bound for $|\gamma_{AB}|$ and the lower bound for $\det\gamma$.
\end{proof}

For metric $\gamma$, we will also need the following
\begin{proposition}\label{gamma2}
We continue to work under the assumptions of Theorem \ref{main.thm1} and the bootstrap assumptions \eqref{BA.0}. Fix a point $(u,\theta^1, \theta^2)$ on the initial hypersurface $\Hb_0$. Along the outgoing null geodesics emitting from $(u, \theta^1, \theta^2)$, denote $\Lambda(\ub)$ and $\lambda(\ub)$ to be the larger and smaller eigenvalue of $\gamma^{-1}(u, \ub=0, \theta^1, \theta^2)\gamma(u, \ub, \theta^1, \theta^2)$. 
Then we have
$$|\Lambda(\ub)-1|+|\lambda(\ub)-1|\leq \f{1}{\at}$$.
\end{proposition}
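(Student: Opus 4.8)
The plan is to rerun the first-variation transport argument of Proposition~\ref{gamma}, but now following the full metric tensor at a fixed angular point and then converting the resulting pointwise metric bound into a bound on eigenvalues. Fix $u$ and $(\theta^1,\theta^2)$; because the angular coordinates were built so that $\Ls_L\theta^A=0$ on $H_u$, moving along the outgoing null geodesic from $(u,\theta^1,\theta^2)$ simply amounts to varying $\ub$ with $u$ and $\theta$ frozen. Set $M(\ub):=\gamma^{-1}(u,0,\theta)\,\gamma(u,\ub,\theta)$, so that $M(0)=\mathrm{Id}$, and differentiate using the first variation formula $\partial_{\ub}\gamma_{AB}=2\Omega\chi_{AB}$ (already exploited in Proposition~\ref{gamma}); this gives $\partial_{\ub}M=2\Omega\,\gamma^{-1}(u,0,\theta)\,\chi(u,\ub,\theta)$, hence $M(\ub)-\mathrm{Id}=\int_0^{\ub}2\Omega\,\gamma^{-1}(u,0,\theta)\,\chi(u,\ub',\theta)\,d\ub'$.

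I would then estimate this integral with the bounds already in hand: $\|\Omega\|_{L^\infty(\S)}\ls 1$ (Proposition~\ref{Omega}), $|\gamma^{-1}|\le C'$ (Proposition~\ref{gamma}), and the splitting $\chi=\tfrac12\tr\chi\,\gamma+\chih$. The trace part contributes $\ls|\tr\chi|\ls O/|u|$, which integrates over the unit $\ub$-interval to $\ls O/a$, matching the determinant bound of Proposition~\ref{gamma}. The traceless part is what controls the size of the answer: $\chih$ is anomalous, so the bootstrap assumption only gives $\|\chih\|_{L^\infty(S_{u,\ub'})}\le\at O/|u|$, carrying an extra factor $\at$. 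Integrating over the length-$\le 1$ interval and using $|u|\ge a/4$ turns this into a contribution $\ls\at O/|u|\ls O/\at$, i.e.\ of order $a^{-1/2}$. Hence at the fixed point $|\gamma_{AB}(u,\ub,\theta)-\gamma_{AB}(u,0,\theta)|\ls a^{-1/2}$, so $\|M(\ub)-\mathrm{Id}\|\ls a^{-1/2}$.

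Finally, to read off the eigenvalues, note that $M(\ub)$ is conjugate to the symmetric positive-definite matrix $\gamma_0^{-1/2}\gamma(u,\ub,\theta)\gamma_0^{-1/2}=\mathrm{Id}+\gamma_0^{-1/2}\big(\gamma(u,\ub,\theta)-\gamma_0\big)\gamma_0^{-1/2}$, with $\gamma_0:=\gamma(u,0,\theta)$, whose deviation from $\mathrm{Id}$ has operator norm $\ls\|\gamma_0^{-1}\|\cdot\max_{A,B}|\gamma_{AB}(u,\ub,\theta)-(\gamma_0)_{AB}|\ls a^{-1/2}$; by Weyl's inequality for symmetric matrices this forces $|\Lambda(\ub)-1|+|\lambda(\ub)-1|\ls a^{-1/2}$, which is the assertion. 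The one delicate point, and the reason the eigenvalues are controlled only to within $a^{-1/2}$ (a full power of $a$ worse than $\det\gamma$ in Proposition~\ref{gamma}), is that the anomalous factor $\at$ in the bootstrap bound for $\chih$ must be \emph{exactly} beaten by the $|u|^{-1}\le 4a^{-1}$ decay together with the unit length of the $\ub$-integration; the precise numerical constant in ``$\le 1/\at$'' is then pinned down by using the sharp a priori bound $\|\chih\|_{L^\infty(\S)}\ls\at/|u|$ in place of the crude bootstrap constant, or simply by enlarging $a$. The transport step and the matrix-to-eigenvalue step are otherwise routine.
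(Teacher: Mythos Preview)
Your argument is correct and reaches the right bound, but it is organized differently from the paper's proof. The paper introduces the eigenvalue ratio $\nu(\ub):=\sqrt{\Lambda(\ub)/\lambda(\ub)}$ and invokes an integral inequality from Christodoulou's monograph ((5.93) in \cite{Chr:book}),
\[
\nu(\ub)\leq 1+\int_0^{\ub}|\Omega\chih(\ub')|_\gamma\,\nu(\ub')\,d\ub',
\]
which involves only the \emph{traceless} part $\chih$; Gr\"onwall then gives $|\nu-1|\ls \at O/|u|$, and this is combined with the determinant bound \eqref{detgaper} (driven by $\tr\chi$) already obtained in Proposition~\ref{gamma} to recover the individual eigenvalues. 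Your route instead bounds the full matrix $M(\ub)-\mathrm{Id}$ in one shot, using both $\tr\chi$ and $\chih$, and then extracts the eigenvalues via conjugation to a symmetric matrix and Weyl's inequality. The paper's decomposition has the conceptual merit of cleanly separating the role of the shear $\chih$ (which controls the \emph{distortion} $\Lambda/\lambda$) from the expansion $\tr\chi$ (which controls the \emph{product} $\Lambda\lambda=\det M$); your version is more self-contained, since it avoids citing \cite{Chr:book} and uses only elementary linear algebra. Note incidentally that the bound $|\gamma_{AB}(u,\ub,\theta)-(\gamma_0)_{AB}|\ls O/\at$ you need is already proved verbatim in the last displayed line of the proof of Proposition~\ref{gamma}, so your argument really just appends the spectral conversion step to that proposition.
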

\begin{proof}
Define $\nu(\ub):=\sqrt{\f{\Lambda(\ub)}{\lambda(\ub)}}.$ Following the derivation of (5.93) in \cite{Chr:book}, by (\ref{ub gamma}) we can derive 
$$\nu(\ub)\leq 1+\int_0^{\ub}|\O\chih(\ub')|_{\gamma} \nu(\ub')d\ub'.$$
Via Gr\"onwall's inequality, this {\color{black}implies} 
\begin{equation}\label{nu}
|\nu(\ub)|\ls 1 \quad \mbox{ and } \quad |\nu(\ub)-1|\leq \f{\at\cdot O}{|u|^2}\leq \f{O}{a^{\f32}}\leq \f{1}{a}.
\end{equation}
The desired estimate follows from (\ref{detgaper}) and (\ref{nu}).
\end{proof}

The above two propositions also imply
\begin{proposition}\label{area}
Under the assumptions of Theorem \ref{main.thm1} and the bootstrap assumptions \eqref{BA.0}, in $D$ we have
$$\sup_{\ub}|\mbox{Area}(S_{u,\ub})-\mbox{Area}(S_{u,0})|\leq\f{O^{\f12}}{a^{\f12}}|u|^2.$$
\end{proposition}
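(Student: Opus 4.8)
The plan is to estimate $\mbox{Area}(S_{u,\ub})-\mbox{Area}(S_{u,0})$ by writing both areas as integrals of $\sqrt{\det\gamma}$ over the fixed coordinate patches (the angular coordinates $\theta^A$ do not change along the outgoing geodesics, by the construction in Section \ref{coordinates}), so that the difference of areas reduces to the integral of $\sqrt{\det\gamma(u,\ub,\theta)}-\sqrt{\det\gamma(u,0,\theta)}$ over the sphere. Then I would use the first variation formula \eqref{ub gamma}, which gives $\frac{\partial}{\partial\ub}\log(\det\gamma)=2\Omega\,\mbox{tr}\chi$, so that
\[
\sqrt{\det\gamma(u,\ub,\theta)}=\sqrt{\det\gamma(u,0,\theta)}\,\exp\!\left(\int_0^{\ub}\Omega\,\mbox{tr}\chi\,(u,\ub',\theta)\,d\ub'\right).
\]
The pointwise bound $|\Omega\,\mbox{tr}\chi|\lesssim O/|u|$ (from Proposition \ref{Omega} and the bootstrap assumption $\|\mbox{tr}\chi\|_{L^\infty_{sc}(\S)}\leq O$, i.e.\ $\|\mbox{tr}\chi\|_{L^\infty(\S)}\lesssim O/|u|$) together with $0\leq\ub\leq 1$ and $|u|\geq a/4$ gives that the exponent is $O(O/a)$, hence $|e^{\int_0^{\ub}\Omega\,\mbox{tr}\chi\,d\ub'}-1|\lesssim O/a$ by Taylor expansion.

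Combining this with the upper bound $\det\gamma\leq C'$ from Proposition \ref{gamma}, I get the pointwise estimate $|\sqrt{\det\gamma(u,\ub,\theta)}-\sqrt{\det\gamma(u,0,\theta)}|\lesssim O/a$ uniformly in $\theta$. Integrating over the angular coordinates (using that the coordinate patches and partition of unity are fixed and $\mbox{Area}(S_{u,0})\sim|u|^2$, since $S_{u,0}$ is the round sphere of radius $|u|$ in Minkowski space, with only bounded perturbation of the coordinate volume) yields
\[
\sup_{\ub}\big|\mbox{Area}(S_{u,\ub})-\mbox{Area}(S_{u,0})\big|\lesssim \frac{O}{a}\cdot|u|^2.
\]
Since $O/a\leq O^{1/2}/a^{1/2}$ follows from $O\leq (O+R)^{20}\leq a^{1/16}\ll a$ (so in particular $O^{1/2}/a^{1/2}\geq O/a$ once $O\lesssim a$), this is stronger than the claimed bound $O^{1/2}|u|^2/a^{1/2}$, so the proposition follows. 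Alternatively, one can simply absorb constants: the claimed form $O^{1/2}/a^{1/2}$ is a convenient weakening that is more than enough for later use.

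I do not expect any serious obstacle here; the only minor care needed is the bookkeeping that the angular coordinates are transported trivially along $L$ (so the two area integrals are over the same coordinate domain with the same partition of unity $p_U$), which is exactly the content of the coordinate construction $\Ls_L\theta^A=0$ in Section \ref{coordinates}, and the observation that $\mbox{Area}(S_{u,0})\sim|u|^2$ so that the relative error $O/a$ can be converted into the stated absolute error with an $|u|^2$ weight. Everything else is a direct application of Gr\"onwall/Taylor expansion exactly as in the proof of Proposition \ref{gamma}.
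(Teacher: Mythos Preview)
Your proposal is correct and follows essentially the same route as the paper, whose proof is just the one-liner ``this follows from the definition in (\ref{int S}) and estimate \eqref{detgaper}''; you have simply spelled out those details. One small tidy-up: rather than invoking $\det\gamma\leq C'$ and then separately arguing about the coordinate volume, it is cleaner to keep the multiplicative form $|\sqrt{\det\gamma}-\sqrt{\det\gamma_0}|\leq \sqrt{\det\gamma_0}\,|e^{\int\Omega\trch}-1|\lesssim \sqrt{\det\gamma_0}\cdot O/a$ and integrate directly to get $\frac{O}{a}\,\mbox{Area}(S_{u,0})\sim \frac{O}{a}|u|^2$, which sidesteps any bookkeeping about how the $|u|^2$ factor enters.
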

\begin{proof}
This follows from definition in (\ref{int S}) and estimate in \eqref{detgaper}.
\end{proof}

\subsection{Estimates for Transport Equations}\label{transportsec}
In latter sections, we will employ following propositions for transport equations:

\begin{proposition}
Under the assumptions of Theorem \ref{main.thm1} and the bootstrap assumptions \eqref{BA.0}, for an $S_{u,\ub}$ tangent tensor $\phi$ of arbitrary rank, we have \begin{equation}\label{transport1}
 ||\phi||_{L^2(S_{u,\ub})}\ls ||\phi||_{L^2(S_{u,\ub'})}+\int_{\ub'}^{\ub} ||\nabla_4\phi||_{L^2(S_{u,\ub''})}d{\ub''},
\end{equation}
\begin{equation}\label{transport3}
 ||\phi||_{L^2(S_{u,\ub})}\ls ||\phi||_{L^2(S_{u',\ub})}+\int_{u'}^{u} ||\nabla_3\phi||_{L^2(S_{u'',\ub})}d{u''}. 
 \end{equation}
\end{proposition}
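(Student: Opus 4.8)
The plan is to prove the two transport estimates \eqref{transport1} and \eqref{transport3} by the standard energy method for ODEs along the null generators, combined with the evolution of the volume element already computed in Section \ref{metric}. I focus on \eqref{transport1}; the proof of \eqref{transport3} is entirely parallel, integrating in the $u$ direction along the incoming generators of $\Hb_{\ub}$ and using $\Ls_{\Lb}\gamma = 2\Omega\chib$ in place of \eqref{ub gamma}.

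First I would compute $\tfrac{\partial}{\partial\ub}\int_{S_{u,\ub}}\langle\phi,\phi\rangle_\gamma$. Using the coordinate expression $e_4=\Omega^{-1}\tfrac{\partial}{\partial\ub}$ and the integration formula \eqref{int S}, the derivative hits three things: the tensor $\phi$ (producing $\nabla_4\phi$ up to lower-order terms that are $\chi$-contractions, which arise because $\nabla_4$ is the projected derivative), the metric contractions $\gamma^{-1}$ used to form $\langle\phi,\phi\rangle_\gamma$ (producing factors of $\Omega\chi$ via \eqref{ub gamma}), and the area element $\sqrt{\det\gamma}$ (producing $\Omega\,\mathrm{tr}\chi$, again via \eqref{ub gamma}). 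Collecting everything, one obtains a pointwise-integrated identity of the schematic form
\begin{equation*}
\frac{\partial}{\partial\ub}\int_{S_{u,\ub}}\langle\phi,\phi\rangle_\gamma = \int_{S_{u,\ub}}\Big(2\langle\phi,\Omega\nabla_4\phi\rangle_\gamma + \Omega\,(\mathrm{tr}\chi + c\,\chi\ast)\langle\phi,\phi\rangle_\gamma\Big),
\end{equation*}
where $\chi\ast$ denotes some contraction of $\chi$ against $\phi\otimes\phi$ and $c$ is a bounded numerical coefficient depending on the rank of $\phi$.

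Next I would estimate. By Cauchy--Schwarz the first term is bounded by $2\|\phi\|_{L^2(S_{u,\ub})}\|\Omega\nabla_4\phi\|_{L^2(S_{u,\ub})}$, and by Proposition \ref{Omega} we have $\|\Omega\|_{L^\infty}\lesssim 1$. For the second term, the bootstrap assumption \eqref{BA.0} together with the scale-invariant norm definitions \eqref{scale invariant norms} give $\|\mathrm{tr}\chi\|_{L^\infty(\S)}\lesssim O/|u|$ and $\|\chi\|_{L^\infty(\S)}\lesssim 1/|u| + O/|u| \lesssim 1$ (using $|u|\geq a/4$ and Proposition \ref{gamma} to control the $\gamma^{-1}$ contractions), so this term is bounded by $C\|\phi\|_{L^2(S_{u,\ub})}^2$ with $C$ independent of $\ub$. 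Writing $f(\ub):=\|\phi\|_{L^2(S_{u,\ub})}$, we thus arrive at the differential inequality $\tfrac{d}{d\ub}f^2 \leq C f^2 + 2f\|\Omega\nabla_4\phi\|_{L^2(S_{u,\ub})}$, i.e. $\tfrac{d}{d\ub}f \leq \tfrac{C}{2}f + \|\Omega\nabla_4\phi\|_{L^2(S_{u,\ub})}$ wherever $f>0$ (and the estimate is trivial where $f$ vanishes). Integrating from $\ub'$ to $\ub$ and applying Gr\"onwall's inequality with the constant factor $e^{C(\ub-\ub')/2}\lesssim 1$ (since $0\le\ub\le 1$) yields \eqref{transport1}.

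The main obstacle, such as it is, is purely bookkeeping: one must check that all the error terms generated by differentiating the metric contractions and the volume form are genuinely controlled by $\|\chi\|_{L^\infty}\lesssim 1$ rather than by something anomalous, and in particular that the $1/|u|$ from $\mathrm{tr}\chi$ combined with the length $|\ub-\ub'|\le 1$ of the integration interval keeps the Gr\"onwall factor bounded uniformly — this is exactly why the estimate is stated with an implied constant and no weights. There is no analytic difficulty; the identity for $\tfrac{\partial}{\partial\ub}\|\phi\|_{L^2}^2$ is the one in Lemma 4.1.1 type statements of \cite{Chr:book}, and the only subtlety specific to this paper is confirming that the bootstrap bounds \eqref{BA.0} and Propositions \ref{Omega}--\ref{gamma} suffice for uniformity in $u_\infty$, which they do since none of the constants $C', c'$ depend on $u_\infty$.
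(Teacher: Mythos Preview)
Your argument for \eqref{transport1} is essentially the paper's own: differentiate $\int_{S_{u,\ub}}|\phi|_\gamma^2$, use Cauchy--Schwarz and the $L^\infty$ bounds on $\Omega,\trch$, and apply Gr\"onwall over the unit interval $[0,1]$. (One remark: your extra ``$c\,\chi\ast$'' terms from differentiating the metric contractions in $|\phi|_\gamma^2$ actually cancel against the Christoffel pieces of $\nabla_4\phi$, by metric compatibility of the projected connection; only $\Omega\,\trch$ from the volume form survives. This is harmless here since you can bound the spurious terms anyway.)

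However, your claim that \eqref{transport3} is ``entirely parallel'' conceals a genuine gap. In the $u$-direction the zero-order coefficient is $\Omega\,\trchb\sim -2/|u|$. If you proceed as you did for \eqref{transport1} and simply bound $|\trchb|$ in $L^\infty$, the Gr\"onwall factor becomes
\[
\exp\Big(C\int_{u'}^{u}\frac{du''}{|u''|}\Big)=\Big(\frac{|u'|}{|u|}\Big)^{C},
\]
which is \emph{not} uniformly bounded as $|u_\infty|\to\infty$ (take $u'=u_\infty$, $u=-a/4$). This destroys the uniform implied constant the proposition asserts. The paper's fix is short but essential: since $\Omega>0$ and $\trchb<0$ throughout the region, the term $\int_{S}\Omega\,\trchb\,|\phi|_\gamma^2$ is nonpositive and can be \emph{dropped}, yielding directly $\Lb\|\phi\|_{L^2}\lesssim\|\nabla_3\phi\|_{L^2}$ with no Gr\"onwall needed. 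This sign argument works precisely because the only surviving zero-order term is $\trchb$ from the volume form (the $\chibh$ contractions having cancelled by metric compatibility, as noted above).
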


\begin{proof}
Here we first prove (\ref{transport1}). For a scalar function $f$, by variation of area formula, we have
\[
 \frac{d}{d\ub}\int_{\S} f=\int_{\S} \left(\frac{df}{d\ub}+\Omega \trch f\right)=\int_{\S} \Omega\left(e_4(f)+ \trch f\right).
\]
Taking $f=|\phi|_{\gamma}^2$, using Cauchy-Schwarz inequality on the sphere and $L^\infty$ bounds for $\Omega$ and $\trch$, we obtain
$$2\|\phi\|_{L^2(\S)}\cdot \f{d}{d\ub}\|\phi\|_{L^2(\S)}\ls \|\phi\|_{L^2(\S)}\cdot \|\nab_4\phi\|_{L^2(\S)}+\f{O}{|u|}\|\phi\|^2_{L^2(\S)}.$$
This implies
$$\f{d}{d\ub}\|\phi\|_{L^2(\S)}\ls \|\nab_4\phi\|_{L^2(\S)}+\f{O}{|u|}\|\phi\|_{L^2(\S)}.$$
And (\ref{transport1}) can be concluded by applying Gr\"onwall's inequality for $\ub$ variable. 

Inequality (\ref{transport3}) could be proved in {\color{black}a similar fashion.  For a scalar function $f$, we arrive at
\[
\Lb\int_{\S} f=\int_{\S} \left(\Lb f+\Omega \tr\chib f\right)=\int_{\S} \Omega\left(e_3(f)+ \tr\chib f\right).
\]
Taking $f=|\phi|_{\gamma}^2$, using Cauchy-Schwarz inequality on the sphere and the fact $\O>0, \tr\chib<0$, we obtain
$$2\|\phi\|_{L^2(\S)}\cdot \Lb\|\phi\|_{L^2(\S)}\ls \|\phi\|_{L^2(\S)}\cdot \|\nab_3\phi\|_{L^2(\S)}.$$
This implies $ \Lb\|\phi\|_{L^2(\S)}\ls \|\nab_3\phi\|_{L^2(\S)}$ and (\ref{transport3}) follows.
 }
\end{proof}

We then rewrite the above inequalities in scale invariant norms:
\begin{proposition}\label{transport}
For an $\S$ tangent tensor $\phi$ of arbitrary rank, we have
\begin{equation*}
\|\phi\|_{L^2_{sc}(S_{u,\underline{u}})}\ls
\|\phi\|_{L^2_{sc}(S_{u,0})}+\int_0^{\underline{u}} \|\nabla_4\phi\|_{L^2_{sc}(S_{u,\underline{u}'})}d\underline{u}',
\end{equation*}

\begin{equation*}
\|\phi\|_{L^2_{sc}(S_{u,\underline{u}})}\ls
\|\phi\|_{L^2_{sc}(S_{u_{\infty},\underline{u}})}+\int_{u_{\infty}}^{u}\frac{a}{|u'|^2}\|\nabla_3\phi\|_{L^2_{sc}(S_{u',\underline{u}})}du'. 
\end{equation*}
\end{proposition}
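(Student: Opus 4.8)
\emph{Proof plan.} Both inequalities are purely bookkeeping consequences of the unweighted transport estimates \eqref{transport1} and \eqref{transport3}: one multiplies each of those inequalities by the power weight $a^{-s_2(\phi)}|u|^{2s_2(\phi)}$ built into the definition \eqref{scale invariant norms} of $\|\cdot\|_{L^2_{sc}(\S)}$, and then matches the powers of $a$ and $|u|$ on the two sides using the signature rule \eqref{signature of derivative}. No new analytic input is needed beyond the previous propositions; the $L^\infty$ control of $\Omega$ and $\tr\chi$ that makes \eqref{transport1}–\eqref{transport3} valid is already provided by Propositions \ref{Omega} and \ref{gamma}. The only point requiring attention is how the weight $a^{-s_2(\phi)}|u|^{2s_2(\phi)}$ moves along each of the two null directions, and here we use that every geometric quantity we apply the proposition to has $s_2(\phi)\geq 0$ (the signature table gives $s_2\geq 0$, and products and the operators $\nabla,\nabla_3,\nabla_4$ only preserve or raise the signature).

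For the $\nabla_4$ estimate: along an outgoing cone $H_u$ the value of $u$ is fixed, and \eqref{signature of derivative} gives $s_2(\nabla_4\phi)=s_2(\phi)$. Hence the factor $a^{-s_2(\phi)}|u|^{2s_2(\phi)}$ is one and the same constant for $\phi$ and for $\nabla_4\phi$, independent of $\ub$. Multiplying \eqref{transport1} (with $\ub'=0$) through by this constant converts every $L^2(S)$ norm into the corresponding $L^2_{sc}(S)$ norm and yields the first inequality verbatim.

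For the $\nabla_3$ estimate: apply \eqref{transport3} with $u'=u_{\infty}$ and multiply by $a^{-s_2(\phi)}|u|^{2s_2(\phi)}$. Since $u_{\infty}\leq u\leq -a/4<0$ forces $|u|\leq |u_{\infty}|$, and $s_2(\phi)\geq 0$, the boundary term obeys
$$a^{-s_2(\phi)}|u|^{2s_2(\phi)}\|\phi\|_{L^2(S_{u_{\infty},\ub})}\ls a^{-s_2(\phi)}|u_{\infty}|^{2s_2(\phi)}\|\phi\|_{L^2(S_{u_{\infty},\ub})}=\|\phi\|_{L^2_{sc}(S_{u_{\infty},\ub})}.$$
For the integral term, any $u''\in[u_{\infty},u]$ satisfies $|u|\leq |u''|$, and $s_2(\nabla_3\phi)=s_2(\phi)+1$, so
$$a^{-s_2(\phi)}|u|^{2s_2(\phi)}\leq a^{-s_2(\phi)}|u''|^{2s_2(\phi)}=\f{a}{|u''|^2}\,a^{-s_2(\nabla_3\phi)}|u''|^{2s_2(\nabla_3\phi)};$$
multiplying $\|\nabla_3\phi\|_{L^2(S_{u'',\ub})}$ by the left side and bounding it by the right side turns the integrand into $\f{a}{|u''|^2}\|\nabla_3\phi\|_{L^2_{sc}(S_{u'',\ub})}$, which is exactly the density appearing in the claimed inequality. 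Adding the two bounds gives the second estimate.

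\textbf{Main point of care.} There is no genuine obstacle, only the bookkeeping: one must respect the \emph{direction} in which the weight $|u|^{2s_2(\phi)}$ is monotone, namely that $|u|$ \emph{decreases} as $u$ increases toward $-a/4$, so the weight is largest on the initial cone $H_{u_{\infty}}$ and smallest at the current $u$. It is precisely this — together with $s_2(\phi)\geq 0$ — that makes the inequalities run in the favourable direction, and it is also the reason the extra density $\f{a}{|u'|^2}$ was inserted into the definition \eqref{scale invariant norms 2} of the scale invariant norm along $\Hb_{\ub}$; the computation above shows that this density is the exact conversion factor between $\|\nabla_3\phi\|_{L^2(S)}$ weighted for $\phi$ and $\|\nabla_3\phi\|_{L^2_{sc}(S)}$.
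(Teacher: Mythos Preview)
Your proof is correct and is exactly the approach the paper intends: the paper states Proposition \ref{transport} as an immediate rewriting of \eqref{transport1}--\eqref{transport3} in scale invariant norms without giving further details, and your argument supplies precisely those details (multiplying by the weight $a^{-s_2(\phi)}|u|^{2s_2(\phi)}$, using $s_2(\nabla_4\phi)=s_2(\phi)$ and $s_2(\nabla_3\phi)=s_2(\phi)+1$, and exploiting $|u|\leq |u''|\leq |u_{\infty}|$ together with $s_2(\phi)\geq 0$). Your observation that the density $a/|u'|^2$ is exactly the conversion factor is the right way to see why the $\nabla_3$ inequality takes its stated form.
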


For $\nab_3$ equations, sometimes we need more precise estimates to deal with borderline terms. Typically, a borderline term contains $\tr\chib$. Thus, the coefficients in front of $\trchb$ play an important role.
\begin{proposition}\label{el}
We work under the assumptions of Theorem \ref{main.thm1} and bootstrap assumptions \eqref{BA.0}. Let $\phi$ and $F$ be $\S$-tangent tensor fields of rank $k$ satisfying the following transport equation:
\begin{equation*}
\nab_3 \phi_{A_1...A_k}+\lambda_0{\tr\underline{\chi}}\phi_{A_1...A_k}=F_{A_1...A_k}.
\end{equation*}
Denoting $\lambda_1=2(\lambda_0-\frac{1}{2})$, for $\phi$ we have
\begin{equation*}
|u|^{\lambda_1}\|\phi\|_{L^{2}(\S)}\lesssim
|u_{\infty}|^{\lambda_1}\|\phi\|_{L^{2}(S_{u_{\infty},\underline{u}})}+\int_{u_{\infty}}^u|u'|^{\lambda_1}\|F\|_{L^{2}(S_{u',\underline{u}})}du'.
\end{equation*}
\end{proposition}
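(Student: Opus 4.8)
The plan is to run a weighted $L^2$ energy estimate on the spheres $\S$, arranged so that the relation $\lambda_1=2(\lambda_0-\f12)$ forces the borderline $\trchb$ term to cancel against the $u$-derivative of the weight $|u|^{2\lambda_1}$.

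First I would recall the variation-of-area identity along $e_3$ that was already used to prove (\ref{transport3}): for a scalar $f$ one has $\Lb\int_{\S}f=\int_{\S}\Omega\,(e_3 f+\trchb\, f)$, where $\Lb$ fixes $\ub$ and acts as $\partial_u$ on functions of $(u,\ub)$. Taking $f=|\phi|_{\gamma}^2$ and using the metric compatibility of $\nab_3$ (so that $e_3|\phi|_{\gamma}^2=2\langle\nab_3\phi,\phi\rangle_{\gamma}$), then inserting $\nab_3\phi=F-\lambda_0\trchb\,\phi$ and collecting the $\trchb|\phi|^2$ terms (whose total coefficient becomes $1-2\lambda_0=-\lambda_1$), I get
\[
\f{d}{du}\|\phi\|_{L^2(\S)}^2=\int_{\S}2\Omega\langle F,\phi\rangle_{\gamma}-\lambda_1\int_{\S}\Omega\,\trchb\,|\phi|_{\gamma}^2 .
\]
Next I would write $\Omega\trchb=-\f{2}{|u|}+E$ with $E:=\Omega\,\tc-\f{2(\Omega-1)}{|u|}$; Proposition \ref{Omega} and the bootstrap assumptions (\ref{BA.0}), which control $\f{a}{|u|}\|\tc\|_{L^{\infty}_{sc}(\S)}$ and $\|\Omega-1\|_{L^{\infty}(\S)}$, yield $\|\Omega\|_{L^{\infty}(\S)}\lesssim 1$ and $\|E\|_{L^{\infty}(\S)}\lesssim \f{O}{|u|^2}$. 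Splitting off the $-\f{2}{|u|}$ piece and using $\f{d}{du}|u|^{2\lambda_1}=-2\lambda_1|u|^{2\lambda_1-1}$, the two borderline contributions $\pm\,2\lambda_1|u|^{2\lambda_1-1}\|\phi\|_{L^2(\S)}^2$ cancel exactly and one is left with
\[
\f{d}{du}\Big(|u|^{2\lambda_1}\|\phi\|_{L^2(\S)}^2\Big)=|u|^{2\lambda_1}\int_{\S}2\Omega\langle F,\phi\rangle_{\gamma}-\lambda_1|u|^{2\lambda_1}\int_{\S}E\,|\phi|_{\gamma}^2 .
\]

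Finally, with $X(u):=|u|^{\lambda_1}\|\phi\|_{L^2(\S)}$, Cauchy-Schwarz on $\S$ and the two bounds above give $\f{d}{du}X\lesssim |u|^{\lambda_1}\|F\|_{L^2(\S)}+\f{O}{|u|^2}\,X$, the vanishing of $X$ being handled routinely (e.g.\ by working with $\sqrt{X^2+\epsilon}$ and sending $\epsilon\to0$). Integrating in $u$ from $u_{\infty}$ to $u$ and applying Gr\"onwall --- the accumulated factor $\exp\!\big(C\int_{u_{\infty}}^u\f{O}{|u'|^2}\,du'\big)\le\exp(4CO/a)\lesssim 1$ is harmless since $O\ll a$ --- yields exactly
\[
|u|^{\lambda_1}\|\phi\|_{L^2(\S)}\lesssim |u_{\infty}|^{\lambda_1}\|\phi\|_{L^2(S_{u_{\infty},\ub})}+\int_{u_{\infty}}^u|u'|^{\lambda_1}\|F\|_{L^2(S_{u',\ub})}\,du' .
\]
I expect the one genuinely delicate point to be this exact cancellation: it works only because $\lambda_1=2\lambda_0-1$ and because $\trchb$ agrees with $-2/|u|$ up to an error $E$ with $\|E\|_{L^{\infty}(S_{u',\ub})}\lesssim O/|u'|^2$, so that $\int_{u_{\infty}}^u\|E\|_{L^{\infty}(S_{u',\ub})}\,du'\lesssim O/|u|\ll1$ stays bounded uniformly as $u_{\infty}\to-\infty$; both facts rely on Proposition \ref{Omega} and the bootstrap assumptions (\ref{BA.0}).
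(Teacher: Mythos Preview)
Your proof is correct and follows essentially the same approach as the paper: a weighted $L^2$ energy identity along $e_3$ in which the choice $\lambda_1=2\lambda_0-1$ makes the borderline $\trchb$ contribution cancel against the derivative of the weight, leaving only an $O/|u|^2$ error that is handled by Gr\"onwall. The only cosmetic difference is bookkeeping: the paper places the weight $|u|^{2\lambda_1}$ inside the variation-of-area integral (so $e_3$ hits it and one tracks $e_3u=\Omega^{-1}$, hence an $\Omega^{-1}-1$ error), whereas you keep the weight outside and use the ordinary product rule for $\f{d}{du}$, tracking instead $\Omega\trchb=-\f{2}{|u|}+E$ with an $\Omega-1$ error; both decompositions give the same $O/|u|^2$ bound via Proposition~\ref{Omega} and the bootstrap control of $\tc$.
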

\begin{proof}
We use variation of area formula for equivariant vector $\Lb$ \footnote{Recall $\Lb=\O e_3$.} and a scalar function $f$: 
\[
\Lb\int_{\S} f=\int_{\S} \left(\Lb f+\Omega \trchb f\right)=\int_{\S} \Omega\left(e_3(f)+ \trchb f\right).
\]
With this identity, 
we obtain
\begin{equation}\label{evolution.id}
\begin{split}
&\Lb(\int_{\S}|u|^{2\lambda_1 }|\phi|^{2})\\
=&\int_{\S}\Omega\l -2\lambda_1 |u|^{2\lambda_1-1}(e_3 u)|\phi|^{2}+2|u|^{2\lambda_1}<\phi,\nab_3\phi>+ \tr\underline{\chi}|u|^{2\lambda_1}|\phi|^{2}\r\\
=&\int_{\S}\Omega\l 2|u|^{2\lambda_1}<\phi, \nab_3\phi+{\lambda_0}\trchb\phi>\r\\
&+\int_{\S}\Omega |u|^{2\lambda_1}\l -\f{2\lambda_1 (e_3u)}{|u|}+(1-2\lambda_0)\trchb\r|\phi|^2.
\end{split}
\end{equation}
Observe that we have
\begin{equation}\label{trchib additional}
\begin{split}
&-\f{2\lambda_1 (e_3u)}{|u|}+(1-2\lambda_0)\trchb\\
= &-\f{2\lambda_1 \Omega^{-1}}{|u|}+(1-2\lambda_0)\trchb\\
= &-\f{2\lambda_1 (\Omega^{-1}-1)}{|u|}+(1-2\lambda_0)(\trchb+\f{2}{|u|})-\f{2\lambda_1+2-4\lambda_0}{|u|}\\
\ls &\f{O}{|u|^2}.
\end{split}
\end{equation}
For the last inequality, we employ (\ref{Omega -1}), bootstrap assumption $\|\tr\chib+\f{2}{|u|}\|_{L^{\infty}(\S)}\leq \f{O}{|u|^2}$ and $\lambda_1=2(\lambda_0-1/2)$.

Using Cauchy-Schwarz for the first term and applying Gr\"onwall's inequality for the second term, we obtain
\begin{equation*}
\begin{split}
&|u|^{\lambda_1}\|\phi\|_{L^2(\S)}\\
\ls &e^{O\|u^{-2}\|_{L^1_u}}\l|u_{\infty}|^{\lambda_1}\|\phi\|_{L^2(S_{u_{\infty},\underline{u}})}+\int_{u_{\infty}}^u |u'|^{\lambda_1}\|F\|_{L^2(S_{u',\underline{u}})}du'\r\\
\ls &|u_{\infty}|^{\lambda_1}\|\phi\|_{L^2(S_{u_{\infty},\underline{u}})}+\int_{u_{\infty}}^u |u'|^{\lambda_1}\|F\|_{L^2(S_{u',\underline{u}})}du'.
\end{split}
\end{equation*}
In the last step, we use $O\|u^{-2}\|_{L^1_u}\ls O/a\leq 1$.
\end{proof}

\subsection{Sobolev Embedding}\label{Embedding}
With the derived estimates for metric $\gamma$, we follow \cite{Chr:book} to obtain a bound on the isoperimetric constant for a $2$-sphere $S$
$$I(S)=\sup_{\substack{U\subset S\\\partial U \in C^1}} \f{\min\{\mbox{Area}(U),\mbox{Area}(U^c)\}}{(\mbox{Perimeter}(\partial U))^2}.$$
\begin{proposition}\label{isoperimetric}
Under the assumptions of Theorem \ref{main.thm1} and the bootstrap assumption \eqref{BA.0}, the isoperimetric constant obeys an upper bound
$$I(\S)\leq \f1{\pi},$$
where $u_{\infty}\leq u\leq -a/4$ and $0\leq \ub \leq 1$.
\end{proposition}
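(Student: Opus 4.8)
The plan is to follow Christodoulou's classical argument (Chapter 5 of \cite{Chr:book}) for bounding the isoperimetric constant, using the quantitative control on $\gamma$ established in Propositions \ref{gamma} and \ref{gamma2}. The key idea is that the isoperimetric constant is a conformally invariant quantity of the metric, so it suffices to compare $\gamma$ on $S_{u,\ub}$ to the reference metric $\gamma_0$ on $S_{u,0}$, which is (a rescaling of) the round metric on $S_{u_\infty,0}$ for which the isoperimetric constant is exactly $\tfrac{1}{4\pi}$.

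First I would recall, as in \cite{Chr:book}, that for two metrics $\gamma$ and $\gamma_0$ on the same $2$-surface satisfying a pointwise comparison $\Lambda^{-1}\gamma_0\leq \gamma\leq \Lambda\gamma_0$ in the sense of quadratic forms, one has, for any subdomain $U$ with $C^1$ boundary,
\[
\mathrm{Area}_\gamma(U)\leq \Lambda\,\mathrm{Area}_{\gamma_0}(U),\qquad
\mathrm{Perimeter}_\gamma(\partial U)\geq \Lambda^{-1/2}\,\mathrm{Perimeter}_{\gamma_0}(\partial U),
\]
and similarly for $U^c$, whence $I_\gamma(S)\leq \Lambda^2\, I_{\gamma_0}(S)$. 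Next I would transport $\gamma$ along the outgoing null geodesics from $\Hb_0$ using the first variation formula $\Ls_L\gamma=2\Omega\chi$, exactly as in the proof of Proposition \ref{gamma2}: the eigenvalues $\Lambda(\ub),\lambda(\ub)$ of $\gamma_0^{-1}\gamma$ satisfy $|\Lambda(\ub)-1|+|\lambda(\ub)-1|\leq a^{-1/2}$, so $\gamma$ and $\gamma_0$ are comparable with $\Lambda\leq 1+a^{-1/2}$. Consequently
\[
I(\S)\leq (1+a^{-1/2})^2\, I(S_{u,0}).
\]
Finally I would observe that $\gamma_0$ on $S_{u,0}$ is conformal to the round unit sphere (it is a Minkowskian $2$-sphere of radius $|u|$ up to the stereographic conformal factor built into the construction in Section \ref{coordinates}), and since the isoperimetric constant is conformally invariant in two dimensions it equals that of the round sphere, namely $I(S_{u,0})=\tfrac{1}{4\pi}$. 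Therefore $I(\S)\leq (1+a^{-1/2})^2/(4\pi)\leq 1/\pi$ for $a$ sufficiently large.

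The main obstacle is making the conformal-invariance step fully rigorous: one must check that the reference metric $\gamma_0$ genuinely descends from the round metric under the angular coordinate propagation $\Ls_{\Lb}\theta^A=0$ on $\Hb_0$ and $\Ls_L\theta^A=0$ in the bulk, so that $\gamma_0$ differs from $|u|^2$ times the round metric only by a smooth positive conformal factor on each $S_{u,0}$; this is where one invokes that the data on $\Hb_0$ is Minkowskian. Once this is in place the rest is the elementary comparison of areas and perimeters under a bounded bilipschitz change of metric, which is routine given Proposition \ref{gamma2}. (Alternatively, one can bypass the conformal argument entirely and bound $I(S_{u,0})$ directly from the explicit form of the Minkowskian round metric, which also gives the clean constant; either route yields the stated bound.)
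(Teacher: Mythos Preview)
Your approach is essentially the same as the paper's: pull back to $S_{u,0}$ along the $L$-flow, use Proposition~\ref{gamma2} to compare areas and perimeters, and invoke the known isoperimetric constant of the round sphere. Two numerical slips should be corrected. First, the isoperimetric constant (with the definition used here) of a round $2$-sphere is $1/(2\pi)$, not $1/(4\pi)$: the extremizer is a hemisphere, giving $\tfrac{2\pi r^2}{(2\pi r)^2}=\tfrac{1}{2\pi}$, and this is the value the paper uses. Second, the isoperimetric constant is \emph{not} conformally invariant in two dimensions; it is only scale-invariant. This does not hurt you, because $S_{u,0}$ is genuinely a round sphere (Minkowskian data on $\Hb_0$), so only scale invariance is needed---and you already note the direct alternative at the end. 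With $I(S_{u,0})=1/(2\pi)$ your bound becomes $I(S_{u,\ub})\le \Lambda^2/(2\pi)$, which is still $\le 1/\pi$ once $a$ is large enough to make $\Lambda^2\le 2$; so the argument survives.
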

\begin{proof}
Fix $u$. For $U_{\ub}\subset S_{u,\ub}$, we denote $U_0\subset S_{u,0}$ to be the backward image of $U_{\ub}$ under the diffeomorphism generated
by the equivariant vector $L$.
Using Proposition \ref{gamma}, Proposition \ref{gamma2} and their proof, we obtain
$$\f{\mbox{Perimeter}(\partial U_{\ub})}{\mbox{Perimeter}(\partial U_0)}\geq \sqrt{\inf_{S_{u,0}} \lambda(\ub)} $$
and
$$\f{\mbox{Area}(U_{\ub})}{\mbox{Area}(U_0)}\leq \sup_{S_{u,0}} \f{\det(\gamma_{\ub})}{\det(\gamma_0)},\quad\f{\mbox{Area}(U^c_{\ub})}{\mbox{Area}(U^c_0)}\leq \sup_{S_{u,0}} \f{\det(\gamma_{\ub})}{\det(\gamma_0)}.$$
The conclusion then follows from the fact that $I(S_{u,0})=\f{1}{2\pi}$ and the bounds in Proposition \ref{gamma} and Proposition \ref{gamma2}.
\end{proof}
We will use an $L^2-L^\i$ Sobolev embedding inequality in this article. In order to derive it, we will use two propositions quoted directly from \cite{Chr:book}:
\begin{proposition}[\cite{Chr:book}, Lemma 5.1]\label{Lp}
For any Riemannian $2$-manifold $(S,\gamma)$, it holds
$$(\mbox{Area}(S))^{-\f1p}\|\phi\|_{L^p(S)}\leq C_p\sqrt{\max\{I(S),1\}}\bigg(\|\nab\phi\|_{L^2(S)}+\big(\mbox{Area}(S)\big)^{-\f12}\|\phi\|_{L^2(S)}\bigg)$$
for $2<p<\infty$ and for any tensor field $\phi$.
\end{proposition}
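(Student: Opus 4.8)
\medskip
\noindent\textbf{Proof proposal.} The plan is to derive the inequality entirely from the isoperimetric inequality of $(S,\gamma)$ via a co-area argument, so that the constant depends only on $I(S)$ and $p$ --- no injectivity radius or curvature bound enters. First I would reduce to scalars: Kato's inequality $|\nab|\phi|_\gamma|\le|\nab\phi|_\gamma$ holds $\gamma$-almost everywhere (it is elementary away from the zero set of $\phi$, and that set contributes nothing to any of the integrals), so it suffices to prove the bound for a nonnegative scalar $f\in C^1(S)$, with $f$ in place of $|\phi|_\gamma$ and $|\nab f|$ in place of $|\nab\phi|_\gamma$ throughout.

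The core step is the endpoint ``$L^1\to L^2$'' Sobolev inequality: for every nonnegative $g\in C^1(S)$,
$$\|g\|_{L^2(S)}\ \le\ 2\sqrt{\max\{I(S),1\}}\,\Big(\|\nab g\|_{L^1(S)}+(\mbox{Area}(S))^{-\f12}\,\|g\|_{L^1(S)}\Big).$$
To prove this, let $m\ge 0$ be a median of $g$ and write $g-m=(g-m)_+-(m-g)_+$. For any nonnegative $h$ one has $\|h\|_{L^2(S)}\le\int_0^\infty|\{h>t\}|^{1/2}\,dt$, obtained by integrating by parts in the layer-cake identity $\|h\|_{L^2(S)}^2=\int_0^\infty 2t\,|\{h>t\}|\,dt$. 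Applying this with $h=(g-m)_+$ and with $h=(m-g)_+$, and then using the co-area formula $\|\nab g\|_{L^1(S)}=\int_{-\infty}^\infty\mbox{Per}(\{g>t\})\,dt$ together with the isoperimetric inequality $\min\{|\{g>t\}|,|\{g\le t\}|\}\le I(S)\,\mbox{Per}(\{g>t\})^2$ --- the minimum being $|\{g>t\}|$ for $t>m$ and $|\{g\le t\}|$ for $t<m$ --- yields $\|(g-m)_\pm\|_{L^2(S)}\le\sqrt{I(S)}\,\|\nab g\|_{L^1(S)}$. Finally the median obeys $m\le 2(\mbox{Area}(S))^{-1}\|g\|_{L^1(S)}$, since $\|g\|_{L^1(S)}\ge m\,|\{g\ge m\}|\ge\f12 m\,\mbox{Area}(S)$; hence $|m|\,(\mbox{Area}(S))^{1/2}\le 2(\mbox{Area}(S))^{-1/2}\|g\|_{L^1(S)}$, and adding the pieces (using $1\le\sqrt{\max\{I(S),1\}}$) gives the displayed inequality.

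To reach $L^p$ for $2<p<\infty$, I would apply the endpoint inequality once, to $g=f^{p/2}$ (a $C^1$ function of $\phi$, up to a routine mollification near the zero set). Since $|\nab g|=\f p2 f^{p/2-1}|\nab f|$, Cauchy-Schwarz gives $\|\nab g\|_{L^1(S)}\le\f p2\|f\|_{L^{p-2}(S)}^{(p-2)/2}\|\nab f\|_{L^2(S)}$, while $\|g\|_{L^1(S)}=\|f\|_{L^{p/2}(S)}^{p/2}$ and $\|g\|_{L^2(S)}=\|f\|_{L^p(S)}^{p/2}$, so that
$$\|f\|_{L^p(S)}^{p/2}\ \le\ p\sqrt{\max\{I(S),1\}}\,\|f\|_{L^{p-2}(S)}^{(p-2)/2}\|\nab f\|_{L^2(S)}\ +\ 2\sqrt{\max\{I(S),1\}}\,(\mbox{Area}(S))^{-\f12}\|f\|_{L^{p/2}(S)}^{p/2}.$$
Since $p-2<p$ and $p/2<p$, the norms $\|f\|_{L^{p-2}(S)}$ and $\|f\|_{L^{p/2}(S)}$ are controlled by H\"older's interpolation inequality between $L^2(S)$ and $L^p(S)$ (with constant $1$; if the lower exponent falls below $2$, one uses H\"older against the finite measure instead, which introduces no $L^p$ term). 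In every resulting term $\|f\|_{L^p(S)}$ occurs with exponent strictly below $p/2$, so Young's inequality absorbs those contributions into $\|f\|_{L^p(S)}^{p/2}$ on the left; what remains, after dividing by $(\mbox{Area}(S))^{1/2}$ and taking the $2/p$-th power, is bounded by $C_p\sqrt{\max\{I(S),1\}}\big(\|\nab f\|_{L^2(S)}+(\mbox{Area}(S))^{-1/2}\|f\|_{L^2(S)}\big)$: the $f$-degree and the area weight come out correctly by homogeneity, and since $\max\{I(S),1\}\ge1$ any power at most $1$ of $\sqrt{\max\{I(S),1\}}$ that appears is dominated by the first power. (Equivalently, one first proves the $L^2\to L^4$ case, where $g=f^2$ and $p-2=p/2=2$ so that no interpolation is needed, and then iterates and interpolates to a general $p$.)

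I expect the only genuinely delicate point to be this final absorption step, which is where the constant $C_p$ is manufactured: one must check it stays finite for each fixed $p\in(2,\infty)$, and it necessarily blows up as $p\to\infty$ --- exactly why $p=\infty$ is excluded. The rest --- the scalar reduction, the co-area formula, and the sharp isoperimetric inequality --- is standard. Note also that in the present paper Proposition \ref{isoperimetric} already gives $I(\S)\le 1/\pi<1$, so $\max\{I(\S),1\}=1$ and every factor $\sqrt{\max\{I,1\}}$ above is simply $1$ in the subsequent applications of this lemma.
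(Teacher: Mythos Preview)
The paper does not supply its own proof of this proposition: it is quoted verbatim from Christodoulou's monograph \cite{Chr:book} (Lemma~5.1 there), as the surrounding text makes explicit. Your outline---reduce to scalars by Kato, prove an $L^1\!\to\!L^2$ endpoint from the isoperimetric constant via layer-cake and co-area, then bootstrap to $L^p$ by applying the endpoint to $f^{p/2}$ and absorbing lower norms by interpolation and Young---is exactly the classical argument and is essentially what appears in the cited reference, so there is nothing to compare against here.

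One point worth tightening in your sketch: in the absorption step, Young's inequality will in general raise the exponent on $\sqrt{\max\{I,1\}}$ above~$1$, so the remark that ``any power at most $1$ \ldots\ is dominated by the first power'' does not by itself guarantee the final constant is linear in $\sqrt{\max\{I,1\}}$. One clean way around this (your own parenthetical) is to first do $p=4$, where $p-2=p/2=2$ and no interpolation is needed, and then interpolate between $L^2$ and $L^4$ for $2<p<4$ and iterate for larger $p$; tracking the $I$-dependence through interpolation is then straightforward. In any case this subtlety is moot for the present paper: Proposition~\ref{isoperimetric} gives $I(\S)\le 1/\pi<1$, so $\max\{I,1\}=1$ in every application.
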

\begin{proposition}[\cite{Chr:book}, Lemma 5.2]\label{Linfty}
For any Riemannian $2$-manifold $(S,\gamma)$, we have 
$$\|\phi\|_{L^\infty(S)}\leq C_p\sqrt{\max\{I(S),1\}}(\mbox{Area}(S))^{\f12-\f1p}\bigg(\|\nab\phi\|_{L^p(S)}+\big(\mbox{Area}(S)\big)^{-\f12}\|\phi\|_{L^p(S)}\bigg)$$
for $p>2$ and for any tensor field $\phi$.
\end{proposition}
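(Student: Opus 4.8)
The plan for proving Proposition \ref{Linfty} — a statement of classical Sobolev type — is to recognize it as the Morrey embedding $W^{1,p}(S)\hookrightarrow L^\infty(S)$ in dimension two (valid precisely because $p>2$), made uniform in the geometry of $S$ through the isoperimetric constant $I(S)$, with Proposition \ref{Lp} as the workhorse. First I would reduce to nonnegative scalars: by the Kato inequality $|\nabla|\phi||\le|\nabla\phi|$, which is an identity away from $\{\phi=0\}$ and has both sides vanishing a.e.\ on $\{\phi=0\}$, it suffices to prove the estimate for the Lipschitz scalar $f:=|\phi|\ge 0$. Writing $A:=\mathrm{Area}(S)$ and $\bar I:=\max\{I(S),1\}$ and rescaling $f$, I may assume $\|\nabla f\|_{L^p(S)}+A^{-1/2}\|f\|_{L^p(S)}=1$, so that the target becomes $\|f\|_{L^\infty(S)}\lesssim_p \bar I^{1/2}A^{1/2-1/p}$.

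Next I would run a Moser iteration. Proposition \ref{Lp} provides, for each finite exponent $q>2$, the bound $A^{-1/q}\|\psi\|_{L^q(S)}\lesssim_q \bar I^{1/2}\big(\|\nabla\psi\|_{L^2(S)}+A^{-1/2}\|\psi\|_{L^2(S)}\big)$. Two remarks convert this into an $L^\infty$ estimate. Since $p>2$, Hölder gives $\|\nabla\psi\|_{L^2}\le A^{1/2-1/p}\|\nabla\psi\|_{L^p}$ and likewise for $\|\psi\|_{L^2}$, so the hypothesis of Proposition \ref{Lp} can be fed from $L^p$ data; in particular $\|f\|_{L^q}$ is controlled directly for every finite $q$, which seeds the iteration at an exponent as large as we like. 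Then, applying Proposition \ref{Lp} to $\psi=f^{\beta}$ (with $\beta\ge 1$) and using $\nabla(f^{\beta})=\beta f^{\beta-1}\nabla f$ together with the Hölder split $\|f^{\beta-1}\nabla f\|_{L^2}\le\|f^{\beta-1}\|_{L^{2p/(p-2)}}\|\nabla f\|_{L^p}$, one controls $\|f\|_{L^{\beta q}}$ by $\|f\|_{L^{\beta\cdot 2p/(p-2)}}$ and explicit powers of $A$ and $\bar I$. Fixing $q>2p/(p-2)$ makes the exponent grow by the fixed factor $q(p-2)/(2p)>1$ at each step, so the exponents $\mu_k$ tend to infinity geometrically; summing the convergent series $\sum_k\mu_k^{-1}<\infty$ of accumulated constants then bounds $\|f\|_{L^\infty}=\lim_k\|f\|_{L^{\mu_k}}$ as asserted.

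The remaining task is bookkeeping of weights, and here scale invariance does most of it automatically: under $\gamma\mapsto\lambda^2\gamma$ one has $A\mapsto\lambda^2A$, $I(S)$ unchanged, $\|\nabla f\|_{L^p}\mapsto\lambda^{-2/p}\|\nabla f\|_{L^p}$ and $\|f\|_{L^\infty}$ unchanged, which is exactly consistent with the exponent $\tfrac12-\tfrac1p$ on $A$ in the statement — so checking the area powers is a consistency check rather than a genuine computation.

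The main obstacle, as usual with these uniform Sobolev estimates, is ensuring that uniformity is truly preserved: every invocation of Hölder and of Proposition \ref{Lp} must introduce only $A$ and $I(S)$ and no further geometric data of $S$, and the exponents $\beta_k$ must be kept $\ge 1$ throughout so that the Kato and Hölder manipulations are legitimate. The precise power of $\bar I$ then falls out of the telescoping of the iteration; in any event the value $\tfrac12$ recorded in the statement is all that is ever used, since Proposition \ref{isoperimetric} gives $I(\S)\le 1/\pi<1$, hence $\bar I=1$, in every application in this paper.
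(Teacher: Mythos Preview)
The paper does not prove this proposition: it is quoted verbatim as Lemma~5.2 of \cite{Chr:book} and used as a black box, together with Proposition~\ref{Lp}, to assemble the Sobolev embedding in Proposition~\ref{Sobolev}. There is therefore no in-paper argument against which to compare your proposal.

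Your Moser-iteration outline is a legitimate route to an inequality of this shape: the Kato reduction to a nonnegative Lipschitz scalar, the H\"older split $\|f^{\beta-1}\nabla f\|_{L^2}\le\|f\|^{\beta-1}_{L^{(\beta-1)\cdot 2p/(p-2)}}\|\nabla f\|_{L^p}$, the geometric growth of exponents once $q>2p/(p-2)$, and the scale-invariance bookkeeping are all correct. One honest caveat: each application of Proposition~\ref{Lp} contributes a factor $\bar I^{1/2}$, and after the successive $1/\beta_k$-th roots the cumulative exponent on $\bar I$ is $\tfrac12\sum_k\beta_k^{-1}$ (plus the contribution from the seeding step), a convergent sum depending on $p$ and on your choice of $q$ that will in general exceed $\tfrac12$. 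So your iteration delivers the estimate with \emph{some} power $\bar I^{c_p}$ rather than the precise $\bar I^{1/2}$ stated. You are right that this is immaterial for every use in the present paper, since Proposition~\ref{isoperimetric} forces $\bar I=1$; but as a proof of the proposition exactly as written it falls slightly short of the stated constant, for which one would want the direct coarea-and-isoperimetric argument in \cite{Chr:book}.
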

Note by Proposition \ref{area}, we have $\mbox{Area}(\S)\sim |u|^2$. Combining Propositions \ref{isoperimetric}, \ref{Lp} and \ref{Linfty}, we have 
\begin{proposition}\label{Sobolev}
Under the assumptions of Theorem \ref{main.thm1} and the bootstrap assumption \eqref{BA.0}, it holds
\begin{equation}\label{Sobolev 1}
\begin{split}
\|\phi\|_{L^\infty(S_{u,\ub})} \ls & \sum_{i\leq 2}\|u^{i-1}\nab^i\phi\|_{L^2(\S)}+\|\phi\|_{L^2(\S)}.
\end{split}
\end{equation}
Written in scale invariant norms:
\begin{equation}\label{Sobolev 2}
\|\phi\|_{L_{sc}^{\infty}(S_{u,\ub})}\ls  \sum_{i\leq 2}\|(\at\nab)^i\phi\|_{L_{sc}^{2}(S_{u,\ub})}+\|\phi\|_{L_{sc}^{2}(S_{u,\ub})}.
\end{equation}
\end{proposition}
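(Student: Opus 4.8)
The plan is to obtain (\ref{Sobolev 1}) by composing the two embedding estimates of Propositions \ref{Lp} and \ref{Linfty}, with the geometric factors $I(S)$ and $\mbox{Area}(S)$ controlled by Propositions \ref{isoperimetric} and \ref{area}; the scale-invariant version (\ref{Sobolev 2}) then follows simply by multiplying through by the appropriate weight. First I would fix any exponent $p>2$ and apply Proposition \ref{Linfty} on $S=\S$. Proposition \ref{isoperimetric} gives $I(\S)\leq 1/\pi<1$, so that $\max\{I(\S),1\}=1$ and the factor $\sqrt{\max\{I(\S),1\}}$ is harmless; and, as noted above, Proposition \ref{area} together with the bounds of Proposition \ref{gamma} gives $\mbox{Area}(\S)\sim|u|^2$ uniformly in $D$. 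Hence Proposition \ref{Linfty} reads
$$\|\phi\|_{L^\infty(\S)}\lesssim |u|^{1-2/p}\bigl(\|\nab\phi\|_{L^p(\S)}+|u|^{-1}\|\phi\|_{L^p(\S)}\bigr).$$

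Then I would apply Proposition \ref{Lp} twice, once to $\phi$ and once to $\nab\phi$, which gives $\|\phi\|_{L^p(\S)}\lesssim|u|^{2/p}\bigl(\|\nab\phi\|_{L^2(\S)}+|u|^{-1}\|\phi\|_{L^2(\S)}\bigr)$ and $\|\nab\phi\|_{L^p(\S)}\lesssim|u|^{2/p}\bigl(\|\nab^2\phi\|_{L^2(\S)}+|u|^{-1}\|\nab\phi\|_{L^2(\S)}\bigr)$. Substituting these into the previous display, all powers of $p$ cancel and one is left with
$$\|\phi\|_{L^\infty(\S)}\lesssim |u|\,\|\nab^2\phi\|_{L^2(\S)}+\|\nab\phi\|_{L^2(\S)}+|u|^{-1}\|\phi\|_{L^2(\S)},$$
which is exactly $\sum_{i\leq 2}\|u^{i-1}\nab^i\phi\|_{L^2(\S)}$; since $|u|\geq a/4$ this is bounded by the right-hand side of (\ref{Sobolev 1}), where the extra term $\|\phi\|_{L^2(\S)}$ is merely a generous addition.

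Finally, for (\ref{Sobolev 2}) I would multiply the last display by the weight $a^{-s_2(\phi)}|u|^{2s_2(\phi)+1}$. Using the relations $s_2(\nab^i\phi)=s_2(\phi)+\tfrac{i}{2}$ from (\ref{signature of derivative}) and the definitions (\ref{scale invariant norms}), one checks directly that $a^{-s_2(\phi)}|u|^{2s_2(\phi)+1}\cdot|u|\,\|\nab^2\phi\|_{L^2(\S)}=\|(\at\nab)^2\phi\|_{L^2_{sc}(\S)}$, that $a^{-s_2(\phi)}|u|^{2s_2(\phi)+1}\|\nab\phi\|_{L^2(\S)}=\|(\at\nab)\phi\|_{L^2_{sc}(\S)}$, and that $a^{-s_2(\phi)}|u|^{2s_2(\phi)+1}\cdot|u|^{-1}\|\phi\|_{L^2(\S)}=\|\phi\|_{L^2_{sc}(\S)}$, while the left-hand side becomes $\|\phi\|_{L^\infty_{sc}(\S)}$ by definition. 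This yields (\ref{Sobolev 2}).

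Since all of the analytic content here — the isoperimetric bound and the two $L^p$ embeddings — is already in hand, there is no essential obstacle: the only thing requiring care is the bookkeeping, namely tracking the powers of $\mbox{Area}(\S)\sim|u|^2$ through the two substitutions so that the $p$-dependence cancels, and matching the signature weights $a^{-s_2}$ and $|u|^{2s_2}$ correctly in the passage to scale-invariant norms.
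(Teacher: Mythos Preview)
Your proposal is correct and follows exactly the approach the paper intends: the paper's own argument is simply the one-line remark that $\mbox{Area}(\S)\sim|u|^2$ by Proposition~\ref{area} and that the result then follows by combining Propositions~\ref{isoperimetric}, \ref{Lp} and \ref{Linfty}. You have filled in precisely those details, and your bookkeeping for both the $|u|$-powers and the passage to scale-invariant norms via (\ref{signature of derivative}) and (\ref{scale invariant norms}) is accurate.
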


\subsection{Commutation Formula}\label{commutation}
We move to derive general commutation formulae. We first list the following formula from \cite{KNI:book}:

\begin{proposition}\label{commute0}
For a scalar function $f$, it holds

$$[\nab_4,\nab]f=\frac 12 (\eta+\etb)\nab_4f-\chi\cdot\nab f,$$
$$[\nab_3,\nab]f=\frac 12 (\eta+\etb)\nab_3f-\chib\cdot\nab f.$$
\end{proposition}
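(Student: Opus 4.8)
The plan is to derive both identities by direct computation from the definitions, exploiting that on a \emph{scalar} $f$ the angular operator $\nab$ reduces to the frame derivatives $e_A$ and $\nab_4,\nab_3$ reduce to $e_4,e_3$. First I would write, for each index $A\in\{1,2\}$,
\begin{equation*}
[\nab_4,\nab]f\cdot e_A=(\nab_4\nab f)(e_A)-\nab_A(\nab_4 f)=e_4(e_A f)-(\nab_4 e_A)f-e_A(e_4 f),
\end{equation*}
using that $\nab_4$ acting on the $1$-form $\nab f$ is the $T\S$-projection of $D_4$, so that $(\nab_4\nab f)(e_A)=e_4\big((\nab f)(e_A)\big)-(\nab f)(\nab_4 e_A)$. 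Recognizing $e_4(e_A f)-e_A(e_4 f)=[e_4,e_A]f$ and using that $D$ is torsion-free to write $[e_4,e_A]=D_4 e_A-D_A e_4$, this becomes $\big(D_4 e_A-D_A e_4-\nab_4 e_A\big)f$, i.e.\ $(\text{normal part of }D_4 e_A)\,f-(D_A e_4)f$.

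The second step is to expand $D_4 e_A$ and $D_A e_4$ in the null frame $\{e_3,e_4,e_B\}$ via $g(e_3,e_4)=-2$ and the definitions in \eqref{def Ricci coefficients}. I expect to find $D_4 e_A-\nab_4 e_A=\etb_A e_4$ and $D_A e_4=\chi_{AB}e_B-\zeta_A e_4$, the crucial point being that the would-be $e_3$-coefficient of $D_4 e_A$ vanishes: it equals $-\tfrac12 g(D_4 e_A,e_4)=\tfrac12 g(e_A,D_4 e_4)=0$ because the construction in Section \ref{secdnf} forces $D_4 e_4=-2\omega e_4$. Substituting and using $\nab_B f=e_B f$, $\nab_4 f=e_4 f$ gives $[\nab_4,\nab]f\cdot e_A=(\etb_A+\zeta_A)\nab_4 f-(\chi\cdot\nab f)_A$; then the relations $\eta_A=\zeta_A+\nab_A\log\Omega$ and $\etb_A=-\zeta_A+\nab_A\log\Omega$ yield $\zeta_A=\tfrac12(\eta_A-\etb_A)$, hence $\etb_A+\zeta_A=\tfrac12(\eta_A+\etb_A)$, which is exactly the claimed coefficient.

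The third step is the $\nab_3$ identity, which I would obtain by the identical argument with $e_3$ and $e_4$ interchanged: here $D_3 e_3=-2\omb e_3$ kills the spurious term, one finds $D_3 e_A-\nab_3 e_A=\eta_A e_3$ and $D_A e_3=\chib_{AB}e_B+\zeta_A e_3$, so $[\nab_3,\nab]f\cdot e_A=(\eta_A-\zeta_A)\nab_3 f-(\chib\cdot\nab f)_A=\tfrac12(\eta_A+\etb_A)\nab_3 f-(\chib\cdot\nab f)_A$. There is no genuine analytic difficulty; the only place to be careful — and what I regard as the main (bookkeeping) obstacle — is precisely the vanishing of the extraneous $\nab_3 f$ (resp.\ $\nab_4 f$) term, which rests on $g(D_4 e_A,e_4)=0$ (resp.\ $g(D_3 e_A,e_3)=0$), consequences of the normalization $g(e_3,e_4)=-2$ and the gauge for $\Omega$.
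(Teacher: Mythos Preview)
Your computation is correct and complete. The paper itself does not give a proof of this proposition: it simply quotes the formula from \cite{KNI:book} (see the sentence ``We first list the following formula from \cite{KNI:book}'' immediately preceding the statement). So there is no ``paper's own proof'' to compare against; your direct frame computation is exactly the kind of argument one finds in that reference.

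One very minor remark on your closing sentence: the vanishing $g(D_4 e_A,e_4)=0$ and $g(D_3 e_A,e_3)=0$ really comes from the \emph{geodesic} construction $D_{L'}L'=0$, $D_{\Lb'}\Lb'=0$ in Section~\ref{secdnf} (which you correctly invoke earlier), not from the normalization $g(e_3,e_4)=-2$ per se. This does not affect the argument.
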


\begin{proposition}\label{commute1}
For a 1-form $U_b$ tangent to $S_{u,\ub}$, we have

$$[\nab_4,\nab_a]U_b=-\chi_{ac}\nab_cU_b+\epsilon_{ac}{^*\b_b} U_c+\frac 12(\eta_a+\etb_a)\nab_4U_b-\chi_{ac}\etb_bU_c+\chi_{ab}\etb\cdot U,$$ 
$$[\nab_3,\nab_a]U_b=-\chib_{ac}\nab_cU_b+\epsilon_{ac}{^*\beb_b} U_c+\frac 12(\eta_a+\etb_a)\nab_3U_b-\chib_{ac}\eta_bU_c+\chib_{ab}\eta\cdot U.$$ 
\end{proposition}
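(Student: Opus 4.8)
The plan is to derive both identities directly from the defining property of the spacetime curvature together with the relation between the induced derivative $\nab$ and the ambient connection $D$; once the relevant null-frame connection coefficients are recorded the rest is purely algebraic, and this is in essence the computation of \cite{KNI:book}. Since $D$ is torsion free, for the $\S$-tangent $1$-form $U$ one has the exact identity
\begin{equation*}
D_{e_4}D_{e_a}U-D_{e_a}D_{e_4}U-D_{[e_4,e_a]}U=R(e_4,e_a)U ,
\end{equation*}
and every term of the Proposition will emerge from rewriting the left-hand side intrinsically, except for the tangential projection of $R(e_4,e_a)U$. For this I would first record, reading off from the definitions (\ref{def curvatures})--(\ref{def Ricci coefficients}) and using $g(e_3,e_4)=-2$,
\begin{equation*}
D_{e_a}e_b=\nab_ae_b+\tfrac12\chi_{ab}e_3+\tfrac12\chib_{ab}e_4,\qquad D_{e_4}e_b=\nab_4e_b+\etb_be_4,\qquad D_{e_4}e_4=-2\omega e_4,
\end{equation*}
together with the analogous expansions of $D_{e_a}e_4$, $D_{e_a}e_3$, $D_{e_4}e_3$; these give in particular the normal components $(D_{e_a}U)(e_4)=-\chi_{ac}U^c$, $(D_{e_a}U)(e_3)=-\chib_{ac}U^c$ and $(D_{e_4}U)(e_3)=-2\,\etb\cdot U$, while the tangential projections of $D_{e_a}U$ and $D_{e_4}U$ are $\nab_aU$ and $\nab_4U$.

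Then I would substitute and collect. The bracket is $[e_4,e_a]=D_{e_4}e_a-D_{e_a}e_4=\bigl(\nab_4e_a-\chi_a{}^ce_c\bigr)+\tfrac12(\eta_a+\etb_a)e_4$, with no $e_3$-component, where $\tfrac12(\eta_a+\etb_a)=\nab_a\log\Omega=\zeta_a+\etb_a$ by the relations $\eta=\zeta+\nab\log\Omega$, $\etb=-\zeta+\nab\log\Omega$; its tangential part hitting $U$ produces the $-\chi_{ac}\nab_cU_b$ term and its $e_4$-part the $\tfrac12(\eta_a+\etb_a)\nab_4U_b$ term. When $D_{e_4}D_{e_a}U$ is evaluated on $e_b$, the $e_4$-component of $D_{e_4}e_b$ meets the normal component $(D_{e_a}U)(e_4)$ and manufactures $-\chi_{ac}\etb_bU^c$; when $D_{e_a}D_{e_4}U$ is evaluated on $e_b$, the $e_3$-component $\tfrac12\chi_{ab}$ of $D_{e_a}e_b$ meets $(D_{e_4}U)(e_3)$ and manufactures $\chi_{ab}\,\etb\cdot U$; all auxiliary terms of the form $\nab_{\nab_4e_a}U_b$ cancel in the difference. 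What is left is the tangential projection of $R(e_4,e_a)U$, which in Einstein vacuum ($R=W$) involves only the Weyl component with one $e_4$ leg and three $\S$-tangent legs; by the first Bianchi identity and the trace-freeness of $W$, in two dimensions this collapses to exactly $\epsilon_{ac}\,{}^*\beta_bU^c$. The $\nab_3$ identity follows by the formal interchange $e_4\leftrightarrow e_3$, $\chi\leftrightarrow\chib$, $\eta\leftrightarrow\etb$, $\beta\leftrightarrow\betab$ (the sign of $\betab$ fixed by its definition in (\ref{def curvatures})); alternatively one may propagate the scalar commutator of Proposition \ref{commute0} slot by slot, applying it to $U(Y)$ for an arbitrary $\S$-tangent vector field $Y$.

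The main obstacle is the sign and index bookkeeping, concentrated in two places. First, identifying the curvature term: one must check that the $\alpha$-, $\rho$-, $\sigma$-pieces of $R(e_4,e_a)U$ either project off $T\S$ or cancel, and that the surviving piece has exactly the coefficient and index placement of $\epsilon_{ac}\,{}^*\beta_bU^c$; this uses the vacuum condition (so $R=W$, and the Bianchi/Codazzi relations (\ref{null.str3}) are available) together with the symmetries of $W$. Second, pinning down the coefficient $\tfrac12$ in front of $(\eta+\etb)\nab_4U$ and the coefficients of the quadratic terms $\chi_{ac}\etb_bU^c$ and $\chi_{ab}\,\etb\cdot U$ demands careful tracking of which projection produced which normal component; a convenient device, which I would use throughout, is to write every spacetime vector $V$ as $\Pi V+\lambda e_3+\mu e_4$ with $\lambda=-\tfrac12 g(V,e_4)$ and $\mu=-\tfrac12 g(V,e_3)$, so that all coefficients are read off by pairing against $e_4$ and $e_3$. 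No genuinely new difficulty arises beyond this bookkeeping; the formula and its derivation are standard.
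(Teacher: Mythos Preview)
Your proposal is correct and is precisely the standard derivation; the paper itself does not supply a proof for this Proposition but simply lists it as a formula quoted from \cite{KNI:book}, so your computation is exactly the one being invoked. There is nothing to add beyond noting that your bookkeeping for $[e_4,e_a]$ and for the normal components of $D_{e_a}U$, $D_{e_4}U$ is accurate, and that the identification of the tangential curvature piece with $\epsilon_{ac}{}^*\beta_b U^c$ is the only step requiring the vacuum assumption.
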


\begin{proposition}\label{commute2}
For a 2-form $V_{bc}$ tangent to $\S$, we have
\begin{equation*}
\begin{split}
[\nab_4,\nab_a]V_{bc}=&\frac 12(\eta_a+\etb_a)\nab_4V_{bc}-\etb_bV_{dc}\chi_{ad}-\etb_cV_{bd}\chi_{ad}-\epsilon_{bd}{^*\b_a}V_{dc}-\epsilon_{cd}{^*\b_c}V_{bd}\\
&+\chi_{ac}V_{bd}\etb_d+\chi_{ab}V_{dc}\etb_d-\chi_{ad}\nab_dV_{bc},
\end{split}
\end{equation*}

\begin{equation*}
\begin{split}
[\nab_3,\nab_a]V_{bc}=&\frac 12(\eta_a+\etb_a)\nab_3V_{bc}-\eta_bV_{dc}\chib_{ad}-\eta_cV_{bd}\chib_{ad}+\epsilon_{bd}{^*\beb_a}V_{dc}+\epsilon_{cd}{^*\beb_c}V_{bd}\\
&+\chib_{ac}V_{bd}\eta_d+\chib_{ab}V_{dc}\eta_d-\chib_{ad}\nab_dV_{bc}.
\end{split}
\end{equation*}
\end{proposition}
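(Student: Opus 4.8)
The plan is to derive the two commutation identities by a direct computation in a null frame, reducing both statements to the scalar commutator of Proposition~\ref{commute0} together with the action of the connection $D$ on the frame vectors $\{e_3,e_4,e_A\}$. The key observation is that a horizontal $2$-form $V_{bc}$ can be treated as a multilinear object whose derivatives $\nab_a V_{bc}$ are the projections onto $S_{u,\ub}$ of $D_a V_{bc}$; so the commutator $[\nab_4,\nab_a]V_{bc}$ differs from the spacetime commutator $[D_4, D_a]V_{bc}$ (acting on the extended tensor, then projected) precisely by the correction terms coming from projecting after differentiating in the $e_A$-directions, and these corrections are governed by $\chi$, $\chib$, $\eta$, $\etb$, $\zeta$ via the structure equations in \eqref{def Ricci coefficients}. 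I would first recall/record $D_4 e_A$, $D_a e_4$, $D_a e_3$, $D_a e_b$ in terms of Ricci coefficients, which is where $\chi_{ac}\nab_c(\cdot)$, the $\frac12(\eta_a+\etb_a)\nab_4(\cdot)$ term, and the $\chi_{ac}\etb_b(\cdot)$ and $\chi_{ab}\etb\cdot(\cdot)$ terms will all originate.

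The main computational step is then to write, for the $[\nab_4,\nab_a]$ identity,
\[
[\nab_4,\nab_a]V_{bc}
= \l(D_4 D_a - D_a D_4\r)V_{bc}\big|_{\text{proj}}
\;+\;\text{(projection corrections)},
\]
and to evaluate the first piece using $R(e_4,e_a)V$, i.e.\ using $[D_4,D_a] = D_{[e_4,e_a]} + R(e_4,e_a)$. The Riemann curvature contracted against $e_4$ and a horizontal direction produces exactly the $\epsilon_{bd}\,{}^*\b_a V_{dc}$ and $\epsilon_{cd}\,{}^*\b_c V_{bd}$ terms via the definition $\b_A = \frac12 R(e_A,e_4,e_3,e_4)$ from \eqref{def curvatures} (plus, in principle, $\alpha$- and $\rho,\sigma$-type contractions, which must be checked to cancel or to be absorbed — this is the one place where the Einstein vacuum condition $\mbox{Ric}=0$ is used to eliminate the Ricci part of the curvature so that only the $\b$ terms survive). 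The commutator $[e_4,e_a]$ is itself a horizontal vector by the frame construction, and rewriting $D_{[e_4,e_a]}V_{bc}$ in terms of $\nab$ reproduces the $-\chi_{ad}\nab_d V_{bc}$ term together with the $\frac12(\eta_a+\etb_a)\nab_4 V_{bc}$ term. The $[\nab_3,\nab_a]$ identity is then obtained by the formal interchange $e_4\leftrightarrow e_3$, $\chi\leftrightarrow\chib$, $\eta\leftrightarrow\etb$, $\b\leftrightarrow -\beb$ (tracking the sign change in $\beb_A=\frac12 R(e_A,e_3,e_3,e_4)$, which accounts for the sign flip in front of the $\epsilon_{bd}\,{}^*\beb_a V_{dc}$ terms relative to the $e_4$ case), so no independent computation is needed — I would just remark on the symmetry.

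Structurally I would organize it as: (i) state the frame derivative formulas for $D$ acting on $e_3,e_4,e_A$; (ii) establish the identity for a scalar (already given), then for a $1$-form (Proposition~\ref{commute1}, proved the same way, acting as a warm-up that exhibits the $\chi_{ac}\etb_b U_c$ and $\chi_{ab}\etb\cdot U$ terms); (iii) extend to $2$-forms by applying the $1$-form result in each slot and collecting terms, using the Leibniz rule for both $D_a$ and $\nab_a$. In fact the cleanest route for a $2$-form $V_{bc}$ is to avoid the curvature computation a second time: write $V_{bc}$ locally as a sum of products $W_b\, W'_c$ of $1$-forms (or argue by the universal Leibniz structure of a second-order commutator acting on a tensor product), apply Proposition~\ref{commute1} to each factor, and verify that the cross terms assemble into exactly the displayed right-hand side. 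The main obstacle I anticipate is bookkeeping: making sure every $\eta,\etb,\zeta$ term lands with the correct coefficient and index placement, and — more substantively — confirming that the full Riemann contraction $R(e_4,e_a,\cdot,\cdot)$ restricted to horizontal outputs reduces to the $\b$-terms alone once $\mbox{Ric}_{\mu\nu}=0$ is imposed; any stray $\alpha$ or $\rho$/$\sigma$ contribution would have to be shown to cancel against a projection-correction term or to vanish by the symmetries of $R$, and this is the step most prone to sign errors.
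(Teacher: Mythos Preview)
The paper does not supply a proof of this proposition: it is listed, together with Propositions~\ref{commute0} and~\ref{commute1}, as a formula quoted from \cite{KNI:book}. So there is no ``paper's proof'' to compare against; what you have written is precisely the standard derivation that underlies the cited result, and your two suggested routes (direct evaluation of $D_{[e_4,e_a]}+R(e_4,e_a)$ with projection corrections, or extension of the $1$-form identity of Proposition~\ref{commute1} to each slot of $V_{bc}$ via the Leibniz rule) are both correct and both standard. The Leibniz route is the cleaner one here, since all curvature bookkeeping has already been absorbed into Proposition~\ref{commute1}.

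One small correction: the bracket $[e_4,e_a]$ is \emph{not} purely horizontal. From the Ricci-coefficient formulas one has $D_a e_4=\chi_a^{\ d}e_d+\zeta_a e_4$ and $D_4 e_a$ has an $e_4$-component governed by $\etb_a$ (and an $e_3$-component governed by $\eta_a$), so $[e_4,e_a]$ carries an $e_4$-piece. This is in fact the source of the $\tfrac12(\eta_a+\etb_a)\nab_4 V_{bc}$ term you correctly identify in the next clause, so your conclusion is right even though the sentence asserting horizontality is not. Also, the worry about stray $\alpha$ or $\rho,\sigma$ contributions is unfounded once you go through Proposition~\ref{commute1}: the only curvature component that can appear from $R(e_4,e_a)$ acting on a horizontal index is $R_{4abd}$ with $b,d$ horizontal, and in vacuum this is exactly the ${}^*\beta$ term---no separate cancellation is needed.
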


\begin{remark}
In this article, we require $i_1, i_2,..., i_n\geq 0$. 
\end{remark}

\noindent {\color{black}Applying Proposition \ref{commute1} and Proposition \ref{commute2} through a mathematical induction,
we then give the below general formulas (see also \cite{A-L, L-R:Propagation}):}
\begin{proposition}\label{commute3}
Assume $\nabla_4\phi=F_0$. Let $\nabla_4\nabla^i\phi=F_i$.
Then we have
\begin{equation*}
\begin{split}
F_i= &\sum_{i_1+i_2+i_3=i}\nabla^{i_1}(\eta+\underline{\eta})^{i_2}\nabla^{i_3} F_0+\sum_{i_1+i_2+i_3+i_4=i-1} \nabla^{i_1}(\eta+\underline{\eta})^{i_2}\nabla^{i_3}\beta\nabla^{i_4} \phi\\
&+\sum_{i_1+i_2+i_3+i_4=i}\nabla^{i_1}(\eta+\underline{\eta})^{i_2}\nabla^{i_3}\chi\nabla^{i_4} \phi.
\end{split}
\end{equation*}

Similarly, assume $\nabla_3\phi=G_{0}$. Let $\nabla_3\nabla^i\phi=G_{i}$.
We get
\begin{equation*}
\begin{split}
G_{i}{\color{black}+}\frac{i}{2}\tr\chib \nab^i \phi= &\sum_{i_1+i_2+i_3=i}\nabla^{i_1}(\eta+\underline{\eta})^{i_2}\nabla^{i_3} G_{0}\\
&+\sum_{i_1+i_2+i_3+i_4=i-1} \nabla^{i_1}(\eta+\underline{\eta})^{i_2}\nabla^{i_3}\underline{\beta}\nabla^{i_4} \phi\\
&+\sum_{i_1+i_2+i_3+i_4=i}\nabla^{i_1}(\eta+\underline{\eta})^{i_2}\nabla^{i_3}(\chibh,\tc)\nabla^{i_4} \phi\\
&+\sum_{i_1+i_2+i_3+i_4=i-1}\nabla^{i_1}(\eta+\underline{\eta})^{i_2+1}\nabla^{i_3}\tr\chib\nabla^{i_4} \phi.
\end{split}
\end{equation*}
\end{proposition}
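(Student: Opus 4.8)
The plan is to prove both identities simultaneously by induction on $i$, with the first-order commutators of Propositions \ref{commute0}, \ref{commute1} and \ref{commute2} as the only input. First I would record that for an $\S$-tangent tensor $\psi$ of arbitrary rank these commutators take the schematic form
\begin{equation*}
[\nab_4,\nab]\psi = (\eta+\etb)\cdot\nab_4\psi - \chi\cdot\nab\psi + \beta\cdot\psi + \chi\cdot(\eta+\etb)\cdot\psi,
\end{equation*}
\begin{equation*}
[\nab_3,\nab]\psi = (\eta+\etb)\cdot\nab_3\psi - \chib\cdot\nab\psi + \betab\cdot\psi + \chib\cdot(\eta+\etb)\cdot\psi,
\end{equation*}
where the $\beta$- and $\chi\cdot(\eta+\etb)$-type terms (resp. $\betab$, $\chib\cdot(\eta+\etb)$) occur only for $\psi$ of positive rank; the rank-$1$ and rank-$2$ cases are exactly Propositions \ref{commute1}--\ref{commute2}, the scalar case is Proposition \ref{commute0}, and arbitrary rank follows from these by a secondary induction that commutes $\nab$ past one tensor slot of $\psi$ at a time. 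The base case $i=0$ of the main induction is immediate: $F_0=F_0$, and for the $\nab_3$ statement the correction coefficient $\tfrac{0}{2}\tr\chib$ vanishes.

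For the inductive step $i\to i+1$ in the $\nab_4$ case I would write
\begin{equation*}
\nab_4\nab^{i+1}\phi=\nab(\nab_4\nab^i\phi)+[\nab_4,\nab]\nab^i\phi=\nab F_i+[\nab_4,\nab]\nab^i\phi,
\end{equation*}
insert the inductive formula for $F_i$, expand $\nab F_i$ by the Leibniz rule — each derivative raising by one the $\nab$-count of whichever factor it falls on, which is consistent with the bookkeeping because $\nab^{i_1}(\eta+\etb)^{i_2}$ already denotes a product of $i_2$ factors carrying a total of $i_1$ derivatives — and expand $[\nab_4,\nab]\nab^i\phi$ via the schematic commutator, using $\nab_4\nab^i\phi=F_i$ and the inductive formula once more inside the term $(\eta+\etb)\cdot\nab_4\nab^i\phi$. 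Term matching: $-\chi\cdot\nab^{i+1}\phi$ belongs to the last sum with $i_4=i+1$; $\beta\cdot\nab^i\phi$ belongs to the middle sum, whose index sum is now $(i+1)-1=i$; $\chi\cdot(\eta+\etb)\cdot\nab^i\phi$ belongs to the last sum with one extra $(\eta+\etb)$ factor; $\nab$ of each sum of $F_i$ raises its index sum by one; and $(\eta+\etb)\cdot F_i$ reproduces all three sums with $i_2$ shifted up by one. Consolidating numerical constants (which, per the conventions fixed earlier in the paper, we do not track) gives the stated form for $F_{i+1}$.

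The $\nab_3$ step is the same manipulation, and this is where I expect the one genuine point of care. From $\nab_3\nab^{i+1}\phi=\nab G_i+[\nab_3,\nab]\nab^i\phi$ and the inductive identity $G_i=(\mbox{right-hand side})-\tfrac{i}{2}\tr\chib\,\nab^i\phi$, differentiating the correction term yields $-\tfrac{i}{2}\nab(\tr\chib)\,\nab^i\phi-\tfrac{i}{2}\tr\chib\,\nab^{i+1}\phi$, while the commutator contributes $-\chib\cdot\nab^{i+1}\phi$ whose trace part is $-\tfrac12\tr\chib\,\nab^{i+1}\phi$; the two $\tr\chib\,\nab^{i+1}\phi$ pieces add up to $-\tfrac{i+1}{2}\tr\chib\,\nab^{i+1}\phi$, which is precisely the term transferred to the left-hand side at level $i+1$. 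The remaining pieces are harmless: the trace-free part $-\chibh\cdot\nab^{i+1}\phi$ together with $-\tfrac{i}{2}\nab(\tr\chib)\,\nab^i\phi$ land in the $\nab^{i_3}(\chibh,\tc)$ sum, after writing $\tr\chib=\tc-\tfrac{2}{|u|}$ and noting that $\nab$ annihilates the scalar $2/|u|$ on each $\S$; the zeroth-order terms $\chib\cdot(\eta+\etb)\cdot\nab^i\phi$ coming from Propositions \ref{commute1}--\ref{commute2} split into a $(\chibh,\tc)$ contribution and a contribution of the form $(\eta+\etb)^{i_2+1}\nab^{i_3}\tr\chib\,\nab^{i_4}\phi$; and $(\eta+\etb)\cdot\nab_3\nab^i\phi=(\eta+\etb)\cdot G_i$, with the inductive formula reinserted, reproduces all of the sums — in particular its pairing with the correction term $-\tfrac{i}{2}\tr\chib\,\nab^i\phi$ generates exactly the $(\eta+\etb)^{i_2+1}\nab^{i_3}\tr\chib\,\nab^{i_4}\phi$ family appearing in the statement. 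Once the bookkeeping of the $\tr\chib$ coefficient is isolated as above, both identities follow by induction.
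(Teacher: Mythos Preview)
Your proposal is correct and follows exactly the approach the paper indicates: the paper's own proof is a single sentence (``Applying Proposition \ref{commute1} and Proposition \ref{commute2} through a mathematical induction''), and you have carried out precisely that induction, including the key bookkeeping that the coefficient $\tfrac{i}{2}\tr\chib$ increments by $\tfrac12$ at each step via the trace part of $-\chib\cdot\nab^{i+1}\phi$.
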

$$$$
Finally, by replacing $\b, \beb$ via Codazzi equations:
$$\beta=-\div \chih+\f12\nab \tr\chi-\f12(\eta-\etab)\cdot(\chih-\f12\tr\chi),$$
$$\beb=\div\chibh-\f12\nab\tr\chib-\f12(\eta-\etab)\cdot(\chibh-\f12\tr\chib),$$
and substituting $\eta, \etb, \tr\chi, \tr\chib+\f{2}{|u|}$ with $\p$, we arrive at 
\begin{proposition}\label{commute}
Suppose $\nabla_4\phi=F_0$. Let $\nabla_4\nabla^i\phi=F_i$.
Then
\begin{equation*}
\begin{split}
F_i= &\sum_{i_1+i_2+i_3=i}\nabla^{i_1}\p^{i_2}\nabla^{i_3} F_0+\sum_{i_1+i_2+i_3+i_4=i}\nabla^{i_1}\p^{i_2}\nabla^{i_3}(\p, \chih)\nabla^{i_4} \phi.\\
\end{split}
\end{equation*}
Similarly, suppose $\nabla_3\phi=G_{0}$. Let $\nabla_3\nabla^i\phi=G_{i}$.
Then
\begin{equation*}
\begin{split}
G_{i}{\color{black}+}\f {i}{2} \tr\chib\nab^i\phi 
=&\sum_{i_1+i_2+i_3=i}\nabla^{i_1}\p^{i_2}\nabla^{i_3} G_{0}\\
&+\sum_{i_1+i_2+i_3+i_4=i}\nabla^{i_1}\p^{i_2}\nabla^{i_3}(\p, \chibh, \tc)\nabla^{i_4} \phi\\
&+\sum_{i_1+i_2+i_3+i_4=i-1} \nabla^{i_1}\p^{i_2+1}\nabla^{i_3}\tr\chib\nab^{i_4}\phi.\\
\end{split}
\end{equation*}
\end{proposition}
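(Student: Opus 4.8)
The plan is to obtain Proposition \ref{commute} from the intermediate commutation formulas in Proposition \ref{commute3} by two purely algebraic reductions: first, eliminate the curvature factors $\beta$ and $\underline{\beta}$ that occur in the lower order source terms by means of the Codazzi equations in \eqref{null.str3}; second, absorb the normal Ricci coefficients $\eta,\underline{\eta},\tr\chi,\tc$ into the schematic symbol $\psi$ and decompose $\chi=\tfrac12\tr\chi\,\gamma+\chih$, $\chib=\tfrac12\tr\chib\,\gamma+\chibh$, using that $\nabla\gamma=0$. (Proposition \ref{commute3} itself is proved beforehand by induction on $i$ from the single-derivative commutators in Propositions \ref{commute0}--\ref{commute2}, so it may be taken for granted here.)

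First I would treat the $\nabla_4$ identity. In the source term $\sum_{i_1+i_2+i_3+i_4=i-1}\nabla^{i_1}(\eta+\underline{\eta})^{i_2}\nabla^{i_3}\beta\,\nabla^{i_4}\phi$ of Proposition \ref{commute3}, substitute $\beta=-\div\chih+\tfrac12\nabla\tr\chi-\tfrac12(\eta-\underline{\eta})\cdot(\chih-\tfrac12\tr\chi\,\gamma)$ and distribute $\nabla^{i_3}$ by Leibniz. Every resulting term has the form $\nabla^{i_1}\psi^{i_2}\bigl(\nabla^{i_3+1}\chih+\nabla^{i_3+1}\psi+\sum_{a+b=i_3}\nabla^{a}\psi\,\nabla^{b}(\chih,\psi)\bigr)\nabla^{i_4}\phi$; because the derivative count on the distinguished factor has increased by one, the total order is now exactly $i$, so all these terms merge into $\sum_{i_1+i_2+i_3+i_4=i}\nabla^{i_1}\psi^{i_2}\nabla^{i_3}(\psi,\chih)\nabla^{i_4}\phi$. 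The $\chi$-term of Proposition \ref{commute3}, after writing $\chi=\tfrac12\tr\chi\,\gamma+\chih$ and using $\nabla\gamma=0$, feeds into the same sum, and in the $F_0$-term $(\eta+\underline{\eta})^{i_2}$ becomes $\psi^{i_2}$; this yields the stated formula for $F_i$.

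The $\nabla_3$ identity is handled in the same way, this time substituting $\underline{\beta}=\div\chibh-\tfrac12\nabla\tr\chib-\tfrac12(\eta-\underline{\eta})\cdot(\chibh-\tfrac12\tr\chib\,\gamma)$. The one point that needs attention is the behaviour of $\tr\chib$: since $u$ is constant on each $S_{u,\ub}$ we have $\nabla(2/|u|)=0$, hence $\nabla^{k}\tr\chib=\nabla^{k}\tc$ for every $k\ge 1$; thus any $\tr\chib$ that is hit by at least one angular derivative (in particular $\nabla^{i_3+1}\tr\chib$ coming from the $-\tfrac12\nabla\tr\chib$ piece of Codazzi) legitimately belongs to the $\tc$-slot and merges into $\sum_{i_1+i_2+i_3+i_4=i}\nabla^{i_1}\psi^{i_2}\nabla^{i_3}(\psi,\chibh,\tc)\nabla^{i_4}\phi$. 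An \emph{undifferentiated} $\tr\chib$ can survive only (i) as the isolated leading term $\tfrac{i}{2}\tr\chib\,\nabla^{i}\phi$ kept on the left-hand side, or (ii) inside the last sum $\sum_{i_1+i_2+i_3+i_4=i-1}\nabla^{i_1}\psi^{i_2+1}\nabla^{i_3}\tr\chib\,\nabla^{i_4}\phi$ of Proposition \ref{commute3} (together with the $\tr\chib\,\gamma$ contribution of the quadratic Codazzi piece when no derivative lands on it), where it always carries at least one $\psi$-factor and multiplies a strictly lower order $\nabla^{i_4}\phi$ with $i_4<i$. The $G_0$-term contributes $\sum\nabla^{i_1}\psi^{i_2}\nabla^{i_3}G_0$, completing the formula for $G_i$.

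I expect the only genuine obstacle to be combinatorial bookkeeping rather than anything conceptual: one must check that the Codazzi substitution raises the total order of the $\beta$- and $\underline{\beta}$-terms from $i-1$ to exactly $i$ and not higher, that the schematic products $\nabla^{i_1}\psi^{i_2}\nabla^{i_3}(\cdots)\nabla^{i_4}\phi$ hide no $\nabla_3$ or $\nabla_4$ derivatives beyond those written, and—most importantly for the later borderline energy estimates—that after all reductions the only place where an undifferentiated $\tr\chib$ multiplies the top-order factor $\nabla^{i}\phi$ is the explicitly displayed term $\tfrac{i}{2}\tr\chib\,\nabla^{i}\phi$ on the left, every other $\tr\chib$ being either differentiated (hence of type $\tc$) or accompanied by both a $\psi$-factor and a lower order $\nabla^{i_4}\phi$. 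This is exactly the structure that makes the scheme usable in Proposition-based transport estimates, so it is worth verifying carefully even though each individual step is elementary.
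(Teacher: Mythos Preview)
Your proposal is correct and follows exactly the approach the paper takes: start from Proposition \ref{commute3}, replace $\beta$ and $\underline{\beta}$ by the Codazzi identities in \eqref{null.str3}, and then relabel $\eta,\underline{\eta},\tr\chi,\tc$ as $\psi$ while splitting $\chi,\chib$ into trace and traceless parts. Your additional bookkeeping on where undifferentiated $\tr\chib$ can appear is precisely the structural point the paper relies on later, and your argument that $\nabla^{k}\tr\chib=\nabla^{k}\tc$ for $k\geq 1$ is the right way to justify the $\tc$-slot.
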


\section{$L^2(\S)$ estimates for Ricci coefficients}\label{secRicci}

We start from several useful estimates. {\color{black}Denote} 
\begin{equation}\label{psi Psi 2}
\p\in \{\f{\chih}{\at}, \tr\chi, \o, \eta, \etb, \zeta, \omb, \f{a}{|u|}\tc, \f{\at}{|u|}\chibh, \f{a}{|u|^2}\tr\chib \}, \quad\mbox{ and } \quad \Psi\in \{\f{\a}{\at}, \beta,\rho,\sigma, \beb,\ab\}.
\end{equation}
\begin{proposition} Under the assumption of Theorem \ref{main.thm1} and bootstrap assumption (\ref{BA.0}), we have 

\begin{equation}\label{4.1}
\sum_{\substack{i_1+i_2\leq 9 \\}}\|(\at)^{i_1+i_2}\nab^{i_1}\p^{i_2}\|_{L^{2}_{sc}(S_{u,\ub})}\leq |u|,
\end{equation}

\begin{equation}\label{4.2}
\sum_{\substack{i_1+i_2\leq 9\\}}\|(\at)^{i_1+i_2}\nab^{i_1}\p^{i_2+1}\|_{L^{2}_{sc}(S_{u,\ub})}\leq O,
\end{equation}

\begin{equation}\label{4.3}
\sum_{\substack{i_1+i_2\leq 9\\}} \|(\at)^{i_1+i_2}\nab^{i_1}\p^{i_2+2}\|_{L^{2}_{sc}(S_{u,\ub})}\leq \f{O^2}{|u|},
\end{equation}

\begin{equation}\label{4.4}
\sum_{\substack{i_1+i_2\leq 9\\}} \|(\at)^{i_1+i_2}\nab^{i_1}\p^{i_2+3}\|_{L^{2}_{sc}(S_{u,\ub})}\leq \f{O^3}{|u|^2}, 
\end{equation}

\begin{equation}\label{4.5}
\sum_{\substack{i_1+i_2+i_3\leq 9\\ }} \|(\at)^{i_1+i_2+i_3}\nabla^{i_1}\p^{i_2}\nab^{i_3}\Psi\|_{L^2_{sc}(S_{u,\ub})}\leq O,
\end{equation}

\begin{equation}\label{4.6}
\sum_{i_1+i_2+i_3\leq 9}\|(\at)^{i_1+i_2+i_3+1}\nabla^{i_1}\p^{i_2+1}\nab^{i_3}\Psi\|_{L^2_{sc}(S_{u,\ub})}\leq \f{\at}{|u|}\cdot O^2,
\end{equation}

\begin{equation}\label{4.7}
\sum_{i_1+i_2+i_3\leq 9}\|(\at)^{i_1+i_2+i_3+2}\nabla^{i_1}\p^{i_2+2}\nab^{i_3}\Psi\|_{L^2_{sc}(S_{u,\ub})}\leq \f{a}{|u|^2}\cdot O^3.
\end{equation}
\end{proposition}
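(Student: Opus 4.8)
The plan is to obtain all of \eqref{4.1}--\eqref{4.7} from three ingredients already at our disposal: the scale-invariant H\"older inequality \eqref{Holder's}, the bootstrap bound $\M O(u,\ub)\le O$ of \eqref{BA.0}, and (if convenient) the Sobolev embedding \eqref{Sobolev 2}. The starting point is the one-factor bounds: up to the fixed $a$- and $|u|$-powers already built into the norms \eqref{O i infty}--\eqref{R i 2}, the quantities $\p$ and $\Psi$ of \eqref{psi Psi 2} are exactly those measured there, and each of $\M O_{i,\infty},\M R_{i,\infty}$ ($i\le6$) and $\M O_{i,2},\M R_{i,2}$ ($i\le9$) is a summand of $\M O$; hence \eqref{BA.0} gives, for every such $\p$ and $\Psi$,
$$\|(\at\nab)^i\p\|_{L^{2}_{sc}(\S)}+\|(\at\nab)^i\Psi\|_{L^{2}_{sc}(\S)}\ls O\ \ (0\le i\le 9),\qquad \|(\at\nab)^i\p\|_{L^{\infty}_{sc}(\S)}+\|(\at\nab)^i\Psi\|_{L^{\infty}_{sc}(\S)}\ls O\ \ (0\le i\le 6),$$
the one unlisted term $\zeta$ being handled through $\zeta=\tfrac12(\eta-\etb)$; alternatively the $L^\infty_{sc}$ bounds follow from the $L^2_{sc}$ ones via \eqref{Sobolev 2}.

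Next I would expand each product by the Leibniz rule. Since all derivative orders are $\le 9$, a product $\nab^{i_1}\p^{i_2}$ — or $\nab^{i_1}\p^{i_2}\nab^{i_3}\Psi$ — is a finite sum, with a combinatorial constant depending only on $i_1,i_2,i_3\le 9$, of terms $\nab^{j_1}\psi_1\cdots\nab^{j_m}\psi_m$ in which each $\psi_k$ is one of the $\p$'s or $\Psi$, $m$ is the number of factors, and $j_1+\cdots+j_m\le 9$. Reordering so that $j_m$ is the largest, one has $j_m\le 9$ while $j_1,\dots,j_{m-1}\le 4$ (the two largest among numbers summing to $\le 9$ cannot both exceed $4$). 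Applying \eqref{Holder's} exactly $m-1$ times — the factor $\nab^{j_m}\psi_m$ kept in $L^2_{sc}(\S)$, every other factor placed in $L^\infty_{sc}(\S)$ — and inserting the one-factor bounds above (each $\at^{j_k}$ absorbed into $(\at\nab)^{j_k}$) gives
$$\big\|\,\at^{\,j_1+\cdots+j_m}\,\nab^{j_1}\psi_1\cdots\nab^{j_m}\psi_m\,\big\|_{L^2_{sc}(\S)}\ls\frac{O^m}{|u|^{\,m-1}}.$$

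Finally I would balance the weights. Beyond the $\at$'s attached to derivatives, \eqref{4.2}--\eqref{4.5} carry an extra $\at^{i_2}$, \eqref{4.6} an extra $\at^{i_2+1}$, and \eqref{4.7} an extra $\at^{i_2+2}$; multiplying $O^m|u|^{-(m-1)}$ by this surplus, the $(i_1,i_2,i_3)$-term turns out in each case to be a fixed multiple of the asserted right-hand side times $(\at O/|u|)^{i_2}$ — for instance in \eqref{4.7}, where $m=i_2+3$, one gets $\at^{i_2+2}|u|^{-(i_2+2)}O^{i_2+3}=\tfrac{a}{|u|^2}O^3\,(\at O/|u|)^{i_2}$, and analogously for the others. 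Since $|u|\ge a/4$ and $(O+R)^{20}\le a^{1/16}$, both $\at/|u|$ and $\at O/|u|$ are $\ll1$ for $a$ large, so $(\at O/|u|)^{i_2}\le1$ and the finitely many terms sum to the stated bounds. For \eqref{4.1} the $i_2=0$ term is $\|1\|_{L^2_{sc}(\S)}=\sqrt{\mbox{Area}(\S)}\ls|u|$ by Proposition~\ref{area}, while every $i_2\ge1$ term is $\ls\at O\,(\at O/|u|)^{i_2-1}\ls\at O\ll|u|$; this settles \eqref{4.1}.

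The argument is otherwise pure bookkeeping; the one point that needs care is this last balancing — checking that each $\p$-factor past the first contributes both a power of $\at$ (already present in the weights prescribed by \eqref{4.1}--\eqref{4.7}) and a power of $|u|^{-1}$ (produced by iterating \eqref{Holder's}), so that the leftover factor is exactly the harmless quantity $(\at O/|u|)^{i_2}\ll1$ rather than something that grows with $O$.
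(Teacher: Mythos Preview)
Your proof is correct and follows essentially the same route as the paper: iterate the scale-invariant H\"older inequality \eqref{Holder's}, place the factor with the most derivatives in $L^2_{sc}$ and the remaining factors (each carrying at most $4\le 6$ derivatives) in $L^\infty_{sc}$, then invoke the bootstrap bound $\M O\le O$. The only cosmetic difference is that you package all seven estimates into a single weight-balancing computation, whereas the paper treats them one at a time and obtains \eqref{4.3}, \eqref{4.4}, \eqref{4.7} by peeling off one $\p$-factor and reducing to the previous inequality; the substance is identical.
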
 

\begin{proof}
We will prove (\ref{4.1}) first. For $i_2=0$, (\ref{4.1}) is true because naturally we could let $s_2(1)=0$ and
$$\|1\|_{L^2_{sc}(\S)}=|u|.$$
For $i_2\geq 1$, we could rewrite $\nab^{i_1}\p^{i_2}$ {\color{black}as} a product of $i_2$-terms
$$\nab^{i_1}\p^{i_2}=\nab^{j_1}\p\cdot\nab^{j_2}\p\cdot\cdot\cdot\nab^{j_{i_2}}\p, \mbox{ where } i_1=j_1+j_2+...+j_{i_2},$$
and assume that $j_{i_2}$ is the largest number. We then rewrite
 $$(\at)^{i_1+i_2}\nab^{i_1}\p^{i_2}=a^{i_2}\cdot(\at\nab)^{j_{i_2}}\p \cdot \Pi_{k=1}^{k=i_2-1}(\at\nab)^{j_k}\p.$$ 
We bound $(\at\nab)^{j_{i_2}}\p$ in $L^2_{sc}(\S)$ norm and bound other terms in $L^{\infty}_{sc}(\S)$ norms. By employing (\ref{Holder's}) for $i_2-1$ times, we obtain
\begin{equation*}
\begin{split}
&\f{1}{|u|}\cdot\sum_{i_1+i_2\leq 9}\|(\at)^{i_1+i_2}\nab^{i_1}\p^{i_2}\|_{L^{2}_{sc}(S_{u,\ub})}\\
\leq&\f{1}{|u|}\cdot \sum_{i_1+i_2\leq 9}  \f{(\at)^{i_2}}{|u|^{i_2-1}} \|(\at\nab)^{j_{i_2}}\p\|_{L^{2}_{sc}(S_{u,\ub})} \Pi_{k=1}^{k={i_2-1}}\|(\at\nab)^{j_k}\p\|_{L^{\infty}_{sc}(S_{u,\ub})},\\
\leq& \f{(\at)^{i_2}\cdot O^{i_2}}{|u|^{i_2}}\leq 1.
\end{split}
\end{equation*}
We prove (\ref{4.2}) in the same fashion. If $i_2=0$, (\ref{4.2}) is true according to the definition of $O$. For $i_2\geq 1$, assume $i_1=j_1+j_2+...+j_{i_2+1}$. And assume $j_{i_2+1}$ is the largest. It follows
\begin{equation*}
\begin{split}
&\sum_{i_1+i_2\leq 9} \|(\at)^{i_1+i_2}\nab^{i_1}\p^{i_2+1}\|_{L^{2}_{sc}(S_{u,\ub})}\\
\leq& \sum_{i_1+i_2\leq 9} \f{(\at)^{i_2}}{|u|^{i_2}} \|(\at\nab)^{j_{i_2+1}}\p\|_{L^{2}_{sc}(S_{u,\ub})} \Pi_{k=1}^{k={i_2}}\|(\at\nab)^{j_k}\p\|_{L^{\infty}_{sc}(S_{u,\ub})},\\
\leq& \f{(\at)^{i_2}\cdot O^{i_2+1}}{|u|^{i_2}}\leq O.
\end{split}
\end{equation*}
For (\ref{4.3}), we use (\ref{Holder's}), (\ref{4.2}) and derive 
\begin{equation*}
\begin{split}
&|u|\cdot\sum_{i_1+i_2\leq 9}\|(\at)^{i_1+i_2}\nab^{i_1}\p^{i_2+2}\|_{L^{2}_{sc}(S_{u,\ub})}\\
\leq&|u|\cdot\f{1}{|u|}\cdot \sum_{i_1+i_2\leq 9}  \|(\at\nab)^{i_3}\p\|_{L^{\infty}_{sc}(S_{u,\ub})}\|(\at)^{i_2+i_4}\nab^{i_4}\p^{i_2+1}\|_{L^{2}_{sc}(S_{u,\ub})},\, \, \mbox{where} \, \, i_3+i_4=i_1\\
\leq&O\cdot O=O^2.
\end{split}
\end{equation*}
With the same approach, for (\ref{4.4}), we use (\ref{Holder's}), (\ref{4.3}) and obtain
\begin{equation*}
\begin{split}
&|u|^2\cdot\sum_{i_1+i_2\leq 9}\|(\at)^{i_1+i_2}\nab^{i_1}\p^{i_2+3}\|_{L^{2}_{sc}(S_{u,\ub})}\\
\leq&|u|^2\cdot\f{1}{|u|}\cdot \sum_{i_1+i_2\leq 9}  \|(\at\nab)^{i_3}\p\|_{L^{\infty}_{sc}(S_{u,\ub})}\|(\at)^{i_2+i_4}\nab^{i_4}\p^{i_2+2}\|_{L^{2}_{sc}(S_{u,\ub})},\, \, \mbox{where} \, \, i_3+i_4=i_1\\
\leq&|u|\cdot O\cdot \f{O^2}{|u|}\leq O^3.
\end{split}
\end{equation*}
We then move to (\ref{4.5}). If $i_3\leq 7$, we bound $(\at\nab)^{i_3}\Psi$ with $L^{\infty}_{sc}(\S)$ norm; otherwise we bound $(\at\nab)^{i_3}\Psi$ with $L^{2}_{sc}(\S)$ norm. As before assume $i_1= j_1+j_2+...+j_{i_2}$. With (\ref{Holder's}) and (\ref{4.1}) we obtain
\begin{equation*}
\begin{split}
&\sum_{i_1+i_2+i_3\leq 9} \|(\at)^{i_1+i_2+i_3}\nabla^{i_1}\p^{i_2}\nab^{i_3}\Psi\|_{L^2_{sc}(S_{u,\ub})}\\
\leq&\f{1}{|u|}\cdot \sum_{i_1+i_2+i_3\leq 9}    \|(\at)^{i_1+i_2}\nab^{i_1}\p^{i_2}\|_{L^{2}_{sc}(S_{u,\ub})}\|(\at\nab)^{i_3}\Psi\|_{L^{\infty}_{sc}(S_{u,\ub})}\\
&+\sum_{i_1+i_2+i_3\leq 9}\f{(\at)^{i_2}}{|u|^{i_2}} \Pi_{k=1}^{k=i_2}\|(\at\nab)^{j_k}\p\|_{L^{\infty}_{sc}(S_{u,\ub})}\cdot  \|(\at\nab)^{i_3}\Psi\|_{L^{2}_{sc}(S_{u,\ub})},\\
\leq& O.
\end{split}
\end{equation*}
Similarly, for (\ref{4.6}) we decompose $i_1=j_1+j_2+...+j_{i_2}$ and derive
\begin{equation*}
\begin{split}
&\sum_{i_1+i_2+i_3\leq 9} \|(\at)^{i_1+i_2+i_3+1}\nabla^{i_1}\p^{i_2+1}\nab^{i_3}\Psi\|_{L^2_{sc}(S_{u,\ub})}\\
\leq&\f{1}{|u|}\cdot\sum_{i_1+i_2+i_3\leq 9}\|(\at)^{i_1+i_2+1}\nab^{i_1}\p^{i_2+1}\|_{L^{2}_{sc}(S_{u,\ub})}\|(\at\nab)^{i_3}\Psi\|_{L^{\infty}_{sc}(S_{u,\ub})}\\
&+\f{(\at)^{i_2+1}}{|u|^{i_2+1}}\sum_{i_1+i_2+i_3\leq 9}  \Pi_{k=1}^{k=i_2+1}\|(\at\nab)^{j_k}\p\|_{L^{\infty}_{sc}(S_{u,\ub})}\cdot  \|(\at\nab)^{i_3}\Psi\|_{L^{2}_{sc}(S_{u,\ub})}, \\
\leq&\f{\at}{|u|}\cdot O^2.
\end{split}
\end{equation*}
Finally, we prove (\ref{4.7}). We use (\ref{Holder's}) once and reduce it to (\ref{4.6}). 

\begin{equation}
\begin{split}
&\sum_{i_1+i_2+i_3\leq 9} \|(\at)^{i_1+i_2+i_3+2}\nabla^{i_1}\p^{i_2+2}\nab^{i_3}\Psi\|_{L^2_{sc}(S_{u,\ub})}\\
\leq&\sum_{\substack{i_1+i_2+i_3\leq 9\\i_4+i_5=i_1}}\f{\at}{|u|}\|(\at\nab)^{i_4}\p\|_{L^{\infty}_{sc}(S_{u,\ub})}\cdot\|(\at)^{i_2+i_3+i_5+1}\nabla^{i_5}\p^{i_2+1}\nab^{i_3}\Psi\|_{L^2_{sc}(S_{u,\ub})}\\
\leq&\f{\at}{|u|}\cdot O\cdot \f{\at}{|u|}\cdot O^2=\f{a}{|u|^2}\cdot O^3.\\
\end{split}
\end{equation}
\end{proof}

We are now ready to estimate Ricci coefficients and we start from $\o$
\begin{proposition}\label{o.bd}
Under the assumptions of Theorem \ref{main.thm1} and the bootstrap assumptions \eqref{BA.0}, we have
\[
 \sum_{i\leq 10}\|(\at\nab)^i\o\|_{L^2_{sc}(S_{u,\ub})} \ls \f{\at}{|u|^{\f12}}\big(\underline{\M R}[\rho]+1\big).
\]
\end{proposition}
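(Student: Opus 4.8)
The plan is to integrate the $\nab_3\o$ null structure equation \eqref{nab 3 o} after commuting with up to ten angular derivatives, exploiting that, thanks to the gauge $\Omega\equiv1$ on $H_{u_{\infty}}$ (where $e_4=\Omega^{-1}\partial_{\ub}$ is tangent), one has $\o=-\tfrac{1}{2}\nab_4\log\Omega=0$ there, so $(\at\nab)^i\o$ vanishes identically on $H_{u_{\infty}}$ and the $\nab_3$-transport estimate picks up no initial-data contribution. First I would apply the commutation formula of Proposition \ref{commute} with $\phi=\o$ and $G_0=\nab_3\o$, getting $\nab_3\nab^i\o+\tfrac{i}{2}\trchb\,\nab^i\o=G_i$; since $\o,\omb,\eta,\etb$ all belong to the class $\{\p\}$ of \eqref{psi Psi 2} while $\rho$ is a curvature term, $G_0=\tfrac{1}{2}\rho+\p\cdot\p$ schematically, and the commuted $G_i$ is, to leading order, $\tfrac{1}{2}\nabla^i\rho$ plus products that are at least quadratic in $\p$ (or of the form $\nabla^{i_4}\o$ times $(\p,\chibh,\tc,\tr\chib)$).

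Next I would feed this into the scale-invariant $\nab_3$-transport estimate (Proposition \ref{transport}). The commutator term $\tfrac{i}{2}\trchb\,\nab^i\o$ is not borderline here: since $|\trchb|\ls 1/|u'|$ by \eqref{BA.0} and the transport weight is $a/|u'|^2$, its contribution carries $L^1_{u'}$-weight $\int_{u_{\infty}}^{u}a/|u'|^3\,du'\ls a/|u|^2\ls 1/a$, so it is absorbed by a Gr\"onwall argument (or one may quote Proposition \ref{el} with $\lambda_0=i/2$). The nonlinear terms in $G_i$ are controlled by \eqref{4.3}--\eqref{4.7} — extended to $i\le10$ by placing one factor in $L^2_{sc}$ and the remaining factors, necessarily of order $\le 6$, in $L^\infty_{sc}$ — and contribute at most $O^2/|u'|$, which after integration against $a/|u'|^2$ yields $\ls aO^2/|u|^2\ls \at/|u|^{1/2}$ because $|u|\ge a/4$ and $O^2\le a^{1/8}\ll a$. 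Thus one is reduced to estimating $\int_{u_{\infty}}^{u}\tfrac{a}{|u'|^2}\sum_{i\le10}\|(\at\nab)^i\rho\|_{L^2_{sc}(S_{u',\ub})}\,du'$.

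The crux is this last, linear, term, and the trick is Cauchy--Schwarz in $u'$ with weight $a/|u'|^2$:
\[
\int_{u_{\infty}}^{u}\f{a}{|u'|^2}\|(\at\nab)^i\rho\|_{L^2_{sc}(S_{u',\ub})}\,du'\le\Big(\int_{u_{\infty}}^{u}\f{a}{|u'|^2}\,du'\Big)^{1/2}\Big(\int_{u_{\infty}}^{u}\f{a}{|u'|^2}\|(\at\nab)^i\rho\|^2_{L^2_{sc}(S_{u',\ub})}\,du'\Big)^{1/2}.
\]
The first factor is $\le (a/|u|)^{1/2}=\at/|u|^{1/2}$, and by the very definition \eqref{scale invariant norms 2} of the flux norm the second factor equals $\|(\at\nab)^i\rho\|_{L^2_{sc}(\underline{H}_{\ub}^{(u_{\infty},u)})}\le\Rb[\rho]$. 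Summing over $i\le10$ then gives the asserted bound $\ls\tfrac{\at}{|u|^{1/2}}(\Rb[\rho]+1)$, the $+1$ absorbing all nonlinear contributions. The only genuinely delicate point is this Cauchy--Schwarz step: it is what turns the $L^1_{u'}$-integral of the spherical norm of $\rho$ into the incoming-null-hypersurface flux $\Rb[\rho]$, and it is precisely where the extra decay factor $\at/|u|^{1/2}$ is produced; the remaining commutator bookkeeping in scale-invariant norms is routine.
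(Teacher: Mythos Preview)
Your overall strategy matches the paper's proof: commute $\nab_3\o=\tfrac12\rho+\p\p$ with $i$ angular derivatives, use the vanishing of $\o$ on $H_{u_\infty}$ coming from $\Omega\equiv 1$ there, bound the nonlinear pieces via \eqref{4.3}--\eqref{4.6}, and extract the factor $\at/|u|^{1/2}$ from the $\rho$-term by Cauchy--Schwarz against the flux weight $a/|u'|^2$. You correctly identify that Cauchy--Schwarz step as the crux, and your execution of it is exactly right.

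There is, however, a genuine error in your treatment of the $\tfrac{i}{2}\trchb\,\nab^i\o$ term. You assert that it carries $L^1_{u'}$-weight $\int a/|u'|^3\,du'$, obtained by combining the scale-invariant transport weight $a/|u'|^2$ with the pointwise bound $|\trchb|\ls 1/|u'|$. That mixes frameworks. In scale-invariant norms $\trchb$ is \emph{anomalous}: since $s_2(\trchb)=1$, one has $\|\trchb\|_{L^\infty_{sc}}=a^{-1}|u'|^3\|\trchb\|_{L^\infty}\sim |u'|^2/a$ (this is precisely the reason for the prefactor $a/|u|^2$ in \eqref{O i infty}). Applying the H\"older rule \eqref{Holder's} gives $\|\trchb\cdot(\at\nab)^i\o\|_{L^2_{sc}}\ls (|u'|/a)\,\|(\at\nab)^i\o\|_{L^2_{sc}}$, and after multiplying by the transport weight $a/|u'|^2$ the Gr\"onwall kernel is $\sim i/|u'|$, which is \emph{not} uniformly integrable in $u'$: it produces a factor $(|u_\infty|/|u|)^{\,c\,i}$, destroying the estimate. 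So Proposition~\ref{transport} plus Gr\"onwall does not close here; the $\trchb$ term is genuinely borderline.

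Your parenthetical alternative --- Proposition~\ref{el} with $\lambda_0=i/2$ --- is not merely an alternative but the necessary tool, and it is exactly what the paper uses. That proposition builds the weight $|u|^{\lambda_1}=|u|^{i-1}$ into the transport inequality so that the $\trchb$ coefficient cancels to $O(|u'|^{-2})$ (see \eqref{trchib additional}), which \emph{is} integrable. The paper then multiplies the resulting inequality by one extra power of $|u|$ (using $|u|\le|u'|$) before converting to scale-invariant norms, and proceeds as you describe. In short: promote your parenthetical to the main argument and drop the Gr\"onwall claim.
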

\begin{proof}
We use the following schematic null structure equation for $\o$:
$$\nab_3\o=\f12\rho+\p\p.$$ 
Commuting it with angular derivative for $i$ times, we have
\begin{equation*}
\begin{split}
&\nab_3 \nab^i\o +\frac i2 \trchb\nab^i\o\\
= &\nab^i\rho+\sum_{i_1+i_2+i_3+1=i}\nab^{i_1}\p^{i_2+1}\nab^{i_3}\rho+\sum_{i_1+i_2+i_3+i_4=i}\nabla^{i_1}\p^{i_2}\nabla^{i_3}(\p, \chibh, \tc)\nabla^{i_4} \p\\
&+\sum_{i_1+i_2+i_3+i_4=i-1}\nabla^{i_1}\p^{i_2+1}\nabla^{i_3}\tr\chib\nabla^{i_4} \p.
\end{split}
\end{equation*}
Denote the above equality as 
$$\nab_3 \nab^i\o +\frac i2 \trchb\nab^i\o=G.$$
Applying Proposition \ref{el}, it holds
\begin{equation*}
\begin{split}
|u|^{i-1}\|\nab^i\o\|_{L^2(\S)}\leq&|u_{\infty}|^{i-1}\|\nab^i\o\|_{L^2(S_{u_{\infty},\ub})}+\int_{u_{\infty}}^u|u'|^{i-1}\|G\|_{L^{2}(S_{u',\underline{u}})}du'.
\end{split}
\end{equation*}
Times $|u|$ on both sides and using $|u|\leq |u'|, \, |u|\leq |u_{\infty}|$ we have
\begin{equation}\label{o G}
\begin{split}
|u|^{i}\|\nab^i\o\|_{L^2(\S)}\leq&|u_{\infty}|^{i}\|\nab^i\o\|_{L^2(S_{u_{\infty},\ub})}+\int_{u_{\infty}}^u|u'|^{i}\|G\|_{L^{2}(S_{u',\underline{u}})}du'.
\end{split}
\end{equation}
From signature table and property (\ref{signature of derivative}), we have
$$s_2(\nab^i\o)=s_2(\o)+i\cdot\f{1}{2}=0+\f{i}{2}=\f{i}{2}.$$ 
By conversation of signatures in each equation and property (\ref{signature of derivative}), it holds
$$s_2(G)=s_2(\nab_3 \nab^i\o)=s_2(\nab^i \o)+1=\f{i}{2}+1.$$
Using the definition of $L^2_{sc}(\S)$ norms
$$\|\phi\|_{L_{sc}^{2}(\S)}:=a^{-s_2(\phi)}|u|^{2s_2(\phi)}\|\phi\|_{L^{2}(\S)},$$
we have
$$\|\nab^i\o\|_{L^2_{sc}(\S)}=a^{-\f{i}{2}}|u|^i\|\nab^i\o\|_{L^2(\S)}, \quad \|G\|_{L^2_{sc}(\S)}=a^{-\f{i}{2}-1}|u|^{i+2}\|G\|_{L^2(\S)}.$$
That is equivalent to
$$|u|^i\|\nab^i\o\|_{L^2(\S)}=\|(\at\nab)^i\o\|_{L^2_{sc}(\S)}, \quad |u|^{i}\|G\|_{L^2(\S)}= \f{a}{|u|^2}\|(\at)^iG\|_{L^2_{sc}(\S)}.$$
We then rewrite (\ref{o G}) in $L^2_{sc}(\S)$ norms 
\begin{equation*}
\begin{split}
\|(\at\nab)^i\o\|_{L^2_{sc}(\S)}\leq&\|(\at\nab)^i\o\|_{L^2_{sc}(S_{u_{\infty},\ub})}+\int_{u_{\infty}}^{u}\f{a}{|u'|^2}\|(\at\nab)^{i}\rho\|_{L^2_{sc}(S_{u',\ub})}du'\\
&+\int_{u_{\infty}}^{u}\f{a}{|u'|^2}\|\sum_{i_1+i_2+i_3+1=i}(\at)^i\nabla^{i_1}\p^{i_2+1}\nab^{i_3}\rho\|_{L^2_{sc}(S_{u',\ub})}du'\\
&+\int_{u_{\infty}}^{u}\f{a}{|u'|^2}\|\sum_{i_1+i_2+i_3+i_4=i}(\at)^i\nabla^{i_1}\p^{i_2}\nabla^{i_3}(\p, \chibh, \tc)\nabla^{i_4} \p\|_{L^2_{sc}(S_{u',\ub})}du'\\
&+\int_{u_{\infty}}^{u}\f{a}{|u'|^2}\|\sum_{i_1+i_2+i_3+i_4=i-1}(\at)^i\nabla^{i_1}\p^{i_2+1}\nabla^{i_3}\tr\chib\nabla^{i_4} \p\|_{L^2_{sc}(S_{u',\ub})}du'.
\end{split}
\end{equation*}
For the first term, since we prescribe $\O|_{u=u_{\infty}}=1$, note by 
$$\o=-\f12\nab_4(\log\O), \mbox{ we have } \|(\at\nab)^i\o\|_{L^2_{sc}(S_{u_{\infty},\ub})}=0.$$
For the two terms involving $\rho$, we have
\begin{equation*}
\begin{split}
&\int_{u_{\infty}}^{u}\f{a}{|u'|^2}\|(\at\nab)^{i}\rho\|_{L^2_{sc}(S_{u',\ub})}du'+\int_{u_{\infty}}^{u}\f{a}{|u'|^2}\|\sum_{i_1+i_2+i_3+1=i}(\at)^i\nabla^{i_1}\p^{i_2+1}\nab^{i_3}\rho\|_{L^2_{sc}(S_{u',\ub})}du'\\
\leq&\bigg(\int_{u_{\infty}}^{u}\f{a}{|u'|^2}\|(\at\nab)^{i}\rho\|^2_{L^2_{sc}(S_{u',\ub})}du'\bigg)^{\f12} \bigg(\int_{\ui}^u\f{a}{|u'|^2} du'\bigg)^{\f12}+\int_{\ui}^u \f{a}{|u'|^2}\cdot \f{\at}{|u'|}\cdot O^2\, du'\\
=&\|(\at\nab)^{i}\rho\|_{L^2_{sc}(\Hb_{\ub}^{(\ui,u)})}  \cdot \f{\at}{|u|^{\f12}}+\f{a^{\f32}}{|u|^2}O^{2}\\\leq& \underline{\M R}[\rho]\cdot \f{\at}{|u|^{\f12}}+\f{a^{\f32}}{|u|^2}O^{2}\leq \f{\at}{|u|^{\f12}}\big(\underline{\M R}[\rho]+1\big).
\end{split}
\end{equation*}
where H\"older's inequality and (\ref{4.6}) are used in the first inequality; the definition in (\ref{scale invariant norms 2}) is used in the identity; (\ref{R i Hb}) is employed in the second inequality.

For the last two terms, we have
\begin{equation*}
\begin{split}
&\int_{u_{\infty}}^{u}\f{a}{|u'|^2} \|\sum_{i_1+i_2+i_3+i_4=i} (\at)^i \nabla^{i_1}\p^{i_2}\nabla^{i_3}(\p,\chibh, \tc)\nab^{i_4}\p\|_{L^2_{sc}(S_{u',\ub})}du'\\
\leq&\int_{u_{\infty}}^{u}\f{\at}{|u'|}\|\sum_{i_1+i_2+i_3+i_4=i} (\at)^i \nabla^{i_1}\p^{i_2}\nabla^{i_3}(\f{\at}{|u'|}\p, \f{\at}{|u'|}\chibh, \f{\at}{|u'|}\tc)\nab^{i_4}\p\|_{L^2_{sc}(S_{u',\ub})}du'\\
\leq&\int_{u_{\infty}}^{u}\f{\at}{|u'|}\cdot \f{O^2}{|u'|} du' \leq \f{\at}{|u|} O^{2} \leq \f{\at}{|u|^{\f12}},
\end{split}
\end{equation*}
where we use $\at/|u|\leq 1/\at$ and (\ref{4.3}) in the second inequality.  And
\begin{equation*}
\begin{split}
&\int_{u_{\infty}}^{u}\f{a}{|u'|^2}\|\sum_{i_1+i_2+i_3+i_4=i-1}(\at)^i\nabla^{i_1}\p^{i_2+1}\nabla^{i_3}\tr\chib\nabla^{i_4} \p\|_{L^2_{sc}(S_{u',\ub})}du'\\
\leq&\int_{u_{\infty}}^{u}\at\|\sum_{i_1+i_2+i_3+i_4=i-1}(\at)^{i-1}\nabla^{i_1}\p^{i_2+1}\nabla^{i_3}(\f{a}{|u'|^2}\tr\chib)\nabla^{i_4} \p\|_{L^2_{sc}(S_{u',\ub})}du'\\
\leq& \int_{u_{\infty}}^{u} \at\cdot\f{O^3}{|u'|^2}du' \leq \f{\at}{|u|}\cdot O^3 \leq \f{\at}{|u|^{\f12}}.\\
\end{split}
\end{equation*}
Here we appeal to (\ref{4.4}) for the second inequality. Gather all the estimates and let $a$ to be sufficient large, we then derive
\[
 \sum_{i\leq 10}\|(\at\nab)^i\o\|_{L^2_{sc}(S_{u,\ub})} \ls \f{\at}{|u|^{\f12}}\big(\underline{\M R}[\rho]+1\big).
\]
\end{proof}

We then move to estimate $\chibh$. 
\begin{proposition}\label{chibh.bd}
Under the assumptions of Theorem \ref{main.thm1} and the bootstrap assumptions \eqref{BA.0}, we have
\[
 \sum_{i\leq 10}\f{\at}{|u|}\|(\at\nab)^i\chibh\|_{L^2_{sc}(S_{u,\ub})} \ls 1.
\]
\end{proposition}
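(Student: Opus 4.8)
The plan is to mirror the argument for $\omega$ in Proposition \ref{o.bd}, using this time the null structure equation for $\chibh$ from (\ref{null.str1}),
$$\nab_3\chibh+\trchb\,\chibh=-2\omb\,\chibh-\ab ,$$
which is already of the type handled by Proposition \ref{el} with $\lambda_0=1$. The only curvature component on the right is the most anomalous one, $\ab$, and that is exactly what $\underline{\M R}_i$ controls along the incoming cones $\Hb_{\ub}^{(\ui,u)}$, so integrating in the $\nab_3$-direction (from $u=\ui$ inward) is the natural move. First I would commute with $\nab^i$, $i\leq 10$: by Proposition \ref{commute} in its $\nab_3$-form, after replacing the $\beb$ produced by $[\nab_3,\nab^i]$ through Codazzi as there, one obtains
$$\nab_3\nab^i\chibh+\Big(\tfrac{i}{2}+1\Big)\trchb\,\nab^i\chibh=G ,$$
where $G$ is the single linear term $-\nab^i\ab$ together with quadratic-and-higher terms of schematic type $\nab^{i_1}\p^{i_2}\nab^{i_3}(\p,\chibh,\tc)\nab^{i_4}\chibh$ and $\nab^{i_1}\p^{i_2+1}\nab^{i_3}\tr\chib\,\nab^{i_4}\chibh$; the extra $+1$ in the $\trchb$-coefficient, and the part of $G$ coming from the $-\trchb\chibh$ term of the transport equation, are produced by the identity $\nab^i(|u|^{-1}\chibh)=|u|^{-1}\nab^i\chibh$, valid since $u$ is constant on $\S$. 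By signature conservation every term of $G$ has $s_2(G)=2+\tfrac{i}{2}=s_2(\nab_3\nab^i\chibh)$, so $G$ can be treated as a single block in scale-invariant norms.

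Applying Proposition \ref{el} with $\lambda_1=2(\lambda_0-\tfrac12)=i+1$ yields
$$|u|^{i+1}\|\nab^i\chibh\|_{L^2(\S)}\lesssim |\ui|^{i+1}\|\nab^i\chibh\|_{L^2(S_{\ui,\ub})}+\int_{\ui}^{u}|u'|^{i+1}\|G\|_{L^2(S_{u',\ub})}\,du' .$$
Dividing by $\at$ and using $s_2(\chibh)=1$, the left side is exactly $\tfrac{\at}{|u|}\|(\at\nab)^i\chibh\|_{L^2_{sc}(\S)}$ and the first term on the right is exactly $\tfrac{\at}{|\ui|}\|(\at\nab)^i\chibh\|_{L^2_{sc}(S_{\ui,\ub})}\lesssim \M O^{(0)}\lesssim \M I^{(0)}\lesssim 1$. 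For the $\ab$ contribution I would pass to scale-invariant norms; since $s_2(\ab)=2$ one has $\tfrac1{\at}|u'|^{i+1}\|\nab^i\ab\|_{L^2(S_{u',\ub})}=a^{3/2}|u'|^{-3}\|(\at\nab)^i\ab\|_{L^2_{sc}(S_{u',\ub})}$, and Cauchy--Schwarz in $u'$ against the weight $a|u'|^{-2}$ from (\ref{scale invariant norms 2}) bounds the $du'$-integral by $a^{3/2}(a|u|^3)^{-1/2}\,\underline{\M R}_i\lesssim a|u|^{-3/2}\,\underline{\M R}_i\lesssim \underline{\M R}_i/\at\lesssim R/\at\ll 1$, using $|u|\geq a/4$ and $(O+R)^{20}\leq a^{1/16}$. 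The remaining terms of $G$ share the signature $2+\tfrac i2$, hence the weight $a^{3/2}|u'|^{-3}$; estimating each $\chibh$, $\tc$, $\tr\chib$ factor by its weighted counterpart in $\p$ (the resulting powers of $|u'|$ pulled outside the norm, legitimate since $u$ is constant on $\S$), then applying H\"older (\ref{Holder's}) together with (\ref{4.3})--(\ref{4.4}) and integrating $du'$, gives a bound $\lesssim \at(O^2+O^3)/|u|+O^2/a\ll 1$. Summing over $i\leq 10$ gives the claim.

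The conceptual content is slight; the work is in bookkeeping the weights. The point needing care is the $\ab$ term: although $\chibh$ is an anomalous Ricci coefficient and $\ab$ is the worst curvature component, the interplay of the transport weight $|u'|^{i+1}$, the $a$-weight built into $\|\cdot\|_{L^2_{sc}(\Hb)}$, and the extra decay encoded in $s_2(\ab)=2$ yields enough $a^{-1/2}$-smallness from the far region $|u|\geq a/4$ that $\underline{\M R}$ need not appear on the right at all --- in contrast to the $\rho$ term in Proposition \ref{o.bd}, where the analogous weight is only $\at|u|^{-1/2}\lesssim 1$ and $\underline{\M R}[\rho]$ must be kept. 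One should also check that the commutator terms from $[\nab_3,\nab^i]$ introduce no new borderline behaviour, which holds since each carries at least two Ricci/curvature factors and hence at least one factor $|u|^{-1}\leq 4/a$ after the H\"older step.
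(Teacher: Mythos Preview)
Your proposal is correct and follows essentially the same approach as the paper: both use the $\nab_3$ transport equation for $\chibh$, commute with $\nab^i$, apply Proposition \ref{el} with $\lambda_1=i+1$, convert to scale-invariant norms, and estimate the $\ab$ term by Cauchy--Schwarz against $\|(\at\nab)^i\ab\|_{L^2_{sc}(\Hb)}$ to pick up the crucial extra factor $a|u|^{-3/2}\lesssim a^{-1/2}$. Two small bookkeeping points: your description of $G$ omits the lower-order terms $\nab^{i_1}\p^{i_2+1}\nab^{i_3}\ab$ (with $i_1+i_2+i_3=i-1$) that the commutation formula produces from $F_0=\ab$, but these are harmless and bounded exactly as in the paper by $a^2|u|^{-3}O^2\lesssim O^2/a$; and your remark about $\nab^i(|u|^{-1}\chibh)=|u|^{-1}\nab^i\chibh$ is tangential to why the coefficient is $\tfrac{i}{2}+1$, which simply comes from the leading Leibniz term $\trchb\,\nab^i\chibh$ in $\nab^i(\trchb\,\chibh)$ combined with the $\tfrac{i}{2}\trchb$ from Proposition \ref{commute}.
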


\begin{proof}
We use the null structure equation
$$\nab_3\chibh+\tr\chib\,\chibh=\ab+\p\cdot\chibh.$$ 
Commuting this equation with $i$ angular derivatives, by Proposition \ref{commute} we have

\begin{equation*}
\begin{split}
&\nab_3 \nab^i\chibh +\frac{i+2}{2}\tr\chib\nab^i\chibh\\
= &\nab^{i}\ab+\sum_{i_1+i_2+i_3=i-1}\nab^{i_1}\p^{i_2+1}\nab^{i_3}\ab+
\sum_{i_1+i_2+i_3+i_4=i}\nabla^{i_1}\p^{i_2}\nabla^{i_3}(\p, \chibh, \tc)\nabla^{i_4} \chibh\\
&+\sum_{i_1+i_2+i_3+i_4=i-1}\nabla^{i_1}\p^{i_2+1}\nabla^{i_3}\tr\chib\nabla^{i_4} \chibh.\\
\end{split}
\end{equation*}
Rewrite the above equation as
$$\nab_3 \nab^i\chibh +\frac{i+2}{2}\tr\chib\nab^i\chibh=F.$$
Applying  Proposition \ref{el}, we have
\begin{equation}\label{chibh F}
\begin{split}
|u|^{i+1}\|\nab^i\chibh\|_{L^2(\S)}\leq&|u_{\infty}|^{i+1}\|\nab^i\chibh\|_{L^2(S_{u_{\infty},\ub})}+\int_{u_{\infty}}^u|u'|^{i+1}\|F\|_{L^{2}(S_{u',\underline{u}})}du'.
\end{split}
\end{equation}
By signature consideration, we have
$$s_2(\nab^i\chibh)=s_2(\chibh)+i\cdot\f{1}{2}=\f{i}{2}+1, \quad s_2(F)=s_2(\nab_3\nab^i\chib)=\f{i}{2}+2.$$ 
Using the definition of $L^2_{sc}(\S)$ norms
$$\|\phi\|_{L_{sc}^{2}(\S)}:=a^{-s_2(\phi)}|u|^{2s_2(\phi)}\|\phi\|_{L^{2}(\S)},$$
we have
$$\|\nab^i\chibh\|_{L^2_{sc}(\S)}=a^{-\f{i}{2}-1}|u|^{i+2}\|\nab^i\chih\|_{L^2(\S)}, \quad \|F\|_{L^2_{sc}(\S)}=a^{-\f{i}{2}-2}|u|^{i+4}\|F\|_{L^2(\S)},$$
which are equivalent to
$$|u|^{i+1}\|\nab^i\chih\|_{L^2(\S)}=\f{a}{|u|}\|(\at\nab)^i\chibh\|_{L^2_{sc}(\S)}, \quad |u|^{i+1}\|F\|_{L^2(\S)}=\f{a^2}{|u|^3}\|(\at)^iF\|_{L^2_{sc}(\S)}.$$
Rewrite (\ref{chibh F}) in $L^2_{sc}(\S)$ norms, it follows
$$\f{a}{|u|}\|(\at\nab)^i\chibh\|_{L^2_{sc}(\S)}\leq \f{a}{|\ui|}\|(\at\nab)^i\chibh\|_{L^2_{sc}(S_{u_{\infty},\ub})}+\int_{u_{\infty}}^u\f{a^2}{|u'|^3}\|(\at)^i F\|_{L^{2}_{sc}(S_{u',\underline{u}})}du'.$$
Multiplying $a^{-\f12}$ on both sides, with the expression of $F$ we have  
\begin{equation*}
\begin{split}
\f{\at}{|u|}\|(\at\nab)^i\chibh&\|_{L^2_{sc}(\S)}\leq\f{\at}{|\ui|}\|(\at\nab)^i\chibh\|_{L^2_{sc}(S_{u_{\infty},\ub})}+\int_{u_{\infty}}^{u}\f{a^{\f32}}{|u'|^3}\|(\at\nab)^{i}\ab\|_{L^2_{sc}(S_{u',\ub})}du'\\
&+\int_{u_{\infty}}^{u}\f{a^{\f32}}{|u'|^3}\|\sum_{i_1+i_2+i_3=i-1}(\at)^i\nabla^{i_1}\p^{i_2+1}\nabla^{i_3}\ab\|_{L^2_{sc}(S_{u',\ub})}du'\\
&+\int_{u_{\infty}}^{u}\f{a^{\f32}}{|u'|^3}\|\sum_{i_1+i_2+i_3+i_4=i}(\at)^i\nabla^{i_1}\p^{i_2}\nabla^{i_3}(\p, \chibh, \tc)\nabla^{i_4}  \chibh\|_{L^2_{sc}(S_{u',\ub})}du'\\
&+\int_{u_{\infty}}^{u}\f{a^{\f32}}{|u'|^3}\|\sum_{i_1+i_2+i_3+i_4=i-1}(\at)^i\nabla^{i_1}\p^{i_2+1}\nabla^{i_3}\tr\chib\nabla^{i_4} \chibh\|_{L^2_{sc}(S_{u',\ub})}du'.
\end{split}
\end{equation*}
For initial data, we have
$$\f{\at}{|u_{\infty}|}\|(\at\nab)^i\chibh\|_{L^2_{sc}(S_{u_{\infty},\ub})}\leq \M I^{(0)}(\ub) \lesssim 1. $$
For $\ab$ terms, we have
\begin{equation*}
\begin{split}
\int_{u_{\infty}}^{u}\f{a^{\f32}}{|u'|^3}\|(\at\nab)^{i}\ab\|_{L^2_{sc}(S_{u',\ub})}du'\leq& \bigg(\int_{u_{\infty}}^{u}\f{a}{|u'|^2}\|(\at\nab)^{i}\ab\|^2_{L^2_{sc}(S_{u',\ub})}du'  \bigg)^{\f12} \bigg(\int_{\ui}^{u}\f{a^2}{|u'|^4} du'\bigg)^{\f12}\\
\leq&\|(\at\nab)^i\ab\|_{\shb}\cdot \f{a}{|u|^{\f32}}\leq \f{\Rb}{\at}\leq 1.
\end{split}
\end{equation*}
And by (\ref{4.6})
\begin{equation*}
\begin{split}
&\int_{u_{\infty}}^{u}\f{a^{\f32}}{|u'|^3}\|\sum_{i_1+i_2+i_3=i-1}(\at)^i\nabla^{i_1}\p^{i_2+1}\nabla^{i_3}\ab\|_{L^2_{sc}(S_{u',\ub})}du'\\
\leq&\int_{u_{\infty}}^{u}\f{a^{\f32}}{|u'|^3}\cdot \f{\at}{|u'|}\cdot O^2 \,du'\\
\leq&\f{a^2}{|u|^3}\cdot O^2\leq \f{O^2}{a}\leq 1.
\end{split}
\end{equation*}
We then move to
\begin{equation*}
\begin{split}
&\int_{u_{\infty}}^{u}\f{a^{\f32}}{|u'|^3} \|\sum_{i_1+i_2+i_3+i_4=i} (\at)^i \nabla^{i_1}\p^{i_2}\nabla^{i_3}(\p,\chibh, \tc)\nab^{i_4}\chibh\|_{L^2_{sc}(S_{u',\ub})}du'\\
\leq&\int_{u_{\infty}}^{u}\f{a^{\f12}}{|u'|}\|\sum_{i_1+i_2+i_3+i_4=i} (\at)^i \nabla^{i_1}\p^{i_2}\nabla^{i_3}(\f{\at}{|u'|}\p, \f{\at}{|u'|}\chibh, \f{\at}{|u'|}\tc)\nab^{i_4}(\f{\at}{|u'|}\chibh)\|_{L^2_{sc}(S_{u',\ub})}du'\\
\leq&\int_{u_{\infty}}^{u}\f{\at}{|u'|}\cdot \f{O^2}{|u'|} du' \leq \f{\at}{|u|} O^{2} \leq \f{O^2}{\at}\leq 1,
\end{split}
\end{equation*}
where we use (\ref{4.3}) in the second inequality.  

\noindent We then deal with the last term 
\begin{equation*}
\begin{split}
&\int_{u_{\infty}}^{u}\f{a^{\f32}}{|u'|^3}\|\sum_{i_1+i_2+i_3+i_4=i-1}(\at)^i\nabla^{i_1}\p^{i_2+1}\nabla^{i_3}\tr\chib\nabla^{i_4} \chibh\|_{L^2_{sc}(S_{u',\ub})}du'\\
\leq&\int_{u_{\infty}}^{u} \at\|\sum_{i_1+i_2+i_3+i_4=i-1}(\at)^{i-1}\nabla^{i_1}\p^{i_2+1}\nabla^{i_3}(\f{a}{|u'|^2}\tr\chib)\nabla^{i_4} (\f{\at}{|u'|}\chibh)\|_{L^2_{sc}(S_{u',\ub})}du'\\
\leq& \int_{u_{\infty}}^{u} \at\cdot\f{O^3}{|u'|^2}du' \leq \f{\at}{|u|}\cdot O^3 \leq \f{O^3}{\at}\leq 1.\\
\end{split}
\end{equation*}
Here we appeal to (\ref{4.4}) for the second inequality. Gathering all the estimates, and letting $a$ to be sufficiently large we have obtained
\[
 \sum_{i\leq 10}\f{\at}{|u|}\|(\at\nab)^i\chibh\|_{L^2_{sc}(S_{u,\ub})} \ls 1.
\]
\end{proof}

Next, we deal with $\chih$. 
\begin{proposition}\label{chih.bd}
Under the assumptions of Theorem \ref{main.thm1} and the bootstrap assumptions \eqref{BA.0}, we have
$$\sum_{i\leq 10} \f{1}{\at}\|(\at\nab)^i \chih \|_{L^2_{sc}(\S)}\ls \M R[\a]+1.$$
\end{proposition}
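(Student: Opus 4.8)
The plan is to integrate the $\nab_4$ structure equation for $\chih$, namely $\nab_4\chih+\tr\chi\,\chih=-2\omega\chih-\a$, commuted with $i$ angular derivatives, along the outgoing direction starting from $\Hb_0$. First I would use Proposition \ref{commute} to write, schematically,
\begin{equation*}
\nab_4\nab^i\chih=-\a\mbox{-related terms}+\sum_{i_1+i_2+i_3=i}\nab^{i_1}\p^{i_2}\nab^{i_3}(\nab^{i}\a\mbox{ source})+\sum_{i_1+i_2+i_3+i_4=i}\nab^{i_1}\p^{i_2}\nab^{i_3}(\p,\chih)\nab^{i_4}\chih,
\end{equation*}
taking care that the source term $\nab^i\a$ appears (from $\nab^i$ of the $-\a$ term plus commutators hitting $\chih$ through Codazzi) and that $\tr\chi$ and $\omega$ have been absorbed into the $\p$-notation as in \eqref{psi Psi 2}. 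I would then apply Proposition \ref{transport} (the $\nab_4$ transport inequality in scale invariant norms) to obtain
\begin{equation*}
\|(\at\nab)^i\chih\|_{L^2_{sc}(\S)}\ls\|(\at\nab)^i\chih\|_{L^2_{sc}(S_{u,0})}+\int_0^{\ub}\|(\at\nab)^i F\|_{L^2_{sc}(S_{u,\ub'})}d\ub',
\end{equation*}
where $F=\nab_4\nab^i\chih$. Along $\Hb_0$ the data are Minkowskian, so the first term vanishes.

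For the right-hand side, I would multiply through by $a^{-1/2}$ and estimate each class of terms. The genuinely linear term is $\int_0^{\ub}\frac{1}{\at}\|(\at\nab)^i\a\|_{L^2_{sc}(S_{u,\ub'})}d\ub'$; by Cauchy--Schwarz in $\ub'$ this is bounded by $\big(\int_0^{\ub}\frac{1}{a}\|(\at\nab)^i\a\|^2_{L^2_{sc}}d\ub'\big)^{1/2}\cdot\ub^{1/2}\ls \M R[\a]$ since $\ub\leq 1$ and by the definition of $\M R_i$ in \eqref{R i H} the $H_u$-flux of $\a$ is exactly $\int_0^{\ub}\|\cdot\|^2_{L^2_{sc}(S_{u,\ub'})}d\ub'$ (with the $1/\at$ weight built in). The nonlinear terms of types $\nab^{i_1}\p^{i_2}\nab^{i_3}\chih$ and $\nab^{i_1}\p^{i_2}\nab^{i_3}\p\cdot\nab^{i_4}\chih$ are handled by the product estimates \eqref{4.1}--\eqref{4.7}: writing $\chih=\at\cdot(\chih/\at)$ and folding $\chih/\at$ into $\p$, one gets contributions of the form $\int_0^{\ub}\frac{O^2}{|u|}d\ub'\ls \frac{O^2}{\at}\ll 1$ using $1/|u|\leq 4/a$ and $\ub\leq 1$; similarly the $\tr\chib$-coupled terms (which do not actually arise here since this is a $\nab_4$ equation, but any $\tc$-type term) are absorbed the same way. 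There are also the borderline-looking terms with $\tr\chi\cdot\nab^i\chih$ and $\omega\cdot\nab^i\chih$ from the left side of the structure equation; these are genuinely linear-in-$\chih$ but come with a $\p$ coefficient, so \eqref{4.1}-style pairing (bounding $\p$ in $L^\infty_{sc}$, paying $1/|u|$ from Hölder \eqref{Holder's}) controls them by $\int_0^{\ub}\frac{O}{|u|}\|(\at\nab)^i\chih\|_{L^2_{sc}}d\ub'$, which is closed by Grönwall since the integrand coefficient is $O/|u|\ls O/\at\ll1$.

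Collecting everything gives $\sum_{i\leq 10}\frac{1}{\at}\|(\at\nab)^i\chih\|_{L^2_{sc}(\S)}\ls \M R[\a]+1$ after choosing $a$ large. The main obstacle I anticipate is bookkeeping rather than analysis: one must make sure that after commuting $i$ times the $\a$ that appears is exactly $\nab^i\a$ with no loss of derivatives (the commutator terms involving $\b$, rewritten via Codazzi into $\nab\chih,\nab\tr\chi$, feed back into $\chih$ and $\p$ and are genuinely lower order, absorbed into the nonlinear sum), and that the scale-invariant weights $a^{-s_2}|u|^{2s_2}$ track correctly through the $\at$-rescaling of $\chih$ so that the linear $\a$-term lands precisely in the flux norm $\M R[\a]$ from \eqref{R i H} with the correct $1/\at$ anomaly weight and no leftover $a$-power. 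A secondary subtlety is the $L^2_{\ub}$-versus-pointwise-in-$\ub$ issue: since $F$ is integrated in $\ub'$, Cauchy--Schwarz must be applied before invoking the flux norm, and $\ub\leq 1$ is what makes the $\ub^{1/2}$ factor harmless.
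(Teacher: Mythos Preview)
Your proposal is correct and follows essentially the same route as the paper: integrate the $\nab_4\chih$ equation via Proposition \ref{transport}, pull the top-order $\nab^i\a$ into the flux norm $\M R[\a]$ by Cauchy--Schwarz in $\ub'$, and bound the nonlinear terms via \eqref{4.3} and \eqref{4.6}. The only cosmetic difference is that the paper writes the equation directly in schematic form $\nab_4\chih=\a+\p\cdot\chih$ (with $\tr\chi,\omega$ already absorbed into $\p$) and treats the $\p\cdot\chih$ term purely through \eqref{4.3} by folding $\chih/\at$ into $\p$, so no Gr\"onwall step is invoked; your Gr\"onwall alternative for the linear-in-$\chih$ piece is valid but not needed.
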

\begin{proof}
We employ the null structure equation
$$\nab_4 \chih=\a+\p\cdot\chih.$$
Commuting this equation with $i$ angular derivatives, by Proposition \ref{commute} we have
\begin{equation*}
\begin{split}
\nab_4 \nab^i\chih=& \nab^{i}\a+\sum_{i_1+i_2+i_3=i-1}\nab^{i_1}\p^{i_2+1}\nab^{i_3}\a+\sum_{i_1+i_2+i_3+i_4=i}\nabla^{i_1}\p^{i_2}\nabla^{i_3}(\p, \chih)\nabla^{i_4} \chih.\\
\end{split}
\end{equation*}
Applying Proposition \ref{transport} and multiplying $(\at)^{i-1}$ on both sides of equation, we have
\begin{equation*}
\begin{split}
&\f{1}{\at}\|(\at\nab)^{i}\chih\|_{L^2_{sc}(\S)}\\
\leq &\f{1}{\at}\int_0^{\ub}\|(\at\nab)^i \a\|_{L^2_{sc}(S_{u,\ub'})}d\ub'+\sum_{i_1+i_2+i_3=i-1}\int_0^{\ub} \f{1}{\at}\|(\at)^{i}\nabla^{i_1}\p^{i_2+1}\nabla^{i_3} \a\|
_{L^{2}_{sc}(S_{u,\ub'})}d\ub'\\
&+\sum_{i_1+i_2+i_3+i_4=i}\int_0^{\ub} \f{1}{\at}\|(\at)^{i}\nab^{i_1}\p^{i_2}\nab^{i_3}(\p, \chih)\nab^{i_4}\chih\|
_{L^{2}_{sc}(S_{u,\ub'})}d\ub'\\
\leq& \f{1}{\at}\bigg(\int_0^{\ub}\|(\at\nab)^i \a\|^2_{L^2_{sc}(S_{u,\ub'})}d\ub'\bigg)^{\f12}\bigg(\int_0^{\ub}1 \, d\ub'\bigg)^{\f12}\\
&+\sum_{i_1+i_2+i_3=i-1}\int_0^{\ub} \|(\at)^{i}\nabla^{i_1}\p^{i_2+1}\nabla^{i_3} (\f{\a}{\at})\|
_{L^{2}_{sc}(S_{u,\ub'})}d\ub'\\
&+\sum_{i_1+i_2+i_3+i_4=i}\int_0^{\ub} \at\|(\at)^{i}\nab^{i_1}\p^{i_2}\nab^{i_3}(\f{\p}{\at}, \f{\chih}{\at})\nab^{i_4}(\f{\chih}{\at})\|
_{L^{2}_{sc}(S_{u,\ub'})}d\ub'\\
\leq& \f{1}{\at}\|(\at\nab)^i\a\|_{\sh}+\f{\at}{|u|}\cdot O^2+\at\cdot \f{O^2}{|u|}\\
\leq& \M R[\a]+\f{O^2}{\at}\leq \M R[\a]+1,
\end{split}
\end{equation*}
where we use (\ref{4.6}) and (\ref{4.3}) in the third inequality. \\
\end{proof}
In the same fashion, we derive estimate for $\omb$.
\begin{proposition}\label{omb.bd}
Under the assumptions of Theorem \ref{main.thm1} and the bootstrap assumptions \eqref{BA.0}, we have
$$\sum_{i\leq 10} \|(\at\nab)^i \omb\|_{L^2_{sc}(\S)}\ls \M R[\rho]+1.$$
\end{proposition}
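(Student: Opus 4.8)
The plan is to treat $\omb$ in essentially the same way that $\o$ was handled in Proposition \ref{o.bd}, except that now the governing equation is a $\nab_4$ transport equation integrated from $\Hb_0$, rather than a $\nab_3$ equation integrated from $H_{u_\infty}$. Concretely, I would start from the null structure equation for $\omb$ in \eqref{null.str2}, which schematically reads
$$\nab_4\omb=\tfrac12\rho+\p\cdot\p.$$
Commuting with $i$ angular derivatives via the $\nab_4$ part of Proposition \ref{commute} yields
$$\nab_4\nab^i\omb=\nab^i\rho+\sum_{i_1+i_2+i_3+1=i}\nab^{i_1}\p^{i_2+1}\nab^{i_3}\rho+\sum_{i_1+i_2+i_3+i_4=i}\nab^{i_1}\p^{i_2}\nab^{i_3}(\p,\chih)\nab^{i_4}\p,$$
where the commutation error terms and the quadratic term $\p\cdot\p$ have been absorbed into the last sum (note $\omb$ is itself one of the $\p$'s). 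Since $s_2(\nab_4\nab^i\omb)=s_2(\nab^i\omb)$, passing to scale-invariant norms introduces no extra weights.

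Next I would apply the $\nab_4$ transport estimate of Proposition \ref{transport} with $\phi=\nab^i\omb$ and multiply through by $(\at)^i$. The initial-data term $\|(\at\nab)^i\omb\|_{L^2_{sc}(S_{u,0})}$ vanishes: along $\Hb_0$ we have imposed $\O\equiv1$, and since $e_3$ is tangent to $\Hb_0$, $\omb=-\tfrac12\nab_3\log\O=0$ there. Hence
$$\|(\at\nab)^i\omb\|_{L^2_{sc}(\S)}\ls\int_0^{\ub}\Big\|(\at)^i\nab^i\rho+(\at)^i\!\!\sum_{i_1+i_2+i_3+1=i}\!\!\nab^{i_1}\p^{i_2+1}\nab^{i_3}\rho+(\at)^i\!\!\sum_{i_1+i_2+i_3+i_4=i}\!\!\nab^{i_1}\p^{i_2}\nab^{i_3}(\p,\chih)\nab^{i_4}\p\Big\|_{L^2_{sc}(S_{u,\ub'})}d\ub'.$$

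I would then estimate the three groups of terms exactly as in Propositions \ref{o.bd} and \ref{chih.bd}. For the linear term, Cauchy--Schwarz in $\ub$ together with $\ub\le1$ and the definition \eqref{scale invariant norms 2} give $\int_0^{\ub}\|(\at\nab)^i\rho\|_{L^2_{sc}(S_{u,\ub'})}d\ub'\le\|(\at\nab)^i\rho\|_{\sh}\le\M R[\rho]$; since $\rho$ is a \emph{normal} curvature component (an unrescaled $\Psi$ in \eqref{psi Psi 2}), no $\at^{-1}$ weight is needed here, which is exactly why the final bound reads $\M R[\rho]+1$ and not $\M R[\rho]/\at+1$. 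The nonlinear $\rho$-term is controlled by \eqref{4.6}: $\int_0^{\ub}\tfrac{\at}{|u|}O^2\,d\ub'\le\tfrac{\at}{|u|}O^2$, and the cubic $\p$-product term by \eqref{4.3} (after writing $\chih=\at\cdot\tfrac{\chih}{\at}$ and rewriting each factor in terms of the rescaled quantities of \eqref{psi Psi 2}): $\int_0^{\ub}\at\cdot\tfrac{O^2}{|u|}\,d\ub'\le\tfrac{\at O^2}{|u|}$. On $|u|\ge a/4$ both are $\le 4O^2/\at\ll1$. Summing over $i\le10$ and taking $a$ sufficiently large gives $\sum_{i\leq 10}\|(\at\nab)^i\omb\|_{L^2_{sc}(\S)}\ls\M R[\rho]+1$.

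I do not expect a genuine obstacle: the $\nab_4$ equation is in fact \emph{easier} than the $\nab_3$ equations handled in Propositions \ref{o.bd}--\ref{chibh.bd}, since no $\trchb$-borderline term appears and Proposition \ref{transport} applies directly in place of the more delicate Proposition \ref{el}. The only two points needing a moment's care are confirming that the initial-data term vanishes (the gauge $\O\equiv1$ on $\Hb_0$ combined with $e_3\in T\Hb_0$), and checking that the $\ub$-integration — whose length is at most $1$ — contributes only a harmless constant rather than any growth, so that the borderline linear term $\rho$ is absorbed into its flux norm $\M R[\rho]$ via Cauchy--Schwarz.
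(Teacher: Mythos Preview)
Your proposal is correct and follows essentially the same approach as the paper's own proof: use the schematic equation $\nab_4\omb=\rho+\p\cdot\p$, commute with $\nab^i$, apply the $\nab_4$ transport inequality of Proposition \ref{transport}, and bound the linear $\rho$-term by Cauchy--Schwarz in $\ub$ to get $\M R[\rho]$ while the nonlinear terms are handled via \eqref{4.6} and \eqref{4.3} to give $\at O^2/|u|\ls 1$. Your explicit justification that $\omb=0$ on $\Hb_0$ (from $\O\equiv1$ and $e_3$ tangent to $\Hb_0$) is a detail the paper leaves implicit, but otherwise the arguments coincide.
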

\begin{proof}
We have the schematic null structure equation  
$$\nab_4\omb=\rho+\p\cdot\p.$$
Commuting this equation with $i$ angular derivatives, by Proposition \ref{commute} we have
\begin{equation*}
\begin{split}
\nab_4 \nab^i\omb=& \nab^{i}\rho+\sum_{i_1+i_2+i_3=i-1}\nab^{i_1}\p^{i_2+1}\nab^{i_3}\rho+\sum_{i_1+i_2+i_3+i_4=i}\nabla^{i_1}\p^{i_2}\nabla^{i_3}(\p, \chih)\nabla^{i_4} \p\\
\end{split}
\end{equation*}
Applying Proposition \ref{transport} and multiplying $(\at)^{i}$ on both sides of equation, we have
\begin{equation*}
\begin{split}
&\|(\at\nab)^{i}\omb\|_{L^2_{sc}(\S)}\\
\leq &\int_0^{\ub}\|(\at\nab)^i \rho\|_{L^2_{sc}(S_{u,\ub'})}d\ub'+\sum_{i_1+i_2+i_3=i-1}\int_0^{\ub}\|(\at)^{i}\nabla^{i_1}\p^{i_2+1}\nabla^{i_3} \rho\|
_{L^{2}_{sc}(S_{u,\ub'})}d\ub'\\
&+\sum_{i_1+i_2+i_3+i_4=i}\int_0^{\ub}\|(\at)^{i}\nab^{i_1}\p^{i_2}\nab^{i_3}(\p, \chih)\nab^{i_4}\p\|_{L^{2}_{sc}(S_{u,\ub'})}d\ub'\\
\leq&\bigg(\int_0^{\ub}\|(\at\nab)^i \rho\|^2_{L^2_{sc}(S_{u,\ub'})}d\ub'\bigg)^{\f12}\bigg(\int_0^{\ub}1 \, d\ub'\bigg)^{\f12}\\
&+\sum_{i_1+i_2+i_3=i-1}\int_0^{\ub} \|(\at)^{i}\nabla^{i_1}\p^{i_2+1}\nabla^{i_3} \rho\|
_{L^{2}_{sc}(S_{u,\ub'})}d\ub'\\
&+\sum_{i_1+i_2+i_3+i_4=i}\int_0^{\ub} \at\|(\at)^{i}\nab^{i_1}\p^{i_2}\nab^{i_3}(\f{\p}{\at}, \f{\chih}{\at})\nab^{i_4}\p\|_{L^{2}_{sc}(S_{u,\ub'})}d\ub'\\
\leq& \|(\at\nab)^i\rho\|_{\sh}+\f{\at}{|u|}\cdot O^2+\at\cdot\f{O^2}{|u|}\leq \M R[\rho]+1.\\
\end{split}
\end{equation*}
We use (\ref{4.6}) and (\ref{4.3}) in the third inequality. \\
\end{proof}

Similarly, for $\eta$ we have
\begin{proposition}\label{eta.bd}
Under the assumptions of Theorem \ref{main.thm1} and the bootstrap assumptions \eqref{BA.0}, we have
$$\sum_{i\leq 10} \|(\at\nab)^i \eta\|_{L^2_{sc}(\S)}\ls \M R[\b]+1.$$
\end{proposition}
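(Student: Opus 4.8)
The plan is to treat $\eta$ exactly as $\omb$ was treated in Proposition \ref{omb.bd}, using the null structure equation $\nab_4\eta=-\chi\cdot(\eta-\etb)-\b$, which schematically reads $\nab_4\eta=\b+\p\cdot(\p,\chih)$. First I would commute this transport equation with $i$ angular derivatives using Proposition \ref{commute} (the $\nab_4$ version, which introduces no $\tr\chib$ correction term), obtaining
$$\nab_4\nab^i\eta=\nab^i\b+\sum_{i_1+i_2+i_3=i-1}\nab^{i_1}\p^{i_2+1}\nab^{i_3}\b+\sum_{i_1+i_2+i_3+i_4=i}\nab^{i_1}\p^{i_2}\nab^{i_3}(\p,\chih)\nab^{i_4}\p.$$
Here the curvature source is $\b$ rather than $\a$, so the extra $1/\at$ weight needed for $\chih$ and $\a$ does not appear; $\b$ is a normal term ($s_2(\b)=0.5$) and its norm is controlled directly by $\M R[\b]$.

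Next I would apply Proposition \ref{transport} in the $\nab_4$ direction, integrating from $\ub=0$, and multiply through by $(\at)^i$ to land in scale-invariant norms. The initial term on $\Hb_0$ vanishes since Minkowskian data is prescribed there (so $\eta=0$ on $S_{u,0}$). For the linear term I would use Cauchy-Schwarz in $\ub$ to convert $\int_0^{\ub}\|(\at\nab)^i\b\|_{L^2_{sc}(S_{u,\ub'})}d\ub'$ into $\|(\at\nab)^i\b\|_{L^2_{sc}(H_u^{(0,\ub)})}\cdot(\int_0^\ub 1\,d\ub')^{1/2}\lesssim \M R[\b]$. For the nonlinear terms $\sum\nab^{i_1}\p^{i_2+1}\nab^{i_3}\b$ and $\sum\nab^{i_1}\p^{i_2}\nab^{i_3}(\p,\chih)\nab^{i_4}\p$, I would rewrite $\chih=\at\cdot(\chih/\at)$ to absorb it into the $\p$-class (as in \eqref{psi Psi 2}), then invoke the product estimates \eqref{4.6} and \eqref{4.3} respectively, which give bounds of size $\frac{\at}{|u|}O^2$ and $\at\cdot\frac{O^2}{|u|}$. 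Since $\at/|u|\leq 1$ and $1/\at\ll 1$, both nonlinear contributions are $\lesssim O^2/\at\lesssim 1$ after taking $a$ large.

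Gathering the three pieces yields $\sum_{i\leq 10}\|(\at\nab)^i\eta\|_{L^2_{sc}(\S)}\lesssim \M R[\b]+1$, as claimed. I do not expect any serious obstacle here: the equation has the favorable $\nab_4$ structure (no borderline $\tr\chib$ term, so no need for the refined Proposition \ref{el}), the curvature source $\b$ is a normal term carried along $H_u$, and all nonlinearities are lower order by the smallness in H\"older's inequality \eqref{Holder's}. The only mild subtlety worth checking is that the Codazzi substitution used in Proposition \ref{commute} replaced $\b$ inside the commutator terms by $\div\chih$ and $\nab\tr\chi$ (hence by $\p$ and $\chih/\at$), so the genuine $\b$-dependence appears only through the principal term $\nab^i\b$ and the explicitly displayed $\nab^{i_1}\p^{i_2+1}\nab^{i_3}\b$ sum; one should make sure the latter is still estimated by \eqref{4.6} with the curvature slot occupied by $\b$, which is exactly the form \eqref{4.6} is designed for since $\b\in\{\a/\at,\b,\rho,\sigma,\beb,\ab\}=\Psi$ up to the harmless rescaling.
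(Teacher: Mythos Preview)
Your proposal is correct and follows essentially the same approach as the paper's own proof: the paper also uses the schematic equation $\nab_4\eta=\b+\p\cdot\chih$, commutes with $i$ angular derivatives via Proposition~\ref{commute}, applies Proposition~\ref{transport}, bounds the linear $\b$ term by $\M R[\b]$ via Cauchy--Schwarz in $\ub$, and handles the nonlinear pieces using \eqref{4.6} and \eqref{4.3} after rescaling $\chih$ by $1/\at$. Your explicit mention of the vanishing initial data on $\Hb_0$ and the remark about the Codazzi substitution in the commutator are accurate observations that the paper leaves implicit.
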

\begin{proof}
We have the schematic null structure equation  
$$\nab_4\eta=\b+\p\cdot\chih.$$
Commuting this equation with $i$ angular derivatives, by Proposition \ref{commute} we have
\begin{equation*}
\begin{split}
\nab_4 \nab^i\eta=& \nab^{i}\b+\sum_{i_1+i_2+i_3=i-1}\nab^{i_1}\p^{i_2+1}\nab^{i_3}\b+\sum_{i_1+i_2+i_3+i_4=i}\nabla^{i_1}\p^{i_2}\nabla^{i_3}\p\nabla^{i_4} \chih\\
\end{split}
\end{equation*}
Applying Proposition \ref{transport} and multiplying $(\at)^{i}$ on both sides of equation, we have
\begin{equation*}
\begin{split}
&\|(\at\nab)^{i}\eta\|_{L^2_{sc}(\S)}\\
\leq &\int_0^{\ub}\|(\at\nab)^i \b\|_{L^2_{sc}(S_{u,\ub'})}d\ub'+\sum_{i_1+i_2+i_3=i-1}\int_0^{\ub}\|(\at)^{i}\nabla^{i_1}\p^{i_2+1}\nabla^{i_3} \b\|
_{L^{2}_{sc}(S_{u,\ub'})}d\ub'\\
&+\sum_{i_1+i_2+i_3+i_4=i}\int_0^{\ub}\at\|(\at)^{i}\nab^{i_1}\p^{i_2}\nab^{i_3}\p\nab^{i_4}(\f{\p}{\at}, \f{\chih}{\at})\|_{L^{2}_{sc}(S_{u,\ub'})}d\ub'\\
\leq& \|(\at\nab)^i\b\|_{\sh}+\f{\at}{|u|}\cdot O^2+\at\cdot\f{O^2}{|u|}\leq \M R[\b]+1.\\
\end{split}
\end{equation*}
In the third inequality, (\ref{4.6}) and (\ref{4.3}) are used. \\
\end{proof}

We move to estimate $\tr\chi$
\begin{proposition}\label{trchi.bd}
Under the assumptions of Theorem \ref{main.thm1} and the bootstrap assumptions \eqref{BA.0}, we have
$$\sum_{i\leq 10} \|(\at\nab)^i \tr\chi\|_{L^2_{sc}(\S)}\ls (\M R[\a]+1)^2.$$
\end{proposition}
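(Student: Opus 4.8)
The plan is to mimic the arguments carried out above for $\o$, $\omb$, $\eta$ and $\chih$: rewrite the relevant null structure equation schematically, commute with $i\le10$ angular derivatives via Proposition \ref{commute}, apply the $\nab_4$ transport estimate of Proposition \ref{transport} in scale invariant norms, and control the resulting right-hand side. The only genuinely new feature — and the reason the bound here is \emph{quadratic} in $\M R[\a]$ — is that the forcing term is $|\chih|^2$, a product of two \emph{anomalous} quantities, rather than a single curvature component as for $\omb$ or $\eta$.

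Concretely, I would start from the schematic form
$$\nab_4\trch=\chih\cdot\chih+\p\cdot\p$$
of the structure equation $\nab_4\trch+\f12(\trch)^2=-|\chih|^2-2\o\trch$, where $(\trch)^2$ and $\o\trch$ are honest $\p\cdot\p$ terms since $\trch,\o\in\p$. Commuting with $\nab^i$ (Proposition \ref{commute}, recalling that $\trch$ is itself a $\p$), the right-hand side of $\nab_4\nab^i\trch=\cdots$ is a sum of three families: ``double-$\chih$'' terms $\nab^{i_1}\p^{i_2}\nab^{j_1}\chih\,\nab^{j_2}\chih$; ``single-$\chih$'' terms $\nab^{i_1}\p^{i_2+1}\nab^{i_3}\chih$ produced by the commutator acting on the lower-order $\trch$; and pure $\nab^{i_1}\p^{i_2+2}$ terms. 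Applying Proposition \ref{transport} and multiplying by $(\at)^i$ (the signature bookkeeping is as before: $s_2(\nab_4\nab^i\trch)=s_2(\nab^i\trch)=i/2$), it remains to bound $\|(\at\nab)^i\trch\|_{L^2_{sc}(S_{u,0})}$ together with the $\ub$-integral of each family.

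For the data term, the Minkowskian data on $\Hb_0$ give $\trch(u,0)=2/|u|$, which is constant on the round sphere $S_{u,0}$; hence this term is $\ls1$ for $i=0$ and vanishes for $i\ge1$. For a double-$\chih$ term, the point is that the $(\at)^i$-weight distributes as $(\at\nab)^{j_1}\chih\cdot(\at\nab)^{j_2}\chih$ (times harmless $\p^{i_2}$-factors carrying only extra powers of $O/|u|$); taking $j_1\le j_2$ so that $j_1\le5$, H\"older's inequality (\ref{Holder's}), the Sobolev embedding (\ref{Sobolev 2}) on the $j_1$-derivative factor (this costs two derivatives, but $j_1\le5\le8$, so the $L^2_{sc}$ bounds of Proposition \ref{chih.bd} still apply) and Proposition \ref{chih.bd} itself bound this by $\f1{|u|}\cdot\at(\M R[\a]+1)\cdot\at(\M R[\a]+1)=\f{a}{|u|}(\M R[\a]+1)^2$; since $a/|u|\le4$ and $\ub\le1$, the $\ub$-integral is $\ls(\M R[\a]+1)^2$, which is where the square enters. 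A single-$\chih$ term, after writing $\chih=\at\cdot(\chih/\at)$, becomes $\at$ times a product of $\ge2$ normalized $\p$'s, so it is $\ls\at\cdot O^2/|u|\ls O^2/\at\ll1$ by (\ref{4.3}); and the pure $\p^{i_2+2}$ terms are handled exactly as in the proof of Proposition \ref{omb.bd} via (\ref{4.3})--(\ref{4.4}), contributing $\ll1$. Summing over $i\le10$ and taking $a$ sufficiently large yields the claim.

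The main point requiring care — the only step not already routine from the earlier propositions — is the $L^\infty_{sc}$ control of one $\chih$ factor in the double-$\chih$ terms: we possess $L^2_{sc}$ bounds for $(\at\nab)^i\chih$ only up to $i\le10$ (Proposition \ref{chih.bd}), and Sobolev costs two derivatives, so the product $\nab^{j_1}\chih\cdot\nab^{j_2}\chih$ must be split with the factor placed in $L^\infty$ carrying at most $8$ angular derivatives — which is possible precisely because $j_1+j_2\le10$ forces $\min(j_1,j_2)\le5$. Everything else follows the already-established pattern.
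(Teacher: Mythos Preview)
Your proposal is correct and follows essentially the same route as the paper. The paper commutes the schematic equation $\nab_4\trch=\chih\cdot\chih+\p\cdot\p$, applies Proposition \ref{transport}, and bounds the double-$\chih$ term by $\f{a}{|u|}O[\chih]\cdot O[\chih]\ls(\M R[\a]+1)^2$ via Proposition \ref{chih.bd}, with the remaining terms absorbed into the $+1$; you reproduce exactly this, and you are actually more explicit than the paper about two points it leaves implicit --- the Minkowskian initial data contribution and the need for Sobolev embedding on the lower-derivative $\chih$ factor to convert the $L^2_{sc}$ bound of Proposition \ref{chih.bd} into the required $L^{\infty}_{sc}$ control.
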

\begin{proof}
From (\ref{eqn 2 trchib}), we have the schematic null structure equation:
$$\nab_4\tr\chi=\chih\cdot\chih+\p\cdot\p.$$
Commuting this equation with $i$ angular derivatives, by Proposition \ref{commute} we have
\begin{equation*}
\begin{split}
\nab_4 \nab^i \tr\chi=&\sum_{i_1+i_2=i}\nab^{i_1}\chih\nab^{i_2}\chih+\sum_{i_1+i_2+i_3+i_4+1=i}\nabla^{i_1}\p^{i_2+1}\nabla^{i_3}\chih\nabla^{i_4} (\p, \chih)\\
&+\sum_{i_1+i_2+i_3+i_4=i}\nab^{i_1}\p^{i_2}\nab^{i_3}\p\nab^{i_4}\p.
\end{split}
\end{equation*}
Applying Proposition \ref{transport} and multiplying $(\at)^{i}$ on both sides of equation, we calculate as above
\begin{equation*}
\begin{split}
&\|(\at\nab)^{i}\tr\chi\|_{L^2_{sc}(\S)}\\
\leq& \sum_{i_1+i_2=i}\int_0^{\ub}a\|(\at)^{i}\nab^{i_1}(\f{\chih}{\at})\nab^{i_2}(\f{\chih}{\at})\|_{L^{2}_{sc}(S_{u,\ub'})}d\ub'\\
&+\sum_{i_1+i_2+i_3+i_4+1=i}\int_0^{\ub}a\|(\at)^{i}\nab^{i_1}\p^{i_2+1}\nab^{i_3}(\f{\chih}{\at})\nab^{i_4}(\f{\p}{\at} ,\f{\chih}{\at})\|_{L^{2}_{sc}(S_{u,\ub'})}d\ub'\\
&+\sum_{i_1+i_2+i_3+i_4=i}\int_0^{\ub}\|(\at)^{i}\nab^{i_1}\p^{i_2}\nab^{i_3}\p\nab^{i_4}\p\|_{L^{2}_{sc}(S_{u,\ub'})}d\ub'.\\
\leq& \f{a}{|u|}O[\chih]\cdot O[\chih]+\f{a}{|u|^2}O^3+\f{1}{|u|}O^2\\
\leq&O[\chih]\cdot O[\chih]+1\leq (\M R[\a]+1)^2.
\end{split}
\end{equation*}
For the last inequality, we use Proposition \ref{chih.bd}. \\
\end{proof}

We derive estimates for $\tr\chib$.  
\begin{proposition}\label{trchib.bd}
Under the assumptions of Theorem \ref{main.thm1} and the bootstrap assumptions \eqref{BA.0}, we have 
$$\sum_{i\leq 10} \f{a}{|u|}\|(\at\nab)^i (\tr\chib+\f{2}{|u|})\|_{L^2_{sc}(\S)}\ls \M R[\rho]+\underline{\M R}[\rho]+1, \quad \sum_{i\leq 10} \f{a}{|u|^2}\|(\at\nab)^i \tr\chib\|_{L^2_{sc}(\S)}\ls 1.$$
\end{proposition}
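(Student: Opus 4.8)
The plan is to mirror the structure of the preceding propositions (for $\omega$, $\chibh$, $\chih$, etc.), using the transport equation in the $e_3$ direction together with Proposition \ref{el}, since the dangerous coefficient in front of $\tr\chib$ here makes the precise $\lambda_0$/$\lambda_1$ bookkeeping of Proposition \ref{el} essential. First I would use the renormalized equation \eqref{eqn 2 trchib} for $\tc=\tr\chib+\tfrac{2}{|u|}$, written schematically as
\begin{equation*}
\nab_3\tc+\tr\chib\,\tc=\frac{2}{|u|^2}(\Omega^{-1}-1)+\p\cdot\p+\chibh\cdot\chibh,
\end{equation*}
commute with $i$ angular derivatives via Proposition \ref{commute} (the $\tr\chib$-type source terms are exactly the ones that commutator produces), and then rewrite everything in scale invariant norms using $s_2(\tc)=s_2(\tr\chib)=1$ and \eqref{signature of derivative}. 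Applying Proposition \ref{el} with $\lambda_0$ chosen so that the $\tfrac{i+2}{2}\tr\chib$ coefficient is absorbed, I would obtain a $\nab_3$-transport inequality of the form
\begin{equation*}
\frac{a}{|u|}\|(\at\nab)^i\tc\|_{L^2_{sc}(\S)}\ls \frac{a}{|u_\infty|}\|(\at\nab)^i\tc\|_{L^2_{sc}(S_{u_\infty,\ub})}+\int_{u_\infty}^u\frac{a^2}{|u'|^3}\|(\at)^iF\|_{L^2_{sc}(S_{u',\ub})}\,du',
\end{equation*}
in complete analogy with the $\chibh$ estimate in Proposition \ref{chibh.bd}.

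The source terms fall into three groups. The term $\tfrac{2}{|u|^2}(\Omega^{-1}-1)$ and its angular derivatives are controlled via Proposition \ref{Omega} (which gives $\|\Omega^{-1}-1\|_{L^\infty}\ls O/|u|$) together with the already-established higher-order bounds on $\omega$ from Proposition \ref{o.bd}, since $\nab(\Omega^{-1}-1)$ is controlled through $\omega=-\tfrac12\nab_4\log\Omega$ and the structure equations — this produces a contribution like $\int \tfrac{a^2}{|u'|^3}\cdot\tfrac{\at}{|u'|}\cdot(\underline{\mathcal R}[\rho]+1)\,du'\ls \tfrac{a^{1/2}}{|u|}(\underline{\mathcal R}[\rho]+1)$, hence $\ls\underline{\mathcal R}[\rho]+1$ after multiplying through. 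The nonlinear terms $\p\cdot\p$ and $\chibh\cdot\chibh$ (with their normalized weights inserted as in the $\chibh$ proof) are handled by \eqref{4.3} and \eqref{4.4}, producing $O^2/a$-type gains that are $\ls 1$. The genuinely new feature here is that \eqref{eqn 2 trchib} has \emph{no curvature source at all} (the $\rho$ only enters through coupling further up the hierarchy), so unlike $\chibh$ or $\omega$ this estimate does not directly see $\underline{\mathcal R}[\rho]$; the dependence on $\mathcal R[\rho]+\underline{\mathcal R}[\rho]$ in the statement must come in through the $\Omega^{-1}-1$ term, for which I would invoke the bound on $\log\Omega$ derived from integrating $\omega$ and $\omega$'s estimate from Proposition \ref{o.bd}.

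For the second inequality, $\sum_{i\le 10}\tfrac{a}{|u|^2}\|(\at\nab)^i\tr\chib\|_{L^2_{sc}(\S)}\ls 1$, I would write $\tr\chib=\tc-\tfrac{2}{|u|}$, so $(\at\nab)^i\tr\chib=(\at\nab)^i\tc$ for $i\ge 1$ (the constant $-2/|u|$ is killed by $\nab$), and for $i=0$ use $\|\tc\|_{L^2_{sc}}$ plus the explicit bound $\tfrac{a}{|u|^2}\|\tfrac{2}{|u|}\|_{L^2_{sc}(\S)}\ls\tfrac{a}{|u|^2}\cdot\tfrac{a}{|u|}\cdot\tfrac{|u|^2}{|u|}\cdot|u|^{-1}\ls 1$ (after carefully tracking the $a$ and $|u|$ weights in the definition \eqref{scale invariant norms} — note $s_2(\tr\chib)=1$ so the scale-invariant norm of the function $|u|^{-1}$ carries a compensating $|u|$ power). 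Combining with the first bound and the smallness $a^{-1/2}\ll 1$ closes the estimate. I expect the main obstacle to be the treatment of the $\tfrac{2}{|u|^2}(\Omega^{-1}-1)$ source: one must verify that its higher angular derivatives are genuinely controlled with a favorable power of $|u|$, which requires chasing $\nab^i\log\Omega$ through the $\omega$-equation and the commutator formulas, and checking that the resulting $u'$-integral converges with room to spare — this is the one place where the argument is not a pure copy of the $\chibh$ or $\omega$ propositions.
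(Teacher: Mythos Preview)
Your overall strategy is exactly the paper's: use equation \eqref{eqn 2 trchib} for $\tc$, commute with $\nab^i$, apply Proposition \ref{el} with $\lambda_0=\tfrac{i+2}{2}$ to obtain the transport inequality you wrote, and estimate source terms in scale-invariant norms. Your handling of $\tfrac{2}{|u|^2}(\Omega^{-1}-1)$ is also what the paper does: write $\Omega^{-1}-1=\int_0^{\ub}2\omega\,d\ub'$, commute $\nab^{i_3}$ inside, and invoke Proposition \ref{o.bd}. This correctly accounts for the $\underline{\mathcal R}[\rho]$ contribution.

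There is, however, a gap in your schematic. The right-hand side of \eqref{eqn 2 trchib} contains the term $-2\omb\,\tr\chib$, with the \emph{unrenormalized} $\tr\chib$ (not $\tc$), and this cannot be written as $\p\cdot\p$: since $\tr\chib\sim -2/|u|$ is anomalous, absorbing it into $\p\cdot\p$ costs a factor $|u|^2/a$, and the $u$-integral becomes $\int a O^2/|u'|^2\,du'\ls O^2$ rather than the $O^2/a$ you claim --- this does not improve the bootstrap. The paper isolates this contribution (their $I_2$): using that $|\tr\chib|\ls 1/|u'|$ with an absolute constant, the integral reduces to one controlled by $\|(\at\nab)^{i}\omb\|_{L^2_{sc}}$, and one must then invoke the \emph{improved} bound $O[\omb]\ls\mathcal R[\rho]+1$ from Proposition \ref{omb.bd}. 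This is the actual source of the $\mathcal R[\rho]$ in the statement, not the $\Omega^{-1}-1$ term. Similarly, the $|\chibh|^2$ contribution is also borderline and closes only after feeding in the improved estimate $O[\chibh]\ls 1$ from Proposition \ref{chibh.bd}; invoking \eqref{4.3} with the generic bootstrap constant is not enough by itself.
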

\begin{proof}
From (\ref{eqn 2 trchib}), we have the schematic null structure equation:   
\begin{equation*}
\begin{split}
\nab_3\tc+\tr\chib\tc=\f{2}{|u|^2}(\O^{-1}-1)+\tc\tc+\p\tr\chib-|\chibh|^2.
\end{split}
\end{equation*}
Commuting this equation with $i$ angular derivatives, by Proposition \ref{commute} we have
\begin{equation*}
\begin{split}
&\nab_3\nab^i\tc+\f{i+2}{2}\tr\chib\nab^i\tc=\sum_{i_1+i_2+i_3=i}\nab^{i_1}\p^{i_2}\nab^{i_3}\bigg(\f{2}{|u|^2}(\O^{-1}-1)+\tc\tc+\p\tr\chib-|\chibh|^2 \bigg)\\
&\quad\quad+\sum_{i_1+i_2+i_3+i_4=i}\nab^{i_1}\p^{i_2}\nab^{i_3}(\p, \chibh, \tc)\nab^{i_4} \tc+\sum_{i_1+i_2+i_3+i_4=i-1}\nab^{i_1}\p^{i_2+1}\nab^{i_3}\tr\chib\nab^{i_4}\tc.
\end{split}
\end{equation*}
Denote the RHS of above equation to be $\tilde{F}$. As proceeded in Proposition \ref{chibh.bd}, applying Proposition \ref{el} and rewriting everything in scale invariant norms, we arrive at 
\begin{equation*}
\begin{split}
\f{a}{|u|}\|(\at\nab)^i\tc\|_{L^2_{sc}(\S)}\leq& \f{a}{|u_{\infty}|}\|(\at\nab)^i\tc\|_{L^2_{sc}(S_{u_{\infty},\ub})}+\int_{u_{\infty}}^u \f{a^2}{|u'|^3}\|(\at)^i\tilde{F}\|_{L^2_{sc}(S_{u',\ub})}du'\\
=& \f{a}{|u_{\infty}|}\|(\at\nab)^i\tc\|_{L^2_{sc}(S_{u_{\infty},\ub})}+I_1+I_2+I_3+I_4,
\end{split}
\end{equation*}
where 
$$\f{a}{|u_{\infty}|}\|(\at\nab)^i\tc\|_{L^2_{sc}(S_{u_{\infty},\ub})}\leq \mathcal{I}_0\lesssim 1,$$
\begin{equation*}
\begin{split}
I_1=&\int_{u_{\infty}}^u \f{a^2}{|u'|^3}\|(\at)^i \sum_{i_1+i_2+i_3+i_4=i}\nab^{i_1}\p^{i_2}\nab^{i_3}(\p, \tc, \chibh)\nab^{i_4}(\p,\tc, \chibh)\|_{L^2_{sc}(S_{u',\ub})}du'\\
=&\int_{u_{\infty}}^u \f{a}{|u'|}\|(\at)^{i} \sum_{i_1+i_2+i_3+i_4=i}\nab^{i_1}\p^{i_2}\nab^{i_3}(\f{\at}{|u'|}\p, \f{\at}{|u'|}\tc, \f{\at}{|u'|}\chibh)\nab^{i_4}(\f{\at}{|u'|}\p, \f{\at}{|u'|}\tc, \f{\at}{|u'|}\chibh)\|_{L^2_{sc}(S_{u',\ub})}du'\\
\leq& \int_{u_{\infty}}^u \f{a}{|u'|}\cdot\f{1}{|u'|}\cdot \big (O^2[\chibh]+1\big) \, du' \quad  (\mbox{by Proposition \ref{4.3} and letting}\,\, a \,\, \mbox{to be sufficiently large})\\
\ls& O^2[\chibh]+1\ls 1  \quad (\mbox{by Proposition \ref{chibh.bd}}),
\end{split}
\end{equation*}
\begin{equation*}
\begin{split}
I_2=&\int_{u_{\infty}}^u \f{a^2}{|u'|^3}\|(\at)^i \sum_{i_1+i_2+i_3+i_4=i}\nab^{i_1}\p^{i_2}\nab^{i_3}\omb\nab^{i_4}\tr\chib\|_{L^2_{sc}(S_{u',\ub})}du'\\
=&\int_{u_{\infty}}^u \f{a}{|u'|^2}\|(\at)^{i} \sum_{i_1+i_2+i_3+i_4+1=i}\nab^{i_1}\p^{i_2}\nab^{i_3}\omb\nab^{i_4+1}(\f{a}{|u'|}\tc)\|_{L^2_{sc}(S_{u',\ub})}du'\\
&+\int_{u_{\infty}}^u \f{a}{|u'|}\|(\at)^{i} \sum_{i_1+i_2+i_3=i}\nab^{i_1}\p^{i_2}\nab^{i_3}\omb\cdot(\f{a}{|u'|^2}\tr\chib)\|_{L^2_{sc}(S_{u',\ub})}du'\\
\leq& \int_{u_{\infty}}^u \f{a}{|u'|}\cdot\f{1}{|u'|}\cdot \big (O[\omb]+1\big) \, du' \quad (\mbox{by Proposition \ref{4.3} and letting}\,\, a \,\, \mbox{to be sufficiently large})\\
\ls& O[\omb]+1\ls \mathcal{R}[\rho]+1  \quad (\mbox{by Proposition \ref{omb.bd}}),
\end{split}
\end{equation*}
\begin{equation*}
\begin{split}
I_3=&\int_{u_{\infty}}^u \f{a^2}{|u'|^3}\|(\at)^i \sum_{i_1+i_2+i_3=i}\nab^{i_1}\p^{i_2}\nab^{i_3}(\f{\O^{-1}-1}{|u'|^2})\|_{L^2_{sc}{(S_{u',\ub})}}du'\\
=&\int_{u_{\infty}}^u |u'|^{i+1} \|\sum_{i_1+i_2+i_3=i}\nab^{i_1}\p^{i_2}\nab^{i_3}(\f{\O^{-1}-1}{|u'|^2})\|_{L^2{(S_{u',\ub})}}du' \quad (\mbox{in standard norms})\\
=&\int_{u_{\infty}}^u |u'|^{i+1} \|\sum_{i_1+i_2+i_3=i}\nab^{i_1}\p^{i_2}\nab^{i_3}(\f{\O^{-1}-1}{|u'|^2})\|_{L^2{(S_{u',\ub})}}du' \,\, (\mbox{Using} \,\, \f{\partial}{\partial \ub}\O^{-1}=2\o \Leftrightarrow \nab_4\O^{-1}=2\O^{-1}\o)\\
=&\int_{u_{\infty}}^u |u'|^{i+1} \|\sum_{i_1+i_2+i_3=i}\nab^{i_1}\p^{i_2}\nab^{i_3}[\f{1}{|u'|^2}\cdot \int_0^{\ub}2\o(u',\ub',\theta^1, \theta^2)d\ub']\|_{L^2{(S_{u',\ub})}}du' \\
=&\int_{u_{\infty}}^u |u'|^{i+1} \|\sum_{i_1+i_2+i_3=i}\nab^{i_1}\p^{i_2}[\f{1}{|u'|^2}\cdot \int_0^{\ub}2\nab^{i_3}\o(u',\ub',\theta^1, \theta^2)d\ub']\|_{L^2{(S_{u',\ub})}}du' \\
\leq&  |u'|^{i+1} \|\sum_{i_1+i_2+i_3=i}\f{1}{|u'|^{i_1+i_2}}\cdot\f{1}{|u'|^2}\cdot\f{1}{|u'|^{i_3}}\cdot \f{\at}{|u'|^{\f12}}\cdot \big(\underline{\mathcal{R}}[\rho]+1\big)\, du'    (\mbox{by Proposition \ref{o.bd}})\\
\leq& \int_{u_{\infty}}^u \f{\at}{|u'|^{\f32}} \big(\underline{\mathcal{R}}[\rho]+1\big)\,du' \ls \underline{\mathcal{R}}[\rho]+1,
\end{split}
\end{equation*}
\begin{equation*}
\begin{split}
I_4=&\int_{u_{\infty}}^u \f{a^2}{|u'|^3}\|(\at)^i \sum_{i_1+i_2+i_3=i-1}\nab^{i_1}\p^{i_2+1}\cdot\tr\chib\cdot\nab^{i_3}\tc\|_{L^2_{sc}(S_{u',\ub})}du'\\
=&\int_{u_{\infty}}^u \at \|(\at)^{i-1} \sum_{i_1+i_2+i_3=i-1}\nab^{i_1}\p^{i_2+1}\cdot\f{a}{|u'|^2}\tr\chib\cdot\nab^{i_3}(\f{a}{|u'|}\tc)\|_{L^2_{sc}(S_{u',\ub})}du'\\
\leq& \int_{u_{\infty}}^u \at\cdot \f{O^3}{|u'|^2} du'\leq 1 \quad  (\mbox{by Proposition \ref{4.4}}).
\end{split}
\end{equation*}
In summary, we have obtained
$$\sum_{i\leq 10} \f{a}{|u|}\|(\at\nab)^i (\tr\chib+\f{2}{|u|})\|_{L^2_{sc}(\S)}\ls \M R[\rho]+\underline{\M R}[\rho]+1.$$
This implies
$$\sum_{i\leq 10} \f{a}{|u|^2}\|(\at\nab)^i \tr\chib\|_{L^2_{sc}(\S)}\ls 1.$$
\end{proof}

We move to the last term $\etb$.
\begin{proposition}\label{etab.bd}
Under the assumptions of Theorem \ref{main.thm1} and the bootstrap assumptions \eqref{BA.0}, we have
\[
 \sum_{i\leq 10}\|(\at\nab)^i\etab\|_{L^2_{sc}(S_{u,\ub})} \ls \underline{\M R}[\beb]+\M R[\b]+1.
\]
\end{proposition}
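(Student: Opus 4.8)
The plan is to reproduce for $\etab$ the scheme already used for $\o$, $\omb$ and especially $\chibh$ in Propositions \ref{o.bd}, \ref{omb.bd} and \ref{chibh.bd}. I start from the null structure equation
\[
\nab_3\etab=-\chib\cdot(\etab-\eta)+\beb,
\]
split $\chib=\f12\trchb\,\gamma+\chibh$, and carry the diagonal term on the left, so that schematically
\[
\nab_3\etab+\f12\trchb\,\etab=\beb+\f12\trchb\,\eta-\chibh\cdot(\etab-\eta).
\]
The one new feature, absent from the $\o$ and $\omb$ equations, is the off-diagonal source $\f12\trchb\,\eta$. Writing $\trchb=-\f2{|u|}+\tc$ splits it into $-\f1{|u|}\eta+\f12\tc\,\eta$: here $\tc\,\eta$ is an honest quadratic term of the form $\p\cdot\p$, but $-\f1{|u|}\eta$ is \emph{not} lower order, and it will be absorbed using the bound for $\eta$ already established in Proposition \ref{eta.bd}. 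This is exactly what produces the term $\M R[\b]$ on the right-hand side of the claimed estimate.

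First I commute with $i\le10$ angular derivatives via Proposition \ref{commute}, using that $u$ is constant on $\S$, so $\nab\trchb=\nab\tc$ and only the zeroth-order coefficient of $\trchb$ remains borderline. This gives
\[
\nab_3\nab^i\etab+\f{i+1}{2}\trchb\,\nab^i\etab=F,
\]
where $F$ collects $\nab^i\beb$; the linear term $-\f1{|u|}\nab^i\eta$; and the quadratic families $\nab^{i_1}\p^{i_2}\nab^{i_3}(\p,\chibh,\tc)\nab^{i_4}(\etab,\eta)$, $\nab^{i_1}\p^{i_2}\nab^{i_3}\beb$ with $i_1+i_2\ge1$, and $\nab^{i_1}\p^{i_2+1}\nab^{i_3}\trchb\,\nab^{i_4}(\etab,\eta)$. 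Then I apply Proposition \ref{el} with $\lambda_0=\f{i+1}{2}$, so that $\lambda_1=2(\lambda_0-\f12)=i$, which is precisely what makes the error term (\ref{trchib additional}) controllable, and rewrite everything in scale invariant norms exactly as in the proof of Proposition \ref{chibh.bd}, reaching
\[
\|(\at\nab)^i\etab\|_{L^2_{sc}(\S)}\ls\|(\at\nab)^i\etab\|_{L^2_{sc}(S_{\ui,\ub})}+a|u|\int_{\ui}^{u}\f{1}{|u'|^3}\,\|(\at)^iF\|_{L^2_{sc}(S_{u',\ub})}\,du'.
\]

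It then remains to estimate the pieces of $F$, all of which is routine $a$- and $|u|$-weight bookkeeping identical to the earlier proofs. The initial-data term is $\ls\M I^{(0)}\ls1$ (on $H_{\ui}$ one has $\Omega\equiv1$, hence $\etab=-\eta$, which is governed by $\chih_0$ and its derivatives through the null structure equations). The source $\nab^i\beb$ is handled by Cauchy--Schwarz in $u'$ against the weight $a/|u'|^2$, exactly as $\ab$ is treated in Proposition \ref{chibh.bd}, and contributes $\underline{\M R}[\beb]$. The linear term $-\f1{|u|}\nab^i\eta$ integrates to $\ls\big(\sup_{u,\ub}\|(\at\nab)^i\eta\|_{L^2_{sc}}\big)\int_{\ui}^{u}|u'|^{-2}\,du'\ls\M R[\b]+1$ by Proposition \ref{eta.bd}. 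The quadratic families collapse to an $O^2/\at\ls1$ contribution by (\ref{4.3})--(\ref{4.4}), (\ref{4.6}), Propositions \ref{chibh.bd} and \ref{trchib.bd}, and the bootstrap assumption (\ref{BA.0}), just as in Proposition \ref{chibh.bd}. Summing over $i\le10$ and taking $a$ large yields $\sum_{i\le10}\|(\at\nab)^i\etab\|_{L^2_{sc}(\S)}\ls\underline{\M R}[\beb]+\M R[\b]+1$.

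The only genuine difficulty is the treatment of the two $\trchb$ interactions. The diagonal one, $\trchb\cdot\etab$, is borderline: it must be kept on the left of the transport equation and absorbed through the sharp-coefficient Proposition \ref{el}, the key point being that $\lambda_1=i$ gives exactly the cancellation in (\ref{trchib additional}). The off-diagonal one, $\trchb\cdot\eta$, has leading part $\f1{|u|}\eta$, which is \emph{not} a lower-order term; feeding it the previously derived estimate for $\eta$ is what brings $\M R[\b]$ into the bound. Everything else proceeds verbatim as in the proofs of Propositions \ref{o.bd}, \ref{omb.bd} and \ref{chibh.bd}.
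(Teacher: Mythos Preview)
Your proposal is correct and follows essentially the same approach as the paper. The paper also uses the $\nab_3\etab$ equation with $\lambda_0=\frac{i+1}{2}$ in Proposition \ref{el}, identifies the $\trchb\,\eta$ source as the term requiring Proposition \ref{eta.bd} (hence the $\M R[\b]$ contribution), and handles $\nab^i\beb$ by Cauchy--Schwarz against the $\Hb_{\ub}$ flux; the only cosmetic difference is that the paper keeps $\trchb\,\nab^i\eta$ intact and bounds it via $\frac{a}{|u'|^2}\|\trchb\|_{L^\infty_{sc}}\le O$, whereas you split $\trchb=-\frac{2}{|u|}+\tc$ first, which is equivalent.
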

\begin{proof}
We use the following schematic null structure equation for $\o$:
$$\nab_3\etab+\f12\tr\chib\,\etab=\beb+\tr\chib\eta+\chibh\cdot\p.$$ 
Commuting it with angular derivative for $i$ times, we have 
\begin{equation*}
\begin{split}
&\nab_3 \nab^i\etab +\frac {i+1}{2} \trchb\nab^i\etab\\
= &\nab^i\beb+\sum_{i_1+i_2+i_3+1=i}\nab^{i_1}\p^{i_2+1}\nab^{i_3}\beb+\tr\chib\nab^i\eta+\sum_{i_1+i_2+1=i}\nab^{i_1+1}\tr\chib\nab^{i_2}(\eta,\etab)\\
&+\sum_{i_1+i_2+i_3+i_4+1=i}\nabla^{i_1}\p^{i_2+1}\nabla^{i_3}\p\nabla^{i_4}\tr\chib+\sum_{i_1+i_2+i_3+i_4=i}\nabla^{i_1}\p^{i_2}\nabla^{i_3}\p\nabla^{i_4}(\chibh, \tc).\\
\end{split}
\end{equation*}
Denote the above equality as 
$$\nab_3 \nab^i\etab +\f{i+1}{2} \trchb\nab^i\etab=G.$$
Applying Proposition \ref{el}, it holds
\begin{equation*}
\begin{split}
|u|^{i}\|\nab^i\etab\|_{L^2(\S)}\leq&|u_{\infty}|^{i}\|\nab^i\etab\|_{L^2(S_{u_{\infty},\ub})}+\int_{u_{\infty}}^u|u'|^{i}\|G\|_{L^{2}(S_{u',\underline{u}})}du'.
\end{split}
\end{equation*}
Times $a^{-\f12}$ on both sides and using $|u|\leq |u'|, \, |u|\leq |u_{\infty}|$ we have
\begin{equation}\label{etab G}
\begin{split}
a^{-\f12}|u|^{i}\|\nab^i\etab\|_{L^2(\S)}\leq&a^{-\f12}|u_{\infty}|^{i}\|\nab^i\etab\|_{L^2(S_{u_{\infty},\ub})}+a^{-\f12}\int_{u_{\infty}}^u|u'|^{i}\|G\|_{L^{2}(S_{u',\underline{u}})}du'.
\end{split}
\end{equation}
From signature table and property (\ref{signature of derivative}), we have
$$s_2(\nab^i\etab)=s_2(\etab)+i\cdot\f{1}{2}=\f12+\f{i}{2}=\f{i+1}{2}.$$ 
By conversation of signatures in each equation and property (\ref{signature of derivative}), it holds
$$s_2(G)=s_2(\nab_3 \nab^i\etab)=s_2(\nab^i \etb)+1=\f{i+3}{2}.$$
Using the definition of $L^2_{sc}(\S)$ norms
$$\|\phi\|_{L_{sc}^{2}(\S)}:=a^{-s_2(\phi)}|u|^{2s_2(\phi)}\|\phi\|_{L^{2}(\S)},$$
we have
$$\|\nab^i\etab\|_{L^2_{sc}(\S)}=a^{-\f{i+1}{2}}|u|^{i+1}\|\nab^i\etab\|_{L^2(\S)}, \quad \|G\|_{L^2_{sc}(\S)}=a^{-\f{i+3}{2}}|u|^{i+3}\|G\|_{L^2(\S)}.$$
That is equivalent to
$$a^{-\f12}|u|^{i}\|\nab^i\etab\|_{L^2(\S)}=\f{1}{|u|}\|(\at\nab)^i\etab\|_{L^2_{sc}(\S)}, \quad a^{-\f12}|u|^{i}\|G\|_{L^2(\S)}= \f{a}{|u|^3}\|(\at)^iG\|_{L^2_{sc}(\S)}.$$
We then rewrite (\ref{etab G}) in $L^2_{sc}(\S)$ norms 
\begin{equation*}
\begin{split}
\f{1}{|u|}\|(\at\nab)^i\etab\|_{L^2_{sc}(\S)}\leq&\f{1}{|\ui|}\|(\at\nab)^i\etab\|_{L^2_{sc}(S_{u_{\infty},\ub})}+\int_{u_{\infty}}^{u}\f{a}{|u'|^3}\|(\at\nab)^{i}\beb\|_{L^2_{sc}(S_{u',\ub})}du' \\
&+\int_{u_{\infty}}^{u}\f{a}{|u'|^3}\|\sum_{i_1+i_2+i_3+1=i}(\at)^i\nabla^{i_1}\p^{i_2+1}\nab^{i_3}\beb\|_{L^2_{sc}(S_{u',\ub})}du'\\
&+\int_{u_{\infty}}^{u}\f{a}{|u'|^3}\|\tr\chib(\at\nab)^{i}\eta\|_{L^2_{sc}(S_{u',\ub})}du'\\
&+\int_{u_{\infty}}^{u}\f{a}{|u'|^3}\|\sum_{i_1+i_2+1=i}(\at\nab)^{i_1+1}\tr\chib(\at\nab)^{i_2}\eta\|_{L^2_{sc}(S_{u',\ub})}du'\\
&+\int_{u_{\infty}}^{u}\f{a}{|u'|^3}\|\sum_{i_1+i_2+i_3+i_4+1=i}(\at)^i\nabla^{i_1}\p^{i_2+1}\nabla^{i_3}\p\nabla^{i_4} \tr\chib\|_{L^2_{sc}(S_{u',\ub})}du'\\
&+\int_{u_{\infty}}^{u}\f{a}{|u'|^3}\|\sum_{i_1+i_2+i_3+i_4=i}(\at)^i\nabla^{i_1}\p^{i_2}\nabla^{i_3}\p\nabla^{i_4} (\chibh, \tc)\|_{L^2_{sc}(S_{u',\ub})}du'.
\end{split}
\end{equation*}
For the first term, we have
$$\f{1}{|\ui|}\|(\at\nab)^i\etab\|_{L^2_{sc}(S_{u_{\infty},\ub})}\leq \f{\M I^{(0)}(\ub)}{|\ui|}\ls \f{1}{|\ui|}.$$
For the terms involving $\beb$, we have
\begin{equation*}
\begin{split}
&\int_{u_{\infty}}^{u}\f{a}{|u'|^3}\|(\at\nab)^{i}\beb\|_{L^2_{sc}(S_{u',\ub})}du'+\int_{u_{\infty}}^{u}\f{a}{|u'|^3}\|\sum_{i_1+i_2+i_3+1=i}(\at)^i\nabla^{i_1}\p^{i_2+1}\nab^{i_3}\beb\|_{L^2_{sc}(S_{u',\ub})}du'\\
\leq&\bigg(\int_{u_{\infty}}^{u}\f{a}{|u'|^2}\|(\at\nab)^{i}\beb\|^2_{L^2_{sc}(S_{u',\ub})}du'\bigg)^{\f12} \bigg(\int_{\ui}^u\f{a}{|u'|^4} du'\bigg)^{\f12}+\int_{\ui}^u \f{a}{|u'|^3}\cdot \f{\at}{|u'|}\cdot O^2\, du'\\
=&\|(\at\nab)^{i}\beb\|_{L^2_{sc}(\Hb_{\ub}^{(\ui,u)})}  \cdot \f{\at}{|u|^{\f32}}+\f{a^{\f32}}{|u|^3}O^{2}\\\leq& \underline{\M R}[\beb]\cdot \f{\at}{|u|^{\f32}}+\f{a^{\f32}}{|u|^3}O^{2}\leq \f{\underline{\M R}[\beb]}{|u|}+\f{O^2}{\at|u|}\leq \f{\underline{\M R}[\beb]+1}{|u|}.
\end{split}
\end{equation*}
Here we employ (\ref{4.6}), (\ref{scale invariant norms 2}) and (\ref{R i Hb}).

\noindent For next two terms, we use (\ref{Holder's}), bootstrap assumption (\ref{BA.0}) and Proposition \ref{eta.bd} to obtain
\begin{equation*}
\begin{split}
&\int_{u_{\infty}}^{u}\f{a}{|u'|^3}\|\tr\chib(\at\nab)^{i}\eta\|_{L^2_{sc}(S_{u',\ub})}du'+\int_{u_{\infty}}^{u}\f{a}{|u'|^3}\|\sum_{i_1+i_2+1=i}(\at\nab)^{i_1+1}\tr\chib(\at\nab)^{i_2}\eta\|_{L^2_{sc}(S_{u',\ub})}du'\\
&\leq \int_{u_{\infty}}^{u}\f{1}{|u'|^2}\cdot\f{a}{|u'|^2}\|\tr\chib\|_{L^{\infty}_{sc}(S_{u',\ub})}\|(\at\nab)^{i}\eta\|_{L^2_{sc}(S_{u',\ub})}du'\\
&\quad+\int_{u_{\infty}}^{u}\f{1}{|u'|^2}\|\sum_{i_1+i_2+1=i}(\at\nab)^{i_1+1}\bigg(\f{a}{|u'|}(\tr\chib+\f{2}{|u'|})\bigg)(\at\nab)^{i_2}\eta\|_{L^2_{sc}(S_{u',\ub})}du'\\
&\leq\f{O[\eta]}{|u|}+\int_{\ui}^u \f{O^2}{|u'|^3}du'\leq \f{\M R[\b]+1}{|u|}+\f{O^2}{|u|^2}\leq \f{\underline{\M R}[\beb]+1}{|u|}.
\end{split}
\end{equation*}
As calculated above, for the sixth term we have
\begin{equation*}
\begin{split}
&\int_{u_{\infty}}^{u}\f{a}{|u'|^3}\|\sum_{i_1+i_2+i_3+i_4+1=i}(\at)^i\nabla^{i_1}\p^{i_2+1}\nabla^{i_3}\p\nabla^{i_4} \tr\chib\|_{L^2_{sc}(S_{u',\ub})}du'\\
=&\int_{u_{\infty}}^{u}\f{\at}{|u'|}\|\sum_{i_1+i_2+i_3+i_4+1=i}(\at)^{i-1}\nabla^{i_1}\p^{i_2+1}\nabla^{i_3}\p\nabla^{i_4} (\f{a}{|u'|^2}\tr\chib)\|_{L^2_{sc}(S_{u',\ub})}du'\\
\leq&\int_{u_{\infty}}^{u} \f{\at}{|u'|}\cdot\f{O^3}{|u'|^2} du' \leq \f{\at \cdot O^3}{|u|^2}\leq \f{1}{|u|}.
\end{split}
\end{equation*}
And for the last term, with (\ref{4.3}) we have
\begin{equation*}
\begin{split}
&\int_{u_{\infty}}^{u}\f{a}{|u'|^3}\|\sum_{i_1+i_2+i_3+i_4=i}(\at)^i\nabla^{i_1}\p^{i_2}\nabla^{i_3}\p\nabla^{i_4} (\chibh, \tc)\|_{L^2_{sc}(S_{u',\ub})}du'\\
=&\int_{u_{\infty}}^{u}\f{\at}{|u'|^2}\|\sum_{i_1+i_2+i_3+i_4=i}(\at)^i\nabla^{i_1}\p^{i_2}\nabla^{i_3}\p\nabla^{i_4} (\f{\at}{|u'|}\chibh, \f{\at}{|u'|}\tc)\|_{L^2_{sc}(S_{u',\ub})}du'\\
\leq&\int_{u_{\infty}}^{u} \f{\at}{|u'|^2}\cdot\f{O^2}{|u'|} du' \leq \f{\at \cdot O^2}{|u|^2}\leq \f{1}{\at}\cdot\f{O^2}{|u|}\leq \f{1}{|u|}.
\end{split}
\end{equation*}
Combining all the estimates derived, we have
$$\f{1}{|u|}\|(\at\nab)^i\etab\|_{L^2_{sc}(\S)}\leq \f{1}{|u_{\infty}|}+\f{\underline{\M R}[\beb]+\M R[\b]+1}{|u|}.$$
Multiplying $|u|$ on both sides, we obtain
$$\|(\at\nab)^i\etab\|_{L^2_{sc}(\S)}\ls 1+\underline{\M R}[\beb]+\M R[\b].$$
\end{proof}

\section{$L^2(\S)$ ESTIMATE FOR CURVATURE}\label{secCurvatureL2}
For $i\leq 9$, we have
\begin{proposition}\label{a.bd}
Under the assumptions of Theorem \ref{main.thm1} and the bootstrap assumptions \eqref{BA.0}, we have
\[
 \sum_{i\leq 9}\f{1}{\at}\|(\at\nab)^i\a\|_{L^2_{sc}(S_{u,\ub})} \ls \underline{\M R}[\b]+1.
\]
\end{proposition}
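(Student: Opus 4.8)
The plan is to mimic, for the curvature component $\a$, the transport scheme already run for $\omb$, $\chibh$ and $\etab$ in Propositions \ref{o.bd}, \ref{chibh.bd} and \ref{etab.bd}. The starting point is the first of the null Bianchi equations in \eqref{eq:null.Bianchi}, which schematically reads
$$\nab_3\a+\f12\trchb\,\a=\nab\hot\b+\p\cdot\a+\p\cdot(\rho,\sigma)+\p\cdot\b,$$
where the factors multiplying $\a$ and $(\rho,\sigma)$ carry, respectively, the $\a$- and the $\chih$-anomaly (these terms are $4\omb\a$ and $-3(\chih\rho+{}^*\chih\sigma)$). First I would commute with $i\leq 9$ angular derivatives via Proposition \ref{commute}. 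Since the coefficient of $\trchb$ is $\tfrac12$ before commuting, the commuted equation takes the form
\begin{equation*}
\begin{split}
\nab_3\nab^i\a+\f{i+1}{2}\trchb\,\nab^i\a=&\,\nab^{i+1}\b+\sum_{i_1+i_2+i_3=i-1}\nab^{i_1}\p^{i_2+1}\nab^{i_3}\b\\
&+\sum_{i_1+i_2+i_3+i_4=i}\nab^{i_1}\p^{i_2}\nab^{i_3}(\p,\chibh,\tc)\nab^{i_4}(\a,\chih,\rho,\sigma)\\
&+\sum_{i_1+i_2+i_3+i_4=i-1}\nab^{i_1}\p^{i_2+1}\nab^{i_3}\trchb\,\nab^{i_4}\a.
\end{split}
\end{equation*}
The coefficient $\tfrac{i+1}{2}$ is exactly what makes Proposition \ref{el} applicable with $\lambda_0=\tfrac{i+1}2$, hence $\lambda_1=2(\lambda_0-\tfrac12)=i$; combining this with $s_2(\nab^i\a)=\tfrac i2$ and $s_2$ of the right-hand side equal to $\tfrac i2+1$, and passing to scale-invariant norms after multiplying by $a^{-1/2}$ (precisely as in the proof of Proposition \ref{chibh.bd}), turns the transport inequality into
$$\f1{\at}\|(\at\nab)^i\a\|_{L^2_{sc}(\S)}\ls \f1{\at}\|(\at\nab)^i\a\|_{L^2_{sc}(S_{\ui,\ub})}+\int_{\ui}^u\f{a}{|u'|^2}\cdot\f1{\at}\big\|(\at)^i(\mbox{RHS})\big\|_{L^2_{sc}(S_{u',\ub})}\,du'.$$

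Then I would estimate the terms one at a time. Along $H_{\ui}$ one has $\a=-\nab_4\chih_0$ (since $\O\equiv1$, hence $\o=0$, there), so the initial-data term is $\ls\M I^{(0)}\ls1$. The genuine curvature term $\nab^{i+1}\b$ — here $i+1\leq10$, which is the reason for the restriction $i\leq9$ — is handled by Cauchy--Schwarz in $u'$ against $\int_{\ui}^u\tfrac a{|u'|^2}\,du'\leq\tfrac a{|u|}$:
$$\f1a\int_{\ui}^u\f a{|u'|^2}\|(\at\nab)^{i+1}\b\|_{L^2_{sc}(S_{u',\ub})}\,du'\ls\f1{\at\,|u|^{1/2}}\|(\at\nab)^{i+1}\b\|_{\shb}\ls\f{\Rb[\b]}{|u|^{1/2}}\ls\Rb[\b],$$
using \eqref{scale invariant norms 2} and \eqref{R i Hb}. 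All the remaining terms are nonlinear: the sum $\nab^{i_1}\p^{i_2+1}\nab^{i_3}\b$ is controlled by \eqref{4.6}, while the two families involving $\nab^{i_4}(\a,\chih,\rho,\sigma)$ and $\nab^{i_4}\a$ are controlled by the product estimates \eqref{4.3}--\eqref{4.7} after the usual renormalization — absorbing one factor $\tfrac{\at}{|u'|}$ of $\tfrac a{|u'|^2}=\tfrac{\at}{|u'|}\cdot\tfrac{\at}{|u'|}$ into the anomalous quantities $\chih/\at$, $\a/\at$, $\at\chibh/|u'|$, $a\,\tc/|u'|$, $a\,\trchb/|u'|^2$ — together with the already-proved bound $\tfrac1{\at}\|(\at\nab)^i\chih\|_{L^2_{sc}}\ls\M R[\a]+1$ of Proposition \ref{chih.bd}. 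Each of these leaves a factor $|u'|^{-m}$ with $m\geq1$, so that $\int_{\ui}^u\tfrac a{|u'|^2}|u'|^{-m}O^k\,du'\ls\tfrac a{|u|^2}O^k\ls\tfrac{O^k}{a}\ll1$ once $a$ is chosen large (using $O^k\leq(O+R)^{20}\leq a^{1/16}$). Summing over $i\leq9$ yields $\sum_{i\leq9}\tfrac1{\at}\|(\at\nab)^i\a\|_{L^2_{sc}(\S)}\ls\Rb[\b]+1$.

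The main obstacle is bookkeeping rather than any genuinely new difficulty. One must verify that the extra $a^{-1/2}$ attached to $\a$ and to $\chih$ on the right-hand side is precisely the $a^{-1/2}$ we carry on the left, so that after renormalization every factor is ``normal'' and \eqref{4.3}--\eqref{4.7} apply verbatim; in particular the self-interaction $4\omb\,\a$ must be recognised as genuinely lower order — bounded using the bootstrap assumption \eqref{BA.0} on $\a$, the $|u'|^{-1}$ gain of H\"older's inequality \eqref{Holder's}, and the $a/|u'|^2$ integration weight — so that no Gr\"onwall loop in $\a$ is opened and no $\M R[\a]$ appears on the right. The one structural subtlety is that the sharp transport estimate of Proposition \ref{el}, not the crude Proposition \ref{transport}, must be used, since the coefficient $\tfrac12\trchb$ (and $\tfrac{i+1}2\trchb$ after commuting) is borderline; the cancellation $\lambda_1=2(\lambda_0-\tfrac12)$ and the bound \eqref{trchib additional} for the residual $\trchb$-error are exactly what make this transport estimate lossless.
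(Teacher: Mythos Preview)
Your proposal is correct and follows essentially the same route as the paper: you use the same Bianchi equation for $\nab_3\a$, commute via Proposition~\ref{commute}, apply the sharp weighted transport estimate of Proposition~\ref{el} with $\lambda_0=\tfrac{i+1}{2}$, convert to scale-invariant norms after multiplying by $a^{-1/2}$, bound the top-order $\nab^{i+1}\b$ term by Cauchy--Schwarz against $\shb$ to produce $\Rb[\b]$, and absorb all nonlinear terms (including the self-interaction $\omb\,\a$ and the $\chih\cdot(\rho,\sigma)$ terms) using the bootstrap constant $O$ and the extra $|u'|$-smallness from the integration weight. The paper organizes the commuted right-hand side slightly differently---separating out $\nab^{i_1}\chibh\,\nab^{i_2}\a$ and $\nab^{i_1+1}\trchb\,\nab^{i_2}\a$ explicitly and writing the $\chih$-terms as $(\p,\chih)\Psi$ rather than placing $\chih$ in the last slot---but the content and the estimates are the same. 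Your remark that Proposition~\ref{chih.bd} is available is unnecessary here: the paper (and your own final bookkeeping) uses only the bootstrap bound $O$ on $\chih/\at$, which is what keeps $\M R[\a]$ off the right-hand side.
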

\begin{proof}
We have systematical null Bianchi equation:
$$\nab_3 \a+\f12\tr\chib\a=\nab\b+\p\a+(\p,\chih)\Psi.$$
Commuting it with angular derivative for $i$ times, we have
\begin{equation*}
\begin{split}
&\nab_3 \nab^i\a +\frac {i+1}{2} \trchb\nab^i\a\\
= &\nab^{i+1}\b+\sum_{i_1+i_2+i_3+1=i}\nab^{i_1}\p^{i_2+1}\nab^{i_3+1}\b+\sum_{i_1+i_2+1=i}\nab^{i_1+1}\tr\chib\nab^{i_2}\a+\sum_{i_1+i_2=i}\nab^{i_1}\chibh\nab^{i_2}\a\\
&+\sum_{i_1+i_2+i_3+i_4+1=i}\nabla^{i_1}\p^{i_2+1}\nabla^{i_3}(\p, \tr\chib, \chibh)\nabla^{i_4}\a+\sum_{i_1+i_2+i_3+i_4=i}\nabla^{i_1}\p^{i_2}\nabla^{i_3}(\p,\chih)\nabla^{i_4}\Psi.\\
\end{split}
\end{equation*}
Denote the above equality as 
$$\nab_3 \nab^i\a +\f{i+1}{2} \trchb\nab^i\a=G.$$
Applying Proposition \ref{el}, it holds
\begin{equation*}
\begin{split}
|u|^{i}\|\nab^i\a\|_{L^2(\S)}\leq&|u_{\infty}|^{i}\|\nab^i\a\|_{L^2(S_{u_{\infty},\ub})}+\int_{u_{\infty}}^u|u'|^{i}\|G\|_{L^{2}(S_{u',\underline{u}})}du'.
\end{split}
\end{equation*}
Multiplying $a^{-\f12}$ on both sides, we have
\begin{equation}\label{a G}
\begin{split}
a^{-\f12}|u|^{i}\|\nab^i\a\|_{L^2(\S)}\leq&a^{-\f12}|u_{\infty}|^{i}\|\nab^i\a\|_{L^2(S_{u_{\infty},\ub})}+\int_{u_{\infty}}^u a^{-\f12}|u'|^{i}\|G\|_{L^{2}(S_{u',\underline{u}})}du'.
\end{split}
\end{equation}
From signature table and property (\ref{signature of derivative}), we have
$$s_2(\nab^i\a)=s_2(\a)+i\cdot\f{1}{2}=0+\f{i}{2}=\f{i}{2}.$$ 
By conversation of signature in each equation and property (\ref{signature of derivative}), it holds
$$s_2(G)=s_2(\nab_3 \nab^i\a)=s_2(\nab^i \a)+1=\f{i+2}{2}.$$
Using the definition of $L^2_{sc}(\S)$ norms
$$\|\phi\|_{L_{sc}^{2}(\S)}:=a^{-s_2(\phi)}|u|^{2s_2(\phi)}\|\phi\|_{L^{2}(\S)},$$
we have
$$\|\nab^i\a\|_{L^2_{sc}(\S)}=a^{-\f{i}{2}}|u|^{i}\|\nab^i\a\|_{L^2(\S)}, \quad \|G\|_{L^2_{sc}(\S)}=a^{-\f{i+2}{2}}|u|^{i+2}\|G\|_{L^2(\S)}.$$
That is equivalent to
$$a^{-\f12}|u|^{i}\|\nab^i\a\|_{L^2(\S)}=a^{-\f12}\|(\at\nab)^i\a\|_{L^2_{sc}(\S)}, \quad  a^{-\f12}|u|^{i}\|G\|_{L^2(\S)}= \f{\at}{|u|^2}\|(\at)^iG\|_{L^2_{sc}(\S)}.$$
We then rewrite (\ref{a G}) in $L^2_{sc}(\S)$ norms 
\begin{equation*}
\begin{split}
&a^{-\f12}\|(\at\nab)^i\a\|_{L^2_{sc}(\S)}\\
\leq&a^{-\f12}\|(\at\nab)^i\a\|_{L^2_{sc}(S_{u_{\infty},\ub})}+\int_{u_{\infty}}^{u}\f{\at}{|u'|^2}\|(\at\nab)^{i+1}\b\|_{L^2_{sc}(S_{u',\ub})}du' \\
&+\int_{u_{\infty}}^{u}\f{\at}{|u'|^2}\|\sum_{i_1+i_2+i_3+1=i}(\at)^i\nab^{i_1}\p^{i_2+1}\nab^{i_3+1}\b\|_{L^2_{sc}(S_{u',\ub})}du' \\
&+\int_{u_{\infty}}^{u}\f{\at}{|u'|^2}\|\sum_{i_1+i_2+1=i}(\at)^i\nab^{i_1+1}\tr\chib\nab^{i_2}\a\|_{L^2_{sc}(S_{u',\ub})}du' \\
&+\int_{u_{\infty}}^{u}\f{\at}{|u'|^2}\|\sum_{i_1+i_2=i}(\at)^i\nab^{i_1}\chibh\nab^{i_2}\a\|_{L^2_{sc}(S_{u',\ub})}du' \\
&+\int_{u_{\infty}}^{u}\f{\at}{|u'|^2}\|\sum_{i_1+i_2+i_3+i_4+1=i}(\at)^i\nabla^{i_1}\p^{i_2+1}\nabla^{i_3}(\p, \tr\chib, \chibh)\nabla^{i_4}\a\|_{L^2_{sc}(S_{u',\ub})}du' \\
&+\int_{u_{\infty}}^{u}\f{\at}{|u'|^2}\|\sum_{i_1+i_2+i_3+i_4=i}(\at)^i\nabla^{i_1}\p^{i_2}\nabla^{i_3}(\p,\chih)\nabla^{i_4}\Psi\|_{L^2_{sc}(S_{u',\ub})}du'. \\
\end{split}
\end{equation*}
For the first term, we have
$$a^{-\f12}\|(\at\nab)^i\a\|_{L^2_{sc}(S_{u_{\infty},\ub})}\leq {\M I^{(0)}(\ub)}\ls 1.$$
For the terms involving $\b$, we have
\begin{equation*}
\begin{split}
&\int_{u_{\infty}}^{u}\f{\at}{|u'|^2}\|(\at\nab)^{i+1}\b\|_{L^2_{sc}(S_{u',\ub})}du'\\
&+\int_{u_{\infty}}^{u}\f{1}{|u'|^2}\|\sum_{i_1+i_2+i_3+1=i}(\at)^{i+1}\nabla^{i_1}\p^{i_2+1}\nab^{i_3+1}\b\|_{L^2_{sc}(S_{u',\ub})}du'\\
\leq&\bigg(\int_{u_{\infty}}^{u}\f{a}{|u'|^2}\|(\at\nab)^{i+1}\b\|^2_{L^2_{sc}(S_{u',\ub})}du'\bigg)^{\f12} \bigg(\int_{\ui}^u\f{1}{|u'|^2} du'\bigg)^{\f12}+\int_{\ui}^u \f{1}{|u'|^2}\cdot \f{\at}{|u'|}\cdot O^2\, du'\\
\leq&\|(\at\nab)^{i+1}\b\|_{L^2_{sc}(\Hb_{\ub}^{(\ui,u)})}  \cdot \f{1}{|u|^{\f12}}+\f{a^{\f12}}{|u|^2}O^{2}\\ \leq&a^{-\f12}\|(\at\nab)^{i+1}\b\|_{L^2_{sc}(\Hb_{\ub}^{(\ui,u)})}  \cdot \f{\at}{|u|^{\f12}}+\f{a^{\f12}}{|u|^2}O^{2}\\
\leq& a^{-\f12}\|(\at\nab)^{i+1}\b\|_{L^2_{sc}(\Hb_{\ub}^{(\ui,u)})}+1 \leq \underline{\M R}[\beb]+1,
\end{split}
\end{equation*}
where we employ (\ref{4.6}), (\ref{scale invariant norms 2}) and (\ref{R i Hb}).

\noindent For the next two terms, we use (\ref{4.6}) and obtain 
\begin{equation*}
\begin{split}
&\int_{u_{\infty}}^{u}\f{\at}{|u'|^2}\|\sum_{i_1+i_2+1=i}(\at)^i\nab^{i_1+1}\tr\chib\nab^{i_2}\a\|_{L^2_{sc}(S_{u',\ub})}du' \\
&+\int_{u_{\infty}}^{u}\f{\at}{|u'|^2}\|\sum_{i_1+i_2=i}(\at)^i\nab^{i_1}\chibh\nab^{i_2}\a\|_{L^2_{sc}(S_{u',\ub})}du' \\
\leq& \int_{u_{\infty}}^{u}\f{a^{-\f12}}{|u'|}\|\sum_{i_1+i_2+1=i}(\at)^{i+1}\nab^{i_1+1}\bigg(\f{a}{|u'|}(\tr\chib+\f{2}{|u'|})\bigg)\nab^{i_2}(\f{\a}{\at})\|_{L^2_{sc}(S_{u',\ub})}du' \\
&+\int_{u_{\infty}}^{u}\f{1}{|u'|}\|\sum_{i_1+i_2=i}(\at)^{i+1}\nab^{i_1}(\f{\at}{|u'|}\chibh)\nab^{i_2}(\f{\a}{\at})\|_{L^2_{sc}(S_{u',\ub})}du' \\
\leq& \int_{\ui}^{u}\f{a^{-\f12}}{|u'|}\cdot \f{\at}{|u'|}O^2 \,du'+ \int_{\ui}^{u}\f{1}{|u'|}\cdot \f{\at}{|u'|}O^2 \,du'\\
\leq& \f{O^2}{|u|}+\f{\at\cdot O^2}{|u|}\leq 1.
\end{split}
\end{equation*}
For the last two terms, we have
\begin{equation*}
\begin{split}
&\int_{u_{\infty}}^{u}\f{\at}{|u'|^2}\|\sum_{i_1+i_2+i_3+i_4+1=i}(\at)^i\nabla^{i_1}\p^{i_2+1}\nabla^{i_3}(\p, \tr\chib, \chibh)\nabla^{i_4}\a\|_{L^2_{sc}(S_{u',\ub})}du' \\
&+\int_{u_{\infty}}^{u}\f{\at}{|u'|^2}\|\sum_{i_1+i_2+i_3+i_4=i}(\at)^i\nabla^{i_1}\p^{i_2}\nabla^{i_3}(\p,\chih)\nabla^{i_4}\Psi\|_{L^2_{sc}(S_{u',\ub})}du' \\
\leq&\int_{u_{\infty}}^{u}a^{-\f12}\|\sum_{i_1+i_2+i_3+i_4+1=i}(\at)^{i+1}\nabla^{i_1}\p^{i_2+1}\nabla^{i_3}(\f{a}{|u'|^2}\p, \f{a}{|u'|^2}\tr\chib, \f{a}{|u'|^2}\chibh)\nabla^{i_4}(\f{\a}{\at})\|_{L^2_{sc}(S_{u',\ub})}du' \\
&+\int_{u_{\infty}}^{u}\f{\at}{|u'|^2}\|\sum_{i_1+i_2+i_3+i_4=i}(\at)^{i+1}\nabla^{i_1}\p^{i_2}\nabla^{i_3}(\f{\p}{\at},\f{\chih}{\at})\nabla^{i_4}\Psi\|_{L^2_{sc}(S_{u',\ub})}du' \\
\leq& \int_{\ui}^{u}a^{-\f12} \f{a}{|u|^2} O^3 \,du'+ \int_{\ui}^{u}\f{\at}{|u'|^2}\cdot \f{\at}{|u'|}O^2 \,du'\\
\leq& \f{\at\cdot O^2}{|u|}+\f{a\cdot O^2}{|u|^2}\leq 1.
\end{split}
\end{equation*}
Gathering all the estimates, for sufficiently large $a$ we have showed that
$$\sum_{i\leq 9}\f{1}{\at}\|(\at\nab)^i\a\|_{L^2_{sc}(S_{u,\ub})} \ls \underline{\M R}[\b]+1.$$
\end{proof}

Let $\Psi\in \{\b, \rho, \sigma, \beb, \ab\}$, we proceed to prove
\begin{proposition}\label{etab.bd}
Under the assumptions of Theorem \ref{main.thm1} and the bootstrap assumptions \eqref{BA.0}, we have
\[
 \sum_{i\leq 9}\|(\at\nab)^i\Psi\|_{L^2_{sc}(S_{u,\ub})} \ls \M R[\a]+\underline{\M R}[\b]+1.
\]
\end{proposition}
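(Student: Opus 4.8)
The plan is to treat the five curvature components $\Psi\in\{\b,\rho,\sigma,\beb,\ab\}$ simultaneously, using for each the corresponding $\nab_4$-null Bianchi equation from \eqref{eq:null.Bianchi}, which is the natural one since every $\Psi$ in this list already appears on the right-hand side of $\nab_3\a$ and is propagated in $\nab_4$ against the better data. Schematically each of these equations has the form
$$\nab_4\Psi+(\text{angular power of }\tr\chi)\Psi=\nab\Psi'+\p\Psi''+(\p,\chih)\a,$$
where $\Psi',\Psi''$ are curvature components of strictly higher signature $s_2$, and the only place $\a$ enters is through a $(\p,\chih)\cdot\a$ term (this is why the bound carries $\M R[\a]$ and why we estimate $\a$ first, in Proposition~\ref{a.bd}, via $\nab_3$). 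First I would commute with $i\leq 9$ angular derivatives using Proposition~\ref{commute}, writing the result schematically as $\nab_4\nab^i\Psi=\nab^{i+1}\Psi'+\sum\nabla^{i_1}\p^{i_2+1}\nabla^{i_3+1}\Psi'+\sum\nabla^{i_1}\p^{i_2}\nabla^{i_3}(\p,\chih)\nabla^{i_4}\a+\cdots$, and then apply the transport estimate Proposition~\ref{transport} integrating in $\ub$ from $0$ to $\ub$.

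The key step is to track signatures and rewrite everything in scale-invariant norms, exactly as in the proofs of Propositions~\ref{o.bd}, \ref{chibh.bd}, \ref{a.bd}: since $s_2(\nab^{i}\Psi)=s_2(\Psi)+i/2$ and all terms in each Bianchi equation share the same signature, the transport inequality becomes
$$\|(\at\nab)^i\Psi\|_{L^2_{sc}(S_{u,\ub})}\lesssim\|(\at\nab)^i\Psi\|_{L^2_{sc}(S_{u,0})}+\int_0^{\ub}\Big(\|(\at\nab)^{i+1}\Psi'\|_{L^2_{sc}(S_{u,\ub'})}+(\text{nonlinear terms})\Big)d\ub'.$$
The leading linear term $\int_0^{\ub}\|(\at\nab)^{i+1}\Psi'\|_{L^2_{sc}}\,d\ub'$ is handled by Cauchy–Schwarz in $\ub'$ against $\big(\int_0^{\ub}1\,d\ub'\big)^{1/2}\leq 1$, turning it into $\|(\at\nab)^{i+1}\Psi'\|_{L^2_{sc}(H_u^{(0,\ub)})}\lesssim\M R$; the $(\p,\chih)\a$ term is controlled by the Hölder inequality \eqref{Holder's}, estimates \eqref{4.5}–\eqref{4.7}, and Proposition~\ref{chih.bd}, producing a factor $\M R[\a]+1$ up to $O^2/\at$; the remaining genuinely nonlinear terms $\nabla^{i_1}\p^{i_2+1}\nabla^{i_3+1}\Psi'$ etc. are all lower order, gaining a power of $\at/|u|\leq 1/\at$, exactly as in the already-proved propositions. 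For the top-signature components $\ab,\beb$ one must also absorb the borderline terms involving $\tr\chib$ or $\chibh$ paired with $\ab$; but for the $\nab_4$ equations these come with the favorable $\chih,\p,\tr\chi$ coefficients (not $\tr\chib$), so no analogue of Proposition~\ref{el} is needed here — plain Proposition~\ref{transport} suffices.

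The main obstacle I expect is bookkeeping rather than conceptual: one must verify for each of the five equations that the component $\Psi'$ appearing under $\nab$ on the right really has signature $s_2(\Psi')=s_2(\Psi)+1/2$, so that the $\at$-weighted norm of $\nab^{i+1}\Psi'$ closes against $\M R_{i+1}$ with $i+1\leq 10$ (this is why the statement restricts to $i\leq 9$), and that no term requires more than ten angular derivatives of curvature or a curvature component of signature exceeding that of $\ab$. Once these signature checks are done, the estimate is mechanical: gather all contributions, use $(O+R)^{20}\leq a^{1/16}$ and $a$ large to absorb the $O^k/\at$-type remainders, and conclude
$$\sum_{i\leq 9}\|(\at\nab)^i\Psi\|_{L^2_{sc}(S_{u,\ub})}\lesssim\M I^{(0)}+\M R[\a]+\underline{\M R}[\b]+1\lesssim\M R[\a]+\underline{\M R}[\b]+1.$$
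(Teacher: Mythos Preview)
Your approach is the paper's: integrate each $\nab_4\Psi$ Bianchi equation in $\ub$ via Proposition~\ref{transport}, commute with $\nab^i$, and close in scale-invariant norms. Two bookkeeping points need fixing.

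First, signature goes the other way: from $s_2(\nab_4\Psi)=s_2(\Psi)=s_2(\nab\Psi')=s_2(\Psi')+\tfrac12$ you get $s_2(\Psi')=s_2(\Psi)-\tfrac12$, so the component under $\nab$ has \emph{lower} signature. In particular for $\Psi=\b$ the leading term is $\nab^{i+1}\a$, and after multiplying by $(\at)^i$ you pick up an extra $a^{-\f12}$ so that $a^{-\f12}\|(\at\nab)^{i+1}\a\|_{L^2_{sc}(H_u)}=\M R[\a]$; for the other $\Psi$ the analogous term is $a^{-\f12}\|(\at\nab)^{i+1}\Psi'\|_{L^2_{sc}(H_u)}\leq a^{-\f12}R$, which is absorbed.

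Second, and more important, the borderline nonlinear term is $\chibh\cdot\a$ (from $\nab_4\rho$ and $\nab_4\sigma$), not $(\p,\chih)\cdot\a$. This is precisely where $\underline{\M R}[\b]$ enters: after H\"older you get $O[\chibh]\cdot O[\a]$, and you must invoke Proposition~\ref{chibh.bd} for $O[\chibh]\lesssim 1$ together with Proposition~\ref{a.bd} for $O[\a]\lesssim\underline{\M R}[\b]+1$. Your write-up cites Proposition~\ref{chih.bd} instead and never explains how $\underline{\M R}[\b]$ appears; fix that and the argument is complete.
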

\begin{proof}
For $\Psi\in \{\b, \rho, \sigma, \beb, \ab\}$,  we have the systematical null Bianchi equations: 
$$\nab_4 \Psi=\nab\Psi+\nab\alpha+(\chibh,\p) (\Psi, \a).$$
By Proposition \ref{commute}, we commute this equations with $i$ angular derivatives
\begin{equation*}
\begin{split}
\nab_4\nab^i\Psi=&\nab^{i+1}\Psi+\nab^{i+1}\a+\sum_{i_1+i_2+i_3+1=i}\nabla^{i_1}\p^{i_2+1}\nabla^{i_3+1}(\Psi, \a)\\
&+\sum_{i_1+i_2+i_3+i_4=i}\nabla^{i_1}\p^{i_2}\nab^{i_3}(\p,\chih)\nabla^{i_4}\Psi+\sum_{i_1+i_2+i_3+i_4=i}\nabla^{i_1}\p^{i_2}\nabla^{i_3}(\p, \chibh)\nab^{i_4}(\Psi, \a).
\end{split}
\end{equation*}
Applying Proposition \ref{transport} and multiplying $(\at)^i$ on both sides, we have

\begin{equation*}
\begin{split}
&\|(\at\nab)^{i}\Psi\|_{L^2_{sc}(\S)}\\
\leq&\int_0^{\ub}\|(\at)^{i}\nab^{i+1}\Psi\|
_{L^{2}_{sc}(S_{u,\ub'})}d\ub'+\int_0^{\ub}\|(\at)^{i}\nab^{i+1}\a\|
_{L^{2}_{sc}(S_{u,\ub'})}d\ub'\\
&+\sum_{i_1+i_2+i_3+1=i}\int_0^{\ub}\|(\at)^{i}\nabla^{i_1}\p^{i_2+1}\nabla^{i_3+1}(\Psi, \a)\|
_{L^{2}_{sc}(S_{u,\ub'})}d\ub'\\
&+\sum_{i_1+i_2+i_3+i_4=i}\int_0^{\ub}\|(\at)^{i}\nabla^{i_1}\p^{i_2}\nabla^{i_3}(\p,\chih)\nab^{i_4}\Psi\|
_{L^{2}_{sc}(S_{u,\ub'})}d\ub'\\
&+\sum_{i_1+i_2+i_3+i_4=i}\int_0^{\ub}\|(\at)^{i}\nabla^{i_1}\p^{i_2}\nabla^{i_3}(\p,\chibh)\nab^{i_4}(\Psi, \a)\|
_{L^{2}_{sc}(S_{u,\ub'})}d\ub'\\
\leq&a^{-\f12}\int_0^{\ub}\|(\at)^{i+1}\nab^{i+1}\Psi\|
_{L^{2}_{sc}(S_{u,\ub'})}d\ub'+a^{-\f12}\int_0^{\ub}\|(\at)^{i+1}\nab^{i+1}\a\|
_{L^{2}_{sc}(S_{u,\ub'})}d\ub'\\
&+\sum_{i_1+i_2+i_3+1=i}\int_0^{\ub}\|(\at)^{i+1}\nabla^{i_1}\p^{i_2+1}\nabla^{i_3+1}(\f{\Psi}{\at}, \f{\a}{\at})\|
_{L^{2}_{sc}(S_{u,\ub'})}d\ub'\\
&+\sum_{i_1+i_2+i_3+i_4=i}\int_0^{\ub}\|(\at)^{i+1}\nabla^{i_1}\p^{i_2}\nabla^{i_3}(\f{\p}{\at},\f{\chih}{\at})\nab^{i_4}\Psi\|
_{L^{2}_{sc}(S_{u,\ub'})}d\ub'\\
&+\sum_{i_1+i_2+i_3+i_4+1=i}\int_0^{\ub}a^{-\f12}|u|\|(\at)^{i+1}\nabla^{i_1}\p^{i_2+1}\nabla^{i_3}(\f{\at}{|u|}\p,\f{\at}{|u|}\chibh)\nab^{i_4}(\f{\Psi}{\at}, \f{\a}{\at})\|
_{L^{2}_{sc}(S_{u,\ub'})}d\ub'\\
&+\sum_{i_1+i_2=i}\int_0^{\ub}a^{-\f12}|u|\|(\at)^{i+1}\nabla^{i_1}(\f{\at}{|u|}\p,\f{\at}{|u|}\chibh)\nab^{i_2}(\f{\Psi}{\at}, \f{\a}{\at})\|
_{L^{2}_{sc}(S_{u,\ub'})}d\ub'\\
\leq& a^{-\f12}\bigg(\int_0^{\ub}\|(\at)^{i+1}\nab^{i+1}\Psi\|^2
_{L^{2}_{sc}(S_{u,\ub'})}d\ub'\bigg)^{\f12}+a^{-\f12}\bigg(\int_0^{\ub}\|(\at)^{i+1}\nab^{i+1}\a\|^2
_{L^{2}_{sc}(S_{u,\ub'})}d\ub'\bigg)^{\f12}\\
&+\f{\at}{|u|}O^2+a^{-\f12}|u|\f{a}{|u|^2}O^3+a^{-\f12}|u|\f{\at}{|u|}(O[\chibh]\cdot O[\a]+1)\\
\leq& a^{-\f12}\|(\at\nab)^{i+1}\Psi\|_{\sh}+a^{-\f12}\|(\at\nab)^{i+1}\a\|_{\sh}+O[\chibh]\cdot O[\a]+1\\
\leq& R[\a]+O[\chibh]\cdot O[\a]+1\ls \M R[\a]+O[\a]+1\ls \M R[\a]+\underline{\M R}[\b]+1 ,
\end{split}
\end{equation*}
where we use (\ref{4.6}), (\ref{4.7}), Proposition \ref{chibh.bd} and Proposition \ref{a.bd}.
\end{proof}

\section{ENERGY ESTIMATE}\label{energy estimate}

In this section with scale invariant norms we will derive energy estimates for curvature components and their angular derivatives. Our goal is to show that 
\begin{equation}\label{R Rb}
\M R+\Rb \ls \M I^{(0)}+(\M I^{(0)})^2+1.
\end{equation}
Together with the estimates derived in previous sections, we hence improve all the bootstrap assumptions in \eqref{BA.0}. And Theorem \ref{main.thm1} is therefore established. 

\subsection{Integration by Parts Formula}
We first state and prove several useful formula.   For $D_{u,\ub}=(u_{\infty}, u)\times (0,\ub)$ by direct computations, we have
\begin{proposition}\label{ee 1}
Suppose $\phi_1$ and $\phi_2$ are $r$ tensorfields, then
\begin{equation*}
\begin{split}
&\int_{D_{u,\ub}} \phi_1 \nabla_4\phi_2+\int_{D_{u,\ub}}\phi_2\nabla_4\phi_1\\
=& \int_{\Hb_{\ub}^{(\ui,u)}} \phi_1\phi_2-\int_{\Hb_0^{(\ui,u)}} \phi_1\phi_2+\int_{D_{u,\ub}}(2\omega-\trch)\phi_1\phi_2.
\end{split}
\end{equation*}
\end{proposition}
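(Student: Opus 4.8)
The plan is to prove this as a direct integration-by-parts identity, using the coordinate expressions for the spacetime volume form and for $e_4$ established in Section \ref{coordinates}. Recall from \eqref{equation g} and the discussion there that $e_4=\Omega^{-1}\partial_{\ub}$ and that the spacetime volume element is $\Omega^2\sqrt{-\det g}\,d\th^1 d\th^2 du\,d\ub$, while on each $\S$ the area element is $\sqrt{\det\gamma}\,d\th^1 d\th^2$ and on $\Hb_{\ub}^{(\ui,u)}$ the measure is $2\Omega\sqrt{\det\gamma}\,d\th^1 d\th^2 du$ (see \eqref{int S} and the formulas following it). The basic mechanism is the first variation formula $\partial_{\ub}\sqrt{\det\gamma}=\Omega\trch\sqrt{\det\gamma}$, which was already used in the proof of Proposition \ref{gamma}.

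First I would reduce to the scalar case. Writing $f:=\langle\phi_1,\phi_2\rangle_\gamma$, the metric compatibility of $\nabla$ along $e_4$ (i.e. $\nabla_4\gamma=0$ modulo the usual normalization, which holds for horizontal tensors with the projected connection) gives $e_4(f)=\langle\nabla_4\phi_1,\phi_2\rangle+\langle\phi_1,\nabla_4\phi_2\rangle$, so it suffices to show
\begin{equation*}
\int_{D_{u,\ub}} e_4(f) = \int_{\Hb_{\ub}^{(\ui,u)}} f - \int_{\Hb_0^{(\ui,u)}} f + \int_{D_{u,\ub}}(2\omega-\trch)f.
\end{equation*}
Next I would expand both sides in the coordinate system $(u,\ub,\th^1,\th^2)$. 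On the left, $\int_{D_{u,\ub}}e_4(f)=\int\!\!\int\!\!\int\!\!\int \Omega^{-1}\partial_{\ub}f\cdot \Omega^2\sqrt{-\det g}\,d\th^1d\th^2\,du'\,d\ub'$. Using $\sqrt{-\det g}=2\Omega^2\sqrt{\det\gamma}$ from \eqref{equation g} (the $2\times2$ block $-2\Omega^2(du\,d\ub+d\ub\,du)$ contributes a determinant factor, and one checks $-\det g=4\Omega^4\det\gamma$), the integrand becomes $2\Omega^3\sqrt{\det\gamma}\,\partial_{\ub}f$. Now integrate by parts in $\ub'$: $\partial_{\ub}(2\Omega^3\sqrt{\det\gamma}f)=2\Omega^3\sqrt{\det\gamma}\,\partial_{\ub}f + f\,\partial_{\ub}(2\Omega^3\sqrt{\det\gamma})$, and $\partial_{\ub}(2\Omega^3\sqrt{\det\gamma})=2\Omega^3\sqrt{\det\gamma}(3\Omega^{-1}\partial_{\ub}\Omega+\Omega\trch)=2\Omega^3\sqrt{\det\gamma}\,\Omega(\trch - 6\omega)$ using $\omega=-\tfrac12\Omega^{-1}\partial_{\ub}\Omega$ from \eqref{Omegatransport}. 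Wait — I should recompute the $\Omega$-weight carefully: the correct power to differentiate is the full integrand factor $\Omega^{-1}\cdot\Omega^2\cdot 2\Omega^2 = 2\Omega^3$, but since we want the boundary term to be the hypersurface integral $\int_{\Hb_{\ub}}f$ with its measure $2\Omega\sqrt{\det\gamma}$, the clean route is to factor the integrand as $(2\Omega\sqrt{\det\gamma})\cdot(\Omega^2\,\partial_{\ub}f)$ — no, that is not a derivative. The honest way: differentiate $2\Omega\sqrt{\det\gamma}\cdot f$ in $\ub$, producing the two boundary terms at $\ub'=\ub$ and $\ub'=0$ (which are exactly $\int_{\Hb_{\ub}^{(\ui,u)}}f$ and $\int_{\Hb_0^{(\ui,u)}}f$), plus $-\int f\,\partial_{\ub}(2\Omega\sqrt{\det\gamma})$; then reconcile this with $\int_{D_{u,\ub}}e_4(f)$ via the volume-form relation, and the discrepancy is precisely a multiple of $f$, which after using $\partial_{\ub}(2\Omega\sqrt{\det\gamma})=2\Omega\sqrt{\det\gamma}(\,-2\omega+\Omega\trch\,)\cdot(\text{appropriate }\Omega\text{ factor})$ collapses to $\int_{D_{u,\ub}}(2\omega-\trch)f$. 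This is the one genuinely error-prone bookkeeping step.

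So the heart of the argument, and the step I expect to be the main obstacle, is getting the $\Omega$-weights in the volume form versus the hypersurface measure exactly right, so that the leftover first-order term comes out as $(2\omega-\trch)$ with the stated sign rather than, say, $(2\omega-\trch)$ times an extra power of $\Omega$ or with a flipped sign. Once the scalar identity is in hand, the tensorial statement follows immediately from $e_4\langle\phi_1,\phi_2\rangle_\gamma=\langle\nabla_4\phi_1,\phi_2\rangle_\gamma+\langle\phi_1,\nabla_4\phi_2\rangle_\gamma$ (valid because $\nabla_4$ denotes the $\S$-projection of $D_4$ and the induced metric $\gamma$ is preserved under the flow up to the factor already accounted for), and the proposition is proved. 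I would present the computation compactly, quoting \eqref{Omegatransport} for $\omega$, the first variation formula $\partial_{\ub}\log\sqrt{\det\gamma}=\Omega\trch$, and the integration conventions \eqref{int S}.
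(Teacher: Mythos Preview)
Your approach is correct and is exactly what the paper means: the proposition is stated there without proof beyond the phrase ``by direct computations'', and the computation it has in mind is the one you describe --- reduce to the scalar $f=\langle\phi_1,\phi_2\rangle_\gamma$, then differentiate $\int_{\Hb_{\ub'}^{(\ui,u)}}f$ in $\ub'$ using $\partial_{\ub}\sqrt{\det\gamma}=\Omega\,\trch\,\sqrt{\det\gamma}$ and $\partial_{\ub}\Omega=-2\Omega^2\omega$ (this is the same mechanism the paper does write out for the $\nabla_3$ analogue in Proposition~\ref{ee 3}). The only issue is presentational: the mid-paragraph ``Wait --- I should recompute'' and ``no, that is not a derivative'' passages should be deleted and replaced by the single clean line $\partial_{\ub'}\!\int_{\Hb_{\ub'}^{(\ui,u)}}f=\int_{\ui}^u\!\int 2\Omega^2\sqrt{\det\gamma}\,\big(e_4(f)+(\trch-2\omega)f\big)\,d\th\,du'$, after which integrating in $\ub'$ and rearranging gives the identity.
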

\begin{proposition}\label{ee 2}
Suppose we have an $r$ tensorfield $^{(1)}\phi$ and an $r-1$ tensorfield $^{(2)}\phi$, then
\begin{equation*}
\begin{split}
&\int_{D_{u,\ub}}{ }^{(1)}\phi^{A_1A_2...A_r}\nabla_{A_r}{ }^{(2)}\phi_{A_1...A_{r-1}}+\int_{D_{u,\ub}}\nabla^{A_r}{ }^{(1)}\phi_{A_1A_2...A_r}{ }^{(2)}\phi^{A_1...A_{r-1}}\\
=& -\int_{D_{u,\ub}}(\eta+\etab){ }^{(1)}\phi{ }^{(2)}\phi.
\end{split}
\end{equation*}
\end{proposition}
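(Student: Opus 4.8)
The plan is to reduce the identity to the divergence theorem on each sphere $S_{u,\ub}$, integrated over $D_{u,\ub}$. First I would recognize the left-hand integrand as a pure angular divergence: setting
\begin{equation*}
X^{A}:={ }^{(1)}\phi^{A_1A_2\cdots A_{r-1}A}\,{ }^{(2)}\phi_{A_1\cdots A_{r-1}},
\end{equation*}
an $S_{u,\ub}$-tangent vector field (with $A_1,\dots,A_{r-1}$ summed and contracted by $\gamma$), the two terms on the left combine exactly into $\nabla_{A}X^{A}=\div X$, because the induced connection $\nabla$ on $S_{u,\ub}$ satisfies the Leibniz rule and is $\gamma$-compatible, so that raising and lowering the contracted indices is harmless.

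Next I would pass to the coordinate description of $\int_{D_{u,\ub}}$ from Section \ref{sec setting}, writing it as an iterated integral of sphere integrals, $\int_{D_{u,\ub}}F=\int_{u_\infty}^{u}\int_{0}^{\ub}\big(\int_{S_{u',\ub'}}w\,F\big)\,d\ub'\,du'$, where the weight $w$ — the spacetime volume density relative to the area element $\sqrt{\det\gamma}\,d\theta^1\,d\theta^2$ of $S_{u',\ub'}$ — is a constant multiple of $\Omega^{2}$ (this is the normalization consistent with Proposition \ref{ee 1}). On a fixed sphere, the identity $\div(w\,X)=w\,\div X+w\,\langle\nabla\log w,X\rangle$ rearranges to
\begin{equation*}
w\,\div X=\div(w\,X)-w\,\langle\nabla\log w,X\rangle,
\end{equation*}
and integrating over the closed surface $S_{u',\ub'}$ the term $\int_{S}\div(w\,X)$ vanishes by the divergence theorem (applied patchwise against the partition of unity $p_U$), leaving $\int_{S}w\,\div X=-\int_{S}w\,\langle\nabla\log w,X\rangle$.

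Finally, since $w$ is a constant multiple of $\Omega^{2}$ we have $\nabla_A\log w=2\nabla_A\log\Omega$, and the structure relations $\eta_A=\zeta_A+\nabla_A\log\Omega$, $\etab_A=-\zeta_A+\nabla_A\log\Omega$ give $2\nabla_A\log\Omega=\eta_A+\etab_A$. Substituting and re-assembling the iterated integral in $u',\ub'$ yields $\int_{D_{u,\ub}}\div X=-\int_{D_{u,\ub}}(\eta_A+\etab_A)X^{A}=-\int_{D_{u,\ub}}(\eta+\etab)\,{ }^{(1)}\phi\,{ }^{(2)}\phi$, which is the assertion. I do not expect a genuine obstacle here: the only point demanding care is the bookkeeping of the conformal factor $\Omega$ in the spacetime volume measure, together with the observation that it is precisely the angular gradient of this $\Omega$-weight — and not the $e_3,e_4$-evolution of $\sqrt{\det\gamma}$, which is never differentiated along $S_{u,\ub}$ — that produces the $\eta+\etab$ term, while the genuine angular divergence contributes nothing because each $S_{u,\ub}$ has no boundary.
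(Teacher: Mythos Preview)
Your argument is correct and is precisely the ``direct computation'' the paper alludes to without spelling out: the paper states Proposition~\ref{ee 2} immediately after Proposition~\ref{ee 1} under the heading ``by direct computations, we have'' and gives no further proof. Your identification of the integrand as $\div X$ with $X^{A}={}^{(1)}\phi^{A_1\cdots A_{r-1}A}\,{}^{(2)}\phi_{A_1\cdots A_{r-1}}$, together with the observation that the spacetime volume element carries an $\Omega^{2}$ weight relative to the sphere area form (as you correctly cross-check against Proposition~\ref{ee 1}) and the identity $2\nabla_A\log\Omega=\eta_A+\etab_A$, is exactly the intended mechanism.
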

\noindent We will also need the following analogue of Proposition \ref{ee 1} with $u$ weights incorporated. 
\begin{proposition}\label{ee 3}
Suppose $\phi$ is an $r$ tensorfield and let $\lambda_1=2(\lambda_0-\f12)$. Then
\begin{equation*}
\begin{split}
&2\int_{D_{u,\ub}} |u'|^{2\lambda_1}\phi (\nabla_3+\lambda_0\trchb)\phi\\
=& \int_{H_u^{(0,\ub)}} |u|^{2\lambda_1}|\phi|^2-\int_{H_{\ui}^{(0,\ub)}} |\ui|^{2\lambda_1}|\phi|^2+\int_{D_{u,\ub}}|u'|^{2\lambda_1}f|\phi|^2,
\end{split}
\end{equation*}
where $f$ obeys bound
$$|f|\ls \f{O}{|u|^2}.$$
\end{proposition}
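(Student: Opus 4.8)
The plan is to establish Proposition \ref{ee 3} in the same manner as its $\nabla_4$-companion Proposition \ref{ee 1}: reduce the four-dimensional integration by parts to the one-dimensional first variation identity for the foliating spheres, which is already available from the proof of Proposition \ref{el}. That identity states that, for any scalar $h$,
$$\frac{d}{du'}\int_{S_{u',\ub'}}h=\int_{S_{u',\ub'}}\big(\Lb h+\Omega\,\trchb\,h\big)=\int_{S_{u',\ub'}}\Omega\big(e_3 h+\trchb h\big);$$
the shift term $d^A$ inside $e_3$ and the $u'$-derivatives of the partition of unity have already been absorbed into the clean $\Omega\trchb$ first-variation coefficient there, so nothing further is needed on that point. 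The one new ingredient is the choice of scalar: I would take $h:=2\Omega\,|u'|^{2\lambda_1}|\phi|_\gamma^{2}$, carrying the weight $2\Omega$, precisely so that after the $\ub'$-integration the outgoing-null boundary contributions reassemble into the hypersurface integrals $\int_{H_u^{(0,\ub)}}$ and $\int_{H_{\ui}^{(0,\ub)}}$ of $|u|^{2\lambda_1}|\phi|^{2}$ as they are defined in Section \ref{sec setting} (with the $2\Omega$ weight).

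To carry this out I would first record three pointwise identities on $D_{u,\ub}$: since the projected derivative $\nabla_3$ is compatible with $\gamma$, one has $e_3(|\phi|_\gamma^{2})=2\langle\phi,\nabla_3\phi\rangle_\gamma$; from $e_3 u'=\Omega^{-1}$ and $u'<0$ one gets $e_3(|u'|^{2\lambda_1})=-2\lambda_1\Omega^{-1}|u'|^{2\lambda_1-1}$; and $e_3\Omega=-2\Omega\,\omb$ by the definition of $\omb$. Expanding $e_3 h$ with these, adding $\trchb h$, and using $\lambda_1=2\lambda_0-1$ to peel off the good combination $\lambda_0\trchb$, a short computation yields the pointwise identity
$$\Omega\big(e_3 h+\trchb h\big)=4\Omega^{2}|u'|^{2\lambda_1}\,\phi\,(\nabla_3+\lambda_0\trchb)\phi-2\Omega^{2}|u'|^{2\lambda_1}f\,|\phi|^{2},\qquad f:=\lambda_1\tc+2\omb+\frac{2\lambda_1}{|u'|}(\Omega^{-1}-1).$$
Integrating this in $u'$ over $(u_\infty,u)$ against the first variation identity above, then integrating the result in $\ub'$ over $(0,\ub)$, and recalling that $\int_{D_{u,\ub}}$ and the hypersurface integrals are mutually consistent weightings of the sphere integral (so that $\int_{S}4\Omega^{2}(\cdots)$ integrated over $D_{u,\ub}$ produces $2\int_{D_{u,\ub}}(\cdots)$, and similarly $\int_{S}2\Omega|u|^{2\lambda_1}|\phi|^{2}$ integrated in $\ub'$ produces $\int_{H_u^{(0,\ub)}}|u|^{2\lambda_1}|\phi|^{2}$), the left side telescopes to $\int_{H_u^{(0,\ub)}}|u|^{2\lambda_1}|\phi|^{2}-\int_{H_{\ui}^{(0,\ub)}}|\ui|^{2\lambda_1}|\phi|^{2}$ while the right side becomes $2\int_{D_{u,\ub}}|u'|^{2\lambda_1}\phi(\nabla_3+\lambda_0\trchb)\phi-\int_{D_{u,\ub}}|u'|^{2\lambda_1}f|\phi|^{2}$; rearranging is the claimed identity.

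It remains to bound $f$. Here $\|\tc\|_{L^\infty(S_{u',\ub'})}\lesssim O/|u'|^{2}$ and $\|\omb\|_{L^\infty(S_{u',\ub'})}\lesssim O/|u'|^{2}$ follow from the bootstrap assumptions \eqref{BA.0} together with the definitions of the scale-invariant norms (and $a\le 4|u'|$), while $\|\Omega^{-1}-1\|_{L^\infty(S_{u',\ub'})}\lesssim O/|u'|$ by \eqref{Omega -1}; since $|u'|\ge|u|$ throughout $D_{u,\ub}$, this gives $|f|\lesssim O/|u'|^{2}\lesssim O/|u|^{2}$, as asserted. The whole argument is an elementary integration by parts, and the only point demanding care is the $\Omega$-bookkeeping — building the factor $2\Omega$ into $h$ so that the boundary terms come out as the unweighted $\int_{H_u^{(0,\ub)}}|u|^{2\lambda_1}|\phi|^{2}$ rather than an $\Omega$-distorted version; the shift $d^A$ in $e_3$ causes no difficulty since it has already been absorbed into the first variation identity used in the proof of Proposition \ref{el}.
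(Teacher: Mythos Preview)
Your argument is correct and follows essentially the same route as the paper's own proof: both apply the first-variation identity (the paper's \eqref{evolution.id}) to the weighted scalar $\Omega|u'|^{2\lambda_1}|\phi|^2$, extract the combination $\phi(\nabla_3+\lambda_0\trchb)\phi$, and identify the remainder $f$ as the same expression $\lambda_1\tc+2\omb+\tfrac{2\lambda_1}{|u'|}(\Omega^{-1}-1)$, bounded via the bootstrap assumptions and Proposition~\ref{Omega}. Your write-up is in fact slightly more careful with the $\Omega$-bookkeeping (building the factor $2\Omega$ into $h$ so the boundary terms match the definition of $\int_{H_u}$ exactly), whereas the paper simply says ``slightly modifying \eqref{evolution.id}'' and integrates.
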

\begin{proof}
Slightly modifying \eqref{evolution.id}, we have
\begin{equation*}
\begin{split}
\frac{d}{du}(\int_{\S}&|u|^{2\lambda_1}\Omega|\phi|^2)=\Lb\bigg(\int_{\S}|u|^{2\lambda_1}\O|\phi|^2 \bigg)\\
=&\int_{\S}\Omega^2 \l 2|u|^{2\lambda_1}<\phi, \nab_3\phi+\lambda_0\trchb\phi>\r\\
&+\int_{\S}\Omega^2\l  |u|^{2\lambda_1} (-\f{2\lambda_1 (e_3u)}{|u|}+(1-2\lambda_0)\trchb-2\omb)|\phi|^2\r.
\end{split}
\end{equation*}
Here we use $\Lb=\O e_3=\f{\partial}{\partial u}+b^A\f{\partial}{\partial \theta^A}$. By (\ref{trchib additional}) and bootstrap assumption \ref{BA.0}, we have
$$|-\f{2\lambda_1 (e_3u)}{|u|}+(1-2\lambda_0)\trchb-2\omb|\ls \f{O}{|u|^2}.$$
The proposition then follows via integrating with respect to $du\,d\ub$ and applying the fundamental theorem of calculus in $u$.\\
\end{proof}

\begin{remark}
Observe that for $(\Psi_1, \Psi_2)\in \{ (\a,\b), \big(\b, (\rho, \sigma)\big), \big( (\rho, \sigma), \beb\big), (\beb, \ab)\}$, we can rewrite null Bianchi equations into the systematic forms:
\begin{equation}\label{snb 1}
\nab_3 \Psi_1+\bigg(\f{1}{2}+s_2(\Psi_1)\bigg)\tr\chib\Psi_1-\M D\Psi_2=(\p, \chih)\Psi,
\end{equation}
\begin{equation}\label{snb 2}
\nab_4 \Psi_2-\M D^*\Psi_1=\p\Psi+\chih(\Psi, \a),
\end{equation}
where $\Psi\in \{\b, \rho, \sigma, \beb, \ab\}$. Here we denote $\M D$ to be a differential operator on $\S$, and $\M D^*$ is the $L^2(\S)$ adjoint operator of $\M D$. We further commute (\ref{snb 1}) and (\ref{snb 2}) with angular derivative $\nab$ for $i$ times and get
\begin{equation}\label{snb 3}
\nab_3\nab^i\Psi_1+\big(\f{1+i}{2}+s_2(\Psi_1)\big)\tr\chib\nab^i\Psi_1-\M D \nab^i\Psi_2=P,
\end{equation}
\begin{equation}\label{snb 4}
\nab_4\nab^i\Psi_2-\M D^*\nab^{i}\Psi=1+Q.
\end{equation}
The forms of $P$ and $Q$ will be specified later. Check signature $s_2$, we have  
\begin{equation*}
\begin{split}
s_2(\nab^i \Psi_1)=\f{i}{2}+s_2(\Psi_1), \quad &s_2(\nab^i \Psi_2)=\f{i+1}{2}+s_2(\Psi_1)\\
s_2(P)=s_2(\nab_3\nab^i\Psi_1)=\f{i+2}{2}+s_2(\Psi_1), \quad &s_2(Q)=s_2(\M D^*\nab^i\Psi_1)=\f{i+1}{2}+s_2(\Psi_1).
\end{split}
\end{equation*}

\end{remark}
\begin{remark}
The Hodge structure will play a crucial role: for pair $(\Psi_1, \Psi_2)$ or pair $(\nab^i\Psi_1, \nab^i\Psi_2)$, the angular derivative operator $\M D$ and its $L^2$ adjoint operator $\M D^*$ form a Hodge system. Through Proposition \ref{ee 2}, we have
\begin{equation}\label{snb 5}
\begin{split}
&\int_{\S}\Psi_1\,\M D \Psi_2+\Psi_2\,\M D^*\Psi_1=-\int_{\S}(\eta+\etb)\Psi_1 \Psi_2,\\
&\int_{\S}\nab^i\Psi_1\,\M D \nab^i\Psi_2+\nab^i\Psi_2\,\M D^*\nab^i\Psi_1=-\int_{\S}(\eta+\etb)\nab^i\Psi_1\nab^i\Psi_2.
\end{split}
\end{equation}
\end{remark}

We now move forward and apply Proposition \ref{ee 3} for $\nab^i\Psi_1$.  With 
$$\lambda_0=\f{1+i}{2}+s_2(\Psi_1), \quad \lambda_1:=2\lambda_0-1=i+2s_2(\Psi_1), \mbox{ we get}$$
\begin{equation}\label{6.6}
\begin{split}
&2\int_{D_{u,\ub}} |u'|^{2i+4s_2(\Psi_1)}\nab^i\Psi_1 \bigg(\nabla_3+\big(\f{1+i}{2}+s_2(\Psi_1)\big)\trchb\bigg)\nab^i\Psi_1\\\
=& \int_{H_u^{(0,\ub)}} |u|^{2i+4s_2(\Psi_1)}|\nab^i\Psi_1|^2-\int_{H_{\ui}^{(0,\ub)}} |\ui|^{2i+4s_2(\Psi_1)}|\nab^i\Psi_1|^2+\int_{D_{u,\ub}}|u'|^{2i+4s_2(\Psi_1)}f|\nab^i\Psi_1|^2,
\end{split}
\end{equation}
where $|f|\leq O/|u'|^2$. 

\noindent We also use Proposition \ref{ee 1} with substitution $\phi_1=\phi_2=|u|^{i+2s_2(\Psi_1)}\nab^i\Psi_2$
\begin{equation}\label{6.7}
\begin{split}
&2\int_{D_{u,\ub}} |u'|^{2i+4s_2(\Psi_1)} \nab^i  \Psi_2 \nabla_4\nab^i\Psi_2\\
=& \int_{\Hb_{\ub}^{(\ui,u)}} |u'|^{2i+4s_2(\Psi_1)}|\nab^i\Psi_2|^2-\int_{\Hb_0^{(\ui,u)}} |u'|^{2i+4s_2(\Psi_1)}|\nab^i\Psi_2|^2\\
&+\int_{D_{u,\ub}}|u'|^{2i+4s_2(\Psi_1)}(2\omega-\trch)|\nab^i\Psi_2|^2.
\end{split}
\end{equation}
Add (\ref{6.6}) and (\ref{6.7}), we hence obtain
\begin{equation*}
\begin{split}
&2\int_{D_{u,\ub}} |u'|^{2i+4s_2(\Psi_1)}\nab^i\Psi_1 \bigg(\nabla_3+\big(\f{1+i}{2}+s_2(\Psi_1)\big)\trchb\bigg)\nab^i\Psi_1\\
&+2\int_{D_{u,\ub}} |u'|^{2i+4s_2(\Psi_1)} \nab^i\Psi_2 \nabla_4\nab^i\Psi_2\\
=& \int_{H_u^{(0,\ub)}} |u|^{2i+4s_2(\Psi_1)}|\nab^i\Psi_1|^2-\int_{H_{\ui}^{(0,\ub)}} |\ui|^{2i+4s_2(\Psi_1)}|\nab^i\Psi_1|^2+\int_{D_{u,\ub}}|u'|^{2i+4s_2(\Psi_1)}f|\nab^i\Psi_1|^2\\
 &+\int_{\Hb_{\ub}^{(\ui,u)}}|u'|^{2i+4s_2(\Psi_1)}|\nab^i\Psi_2|^2-\int_{\Hb_0^{(\ui,u)}} |u'|^{2i+4s_2(\Psi_1)}|\nab^i\Psi_2|^2\\
 &+\int_{D_{u,\ub}}|u'|^{2i+4s_2(\Psi_1)}(2\omega-\trch)|\nab^i\Psi_2|^2.
\end{split}
\end{equation*}
Apply (\ref{snb 3}) and (\ref{snb 4}). Wtih the help of (\ref{snb 5}), we then arrive at 
\begin{equation*}
\begin{split}
&\int_{H_u^{(0,\ub)}} |u|^{2i+4s_2(\Psi_1)}|\nab^i\Psi_1|^2+\int_{\Hb_{\ub}^{(\ui,u)}}|u'|^{2i+4s_2(\Psi_1)}|\nab^i\Psi_2|^2\\
=&\int_{H_{\ui}^{(0,\ub)}} |\ui|^{2i+4s_2(\Psi_1)}|\nab^i\Psi_1|^2+\int_{\Hb_{0}^{(\ui,u)}}|u'|^{2i+4s_2(\Psi_1)}|\nab^i\Psi_2|^2\\
&+2\int_{D_{u,\ub}} |u'|^{2i+4s_2(\Psi_1)}\nab^i\Psi_1 \cdot P+2\int_{D_{u,\ub}} |u'|^{2i+4s_2(\Psi_1)}\nab^i\Psi_2 \cdot Q\\
&-2\int_{D_{u,\ub}}|u'|^{2i+4s_2(\Psi_1)}(\eta+\etb)\nab^i\Psi_1\nab^i\Psi_2\\
&+\int_{D_{u,\ub}}|u'|^{2i+4s_2(\Psi_1)}f|\nab^i\Psi_1|^2+\int_{D_{u,\ub}}|u'|^{2i+4s_2(\Psi_1)}(2\omega-\trch)|\nab^i\Psi_2|^2.
\end{split}
\end{equation*}
Using $|(\eta+\etb)\nab^i\Psi_1\nab^i\Psi_2|\leq |\eta+\etb|(\nab^i\Psi_1)^2+|\eta+\etb|(\nab^i\Psi_2)^2,$ and the fact
$$|\eta+\etb|\leq \at O/{|u'|^2}, \quad |f|\leq O/|u'|^2, \quad |2\o-\tr\chi|\leq {O}/{|u'|},$$
by applying Gr\"onwall's inequality twice (one for $du$, one for $d\ub$), we obtain
\begin{equation*}
\begin{split}
&\int_{H_u^{(0,\ub)}} |u|^{2i+4s_2(\Psi_1)}|\nab^i\Psi_1|^2+\int_{\Hb_{\ub}^{(\ui,u)}}|u'|^{2i+4s_2(\Psi_1)}|\nab^i\Psi_2|^2\\
\ls&\int_{H_{\ui}^{(0,\ub)}} |\ui|^{2i+4s_2(\Psi_1)}|\nab^i\Psi_1|^2+\int_{\Hb_{0}^{(\ui,u)}}|u'|^{2i+4s_2(\Psi_1)}|\nab^i\Psi_2|^2\\
&+2\int_{D_{u,\ub}} |u'|^{2i+4s_2(\Psi_1)}\nab^i\Psi_1 \cdot P+2\int_{D_{u,\ub}}|u'|^{2i+4s_2(\Psi_1)}\nab^i\Psi_2 \cdot Q.
\end{split}
\end{equation*}
Multiply $a^{-i-2s_2(\Psi_1)}$ on both sides
\begin{equation}\label{6.8}
\begin{split}
&\int_{H_u^{(0,\ub)}} a^{-i-2s_2(\Psi_1)}|u|^{2i+4s_2(\Psi_1)}|\nab^i\Psi_1|^2+\int_{\Hb_{\ub}^{(\ui,u)}}a^{-i-2s_2(\Psi_1)}|u'|^{2i+4s_2(\Psi_1)}|\nab^i\Psi_2|^2\\
\ls&\int_{H_{\ui}^{(0,\ub)}} a^{-i-2s_2(\Psi_1)}|\ui|^{2i+4s_2(\Psi_1)}|\nab^i\Psi_1|^2+\int_{\Hb_{0}^{(\ui,u)}}a^{-i-2s_2(\Psi_1)}|u'|^{2i+4s_2(\Psi_1)}|\nab^i\Psi_2|^2\\
&+2\int_{D_{u,\ub}} a^{-i-2s_2(\Psi_1)}|u'|^{2i+4s_2(\Psi_1)}\nab^i\Psi_1 \cdot P+2\int_{D_{u,\ub}}a^{-i-2s_2(\Psi_1)}|u'|^{2i+4s_2(\Psi_1)}\nab^i\Psi_2 \cdot Q.
\end{split}
\end{equation}
With signature identities
\begin{equation*}
\begin{split}
s_2(\nab^i \Psi_1)=\f{i}{2}+s_2(\Psi_1), \quad &s_2(\nab^i \Psi_2)=\f{i+1}{2}+s_2(\Psi_1)\\
s_2(P)=s_2(\nab_3\nab^i\Psi_1)=\f{i+2}{2}+s_2(\Psi_1), \quad &s_2(Q)=s_2(\M D^*\nab^i\Psi_1)=\f{i+1}{2}+s_2(\Psi_1),
\end{split}
\end{equation*}
and definitions
$$\|\phi\|_{L^2_{sc}(\S)}=a^{-s_2(\phi)}|u|^{2s_2(\phi)}\|\phi\|_{L^2(\S)},$$
$$\|\phi\|_{L^1_{sc}(\S)}=a^{-s_2(\phi)}|u|^{2s_2(\phi)-1}\|\phi\|_{L^1(\S)},$$
we rewrite (\ref{6.8}) as   
\begin{equation*}
\begin{split}
&\int_{H_u^{(0,\ub)}} \|\nab^i\Psi_1\|^2_{L^2_{sc}(\S)}+\int_{\Hb_{\ub}^{(\ui,u)}}\f{a}{|u'|^2}\|\nab^i\Psi_2\|^2_{L^2_{sc}(S_{u',\ub})}\\
\ls&\int_{H_{\ui}^{(0,\ub)}} \|\nab^i\Psi_1\|^2_{L^2_{sc}(S_{\ui,\ub})}+\int_{\Hb_{0}^{(\ui,u)}}\f{a}{|\ui|^2}\|\nab^i\Psi_2\|^2_{L^2_{sc}(S_{\ui,\ub})}\\
&+2\int_{D_{u,\ub}} \f{a}{|u'|}\|\nab^i\Psi_1 \cdot P\|_{L^1_{sc}(S_{u',\ub'})}+2\int_{D_{u,\ub}}\f{a}{|u'|}\|\nab^i\Psi_2 \cdot Q\|_{L^1_{sc}(S_{u',\ub'})}.
\end{split}
\end{equation*}
Recall the definition in (\ref{scale invariant norms 2}) 
\begin{equation*}
\begin{split}
\|\phi\|^2_{L^2_{sc}(H_u^{(0,\underline{u})})}:=&
\int_0^{\underline{u}}\|\phi\|^2_{L^2_{sc}(S_{u,\underline{u}'})}d\underline{u}',\\
\|\phi\|^2_{L^2_{sc}(\underline{H}_{\underline{u}}^{(u_{\infty},u)})}:=&
\int_{u_{\infty}}^{u}{\frac{a}{|u'|^2}}\|\phi\|^2_{L^2_{sc}(S_{u',\underline{u}})}du',
\end{split}
\end{equation*}
we therefore arrive at 
\begin{proposition} \label{EE}
Under the assumptions of Theorem \ref{main.thm1} and the bootstrap assumptions \eqref{BA.0}, assume pair $(\nab^i\Psi_1, \nab^i\Psi_2)$ satisfying 
\begin{equation*}
\nab_3\nab^i\Psi_1+\big(\f{1+i}{2}+s_2(\Psi_1)\big)\tr\chib\nab^i\Psi_1-\M D \nab^i\Psi_2=P,
\end{equation*}
\begin{equation*}
\nab_4\nab^i\Psi_2-\M D^*\nab^{i}\Psi_1=Q,
\end{equation*}
where $\M D^*$ is the $L^2$ adjoint operator of $\M D$, then it follows
\begin{equation}\label{EE1}
\begin{split}
&\int_{H_u^{(0,\ub)}} \|\nab^i\Psi_1\|^2_{L^2_{sc}(\S)}+\int_{\Hb_{\ub}^{(\ui,u)}}\f{a}{|u'|^2}\|\nab^i\Psi_2\|^2_{L^2_{sc}(S_{u',\ub})}\\
\ls&\int_{H_{\ui}^{(0,\ub)}} \|\nab^i\Psi_1\|^2_{L^2_{sc}(S_{\ui,\ub})}+\int_{\Hb_{0}^{(\ui,u)}}\f{a}{|\ui|^2}\|\nab^i\Psi_2\|^2_{L^2_{sc}(S_{\ui,\ub})}\\
&+\int_{D_{u,\ub}} \f{a}{|u'|}\|\nab^i\Psi_1 \cdot P\|_{L^1_{sc}(S_{u',\ub'})}+\int_{D_{u,\ub}}\f{a}{|u'|}\|\nab^i\Psi_2 \cdot Q\|_{L^1_{sc}(S_{u',\ub'})}.
\end{split}
\end{equation}
\end{proposition}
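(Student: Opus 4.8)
The plan is to follow exactly the derivation already laid out in the excerpt between equations~(\ref{snb 1}) and~(\ref{6.8}), organizing it into a clean proof. The starting point is the pair of commuted null Bianchi equations (\ref{snb 3})--(\ref{snb 4}); these are assumed in the statement, so nothing new is needed to produce them. First I would pair the $\nab^i\Psi_1$ equation with the weighted $L^2$ energy identity of Proposition~\ref{ee 3}, using the precise exponents $\lambda_0=\tfrac{1+i}{2}+s_2(\Psi_1)$ and $\lambda_1=2\lambda_0-1=i+2s_2(\Psi_1)$, to obtain~(\ref{6.6}); simultaneously I would pair the $\nab^i\Psi_2$ equation with Proposition~\ref{ee 1}, taking $\phi_1=\phi_2=|u'|^{i+2s_2(\Psi_1)}\nab^i\Psi_2$, to obtain~(\ref{6.7}). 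Adding the two identities, the key algebraic cancellation is that the cross terms $\int \nab^i\Psi_1\,\M D\nab^i\Psi_2+\int\nab^i\Psi_2\,\M D^*\nab^i\Psi_1$ combine, via Proposition~\ref{ee 2}, into the harmless term $-\int(\eta+\etb)\nab^i\Psi_1\nab^i\Psi_2$; this is the place where the Hodge-adjoint structure of $(\M D,\M D^*)$ is essential, and I would flag it explicitly.

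Next I would collect the error terms. The coefficient $f$ in Proposition~\ref{ee 3} obeys $|f|\lesssim O/|u'|^2$ by~(\ref{trchib additional}); the factor $2\omega-\trch$ from Proposition~\ref{ee 1} obeys $|2\omega-\trch|\lesssim O/|u'|$ by the bootstrap assumptions; and $|\eta+\etb|\lesssim \at O/|u'|^2$. Since $\int_{u_\infty}^{-a/4}|u'|^{-1}\,du' \lesssim \log a$ is still integrable-with-smallness after the $a$-weights are accounted for — more precisely, after multiplying through by $a^{-i-2s_2(\Psi_1)}$ and reading everything in the scale-invariant norms, these coefficients produce factors that are $\lesssim O\log a$ or better against the $d\ub$ and $du'$ integrations — a double application of Gr\"onwall's inequality (once in $u'$ along each $H$-slice, once in $\ub$; or one may simply absorb the zeroth-order terms since $a$ is large) removes them at the cost of a harmless constant. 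This yields the inequality displayed just above~(\ref{6.8}). I would then multiply both sides by $a^{-i-2s_2(\Psi_1)}$, invoke the signature identities $s_2(\nab^i\Psi_1)=\tfrac{i}{2}+s_2(\Psi_1)$, $s_2(\nab^i\Psi_2)=\tfrac{i+1}{2}+s_2(\Psi_1)$, $s_2(P)=\tfrac{i+2}{2}+s_2(\Psi_1)$, $s_2(Q)=\tfrac{i+1}{2}+s_2(\Psi_1)$ together with the definitions of $\|\cdot\|_{L^2_{sc}(\S)}$ and $\|\cdot\|_{L^1_{sc}(\S)}$ in~(\ref{scale invariant norms}), so that all the $a$- and $|u|$-weights are absorbed into the scale-invariant norms and the flux integrals become exactly the $L^2_{sc}(H_u^{(0,\ub)})$ and $L^2_{sc}(\Hb_{\ub}^{(\ui,u)})$ norms of~(\ref{scale invariant norms 2}).

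The only delicate bookkeeping point — and the step I expect to be the mildest obstacle — is verifying that the powers of $a$ and of $|u'|$ match up correctly on both sides when the weight $a^{-i-2s_2(\Psi_1)}|u'|^{2i+4s_2(\Psi_1)}$ from the energy identity is reconciled with $\|\cdot\|^2_{L^2_{sc}}=a^{-2s_2(\phi)}|u|^{4s_2(\phi)}\|\cdot\|^2_{L^2}$ applied to $\phi=\nab^i\Psi_1$ and $\phi=\nab^i\Psi_2$ separately (note these have \emph{different} signatures, differing by $\tfrac12$, which is precisely why the spacetime integral over $\Hb_{\ub}^{(\ui,u)}$ carries the extra factor $a/|u'|^2$), and that the spacetime error terms $\int_{D_{u,\ub}}\nab^i\Psi_1\cdot P$ and $\int_{D_{u,\ub}}\nab^i\Psi_2\cdot Q$ acquire exactly the factor $\tfrac{a}{|u'|}$ in front of the $L^1_{sc}$ norm — this follows because $\int_{D_{u,\ub}}(\cdots)=\int\!\!\int \|\cdot\|_{L^1(\S)}\,\O^2\,du'\,d\ub'$ and the conversion $\|\cdot\|_{L^1(\S)}\to\|\cdot\|_{L^1_{sc}(\S)}$ for a product of signature $\tfrac{i+2}{2}+s_2(\Psi_1)$ produces precisely $a^{(i+2)/2+s_2(\Psi_1)}|u'|^{-(i+1)-2s_2(\Psi_1)}$, which against the overall weight leaves $a/|u'|$. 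Tracking this carefully gives~(\ref{EE1}) and completes the proof of Proposition~\ref{EE}.
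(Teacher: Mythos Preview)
Your approach is exactly the paper's: apply Proposition~\ref{ee 3} with $\lambda_0=\tfrac{1+i}{2}+s_2(\Psi_1)$ to the $\nab_3$-equation, Proposition~\ref{ee 1} to the $\nab_4$-equation, add, use Proposition~\ref{ee 2} to collapse the $\M D/\M D^*$ cross terms, Gr\"onwall away the lower-order pieces, then multiply by $a^{-i-2s_2(\Psi_1)}$ and convert to scale-invariant norms.

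Two small bookkeeping corrections. First, no $\log a$ appears: the $u'$-Gr\"onwall uses $\int|u'|^{-2}\,du'\lesssim |u|^{-1}\leq 4/a$, while the $|2\omega-\trch|\lesssim O/|u'|$ term is integrated in $\ub'\in[0,1]$ (not in $u'$), giving a Gr\"onwall factor $e^{O/|u'|}\lesssim 1$. Second, the signature of the product $\nab^i\Psi_1\cdot P$ is $s_2(\nab^i\Psi_1)+s_2(P)=i+1+2s_2(\Psi_1)$, not $\tfrac{i+2}{2}+s_2(\Psi_1)$; with the correct value the $L^1\to L^1_{sc}$ conversion factor is $a^{i+1+2s_2(\Psi_1)}|u'|^{-(2i+1+4s_2(\Psi_1))}$, which against the weight $a^{-i-2s_2(\Psi_1)}|u'|^{2i+4s_2(\Psi_1)}$ indeed leaves exactly $a/|u'|$ as you claim.
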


\subsection{Energy Estimate in Scale Invariant Norms}
We are now ready to prove 
$$\M R+\Rb\ls \M I^{(0)}+(\M I^{(0)})^2+1.$$
Let's start with the pair $(\a, \b)$.
\begin{proposition} \label{ee 4}
Under the assumptions of Theorem \ref{main.thm1} and the bootstrap assumptions \eqref{BA.0}, we have 
\begin{equation*}
\begin{split}
&\f{1}{\at}\|(\at\nab)^i\a\|^2_{L^2_{sc}(H_u^{(0,\ub)})}+\f{1}{\at}\|(\at\nab)^i\b\|^2_{L^2_{sc}(\Hb_{\ub}^{(u_{\infty},u)})}\\
\leq&\f{1}{\at}\|(\at\nab)^i\a\|^2_{L^2_{sc}(H_{u_{\infty}}^{(0,\ub)})}+\f{1}{\at}\|(\at\nab)^i\b\|^2_{L^2_{sc}(\Hb_{0}^{(u_{\infty},u)})}+\f{1}{a^{\f13}} .
\end{split}
\end{equation*}
\end{proposition}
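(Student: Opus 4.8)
The plan is to apply Proposition \ref{EE} to the Bianchi pair $(\Psi_1,\Psi_2)=(\alpha,\beta)$ with $s_2(\alpha)=0$, after commuting the null Bianchi system with $i$ angular derivatives. First I would identify the error terms $P$ and $Q$ appearing in the commuted equations \eqref{snb 3}, \eqref{snb 4}: from Proposition \ref{commute} applied to $\nab_3\a+\f12\trchb\a=\M D\b+(\p,\chih)\Psi$ and $\nab_4\b=\M D^*\a+\p\Psi+\chih(\Psi,\a)$, the terms $P$ collect $\nab^{i_1}\p^{i_2+1}\nab^{i_3+1}\b$, $\nab^{i_1}\p^{i_2}\nab^{i_3}(\p,\tr\chib,\chibh)\nab^{i_4}\a$, and $\nab^{i_1}\p^{i_2}\nab^{i_3}(\p,\chih)\nab^{i_4}\Psi$; similarly for $Q$. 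I would then multiply \eqref{EE1} by $1/\at$ (i.e. $a^{-1/2}$), insert the $(\at)^i$ weights to convert to the $(\at\nab)^i$ normalization, and bound the resulting right-hand side.

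The estimate splits into three parts. The initial-data terms $\f{1}{\at}\|(\at\nab)^i\a\|^2_{L^2_{sc}(H_{u_\infty}^{(0,\ub)})}$ and $\f{1}{\at}\|(\at\nab)^i\b\|^2_{L^2_{sc}(\Hb_0^{(u_\infty,u)})}$ are kept as-is on the right. The spacetime integrals $\int_{D_{u,\ub}}\f{a}{|u'|}\|(\at\nab)^i\a\cdot P\|_{L^1_{sc}}$ and $\int_{D_{u,\ub}}\f{a}{|u'|}\|(\at\nab)^i\b\cdot Q\|_{L^1_{sc}}$ must be shown to be $\lesssim a^{-1/3}$ (with room to spare, so the genuine bound will be something like $a^{-1/2}(O+R)^c$ which is $\leq a^{-1/3}$ by the bootstrap constraint $(O+R)^{20}\leq a^{1/16}$). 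For each term of $P$ or $Q$, I would use the third H\"older inequality in \eqref{Holder's}, $\|\phi_1\phi_2\|_{L^1_{sc}}\leq\f{1}{|u|}\|\phi_1\|_{L^2_{sc}}\|\phi_2\|_{L^2_{sc}}$, pairing $(\at\nab)^i\a$ (or $(\at\nab)^i\b$) with the top-order factor of $P$ (or $Q$), controlling the latter in $L^2_{sc}$ via the curvature norm $\M R$ or $\Rb$ and the lower-order Ricci/metric factors via the product estimates \eqref{4.1}--\eqref{4.7} and the bootstrap assumptions. The $|u'|$-weights from $\f{a}{|u'|}$, the H\"older gain $\f{1}{|u'|}$, and the anomaly rescalings $\f{\at}{|u'|}\leq\f1\at$ combine to produce a convergent $u'$-integral $\int_{u_\infty}^u |u'|^{-k}du'$ with $k\geq 2$, yielding the overall power of $a$.

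The structurally important point — already prepared in the excerpt — is that the borderline terms (those with $\trchb\nab^i\a$, which would otherwise be non-perturbative) have been cancelled by the integration-by-parts/Gr\"onwall mechanism built into Propositions \ref{ee 3} and \ref{EE}: the coefficient $\f{1+i}{2}+s_2(\Psi_1)$ in \eqref{snb 3} is exactly the one for which \eqref{trchib additional} gives $|f|\lesssim O/|u|^2$, so no logarithmic loss occurs. Consequently every term that actually appears in $P$ and $Q$ is genuinely lower order in powers of $1/|u|\leq 4/a$. The cross term $\int_{D_{u,\ub}}\f{a}{|u'|}\|(\at\nab)^i\a\cdot\M D\nab^i\b\|$-type contributions are handled by the Hodge pairing identity \eqref{snb 5}, which is why they never appear as separate error terms.

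The main obstacle I anticipate is bookkeeping rather than conceptual: one must verify that in each summand of $P$ and $Q$ the top-order derivative lands on a curvature component (so it can be absorbed into $\M R+\Rb$, hence into $R$, with at most one factor of $R$ appearing), while all remaining factors are Ricci coefficients or $\chih$ to which the $L^2$--$L^\infty$ product estimates \eqref{4.1}--\eqref{4.7} apply with only $O$-powers; the single delicate case is the term $\chih(\Psi,\a)$ in the $\nab_4\b$ equation, where the anomalous $\chih$ (size $\at$ in $L^\infty_{sc}$) and the anomalous $\a$ must be tracked with their $1/\at$ normalizations so the product contributes $O[\chih]\cdot O[\a]$ rather than a power of $a$. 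Once all terms are seen to carry a net negative power of $a$ after the $u'$-integration, summing over $i\leq 10$ and choosing $a$ large gives the claimed bound with $a^{-1/3}$ on the right.
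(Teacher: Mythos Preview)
Your plan is correct and matches the paper's proof: apply Proposition~\ref{EE} to the pair $(\alpha,\beta)$, write out the commutator errors $F_1,G_1$ via Proposition~\ref{commute}, and bound the resulting spacetime integrals $N_1,M_1$ by H\"older in scale-invariant norms together with \eqref{4.1}--\eqref{4.7}, tracking the $\chih\cdot\alpha$ double anomaly just as you describe. One bookkeeping caveat: the factor $\tfrac{1}{\at}$ in the displayed statement is a misprint for $\tfrac{1}{a}$ --- the paper's own argument multiplies the raw energy inequality by $a^{-1}$ (so that the left side becomes $\M R^2[\alpha]+\Rb^2[\beta]$), and it is only with that extra $a^{-1/2}$ that the error terms $N_1+M_1$, which are of size $\at\cdot(O+R)^c$ before division, come out as $a^{-1/2}(O+R)^c\leq a^{-1/3}$; adjust your ``multiply by $1/\at$'' step accordingly.
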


\begin{proof}
For pair $(\Psi_1, \Psi_2)=(\a, \b)$ we have 
$$\nab_4 \b-{\M D}^* \a=\p(\b, \a),$$
$$\nab_3 \a+\f12\tr\chib\a-\M D\b=(\p, \chih)(\Psi, \b, \a).$$
\noindent Commuting the above equations with $\nab$ for i times, we get 
\begin{equation*}
\begin{split}
&\nab_3\nab^i\a+\f{1+i}{2}\tr\chib\nab^i\a-\M D\nab^i\b\\
=&\sum_{i_1+i_2+i_3=i}\nabla^{i_1}\p^{i_2+1}\nabla^{i_3}\b+\sum_{i_1+i_2+i_3=i}\nabla^{i_1}\p^{i_2}\nabla^{i_3}\bigg((\p,\chih)(\Psi, \b, \a)\bigg)\\
&+\sum_{i_1+i_2+i_3+i_4+1=i} \nabla^{i_1}\p^{i_2+1}\nabla^{i_3}(\chibh,\tr\chib)\nab^{i_4}\a+\sum_{i_1+i_2+i_3+i_4=i}\nabla^{i_1}\p^{i_2}\nabla^{i_3}(\p, \chibh, \tc)\nabla^{i_4} \a\\
=&\sum_{i_1+i_2+i_3+i_4+1=i} \nabla^{i_1}\p^{i_2+1}\nabla^{i_3}(\chibh,\tr\chib)\nab^{i_4}\a+\sum_{i_1+i_2+i_3+i_4=i}\nabla^{i_1}\p^{i_2}\nabla^{i_3}(\p, \chih, \chibh, \tc)\nabla^{i_4} (\Psi, \b, \a)\\
=&F_1.
\end{split}
\end{equation*}
\noindent And
\begin{equation*}
\begin{split}
&\nab_4\nab^i\b-{\M D}^*\nab^i\a\\
=&\sum_{i_1+i_2+i_3=i}\nabla^{i_1}\p^{i_2+1}\nabla^{i_3}\b +\sum_{i_1+i_2+i_3=i}\nabla^{i_1}\p^{i_2}\nabla^{i_3} \bigg(\p(\b, \a)\bigg)\\
&+\sum_{i_1+i_2+i_3+i_4=i}\nabla^{i_1}\p^{i_2}\nabla^{i_3}(\p, \chih)\nabla^{i_4} \b\\
=&\sum_{i_1+i_2+i_3+i_4=i}\nabla^{i_1}\p^{i_2}\nabla^{i_3}(\p, \chih)\nabla^{i_4} (\b, \a)\\
=&G_1.
\end{split}
\end{equation*}
\noindent Applying Proposition \ref{ee 1} to Proposition \ref{ee 3}, it follows
\begin{equation*}
\begin{split}
&\|(\at\nab)^i\a\|^2_{L^2_{sc}(H_u^{(0,\ub)})}+\|(\at\nab)^i\b\|^2_{L^2_{sc}(\Hb_{\ub}^{(u_{\infty},u)})}\\
\leq & \|(\at\nab)^i\a\|^2_{L^2_{sc}(H_{u_{\infty}}^{(0,\ub)})}+\|(\at\nab)^i\b\|^2_{L^2_{sc}(\Hb_{0}^{(u_{\infty},u)})}+N_1+M_1.
\end{split}
\end{equation*}
\noindent where
\begin{equation*}
\begin{split}
N_1=&\int_0^{\ub}\int_{u_{\infty}}^u \frac{a}{|u'|}\|(\at)^iF_1\cdot(\at\nab)^i\a\|_{L^1_{sc}(S_{u',\ub'})} du'd{\ub}',\\
M_1=&\int_0^{\ub}\int_{u_{\infty}}^u \frac{a}{|u'|}\|(\at)^iG_1\cdot(\at\nab)^i\b\|_{L^1_{sc}(S_{u',\ub'})} du'd{\ub}',\\
\end{split}
\end{equation*}
By H\"older's inequalities in scale invariant norms, we have
\begin{equation*}
\begin{split}
N_1=&\int_0^{\ub}\int_{u_{\infty}}^u \frac{a}{|u'|}\|(\at)^iF_1\cdot(\at\nab)^i\a\|_{L^1_{sc}(S_{u',\ub'})} du'd{\ub}',\\
\leq&\int_0^{\ub}\int_{u_{\infty}}^u \frac{a}{|u'|^2}\|(\at)^iF_1\|_{L^2_{sc}(S_{u',\ub'})} \|(\at\nab)^i\a\|_{L^2_{sc}(S_{u',\ub'})} du'd{\ub}'\\
\leq&\int_{u_{\infty}}^u \frac{a}{|u'|^2}\bigg(\int_0^{\ub}\|(\at)^iF_1\|^2_{L^2_{sc}(S_{u',\ub'})}d\ub'\bigg)^{\f12} \|(\at\nab)^i\a\|_{L^2_{sc}(H_{u'}^{(0,\ub)})} du'.\\
\leq&\int_{u_{\infty}}^u \frac{a}{|u'|^2}\bigg(\int_0^{\ub}\|(\at)^iF_1\|^2_{L^2_{sc}(S_{u',\ub'})}d\ub'\bigg)^{\f12}du'\cdot \sup_{u'}\|(\at\nab)^i\a\|_{L^2_{sc}(H_{u'}^{(0,\ub)})},
\end{split}
\end{equation*}
where  
\begin{equation*}
\begin{split}
F_1=&\sum_{i_1+i_2+i_3+i_4+1=i} \nabla^{i_1}\p^{i_2+1}\nabla^{i_3}(\chibh,\tr\chib)\nab^{i_4}\a\\
&+\sum_{i_1+i_2+i_3+i_4=i}\nabla^{i_1}\p^{i_2}\nabla^{i_3}(\p, \chih, \chibh, \tc)\nabla^{i_4} (\Psi, \b, \a).
\end{split}
\end{equation*}
Denote
$$H_1:=\int_0^{\ub}\|(\at)^i F_1\|^2_{L^2_{sc}(S_{u',\ub'})}d\ub'.$$
We further have 
\begin{equation*}
\begin{split}
H_1=&  \int_{0}^{\ub} \|(\at)^iF_1\|^2_{L^2_{sc}(S_{u',\ub'})} d\ub'\\
\leq&  \int_{0}^{\ub} a^{-1}\|\f{\at}{|u'|}\p, \f{\at}{|u'|}\chih, \f{\at}{|u'|}\chibh, \f{\at}{|u'|}\tc\|^2_{L^{\infty}_{sc}(S_{u',\ub'})}  \|(\at\nab)^i(\Psi, \b, \a)\|^2_{L^2_{sc}(S_{u',\ub'})} d\ub'\\
&+ \sum_{i_1+i_2+i_3+i_4+1=i} \int_{0}^{\ub} \f{|u'|^4}{a^2}\|(\at)^{i+1}  \nabla^{i_1}\p^{i_2+1}\nabla^{i_3}(\f{a}{|u'|^2}\chibh,\f{a}{|u'|^2}\tr\chib)\nab^{i_4}(a^{-\f12}\a)\|^2_{L^2_{sc}(S_{u',\ub'})} d\ub'\\
&+\sum_{\substack{i_1+i_2+i_3+i_4=i\\ i_4\leq i-1}} \int_{0}^{\ub} \f{|u'|^2}{a}\|(\at)^{i+1}  \nabla^{i_1}\p^{i_2}\nabla^{i_3}\big(\f{\at}{|u'|}\p, \f{\at}{|u'|}\chih, \f{\at}{|u'|}\chibh, \f{\at}{|u'|}\tc\big)\\
&\quad\quad\quad \quad \quad\quad\quad\quad \quad \quad \quad \quad\quad\quad\quad\quad \times\nab^{i_4}(a^{-\f12}\Psi, a^{-\f12}\b, a^{-\f12}\a)\|^2_{L^2_{sc}(S_{u',\ub'})} d\ub'\\
\leq& \int_{0}^{\ub} \|(\at\nab)^i (a^{-\f12}\Psi, a^{-\f12}\b, a^{-\f12}\a)\|^2_{L^2_{sc}(S_{u',\ub'})} d\ub'\cdot \big(O^2[\chih]+O^2[\chibh]\big)\\
&+\int_0^{\ub} \f{|u'|^4}{a^2}\f{a^2}{|u'|^4} d\ub'\cdot O^{6}+\int_0^{\ub}\f{|u'|^2}{a}\f{a}{|u'|^2}\cdot O^{4}\\
\leq&R^2[\a]\cdot \big(O^2[\chih]+O^2[\chibh]\big)+O^6+O^4.
\end{split}
\end{equation*}
Therefore, for $N_1$ we have
$$N_1\ls (R[\a]\cdot O[\chih]+R[\a]\cdot O[\chibh]+O^3+O^2)\cdot \sup_{u'}\|(\at\nab)^i\a\|_{L^2_{sc}(H_{u'}^{(0,\ub)})}.$$
\noindent We treate $M_1$ in a similar way.
\begin{equation*}
\begin{split}
M_1=&\int_0^{\ub}\int_{u_{\infty}}^u \frac{a}{|u'|}\|(\at)^iG_1\cdot(\at\nab)^i\b\|_{L^1_{sc}(S_{u',\ub'})} du'd{\ub}'\\
\leq&\int_0^{\ub}\int_{u_{\infty}}^u \frac{a}{|u'|^2}\|(\at)^i G_1\|_{L^2_{sc}(S_{u',\ub'})} \|(\at\nab)^i\b\|_{L^2_{sc}(S_{u',\ub'})} du'd{\ub}'\\
\leq&\int_0^{\ub}\bigg(\int_{u_{\infty}}^u \frac{a}{|u'|^2}\|(\at)^i G_1\|^2_{L^2_{sc}(S_{u',\ub'})}du'\bigg)^{\f12} \bigg(\int_{u_{\infty}}^u \frac{a}{|u'|^2}\|(\at\nab)^i \b\|^2_{L^2_{sc}(S_{u',\ub'})}du'\bigg)^{\f12} d{\ub}'\\
\leq&\int_0^{\ub}\bigg(\int_{u_{\infty}}^u \frac{a}{|u'|^2}\|(\at)^i G_1\|^2_{L^2_{sc}(S_{u',\ub'})}du'\bigg)^{\f12}\|(\at\nab)^i\b\|_{L^2_{sc}(\Hb_{\ub'}^{(u_{\infty},u)})}d{\ub}'\\
\leq&\bigg(\int_0^{\ub}\int_{u_{\infty}}^u \frac{a}{|u'|^2}\|(\at)^i G_1\|^2_{L^2_{sc}(S_{u',\ub'})}du' d\ub'\bigg)^{\f12}\cdot \|(\at\nab)^i\b\|_{L^2_{sc}(\Hb_{\ub'}^{(u_{\infty},u)})},
\end{split}
\end{equation*}
where
\begin{equation*}
\begin{split}
G_1=&\sum_{i_1+i_2+i_3+i_4=i}\nabla^{i_1}\p^{i_2}\nabla^{i_3}(\p, \chih)\nabla^{i_4} (\b, \a).
\end{split}
\end{equation*}
Let 
$$J_1:=\int_0^{\ub}\int_{u_{\infty}}^u \frac{a}{|u'|^2}\|(\at)^i G_1\|^2_{L^2_{sc}(S_{u',\ub})}du'd\ub'.$$
Then, by (\ref{4.6})
\begin{equation*}
\begin{split}
J_1\leq& \int_0^{\ub}\int_{u_{\infty}}^u \frac{a}{|u'|^2}\|(\at)^i (\p,\chih)\nab^i(\b, \a)\|^2_{L^2_{sc}(S_{u',\ub})}du'd\ub'\\
&+\int_0^{\ub}\int_{u_{\infty}}^u \f{a}{|u'|^2} \|(\at)^i\sum_{\substack{i_1+i_2+i_3+i_4=i\\i_4\leq i-1}}\nabla^{i_1}\p^{i_2}\nabla^{i_3}(\p, \chih)\nabla^{i_4} (\b, \a)\|^2_{L^2_{sc}(S_{u',\ub})}  du'd\ub'\\
\leq& \int_0^{\ub}\int_{u_{\infty}}^u \frac{a}{|u'|^4}\|\p, \chih\|^2_{L^{\infty}_{sc}(S_{u',\ub})} \|(\at)^i\nab^i (\b, \a)\|^2_{L^2_{sc}(S_{u',\ub})}du'd\ub'\\
&+\int_0^{\ub}\int_{u_{\infty}}^u \f{a^2}{|u'|^2}\|(\at)^{i+1}\sum_{\substack{i_1+i_2+i_3+i_4=i\\i_4\leq i-1}}\nabla^{i_1}\p^{i_2}\nabla^{i_3}(a^{-\f12}\p, a^{-\f12}\chih)\nabla^{i_4} (a^{-\f12}\b, a^{-\f12}\a)\|^2_{L^2_{sc}(S_{u',\ub})}  du'd\ub'\\
\leq& \int_0^{\ub}\int_{u_{\infty}}^u \frac{a^2}{|u'|^4}\|a^{-\f12}\p, a^{-\f12}\chih\|^2_{L^{\infty}_{sc}(S_{u',\ub})} \|(\at)^i\nab^i (\b, \a)\|^2_{L^2_{sc}(S_{u',\ub})}du'd\ub'\\
&+\int_0^{\ub}\int_{u_{\infty}}^u \f{a^2}{|u'|^2}\|(\at)^{i+1}\sum_{\substack{i_1+i_2+i_3+i_4=i\\i_4\leq i-1}}\nabla^{i_1}\p^{i_2}\nabla^{i_3}(a^{-\f12}\p, a^{-\f12}\chih)\nabla^{i_4} (a^{-\f12}\b, a^{-\f12}\a)\|^2_{L^2_{sc}(S_{u',\ub})}  du'd\ub'\\
\leq&O^2\cdot \sup_{u'}\int_0^{\ub}\|(\at)^i\nab^i (\b, \a)\|^2_{L^2_{sc}(S_{u',\ub})}d\ub'\cdot \int_{u_{\infty}}^{u}\f{a^2}{|u'|^4}du'+ \int_{u_{\infty}}^{u} \f{a^3}{|u'|^4}du'\cdot O^{4}\\
\leq&a^{-1}\sup_{u'}\|(\at\nab)^i(\b,\a)\|^2_{L^2_{sc}(H_{u'}^{(0,\ub)})}\cdot O^2+O^{4}\leq R^2\cdot O^2+O^4.
\end{split}
\end{equation*}
Hence, 
\begin{equation*}
\begin{split}
M_1\leq&\bigg(\int_0^{\ub}\int_{u_{\infty}}^u \frac{a}{|u'|^2}\|(\at)^i G\|^2_{L^2_{sc}(S_{u',\ub'})}du' d\ub'\bigg)^{\f12}\cdot \|(\at\nab)^i\b\|_{L^2_{sc}(\Hb_{\ub'}^{(u_{\infty},u)})}\\
\leq& (R\cdot O+O^2)\cdot \|(\at\nab)^i\b\|_{L^2_{sc}(\Hb_{\ub'}^{(u_{\infty},u)})}\\
\end{split}
\end{equation*}
Combining all the above estimates together, we obtain
\begin{equation*}
\begin{split}
&\|(\at\nab)^i\a\|^2_{L^2_{sc}(H_u^{(0,\ub)})}+\|(\at\nab)^i\b\|^2_{L^2_{sc}(\Hb_{\ub}^{(u_{\infty},u)})}\\
\leq & \|(\at\nab)^i\a\|^2_{L^2_{sc}(H_{u_{\infty}}^{(0,\ub)})}+\|(\at\nab)^i\b\|^2_{L^2_{sc}(\Hb_{0}^{(u_{\infty},u)})}+N_1+M_1\\
\leq& \|(\at\nab)^i\a\|^2_{L^2_{sc}(H_{u_{\infty}}^{(0,\ub)})}+\|(\at\nab)^i\b\|^2_{L^2_{sc}(\Hb_{0}^{(u_{\infty},u)})}\\
&+(R\cdot O+O^3+O^2)\cdot \sup_{u'}\|(\at\nab)^i\a\|_{L^2_{sc}(H_{u'}^{(0,\ub)})}\\
&+\at(R\cdot O+O^2)\cdot a^{-\f12}\|(\at\nab)^i\b\|_{L^2_{sc}(\Hb_{\ub'}^{(u_{\infty},u)})}.
\end{split}
\end{equation*}
Hence, for sufficiently large $a$, we have
\begin{equation*}
\begin{split}
&a^{-1}\|(\at\nab)^i\a\|^2_{L^2_{sc}(H_u^{(0,\ub)})}+a^{-1}\|(\at\nab)^i\b\|^2_{L^2_{sc}(\Hb_{\ub}^{(u_{\infty},u)})}\\
\leq& a^{-1}\|(\at\nab)^i\a\|^2_{L^2_{sc}(H_{u_{\infty}}^{(0,\ub)})}+a^{-1}\|(\at\nab)^i\b\|^2_{L^2_{sc}(\Hb_{0}^{(u_{\infty},u)})}\\
&+a^{-\f12}(R\cdot O+O^{3}+O^2)\cdot \sup_{u'}a^{-\f12}\|(\at\nab)^i\a\|_{L^2_{sc}(H_{u'}^{(0,\ub)})}\\
&+a^{-\f12}(R\cdot O+O^2)\cdot a^{-\f12}\|(\at\nab)^i\b\|_{L^2_{sc}(\Hb_{\ub'}^{(u_{\infty},u)})}\\
\leq& a^{-1}\|(\at\nab)^i\a\|^2_{L^2_{sc}(H_{u_{\infty}}^{(0,\ub)})}+a^{-1}\|(\at\nab)^i\b\|^2_{L^2_{sc}(\Hb_{0}^{(u_{\infty},u)})}\\
&+a^{-\f12}(R\cdot O+O^3+O^2)\cdot R+a^{-\f12}(R\cdot O+O^2)\cdot R\\
\leq& a^{-1}\|(\at\nab)^i\a\|^2_{L^2_{sc}(H_{u_{\infty}}^{(0,\ub)})}+a^{-1}\|(\at\nab)^i\b\|^2_{L^2_{sc}(\Hb_{0}^{(u_{\infty},u)})}+\f{1}{a^{\f13}}.
\end{split}
\end{equation*}
This further implies
\begin{equation}\label{eee 4}
\M R^2[\a]+\Rb^2[\b]\leq \M R^2_0[\a]+\Rb^2_0[\b]+\f{1}{a^{\f14}}  \quad \mbox{ and }
\end{equation}
\begin{equation}\label{eee 5}
\M R[\a]+\Rb[\b]\leq 2\,\M R_0[\a]+2\,\Rb_0[\b]+\f{1}{a^{\f18}}. 
\end{equation}
\end{proof}

We next derive estimates for other pairs. 
\begin{proposition} \label{ee 5}
Under the assumptions of Theorem \ref{main.thm1} and the bootstrap assumptions \eqref{BA.0}, we have
\begin{equation*}
\begin{split}
&\|(\at\nab)^i\b\|^2_{L^2_{sc}(H_u^{(0,\ub)})}+\|(\at\nab)^i\rho\|^2_{L^2_{sc}(H_u^{(0,\ub)})}\\
&\quad \quad \quad \quad \quad+\|(\at\nab)^i\sigma\|^2_{L^2_{sc}(H_u^{(0,\ub)})}+\|(\at\nab)^i\beb\|^2_{L^2_{sc}(H_u^{(0,\ub)})}\ls \M I^{(0)}+(\M I^{(0)})^2+1,
\end{split}
\end{equation*}

\begin{equation*}
\begin{split}
&\|(\at\nab)^i\rho\|^2_{L^2_{sc}(\Hb_{\ub}^{(u_{\infty},u)})}+\|(\at\nab)^i\sigma\|^2_{L^2_{sc}(\Hb_{\ub}^{(u_{\infty},u)})}\\
&\quad \quad \quad \quad \quad+\|(\at\nab)^i\beb\|^2_{L^2_{sc}(\Hb_{\ub}^{(u_{\infty},u)})}+\|(\at\nab)^i\ab\|^2_{L^2_{sc}(\Hb_{\ub}^{(u_{\infty},u)})}
\ls \M I^{(0)}+(\M I^{(0)})^2+1.
\end{split}
\end{equation*}
\end{proposition}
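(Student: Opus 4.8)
The plan is to apply Proposition~\ref{EE} to each of the three remaining Bianchi pairs $(\Psi_1,\Psi_2)\in\{(\b,(\rho,\sigma)),\,((\rho,\sigma),\beb),\,(\beb,\ab)\}$, processed in this order so that the flux bounds obtained for one pair are available as inputs for the next; the initial pair $(\a,\b)$ has already been treated in Proposition~\ref{ee 4}. For a given pair I would first read off from the null Bianchi equations~\eqref{eq:null.Bianchi} the Hodge structure~\eqref{snb 1}--\eqref{snb 2}, commute with $\nab^i$ via Proposition~\ref{commute}, and record the inhomogeneities $P$ and $Q$ as schematic sums of $\nab^{i_1}\p^{i_2}\nab^{i_3}(\p,\chih,\chibh,\tc)\nab^{i_4}(\Psi,\a)$ and $\nab^{i_1}\p^{i_2+1}\nab^{i_3}\tr\chib\,\nab^{i_4}(\Psi,\a)$, where $\Psi\in\{\b,\rho,\sigma,\beb,\ab\}$. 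Because the coefficient $\f{1+i}{2}+s_2(\Psi_1)$ of $\tr\chib$ in Proposition~\ref{EE} matches the commuted structure equation exactly, no genuinely borderline term survives in $P$ or $Q$.

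Next I would insert these into~\eqref{EE1}. The data fluxes on $H_{\ui}$ are bounded by $\M R^{(0)}+\underline{\M R}^{(0)}\ls\M I^{(0)}$ from~\eqref{EBA0}, while the fluxes of $\Psi_2$ on $\Hb_0$ vanish since the data along $\ub=0$ is Minkowskian. For the two spacetime integrals $\int_{D_{u,\ub}}\f{a}{|u'|}\|\nab^i\Psi_1\cdot P\|_{L^1_{sc}}$ and $\int_{D_{u,\ub}}\f{a}{|u'|}\|\nab^i\Psi_2\cdot Q\|_{L^1_{sc}}$ I would repeat the mechanism of Proposition~\ref{ee 4}: the scale-invariant H\"older inequalities~\eqref{Holder's}, Cauchy--Schwarz in $u'$ or $\ub'$, and the product bounds~\eqref{4.5}--\eqref{4.7}, extracting the anomalous weights ($\f{1}{\at}$ for $\a$ on outgoing cones and for $\b$ on incoming cones, $\f{\at}{|u|}$ for $\chibh$, $\f{a}{|u|^2}$ for $\tr\chib$, $\f{a}{|u|}$ for $\tc$) so that all factors become normalized, the surplus powers of $a$ being absorbed by the $|u'|^{-2}$ decay in the $du'$-integration. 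Each integral then becomes a bootstrap-sized multiple of the current flux plus a bootstrap-sized multiple of a flux already controlled (either by the previous pair or by Sections~\ref{secRicci}--\ref{secCurvatureL2}, via Propositions~\ref{chibh.bd},~\ref{chih.bd},~\ref{trchib.bd},~\ref{a.bd}); the current-flux term carries coefficient $\ls a^{-1/2}(O+R)^{20}\ls a^{-1/2}a^{1/16}\ll1$ by the constraint $(O+R)^{20}\le a^{1/16}$, so it is absorbed into the left-hand side once $a$ is sufficiently large.

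Running this cascade, each pair yields an inequality of the shape $X^2\le(\text{data})^2+a^{-1/4}$, hence $X\le2(\text{data})+a^{-1/8}$, exactly as in~\eqref{eee 4}--\eqref{eee 5}. Summing over $i\le10$ and combining with Proposition~\ref{ee 4} gives $\M R+\underline{\M R}\ls\M I^{(0)}+(\M I^{(0)})^2+1$, of which the two displayed estimates are immediate consequences; this also closes the bootstrap~\eqref{BA.0} and thereby completes Theorem~\ref{main.thm1}.

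The step I expect to be the main obstacle is the bookkeeping of the doubly anomalous products inside $P$ and $Q$: terms such as $\chih\cdot\ab$ in the $\nab_3(\rho,\sigma)$ equations, or $\chibh\cdot\a$ and $\tr\chib\cdot\ab$ in the $\nab_4\beb$ and $\nab_3\beb$ equations, where two anomalous factors meet. One must verify that for every such product the total extra $a$-weight drawn from the anomalies is strictly smaller than the $a$-gain available from the $|u'|$-decay of the bulk integral, so that the term is strictly lower order rather than merely borderline, and that the coupling never forces bounding an $\ab$-flux against itself without the $a^{-1/2}$ saving. This is precisely why the pairs must be handled in the stated order and why the $\tr\chib$-renormalization constant in Proposition~\ref{EE} is chosen as it is.
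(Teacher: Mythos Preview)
Your overall strategy is the same as the paper's: apply Proposition~\ref{EE} to the remaining Bianchi pairs, commute with $\nab^i$, and control the bulk integrals via the scale-invariant H\"older inequalities and the product estimates~\eqref{4.5}--\eqref{4.7}. Two points deserve comment.

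First, the cascade is unnecessary. The paper treats all three pairs $(\b,(\rho,\sigma))$, $((\rho,\sigma),\beb)$, $(\beb,\ab)$ simultaneously with a single schematic right-hand side $F_2,G_2$; the only prior input needed is Proposition~\ref{ee 4} (together with the $L^2(\S)$ bounds of Sections~\ref{secRicci}--\ref{secCurvatureL2}). Your sequential processing works too, but buys nothing.

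Second, and more substantively, your expectation in the final paragraph is not quite right: the product $\chibh\cdot\a$ appearing in $\nab_4(\rho,\sigma)$ is \emph{not} strictly lower order. The anomalies $\at$ for $\a$ and $|u|/\at$ for $\chibh$ multiply to $|u|$, which exactly cancels the $1/|u|$ from H\"older, leaving an $O(1)$ contribution. In the paper's notation this is the term
\[
J_2\;\ls\;\tfrac{1}{a^{1/3}}+(1+\M R^2[\a])\cdot(\Rb^2[\b]+\M R^2[\a]+1),
\]
which is \emph{not} small in $a$. The mechanism that closes the argument is not an extra $a$-saving but rather that $\M R[\a]$ and $\Rb[\b]$ have already been improved to data size in~\eqref{eee 4}--\eqref{eee 5}; plugging those in is precisely what produces the $(\M I^{(0)})^2$ (indeed $(\M I^{(0)})^4$ before taking the square root) in the final bound. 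So your claim that ``the total extra $a$-weight drawn from the anomalies is strictly smaller than the $a$-gain available'' fails for this one term, and your scheme of bounding it by $a^{-1/2}(O+R)^{20}$ would not work. The fix is simply to bound it by the data-size quantities from Proposition~\ref{ee 4} instead; with that correction your argument goes through and matches the paper's. (Incidentally, $\chih\cdot\ab$ is not doubly anomalous---$\ab$ is normal---and the $\tr\chib\cdot\ab$ combination you mention only arises with an additional $\p$ factor from commutation, hence is genuinely lower order.)
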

\begin{proof} For $(\Psi_1, \Psi_2)\in \{\big(\b, (\rho, \sigma)\big), \big( (\rho, \sigma), \beb\big), (\beb, \ab)\}$ and
$\Psi\in \{\b, \rho, \sigma, \beb, \ab\}$, we have the systematic null Bianchi equations:
$$\nab_3 \Psi_1+\bigg(\f{1}{2}+s_2(\Psi_1)\bigg)\tr\chib\Psi_1-\M D\Psi_2=(\p, \chih)\Psi,$$
$$\nab_4 \Psi_2-{\M D}^* \Psi_1=\p\Psi+\chih(\Psi, \a).$$
Commuting these equations with $\nab$ for i times, we have 
\begin{equation*}
\begin{split}
&\nab_3\nab^i\Psi_1+\bigg(\f{1+i}{2}+s_2(\Psi_1)\bigg)\tr\chib\nab^i\Psi_1-\M D\nab^{i}\Psi_2\\
=&\sum_{i_1+i_2+i_3+1=i}\nabla^{i_1}\p^{i_2+1}\nabla^{i_3}\nab\Psi+\sum_{i_1+i_2+i_3=i}\nabla^{i_1}\p^{i_2}\nabla^{i_3} \bigg((\p,\chih)\Psi\bigg)\\
&+\sum_{i_1+i_2+i_3+i_4+1=i} \nabla^{i_1}\p^{i_2+1}\nabla^{i_3}(\chibh,\tr\chib)\nab^{i_4}\Psi+\sum_{i_1+i_2+i_3+i_4=i}\nabla^{i_1}\p^{i_2}\nabla^{i_3}(\p, \chibh, \tc)\nabla^{i_4} \Psi\\
=&\sum_{i_1+i_2+i_3+i_4+1=i} \nabla^{i_1}\p^{i_2+1}\nabla^{i_3}(\chibh,\tr\chib)\nab^{i_4}\Psi+\sum_{i_1+i_2+i_3+i_4=i}\nabla^{i_1}\p^{i_2}\nabla^{i_3}(\p, \chih, \chibh, \tc)\nabla^{i_4} \Psi\\
=&F_2.\\
\end{split}
\end{equation*}
\noindent And
\begin{equation*}
\begin{split}
&\nab_4\nab^i\Psi_2-{\M D}^*\nab^{i}\Psi_1\\
=&\sum_{i_1+i_2+i_3+1=i}\nabla^{i_1}\p^{i_2+1}\nabla^{i_3}\nab\Psi+\sum_{i_1+i_2+i_3=i}\nabla^{i_1}\p^{i_2}\nabla^{i_3} \bigg(\p\Psi+\chih(\Psi, \a)\bigg)+\sum_{i_1+i_2+i_3+i_4=i}\nabla^{i_1}\p^{i_2}\nabla^{i_3}(\p, \chih)\nabla^{i_4} \Psi\\
=&\sum_{i_1+i_2+i_3+i_4=i}\nabla^{i_1}\p^{i_2}\nabla^{i_3}(\p, \chih, \chibh)\nabla^{i_4} (\Psi, \a)\\
=&G_2.
\end{split}
\end{equation*}
Applying Proposition \ref{ee 1} and Proposition \ref{ee 2}, we have
\begin{equation}\label{N2+M2}
\begin{split}
&\|(\at\nab)^i\Psi_1\|^2_{L^2_{sc}(H_u^{(0,\ub)})}+\|(\at\nab)^i\Psi_2\|^2_{L^2_{sc}(\Hb_{\ub}^{(u_{\infty},u)})}\\
\leq & \|(\at\nab)^i\Psi_1\|^2_{L^2_{sc}(H_{u_{\infty}}^{(0,\ub)})}+\|(\at\nab)^i\Psi_2\|^2_{L^2_{sc}(\Hb_{0}^{(u_{\infty},u)})}+N_2+M_2.
\end{split}
\end{equation}
\noindent Here
\begin{equation*}
\begin{split}
N_2=&\int_0^{\ub}\int_{u_{\infty}}^u \frac{a}{|u'|}\|(\at)^iF_2\cdot(\at\nab)^i\Psi_1\|_{L^1_{sc}(S_{u',\ub'})} du'd{\ub}',\\
M_2=&\int_0^{\ub}\int_{u_{\infty}}^u \frac{a}{|u'|}\|(\at)^iG_2\cdot(\at\nab)^i\Psi_2\|_{L^1_{sc}(S_{u',\ub'})} du'd{\ub}'.\\
\end{split}
\end{equation*}
Employing (\ref{Holder's}) and bootstrap assumption (\ref{BA.0}), we hence have 
\begin{equation*}
\begin{split}
N_2=&\int_0^{\ub}\int_{u_{\infty}}^u \frac{a}{|u'|}\|(\at)^iF_2\cdot(\at\nab)^i\Psi\|_{L^1_{sc}(S_{u',\ub'})} du'd{\ub}',\\
\leq&\int_0^{\ub}\int_{u_{\infty}}^u \frac{a}{|u'|^2}\|(\at)^iF_2\|_{L^2_{sc}(S_{u',\ub'})} \|(\at\nab)^i\Psi\|_{L^2_{sc}(S_{u',\ub'})} du'd{\ub}'\\
\leq&\int_{u_{\infty}}^u \frac{a}{|u'|^2}\bigg(\int_0^{\ub}\|(\at)^iF_2\|^2_{L^2_{sc}(S_{u',\ub'})}d\ub'\bigg)^{\f12} \|(\at\nab)^i\Psi\|_{L^2_{sc}(H_{u'}^{(0,\ub)})} du'.\\
\leq& \int_{u_{\infty}}^u \frac{a}{|u'|^2}\sup_{\ub'}\|(\at)^iF_2\|_{L^2_{sc}(S_{u',\ub'})}\cdot\sup_{\ub}\|(\at\nab)^i\Psi\|_{L^2_{sc}(H_{u'}^{(0,\ub)})} du'\\
\leq&  \int_{u_{\infty}}^u \frac{a}{|u'|^2}\sup_{\ub'}\|(\at)^iF_2\|_{L^2_{sc}(S_{u',\ub'})} du'\cdot R,
\end{split}
\end{equation*}
where $R$ is bootstrap constant for curvature estimates. And recall $F_2$ is of the form:
\begin{equation*}
\begin{split}
F_2=&\sum_{i_1+i_2+i_3+i_4+1=i} \nabla^{i_1}\p^{i_2+1}\nabla^{i_3}(\chibh,\tr\chib)\nab^{i_4}\Psi+\sum_{i_1+i_2+i_3+i_4=i}\nabla^{i_1}\p^{i_2}\nabla^{i_3}(\p, \chih, \chibh, \tc)\nabla^{i_4} \Psi\\
\end{split}
\end{equation*}
Thus, by using (\ref{Holder's}), (\ref{4.6}), (\ref{4.7}) and letting $a$ to be sufficiently large, we have
\begin{equation}\label{N2}
\begin{split}
&N_2\leq  \int_{u_{\infty}}^u \frac{a}{|u'|^2}\sup_{\ub'}\|(\at)^iF\|_{L^2_{sc}(S_{u',\ub'})} du'\cdot R\\
\leq&  \int_{u_{\infty}}^u \frac{a^{\f12}}{|u'|^2}\sup_{\ub'}\|\f{\at}{|u'|}\p, \f{\at}{|u'|}\chih, \f{\at}{|u'|}\chibh, \f{\at}{|u'|}\tc\|_{L^{\infty}_{sc}(S_{u',\ub'})}  \|(\at\nab)^i\Psi\|_{L^2_{sc}(S_{u',\ub'})} du'\cdot R\\
&+ \sum_{i_1+i_2+i_3+i_4+1=i} \int_{u_{\infty}}^u a^{-\f12}\sup_{\ub'}\|(\at)^{i+1}  \nabla^{i_1}\p^{i_2+1}\nabla^{i_3}(\f{a}{|u'|^2}\chibh,\f{a}{|u'|^2}\tr\chib)\nab^{i_4}\Psi \|_{L^2_{sc}(S_{u',\ub'})} du'\cdot R\\
+&\sum_{\substack{i_1+i_2+i_3+i_4=i\\i_4\leq i-1}} \int_{u_{\infty}}^u \f{1}{|u'|}\sup_{\ub'}\|(\at)^{i+1}  \nabla^{i_1}\p^{i_2}\nabla^{i_3}\big(\f{\at}{|u'|}\p, \f{\at}{|u'|}\chih, \f{\at}{|u'|}\chibh, \f{\at}{|u'|}\tc\big)\nab^{i_4}\Psi \|_{L^2_{sc}(S_{u',\ub'})} du'\cdot R\\
\leq& \sup_{\ub'}\bigg(\int^u_{u_{\infty}}\f{1}{|u'|^2}du' \bigg)^{\f12}\cdot O\cdot   \sup_{\ub'}\bigg(\int_{u_{\infty}}^u \frac{a}{|u'|^2}\|(\at\nab)^i\Psi\|^2_{L^2_{sc}(S_{u',\ub'})} du'\bigg)^{\f12}\cdot R\\
&+a^{-\f12}\int^u_{u_{\infty}}\f{a}{|u'|^2} du'\cdot O^{3}\cdot R+\int_{u_{\infty}}^u \f{a^{\f12}}{|u'|^2}\cdot O^{2}\cdot du'\cdot R.\\
\leq&\f{1}{|u|^{\f12}}\cdot O \cdot R \cdot R+a^{-\f12}\cdot \f{a}{|u|} \cdot O^{3}\cdot R+\f{\at}{|u|}\cdot O^{2} \cdot R\leq \f{1}{a^{\f13}}.
\end{split}
\end{equation}
We then treat $M_2$ in the same fashion. By (\ref{Holder's}) and H\"older's inequality, we obtain
\begin{equation*}
\begin{split}
M_2=&\int_0^{\ub}\int_{u_{\infty}}^u \frac{a}{|u'|}\|(\at)^iG_2\cdot(\at\nab)^i\Psi\|_{L^1_{sc}(S_{u',\ub'})} du'd{\ub}'\\
\leq&\int_0^{\ub}\int_{u_{\infty}}^u \frac{a}{|u'|^2}\|(\at)^i G_2\|_{L^2_{sc}(S_{u',\ub'})} \|(\at\nab)^i\Psi\|_{L^2_{sc}(S_{u',\ub'})} du'd{\ub}'\\
\leq&\int_0^{\ub}\bigg(\int_{u_{\infty}}^u \frac{a}{|u'|^2}\|(\at)^i G_2\|^2_{L^2_{sc}(S_{u',\ub'})}du'\bigg)^{\f12} \bigg(\int_{u_{\infty}}^u \frac{a}{|u'|^2}\|(\at\nab)^i \Psi\|^2_{L^2_{sc}(S_{u',\ub'})}du'\bigg)^{\f12} d{\ub}'\\
\leq&\bigg(\int_0^{\ub}\int_{u_{\infty}}^u \frac{a}{|u'|^2}\|(\at)^i G_2\|^2_{L^2_{sc}(S_{u',\ub'})}du' d\ub'\bigg)^{\f12}\cdot \|(\at\nab)^i\Psi\|_{L^2_{sc}(\Hb_{\ub'}^{(u_{\infty},u)})},
\end{split}
\end{equation*}
where
\begin{equation*}
\begin{split}
G_2=&\sum_{i_1+i_2+i_3+i_4=i}\nabla^{i_1}\p^{i_2}\nabla^{i_3}(\p, \chibh, \chih)\nabla^{i_4} (\Psi, \a).
\end{split}
\end{equation*}
Denote
$$J_2:=\int_0^{\ub}\int_{u_{\infty}}^u \frac{a}{|u'|^2}\|(\at)^i G_2\|^2_{L^2_{sc}(S_{u',\ub})}du'd\ub'.$$
Then by (\ref{Holder's}), (\ref{4.3}), (\ref{4.4}), Proposition \ref{a.bd}, Proposition \ref{ee 4} and letting $a$ to be sufficiently large, we have
\begin{equation*}
\begin{split}
J_2\leq& \int_0^{\ub}\int_{u_{\infty}}^u \frac{a}{|u'|^2}\|(\at)^i (\p,\chih, \chibh)\nab^i(\Psi, \a)\|^2_{L^2_{sc}(S_{u',\ub})}du'd\ub'\\
&+\int_0^{\ub}\int_{u_{\infty}}^u \f{a}{|u'|^2} \|(\at)^i\sum_{\substack{i_1+i_2+i_3+i_4=i\\i_4\leq i-1}}\nabla^{i_1}\p^{i_2}\nabla^{i_3}(\p, \chih, \chibh)\nabla^{i_4} (\Psi, \a)\|^2_{L^2_{sc}(S_{u',\ub})}  du'd\ub'\\
\leq& \int_0^{\ub}\int_{u_{\infty}}^u \frac{a}{|u'|^4}\|\p, \chih, \chibh\|^2_{L^{\infty}_{sc}(S_{u',\ub})} \|(\at)^i\nab^i (\Psi, \a)\|^2_{L^2_{sc}(S_{u',\ub})}du'd\ub'\\
&+\int_0^{\ub}\int_{u_{\infty}}^u \|(\at)^{i+1}\sum_{\substack{i_1+i_2+i_3+i_4=i\\i_4\leq i-1}}\nabla^{i_1}\p^{i_2}\nabla^{i_3}(\f{\at}{|u'|}\p,  \f{\at}{|u'|}\chih,\f{\at}{|u'|}\chibh)\nabla^{i_4} (a^{-\f12}\Psi, a^{-\f12}\a)\|^2_{L^2_{sc}(S_{u',\ub})}  du'd\ub'\\
\leq& \int_0^{\ub}\int_{u_{\infty}}^u a^{-1}\|\f{\at}{|u'|}\p,  \f{\at}{|u'|}\chih, \f{\at}{|u'|}\chibh\|^2_{L^{\infty}_{sc}(S_{u',\ub})} \f{a}{|u'|^2} \|(\at)^i\nab^i\Psi\|^2_{L^2_{sc}(S_{u',\ub})}du'd\ub'\\
&+ \int_0^{\ub}\int_{u_{\infty}}^u \|\f{\at}{|u'|} \p,  \f{\at}{|u'|}\chih, \f{\at}{|u'|} \chibh\|^2_{L^{\infty}_{sc}(S_{u',\ub})} \f{a}{|u'|^2} \|(\at)^i\nab^i (a^{-\f12}\a)\|^2_{L^2_{sc}(S_{u',\ub})}du'd\ub'\\
&+ \int_0^{\ub}\int_{u_{\infty}}^u \bigg(\f{a}{|u'|^2}O^2[\chibh]O^2[\a]+\f{a^{-\f12}\cdot a\cdot O^4}{|u'|^2}\bigg) du' d\ub' \\
\leq& \int_0^{\ub} a^{-1}\sup_{u'}\bigg(\|\f{\at}{|u'|}\p,  \f{\at}{|u'|}\chih, \f{\at}{|u'|}\chibh\|^2_{L^{\infty}_{sc}(S_{u',\ub})}\bigg) \bigg(\int_{u_{\infty}}^u  \f{a}{|u'|^2} \|(\at)^i\nab^i\Psi\|^2_{L^2_{sc}(S_{u',\ub})}du'\bigg)d\ub'\\
&+\int_{u_{\infty}}^u \sup_{u'}\bigg(\|\f{\at}{|u'|} \p,  \f{\at}{|u'|}\chih, \f{\at}{|u'|} \chibh\|^2_{L^{\infty}_{sc}(S_{u',\ub})}\bigg) \f{a}{|u'|^2} \bigg(\int_0^{\ub}\|(\at)^i\nab^i (a^{-\f12}\a)\|^2_{L^2_{sc}(S_{u',\ub})}d\ub'\bigg) du'\\
&+\f{a}{|u|}\bigg(O^2[\chibh]+O^2[\chih]+1\bigg)O^2[\a]+\f{\at}{|u|}\cdot O^4\\
\leq&a^{-1}\cdot O^2 \cdot R^2+\bigg(O^2[\chibh]+O^2[\chih]+1\bigg) \cdot \big(\Rb^2[\b]+\f{1}{a}\big)\\
&+\f{a}{|u|}\bigg(O^2[\chibh]+O^2[\chih]+1\bigg) \cdot \bigg(\M R^2[\a]+O^2[\a]\bigg)+\f{\at}{|u|}\cdot O^4\\
\ls& \f{1}{a^{\f13}}+(1+\M R^2[\a])\cdot(\Rb^2[\b]+\M R^2[\a]+1),
\end{split}
\end{equation*}
where we use $\M O[\chibh]\ls 1$, $\M O[\chih]\ls \M R[\a]+1$ and $O[\a]\ls \Rb[\b]+1$. 
This implies
$$M_2\leq J_2^{\f12}\|(\at\nab)^i\Psi\|_{L^2_{sc}(\Hb_{\ub}^{(u_{\infty},u)})}\leq J_2+\f{1}{4}\|(\at\nab)^i\Psi\|^2_{L^2_{sc}(\Hb_{\ub}^{(u_{\infty},u)})}.$$
From (\ref{N2}), we have $N_2\leq 1/a^{\f13}$. Together with (\ref{N2+M2}), we have
\begin{equation*}
\begin{split}
&\|(\at\nab)^i\Psi_1\|^2_{L^2_{sc}(H_u^{(0,\ub)})}+\|(\at\nab)^i\Psi_2\|^2_{L^2_{sc}(\Hb_{\ub}^{(u_{\infty},u)})}\\
\leq & \|(\at\nab)^i\Psi_1\|^2_{L^2_{sc}(H_{u_{\infty}}^{(0,\ub)})}+\|(\at\nab)^i\Psi_2\|^2_{L^2_{sc}(\Hb_{0}^{(u_{\infty},u)})}+N_2+M_2\\
\leq & \|(\at\nab)^i\Psi_1\|^2_{L^2_{sc}(H_{u_{\infty}}^{(0,\ub)})}+\|(\at\nab)^i\Psi_2\|^2_{L^2_{sc}(\Hb_{0}^{(u_{\infty},u)})}+\f{1}{a^{\f13}}+J_2+\f{1}{4}\cdot\|(\at\nab)^i\Psi\|^2_{L^2_{sc}(\Hb_{\ub}^{(u_{\infty},u)})}.
\end{split}
\end{equation*}
The last term $1/4 \|(\at\nab)^i\Psi\|^2_{L^2_{sc}(\Hb_{\ub}^{(u_{\infty},u)})}$ could be absorbed by the left. Recall 
$$J_2\ls  \f{1}{a^{\f13}}+(1+\M R^2[\a])\cdot(\Rb^2[\b]+\M R^2[\a]+1).$$
We hence derive
\begin{equation*}
\begin{split}
&\M R^2[\b]+\M R^2[\rho]+\M R^2[\sigma]+\M R^2[\beb]+\Rb^2[\rho]+\Rb^2[\sigma]+\Rb^2[\beb]+\Rb^2[\ab]\\
\ls& \M R^2_0[\b]+\M R^2_0[\rho]+\M R^2_0[\sigma]+\M R^2_0[\beb]+\Rb^2_0[\rho]+\Rb^2_0[\sigma]+\Rb^2_0[\beb]+\Rb^2_0[\ab]\\
&+\big(1+\M R^2_0[\a]+\Rb^2_0[\b]\big)^2+\f{1}{a^{\f14}}\ls \M (I^{(0)})^2+(\M I^{(0)})^4+1+\f{1}{a^{\f14}},
\end{split}
\end{equation*}
where we use (\ref{eee 4}) in the last step. This implies
\begin{equation}\label{eee 6}
\begin{split}
&\M R[\b]+\M R[\rho]+\M R[\sigma]+\M R[\beb]+\Rb[\rho]+\Rb[\sigma]+\Rb[\beb]+\Rb[\ab]\ls \M I^{(0)}+(\M I^{(0)})^2+1. 
\end{split}
\end{equation}
Recall (\ref{eee 5})
$$\M R[\a]+\Rb[\b]\leq 2\,\M R_0[\a]+2\,\Rb_0[\b]+\f{1}{a^{\f18}}.$$
These together conclude
\begin{equation}\label{bootstrap curvature conclusion}
\M R(u,\ub)+\Rb(u,\ub)\ls \M I^{(0)}+(\M I^{(0)})^2+1.
\end{equation}
By estimates in Section \ref{secRicci} and Section \ref{secCurvatureL2}, we have
$$\M O(u,\ub)\ls \M R(u,\ub)+\Rb(u,\ub)+1.$$
Hence, with (\ref{bootstrap curvature conclusion}) we also deduce
\begin{equation}\label{bootstrap Ricci conclusion}
\M O(u,\ub)\ls \M I^{(0)}+(\M I^{(0)})^2+1.
\end{equation}

\begin{remark}\label{bootstrap openness}
Conclusions in (\ref{bootstrap curvature conclusion}) and (\ref{bootstrap Ricci conclusion}) are improvements of bootstrap assumption (\ref{BA.0}): 
$$\M O(u, \ub)\leq O, \quad  \M R(u,\ub) +\underline{\M R}(u,\ub) \leq R, \mbox{ where } O \mbox{ and } R \mbox{ are large numbers satisfying}$$
$$\M I^{(0)}+(\M I^{(0)})^2+1\ll O, \quad \M I^{(0)}+(\M I^{(0)})^2+1\ll R, \quad  (O+R)^{20}\leq {a^{\f{1}{16}}}.$$
And these improvements prove openness in the bootstrap argument. 
\end{remark}
\end{proof}

\section{FORMATION OF TRAPPED SURFACES}\label{TSF}

In this section, we will prove\\

\begin{minipage}[!t]{0.27\textwidth}
\begin{tikzpicture}[scale=0.70]
\draw [white](3,-1)-- node[midway, sloped, below,black]{$H_{u_{\infty}}(u=u_{\infty})$}(4,0);
\draw [white](1,1)-- node[midway,sloped,above,black]{$H_{-a/4}$}(2,2);
\draw [white](2,2)--node [midway,sloped,above,black] {$\Hb_{1}(\ub=1)$}(4,0);
\draw [white](1,1)--node [midway,sloped, below,black] {$\Hb_{0}(\ub=0)$}(3,-1);
\draw [dashed] (0, 4)--(0, -4);
\draw [dashed] (0, -4)--(4,0)--(0,4);
\draw [dashed] (0,0)--(2,2);
\draw [dashed] (0,-4)--(4,0);
\draw [dashed] (0,2)--(3,-1);
\draw [very thick] (1,1)--(3,-1)--(4,0)--(2,2)--(1,1);
\fill[black!35!white] (1,1)--(3,-1)--(4,0)--(2,2)--(1,1);
\draw [->] (3.3,-0.6)-- node[midway, sloped, above,black]{$e_4$}(3.6,-0.3);
\draw [->] (1.4,1.3)-- node[midway, sloped, below,black]{$e_4$}(1.7,1.6);
\draw [->] (3.3,0.6)-- node[midway, sloped, below,black]{$e_3$}(2.7,1.2);
\draw [->] (2.4,-0.3)-- node[midway, sloped, above,black]{$e_3$}(1.7,0.4);
\end{tikzpicture}
\end{minipage}
\hspace{0.01\textwidth} 
\begin{minipage}[!t]{0.63\textwidth}

{\bf Theorem \ref{main.thm2} }

Given $\M I^{(0)}$, there exists a sufficiently large $a_0=a_0(\M I^{(0)})$.  For $0<a_0<a$, for Einstein vacuum equations with initial data:
\begin{itemize}
\item $\sum_{i\leq 10, k\leq 3}a^{-\frac12}\|\nab^{k}_4(|u_{\infty}|\nab)^{i}\chih_0\|_{L^{\infty}(S_{u_{\infty},\ub})}\leq \M I^{(0)}$ 

\noindent along $u=u_{\infty}$,

\item Minkowskian initial data  along $\ub=0$,

\item $\int_0^1|u_{\infty}|^2|\chih_0|^2(u_{\infty}, \ub')d\ub'\geq a$ for every direction   

\noindent along $u=u_{\infty}$,
\end{itemize}
we have that $S_{-a/4,1}$ is a trapped surface.
\end{minipage}

Proof. We first derive pointwise estimates for $|\chih|^2_{\gamma}$. 
Fix $(\theta^1, \theta^2)\in S^2$. We consider the following null structure equation 
\begin{equation*}
 \nab_3\chih+\frac 12 \tr\chib \chih-2\omegab \chih=\nab\widehat{\otimes} \eta-\frac 12 \tr\chi \chibh +\eta\widehat{\otimes} \eta.
\end{equation*}
We contract this $2$-tensor with another $2$-tensor $\chih$ and get
\begin{equation}\label{chih square}
\f12 \nab_3|\chih|^2_{\gamma}+\f12\tr\chib |\chih|^2_{\gamma}-2\omb|\chih|^2_{\gamma}=\chih(\nab\widehat{\otimes}\eta-\f12\tr\chi\chibh+\eta\widehat{\otimes}\eta).
\end{equation}
Employing the fact $\omb=-\f12\nab_3(\log \Omega)=-\f12\O^{-1}\nab_3\O$, we rewrite (\ref{chih square}) as 
\begin{equation*}
\begin{split} 
\nab_3(\O^2|\chih|^2_{\gamma})+\O^2\tr\chib|\chih|^2_{\gamma}=2\O^2\chih(\nab\widehat{\otimes}\eta-\f12\tr\chi\chibh+\eta\widehat{\otimes}\eta).
\end{split}
\end{equation*}
Using $\nab_3=\f{1}{\O}(\f{\partial}{\partial u}+b^A\f{\partial}{\partial \theta^A})$, we rewrite the above equation as
\begin{equation*}
\begin{split} 
\f{\partial}{\partial u}(\O^2|\chih|^2_{\gamma})+\O\tr\chib\cdot \O^2|\chih|^2_{\gamma}=&2\O^3\chih(\nab\widehat{\otimes}\eta-\f12\tr\chi\chibh+\eta\widehat{\otimes}\eta)-b^A\f{\partial}{\partial \theta^A}(\O^2|\chih|^2_{\gamma}).
\end{split}
\end{equation*}
Substitute $\O\tr\chib$ with
$$\Omega \tr\chib=\Omega(\tr\chib+\f{2}{|u|})-\Omega\f{2}{|u|}=\Omega(\tr\chib+\f{2}{|u|})-(\Omega-1)\f{2}{|u|}-\f{2}{|u|}$$
we have
\begin{equation*}
\begin{split} 
\f{\partial}{\partial u}(\O^2|\chih|^2_{\gamma})-\f{2}{|u|}\O^2|\chih|^2_{\gamma}=&2\O^3\chih(\nab\widehat{\otimes}\eta-\f12\tr\chi\chibh+\eta\widehat{\otimes}\eta)-b^A\f{\partial}{\partial \theta^A}(\O^2|\chih|^2_{\gamma})\\
&-\O(\tr\chib+\f{2}{|u|})(\O^2|\chih|^2_{\gamma})+(\O-1)\cdot\f{2}{|u|}\cdot(\O^2|\chih|^2_{\gamma}).
\end{split}
\end{equation*}
This gives
\begin{equation}\label{chih square 2}
\begin{split} 
\f{\partial}{\partial u}\bigg(u^2\O^2|\chih|^2_{\gamma}\bigg)=&2\cdot|u|^2\cdot\O^3\chih(\nab\widehat{\otimes}\eta-\f12\tr\chi\chibh+\eta\widehat{\otimes}\eta)-|u|^2\cdot b^A\f{\partial}{\partial \theta^A}(\O^2|\chih|^2_{\gamma})\\
&-|u|^2\cdot\O(\tr\chib+\f{2}{|u|})(\O^2|\chih|^2_{\gamma})+|u|^2\cdot(\O-1)\cdot\f{2}{|u|}\cdot(\O^2|\chih|^2_{\gamma}).
\end{split}
\end{equation}
For $b^{A}$, we have equation
\begin{equation*}
\f{\partial b^{A}}{\partial \ub}=-4\Omega^2\zeta^{A},
\end{equation*}
which is from 
\begin{equation*}
[L,\Lb]=\f{\partial b^{A}}{\partial \ub}\f{\partial}{\partial \theta^{A}}.
\end{equation*}
Applying the identity $\zeta_A=\f12\eta_A-\f12\etb_A$, Propositions \ref{Omega}, derived estimates of $\eta, \etb$, it holds in $D_{u,\ub}$
\begin{equation*}
\|b^{A}\|_{L^{\infty}(\S)}\leq \f{\at}{|u|^2}.
\end{equation*}
For the right hand side of (\ref{chih square 2}), we have
$$\|2\cdot|u|^2\cdot\O^3\chih(\nab\widehat{\otimes}\eta-\f12\tr\chi\chibh+\eta\widehat{\otimes}\eta)\|_{L^{\infty}(\S)}\leq |u|^2\cdot\f{\at}{|u|}\cdot(\f{\at}{|u|^3}+\f{a}{|u|^4})\leq\f{a}{|u|^2},$$
$$\||u|^2\cdot b^A\f{\partial}{\partial \theta^A}(\O^2|\chih|^2_{\gamma})\|_{L^{\infty}(\S)}\leq
|u|^2\cdot\f{\at}{|u|^2}\cdot\f{a}{|u|^2}\leq\f{a^{\f32}}{|u|^2},$$
$$\|-|u|^2\cdot\O(\tr\chib+\f{2}{|u|})(\O^2|\chih|^2_{\gamma})\|_{L^{\infty}(\S)}\leq
|u|^2\cdot\f{1}{|u|^2}\cdot\f{a}{|u|^2}\leq\f{a}{|u|^2},$$
$$\||u|^2\cdot(\O-1)\cdot\f{2}{|u|}\cdot(\O^2|\chih|^2_{\gamma}) \|_{L^{\infty}(\S)}\leq
|u|^2\cdot\f{1}{|u|}\cdot\f{2}{|u|}\cdot\f{a}{|u|^2}\leq\f{a}{|u|^2}.$$
In summary, for (\ref{chih square 2}) we have
$$\f{\partial}{\partial u}\bigg(u^2\O^2|\chih|^2_{\gamma}\bigg)=M, \mbox{ and } |M|\ls \f{a^{\f32}}{|u|^2}\ll \f{a^{\f74}}{|u|^2},$$
which {\color{black}implies} 
\begin{equation*}
-\f{a^{\f74}}{|u|}+\f{a^{\f74}}{|\ui|}  \leq |u|^2\O^2|\chih|^2_{\gamma}(u,\ub,\theta^1,\theta^2)-|u_{\infty}|^2\O^2|\chih|^2_{\gamma}(u_{\infty},\ub,\theta^1,\theta^2). 
\end{equation*}
Recall $\O(u_{\infty},\ub,\theta^1,\theta^2)=1$. We hence have
\begin{equation*}
|u|^2\O^2|\chih|^2_{\gamma}(u,\ub,\theta^1,\theta^2)\geq|u_{\infty}|^2|\chih|^2_{\gamma}(u_{\infty},\ub,\theta^1,\theta^2)-\f{a^{\f74}}{|u|}.
\end{equation*}
Together with the assumption in Theorem \ref{main.thm2}, we further have for $\ui\leq u\leq -a/4$
\begin{equation*}
\begin{split}
\int_0^1 |u|^2\O^2|\chih|^2_{\gamma}(u,\ub',\theta^1,\theta^2)d\ub'\geq \int_0^1 |u_{\infty}|^2|\chih|^2_{\gamma}(u_{\infty},\ub',\theta^1,\theta^2)d\ub'-\f{a^{\f74}}{|u|}\geq a-\f{a^{\f74}}{|u|}\geq a-\f{4a^{\f74}}{a} \geq \f{7a}{8}.  
\end{split}
\end{equation*}
Pick $u=-a/4$. With the fact $\|\O-1\|_{L^{\infty}(\S)}\ls {1}/{a}$, for sufficiently large $a$, we hence have
\begin{equation*}
\begin{split}
(-\f{a}{4})^2\int_0^1 |\chih|^2_{\gamma}(-\f{a}{4},\ub',\theta^1,\theta^2)d\ub'\geq& \f67\cdot
\int_0^1 (-\f{a}{4})^2\O^2|\chih|^2_{\gamma}(-\f{a}{4},\ub',\theta^1,\theta^2)d\ub'\\
\geq&\f67\cdot \f{7a}{8}=\f{3a}{4}.   
\end{split}
\end{equation*}
This implies
\begin{equation}\label{int chih square}
\begin{split}
\int_0^1 |\chih|^2_{\gamma}(-\f{a}{4},\ub',\theta^1,\theta^2)d\ub'\geq&\f{3a}{4}\cdot \f{16}{a^2}= \f{12}{a}
\end{split}
\end{equation}

Now we consider the other null structure equation
$$\nab_4 \tr\chi+\f12(\tr\chi)^2=-|\chih|^2_{\gamma}-2\o\tr\chi.$$
Using $\o=-\f12\nab_4(\log \Omega)$, we have
\begin{equation*}
\begin{split}
\nab_4 \tr\chi+\f12(\tr\chi)^2=&-|\chih|^2-2\o\tr\chi\\
=&-|\chih|^2_{\gamma}+\nab_4(\log\Omega)\tr\chi=-|\chih|^2_{\gamma}+\f{1}{\Omega}\nab_4\Omega\cdot \tr\chi.
\end{split}
\end{equation*}
Hence,
\begin{equation*}
\begin{split}
\nab_4(\Omega^{-1} \tr\chi)=&-\O^{-2}\nab_4\O\cdot\tr\chi+\O^{-1}\nab_4\tr\chi\\
=&\O^{-1}(\nab_4\tr\chi-\O^{-1}\cdot\nab_4\O\cdot\tr\chi)=\O^{-1}\bigg(-\f12(\tr\chi)^2-|\chih|^2_{\gamma}\bigg).
\end{split}
\end{equation*}
With the fact $e_4=\Omega^{-1}\f{\partial}{\partial \ub}$, we have
\begin{equation}\label{O trchi}
\begin{split}
\f{\partial}{\partial \ub}(\Omega^{-1} \tr\chi)=&-\f12(\tr\chi)^2-|\chih|^2_{\gamma}.
\end{split}
\end{equation}
For every $(\theta^1,\theta^2)\in \mathbb{S}^2$, along $\Hb_0$ we have 
$$(\Omega^{-1}\tr\chi)(-\f{a}{4}, 0, \theta^1, \theta^2)=1^{-1}\cdot \f{2}{a/4}=\f{8}{a}.$$
We then integrate (\ref{O trchi}). Using (\ref{int chih square}) we obtain
\begin{equation*}
\begin{split}
&(\Omega^{-1}\tr\chi)(-\f{a}{4},1, \theta^1, \theta^2)\\
\leq & (\Omega^{-1}\tr\chi)(-\f{a}{4}, 0, \theta^1, \theta^2)-\int_0^{1}|\chih|^2_{\gamma}(-\f{a}{4},\ub',\theta^1,\theta^2)d\ub'\\
\leq & \f{8}{a}-\f{12}{a}<0.
\end{split}
\end{equation*}
Recall in $D_{u,\ub}$ the following estimate holds
\begin{equation*}
\|\tr\chib+\f{2}{|u|}\|_{L^{\infty}(S_{u,\ub})}\leq \f{1}{|u|^2}.
\end{equation*}
In particular, this implies
$$\tr\chib(-\f{a}{4},1, \theta^1, \theta^2)<0 \mbox{ for every} (\theta^1, \theta^2)\in \mathbb{S}^2.$$
Therefore, we conclude that $S_{-\f{a}{4},1}$ is a trapped surface. 

\section{A Scaling Argument and a connection to [An-Luk]}\label{sec rescale} 
In this article, we use coordinate system $(u, \ub, \theta^1, \theta^2)$ based on double null foliations, where $(\theta^1, \theta^2)$ are stereographic coordinates on $\mathbb{S}^2$. In these coordinates, we study spacetime region
$$u_{\infty} \leq u \leq -\f{a}{4}, \, \, \quad 0\leq \ub \leq 1. $$
The Lorentzian metric $g$ satisfies ansatz
$$g=-2\O^2(du\otimes d\ub+d\ub\otimes du)+\gamma_{AB}(d\theta^A-d^A du)\otimes(d\theta^B-d^B du).$$

In the below, by exploring a rescaling, we will find an interesting connection between the results above and the results in \cite{A-L} proved by An-Luk.

\subsection{A Spacetime Rescaling}  
We introduce a new coordinate system $(u', \ub', \theta^{1'}, \theta^{2'})$, where
\begin{equation}\label{rescale}
u'=\d u, \, \, \, \ub'=\d\ub, \, \, \, \theta^{1'}=\d \theta^1, \, \, \,\theta^{2'}=\d \theta^2. 
\end{equation}
Note that coordinates $(\theta^1, \theta^2)$ on $\S$ are set up through stereographic projection. Assume $(x_1, x_2, x_3)$  satisfying $x_1^2+x_2^2+x_3^2=a^2$ and lying on the upper hemisphere of $S_{-a,0}$ (with radius $a$). It then has stereographic projection $(\zeta_1, \zeta_2)=(\f{ax_1}{a+x_3}, \f{ax_2}{a+x_3})$. Scale down the length by a factor $\d$, we then have $x_1'=\d x_1, \, x_2'=\d x_2, \, x_3'=\d x_3, \,\, (x_1')^2+(x_2')^2+(x_2')^2=\d^2 a^2$ and $(x_1', x_2', x_3')$ has stereographic projection 
$$(\zeta_1', \zeta_2')=(\f{\d a x_1'}{\d a+x_3'}, \f{\d ax_2'}{\d a+x_3'})=(\f{\d a\cdot \d x_1}{\d a+\d x_3}, \f{\d a \cdot \d x_2}{\d a+\d x_3})=(\f{\d ax_1}{a+x_3}, \f{\d ax_2}{a+x_3})=(\d\zeta_1, \d\zeta_2).$$ Therefore, the rescaled coordinates $(\theta^{1'}, \theta^{2'})=(\d \theta^1, \d \theta^2)$ on $S_{u', \ub'}$ make perfect sense  since $2$-sphere $S_{u',\ub'}=S_{\d u, \d \ub}$ is scaled down from $\S$ by a factor $\delta$.

\noindent We then rewrite Theorem \ref{main.thm1} and Theorem \ref{main.thm2} in coordinate system $(u', \ub', \theta^{1'}, \theta^{2'})$:

\begin{minipage}[!t]{0.29\textwidth}
\begin{tikzpicture}[scale=0.85]
\draw [white](3,-1)-- node[midway, sloped, below,black]{$H'_{\d u_{\infty}}(u'=\d u_{\infty})$}(4,0);
\draw [white](1,1)-- node[midway,sloped,above,black]{$H'_{-\d a/4}$}(2,2);
\draw [white](2,2)--node [midway,sloped,above,black] {$\Hb'_{1}(\ub'=\d)$}(4,0);
\draw [white](1,1)--node [midway,sloped, below,black] {$\Hb'_{0}(\ub'=0)$}(3,-1);
\draw [dashed] (0, 4)--(0, -4);
\draw [dashed] (0, -4)--(4,0)--(0,4);
\draw [dashed] (0,0)--(2,2);
\draw [dashed] (0,-4)--(4,0);
\draw [dashed] (0,2)--(3,-1);
\draw [very thick] (1,1)--(3,-1)--(4,0)--(2,2)--(1,1);
\fill[black!35!white] (1,1)--(3,-1)--(4,0)--(2,2)--(1,1);
\end{tikzpicture}
\end{minipage}
\hspace{0.01\textwidth} 
\begin{minipage}[!t]{0.58\textwidth}

With an open set of characteristic initial data (corresponding to the initial data in Theorem \ref{main.thm1} and Theorem \ref{main.thm2}), 

\begin{itemize}

\item Einstein vacuum equations (\ref{1.1}) admit a unique smooth solution in the colored region:
$$\d\cdot u_{\infty}\leq u' \leq -{\d\cdot a}/{4}, \quad 0\leq \ub' \leq \d.$$
\item $S'_{-{\d a}/{4}, \,\d}:=\{u'=-\d a/4\}\cap\{\ub'=\d\}$ is a trapped surface. \\

\end{itemize}

The above conclusion is very similar to the main theorem in \cite{A-L}. In the following, we will verify that this conclusion is indeed an extension of \cite{A-L}. In particular, we will show that all the rescaled Ricci coefficients $\Gamma'$ and rescaled curvature components $R'$ would obey the same apriori estimates as in \cite{A-L}.
\end{minipage}   

\noindent Under the rescaling (\ref{rescale}), it follows
$$g'(u',\ub', \theta^{1'}, \theta^{2'})=\d^2\cdot g(u,\ub, \theta^1, \theta^2).$$
In $(u',\ub',\theta^{1'}, \theta^{2'})$ coordinates, we let
$$g'(u', \ub', \theta^1, \theta^2)=-2{\O'}^2(du'\otimes d\ub'+d\ub'\otimes du')+\gamma'_{A'B'}(d\theta^{A'}-d^{A'} du')\otimes(d\theta^{B'}-d^{B' }du').$$
Compare with
$$g(u, \ub, \theta^1, \theta^2)=-2\O^2(du\otimes d\ub+d\ub\otimes du)+\gamma_{AB}(d\theta^A-d^A du)\otimes(d\theta^B-d^B du).$$
Here we have
$$du'=\d\cdot du, \quad d\ub'=\d\cdot d\ub, \quad d\theta^{A'}=\d \cdot d \theta^A \,\mbox{  for } A=1, 2 , $$
$${\O'}^2(u',\ub', \theta^{1'}, \theta^{2'})=\O^2(u,\ub, \theta^1, \theta^2), \quad \gamma'_{A'B'}(u',\ub', \theta^{1'}, \theta^{2'})=\gamma_{AB}(u,\ub, \theta^1, \theta^2),$$ 
$$d^{A'}(u',\ub', \theta^{1'}, \theta^{2'})=d^A(u,\ub, \theta^1, \theta^2),$$
$$e'_3(u',\ub', \theta^{1'}, \theta^{2'})={\O'}^{-1}(\f{\partial}{\partial u'}+d^{A'}\f{\partial}{\partial \ub'})=\d^{-1}{\O}^{-1}(\f{\partial}{\partial u}+d^{A}\f{\partial}{\partial \ub})=\d^{-1}\cdot e_3(u, \ub, \theta^1, \theta^2),$$
\begin{equation}\label{e4 e4'}
e'_4(u',\ub', \theta^{1'}, \theta^{2'})={\O'}^{-1}\f{\partial}{\partial\ub'}=\d^{-1}{\O}^{-1}\f{\partial}{\partial\ub}=\d^{-1}\cdot e_4(u,\ub, \theta^1, \theta^2),
\end{equation}
\begin{equation}\label{eA eA'}
e'_A(u',\ub', \theta^{1'}, \theta^{2'})=\d^{-1}\cdot e_A(u,\ub, \theta^1, \theta^2), \mbox{ for } A=1,2. 
\end{equation}

\subsection{Rescaled Geometric Quantities}
As usual, with frame $\{e'_3, e'_4, e'_A,  e'_B\}$, we define 
\begin{equation*}
\begin{split}
&\chi'_{A'B'}=g'(D'_{A'} e'_4,e'_B),\, \,\, \quad \chib'_{A'B'}=g'(D'_{A'} e'_3,e'_B),\\
&\eta'_{A'}=-\frac 12 g'(D'_{3'} e'_A,e'_4),\quad \etab'_{A'}=-\frac 12 g'(D'_{4'} e'_A,e'_3),\\
&\omega'=-\frac 14 g'(D'_{4'} e'_3,e'_4),\quad\,\,\, \omegab'=-\frac 14 g'(D'_{3'} e'_4,e'_3),\\
&\zeta'_{A'}=\frac 1 2 g'(D'_{A'} e'_4,e'_3).
\end{split}
\end{equation*}
With $\gamma'_{A'B'}$ being the induced metric on $\S'$, we further decompose $\chi', \chib'$ into
$$\chi'_{A'B'}=\f12\tr\chi'\cdot \gamma'_{A'B'}+\chih'_{A'B'}, \quad \chib'_{A'B'}=\f12\tr\chib'\cdot \gamma'_{A'B'}+\chibh'_{A'B'}.$$
$$\mbox{Here} \quad D'_{e'_{\mu}}e'_{\nu}:=\Gamma'^{\lambda}_{\mu'\nu'}e'_{\lambda} \quad \mbox{ and } \quad \Gamma'^{\lambda}_{\mu'\nu'}:=\f12g'^{\lambda'\kappa'}(\f{\partial g'_{\kappa'\mu'}}{\partial x'_{\nu}}+\f{\partial g'_{\kappa' \nu'}}{\partial x'_{\mu}}-\f{\partial g'_{\mu'\nu'}}{\partial x'_{\kappa}}).$$
\begin{remark}
Note by rescaling 
\begin{equation}\label{rescaling 2}
g'=\d^2\cdot g, \mbox{ and } g'^{-1}=\d^{-2}\cdot g^{-1},
\end{equation}
we then have
\begin{equation}\label{Gamma Gamma'}
\Gamma'^{\lambda'}_{\mu'\nu'}:=\f12g'^{\lambda'\kappa'}(\f{\partial g'_{\kappa'\mu'}}{\partial x'_{\nu}}+\f{\partial g'_{\kappa' \nu'}}{\partial x'_{\mu}}-\f{\partial g'_{\mu'\nu'}}{\partial x'_{\kappa}})=\f12g^{\lambda'\kappa'}(\f{\partial g_{\kappa'\mu'}}{\partial x'_{\nu}}+\f{\partial g_{\kappa' \nu'}}{\partial x'_{\mu}}-\f{\partial g_{\mu'\nu'}}{\partial x'_{\kappa}})=\Gamma^{\lambda'}_{\mu'\nu'},
\end{equation}
which implies
\begin{equation}\label{D D'}
D'_{e'_{\mu}}e'_{\nu}=\Gamma'^{\lambda}_{\mu'\nu'}e'_{\lambda}=\Gamma^{\lambda}_{\mu'\nu'}e'_{\lambda}=D_{e'_{\mu}}e'_{\nu}.
\end{equation}
\end{remark}

We are ready to prove 
\begin{proposition}\label{Prop rescale 1}
For $\Gamma\in \{\chih, \tr\chi, \chibh, \tr\chib, \eta, \etb, \zeta, \o,\omb\}$ written in two different coordinates $(u',\ub',\theta^{1'}, \theta^{2'})$ and $(u, \ub, \theta^1, \theta^2)$, it holds that
$$\Gamma'(u',\ub',\theta^{1'}, \theta^{2'})=\d^{-1}\cdot \Gamma (u,\ub,\theta^{1}, \theta^{2}).$$
\end{proposition}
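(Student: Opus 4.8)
The plan is to verify the scaling law for each Ricci coefficient directly from its defining formula, exploiting the two key facts already assembled in the excerpt: the rescaled frame vectors satisfy $e'_\mu(u',\ub',\theta^{1'},\theta^{2'}) = \delta^{-1} e_\mu(u,\ub,\theta^1,\theta^2)$ (equations \eqref{e4 e4'}, \eqref{eA eA'} and the analogous identities for $e'_3$), the covariant derivative is literally unchanged, $D'_{e'_\mu}e'_\nu = D_{e'_\mu}e'_\nu$ (equation \eqref{D D'}), and the metric scales as $g' = \delta^2 g$ (equation \eqref{rescaling 2}). The strategy is: first simplify $D_{e'_\mu}e'_\nu$ further. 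Since $D$ is the Levi-Civita connection of $g$ and the map is linear in the frame, $D_{e'_\mu}e'_\nu = D_{\delta^{-1}e_\mu}(\delta^{-1}e_\nu) = \delta^{-2}D_{e_\mu}e_\nu$ (the scalars $\delta^{-1}$ are constants, so they pull straight out). Then every Ricci coefficient is of the schematic form $g'(D'_{e'_\mu}e'_\nu, e'_\lambda)$ with a fixed numerical prefactor, so I compute
\[
g'(D'_{e'_\mu}e'_\nu, e'_\lambda) = \delta^2 g(\delta^{-2}D_{e_\mu}e_\nu, \delta^{-1}e_\lambda) = \delta^{-1}\, g(D_{e_\mu}e_\nu, e_\lambda).
\]
This single computation, applied to $\chi'_{A'B'} = g'(D'_{A'}e'_4, e'_B)$, $\chib'_{A'B'}$, $\eta'_{A'} = -\tfrac12 g'(D'_{3'}e'_A, e'_4)$, $\etab'$, $\omega' = -\tfrac14 g'(D'_{4'}e'_3, e'_4)$, $\omegab'$, and $\zeta'_{A'} = \tfrac12 g'(D'_{A'}e'_4, e'_3)$, immediately gives the claimed $\delta^{-1}$ scaling for each of these.

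The remaining cases, $\tr\chi$, $\tr\chib$, $\chih$, $\chibh$, follow from the decomposition $\chi'_{A'B'} = \tfrac12 \tr\chi' \cdot \gamma'_{A'B'} + \chih'_{A'B'}$. Since $\gamma'_{A'B'}(u',\ub',\theta^{1'},\theta^{2'}) = \gamma_{AB}(u,\ub,\theta^1,\theta^2)$ (one of the matching relations recorded after equation \eqref{eA eA'}, which one checks from $g' = \delta^2 g$ together with $d\theta^{A'} = \delta\, d\theta^A$), and since taking the trace with respect to $\gamma'$ means contracting with $(\gamma')^{-1} = \gamma^{-1}$, one has $\tr\chi' = (\gamma')^{A'B'}\chi'_{A'B'} = \gamma^{AB}\cdot \delta^{-1}\chi_{AB} = \delta^{-1}\tr\chi$, and then $\chih'_{A'B'} = \chi'_{A'B'} - \tfrac12 \tr\chi' \gamma'_{A'B'} = \delta^{-1}(\chi_{AB} - \tfrac12 \tr\chi\, \gamma_{AB}) = \delta^{-1}\chih_{AB}$; identically for the barred quantities.

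There is essentially no serious obstacle here — the proof is a bookkeeping exercise once the three structural facts above are in hand. The one point that deserves a careful word rather than a one-line dismissal is why $D'_{e'_\mu}e'_\nu = \delta^{-2}D_{e_\mu}e_\nu$ and not merely $D_{e'_\mu}e'_\nu$: equation \eqref{D D'} already tells us $D'_{e'_\mu}e'_\nu = D_{e'_\mu}e'_\nu$ because the Christoffel symbols in the primed coordinates coincide with those computed from the unprimed metric (the $\delta$'s cancel between $g'$ and its inverse, equation \eqref{Gamma Gamma'}); the further factor $\delta^{-2}$ then comes purely from the frame rescaling $e'_\mu = \delta^{-1}e_\mu$ and the bilinearity of $D$ over constant scalars, as above. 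Assembling these, one states: by \eqref{D D'}, \eqref{rescaling 2}, \eqref{e4 e4'}, \eqref{eA eA'} and the relation $\gamma'_{A'B'} = \gamma_{AB}$, every $\Gamma' \in \{\chih', \tr\chi', \chibh', \tr\chib', \eta', \etb', \zeta', \omega', \omegab'\}$ equals $\delta^{-1}$ times its unprimed counterpart evaluated at the corresponding point, which is the assertion of Proposition \ref{Prop rescale 1}.
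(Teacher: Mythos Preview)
Your proposal is correct and follows essentially the same approach as the paper: both use $g'=\delta^2 g$, $D'=D$, and $e'_\mu=\delta^{-1}e_\mu$ to obtain the $\delta^{-1}$ scaling for each Ricci coefficient, then handle $\tr\chi'$, $\chih'$ (and barred analogues) via the trace--traceless decomposition together with $\gamma'_{A'B'}=\gamma_{AB}$. Your version is slightly more streamlined in that you do the schematic computation $g'(D'_{e'_\mu}e'_\nu,e'_\lambda)=\delta^{-1}g(D_{e_\mu}e_\nu,e_\lambda)$ once rather than writing out $\chi'$ explicitly and then saying ``in the same fashion,'' but the content is identical.
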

\begin{proof}
We first calculate $\chi'_{A'B'}(u',\ub', \theta^{1'}, \theta^{2'})$. With definition of $\chi'$ in the first step, (\ref{D D'}) in the second step,  (\ref{rescaling 2}) in the third step, (\ref{e4 e4'}) and (\ref{eA eA'}) in the fourth step,  we have
\begin{equation*}
\begin{split}
&\chi'_{A'B'}(u',\ub', \theta^{1'}, \theta^{2'})=g'(D'_{e'_A} e'_4,e'_B)\\
=&g'(D_{e'_A}e'_4, e'_B)=\d^2\cdot g(D_{e'_A}e'_4, e'_B)\\
=&\d^2\cdot \d^{-1}\cdot \d^{-1}\cdot \d^{-1} g(D_{e_A} e_4, e_B)=\d^{-1}\cdot\chi_{AB}(u, \ub, \theta^1, \theta^2).
\end{split}
\end{equation*}
In the same fashion, we conclude 
$$\chib'_{A'B'}(u',\ub', \theta^{1'}, \theta^{2'})=\d^{-1}\cdot\chib_{AB}(u, \ub, \theta^1, \theta^2), \,\, \zeta'_{A'B'}(u',\ub', \theta^{1'}, \theta^{2'})=\d^{-1}\cdot\zeta_{AB}(u, \ub, \theta^1, \theta^2),$$
$$\eta'_{A'}(u',\ub', \theta^{1'}, \theta^{2'})=\d^{-1}\cdot\eta_{A}(u, \ub, \theta^1, \theta^2), \quad \etb'_{A'}(u',\ub', \theta^{1'}, \theta^{2'})=\d^{-1}\cdot\etb_{A}(u, \ub, \theta^1, \theta^2),$$
$$\o'(u',\ub', \theta^{1'}, \theta^{2'})=\d^{-1}\cdot\o(u, \ub, \theta^1, \theta^2), \quad \omb'(u',\ub', \theta^{1'}, \theta^{2'})=\d^{-1}\cdot\omb(u, \ub, \theta^1, \theta^2).$$
We then focus on $\tr\chi'$ and $\chih'_{A'B'}$. As calculated above, we have
\begin{equation*}
\begin{split}
&\tr\chi'(u',\ub', \theta^{1'}, \theta^{2'})\\
=&g'^{A'B'}g'(D'_{A'} e'_4,e'_B)=g'^{A'B'}g'(D_{A'} e'_4,e'_B)=g^{A'B'}g(D_{A'} e'_4,e'_B)\\
=&\d^{-1}g^{AB}g(D_{A} e_4,e_B)=\d^{-1}\cdot\tr\chi(u,\ub,\theta^1, \theta^2), \quad \quad \mbox{and}
\end{split}
\end{equation*}
\begin{equation*}
\begin{split}
&\chih'_{A'B'}(u',\ub', \theta^{1'}, \theta^{2'})\\
=&\chi'_{A'B'}(u',\ub', \theta^{1'}, \theta^{2'})-\f12\tr\chi'(u',\ub', \theta^{1'}, \theta^{2'})\cdot\gamma'_{A'B'}(u',\ub', \theta^{1'}, \theta^{2'})\\
=&\chi'_{A'B'}(u',\ub', \theta^{1'}, \theta^{2'})-\f12\tr\chi'(u',\ub', \theta^{1'}, \theta^{2'})\cdot\d^2\cdot \d^{-1} \cdot \d^{-1}\cdot \gamma_{AB}(u,\ub, \theta^{1}, \theta^{2})\\
=&\d^{-1}\cdot\chi_{AB}(u,\ub, \theta^1, \theta^2)-\f12\cdot\d^{-1}\cdot\tr\chi(u,\ub, \theta^1, \theta^2)\cdot \gamma_{AB}(u,\ub, \theta^1, \theta^2)\\
=&\d^{-1}\cdot \chih_{AB}(u, \ub, \theta^1, \theta^2).
\end{split}
\end{equation*}
Similarly, it also holds
$$\tr\chib'(u',\ub', \theta^{1'}, \theta^{2'})=\d^{-1}\cdot\tr\chib(u,\ub,\theta^1, \theta^2), \quad \chibh'_{A'B'}(u',\ub', \theta^{1'}, \theta^{2'})=\d^{-1}\cdot \chibh(u, \ub, \theta^1, \theta^2).$$
We then conclude this proposition.\\
\end{proof} 

For curvature components, we further have
\begin{proposition}\label{Prop rescale 2}
For $\Psi\in \{\a, \b, \rho, \sigma, \beb, \ab\}$ written in coordinates $(u',\ub',\theta^{1'}, \theta^{2'})$ and $(u, \ub, \theta^1, \theta^2)$, the following identity is true
$$\Psi'(u',\ub',\theta^{1'}, \theta^{2'})=\d^{-2}\cdot\Psi(u,\ub,\theta^{1}, \theta^{2}).$$
\end{proposition}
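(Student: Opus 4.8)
The plan is to mimic exactly the argument of Proposition \ref{Prop rescale 1}, now at the level of curvature rather than connection. The starting point is the definition of the null curvature components in (\ref{def curvatures}): each $\Psi'$ is a contraction of the Riemann tensor $R'$ of $g'$ against four (or two, plus two null) frame vectors from $\{e'_3, e'_4, e'_A, e'_B\}$. So the two ingredients I need are: (i) how $R'$ compares to $R$ under the conformal-by-constant rescaling $g' = \d^2 g$, and (ii) the already-established scaling $e'_\mu = \d^{-1} e_\mu$ from (\ref{e4 e4'})--(\ref{eA eA'}).

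For (i), the key observation — parallel to (\ref{Gamma Gamma'}) — is that rescaling the metric by a \emph{constant} factor $\d^2$ leaves all Christoffel symbols unchanged, $\Gamma'^{\lambda}_{\mu'\nu'} = \Gamma^{\lambda}_{\mu'\nu'}$, because the correction terms in the Koszul formula involve $g'^{-1} = \d^{-2} g^{-1}$ times derivatives of $g' = \d^2 g$, and the two factors of $\d$ cancel. Since the Riemann curvature tensor (as a $(1,3)$-tensor, or equivalently with all indices expressed through $\Gamma$ and $\partial\Gamma$) is built purely from Christoffel symbols and their coordinate derivatives, we get $R'^{\lambda}_{\ \mu'\nu'\kappa'} = R^{\lambda}_{\ \mu'\nu'\kappa'}$ as functions in the primed coordinates, and hence for the fully-covariant version $R'_{\lambda'\mu'\nu'\kappa'} = g'_{\lambda'\sigma'} R'^{\sigma}_{\ \mu'\nu'\kappa'} = \d^2 \cdot R_{\lambda'\mu'\nu'\kappa'}$. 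This is the exact analogue of (\ref{D D'}) at one higher order.

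Then (ii) finishes it: for instance $\a'_{A'B'} = R'(e'_A, e'_4, e'_B, e'_4) = \d^2 \cdot R(e'_A, e'_4, e'_B, e'_4) = \d^2 \cdot (\d^{-1})^4 \cdot R(e_A, e_4, e_B, e_4) = \d^{-2} \a_{AB}$, evaluated at the corresponding points $(u',\ub',\theta^{1'},\theta^{2'}) \leftrightarrow (u,\ub,\theta^1,\theta^2)$. Every other component in $\{\b, \rho, \sigma, \beb, \ab\}$ is an analogous four-fold contraction (for $\rho, \sigma$ one also uses that the Hodge dual $^*R$ is formed with the volume form $\eps'$ of $g'$, which scales so that the total weight is still $\d^2 \cdot \d^{-4} = \d^{-2}$), so the same count of one $\d^2$ from the metric and four $\d^{-1}$'s from the frame vectors gives $\Psi' = \d^{-2}\Psi$ uniformly.

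The only mildly delicate point — the ``main obstacle'' such as it is — is bookkeeping the Hodge-dual and raised-index conventions for $\sigma$ (and for $\b,\beb$ if one writes them with a raised index): one must check that wherever $\eps'$ or $g'^{-1}$ enters the definition, the net power of $\d$ still comes out to $-2$. Since $\eps'_{AB} = \sqrt{\det\gamma'}\, [AB]$ scales like $\d^2$ times $\eps$ (two lower frame-indices, matching the metric weight) while $(\gamma')^{-1}$ scales like $\d^{-2}$, these internal contractions are always weight-neutral, so no surprise arises; the count reduces to the number of external frame legs, which is four for all six components. I would present this as: establish $R'_{\lambda'\mu'\nu'\kappa'} = \d^2 R_{\lambda'\mu'\nu'\kappa'}$ via (\ref{Gamma Gamma'}), then contract with (\ref{e4 e4'})--(\ref{eA eA'}) component by component, noting the weight-neutrality of all internal Hodge/metric contractions.
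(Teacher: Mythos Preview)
Your proposal is correct and follows essentially the same approach as the paper: establish $R'^{l}_{\ ijk}=R^{l}_{\ ijk}$ from the invariance of the Christoffel symbols, lower the index to get $R'_{ijkl}=\d^2 R_{ijkl}$, then contract against the rescaled frame $e'_\mu=\d^{-1}e_\mu$ to pick up the factor $\d^2\cdot\d^{-4}=\d^{-2}$, doing $\a$ explicitly and the rest ``in the same manner.'' Your additional remark that the internal Hodge/metric contractions in $\sigma$ (and the half-contractions in $\b,\beb,\rho$) are weight-neutral is a useful sanity check that the paper leaves implicit.
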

\begin{proof}
We first write Riemann curvature in $(u',\ub', \theta^{1'}, \theta^{2'})$ coordinate.
$$R'^{l}_{ijk}=\partial_i\Gamma'^{l}_{jk}-\partial_j \Gamma'^{l}_{ik}+\Gamma'^{p}_{jk}\Gamma'^l_{ip}-\Gamma'^{p}_{ik}\Gamma'^{l}_{ip}$$
With the help of (\ref{Gamma Gamma'}), we obtain
\begin{equation*}
\begin{split}
R'^{l}_{ijk}=&\partial_i\Gamma'^{l}_{jk}-\partial_j \Gamma'^{l}_{ik}+\Gamma'^{p}_{jk}\Gamma'^l_{ip}-\Gamma'^{p}_{ik}\Gamma'^{l}_{ip}\\
=&\partial_i\Gamma^{l}_{jk}-\partial_j \Gamma^{l}_{ik}+\Gamma^{p}_{jk}\Gamma^l_{ip}-\Gamma^{p}_{ik}\Gamma'^{l}_{ip}=R^{l}_{ijk}.
\end{split}
\end{equation*}
This {\color{black}implies} 
$${R'}_{ijkl}={R'}^m_{ijk} g'_{ml}=R^m_{ijk}\cdot\d^2\cdot g_{ml}=\d^2\cdot R_{ijkl}.$$
Therefore, we obtain
\begin{equation*}
\begin{split}
a'_{A'B'}(u', \ub', \theta^{1'}, \theta^{2'}):=&R'(e'_A, e'_4, e'_B, e'_4)=\d^2\cdot R(e'_A, e'_4, e'_B, e'_4)\\
=&\d^{2}\cdot\d^{-4}\cdot R(e_A, e_4, e_B, e_4)=\d^{-2}\cdot\a_{AB}(u,\ub,\theta^1, \theta^2).
\end{split}
\end{equation*}
In the same manner, we have
$$\b'_{A'}(u', \ub', \theta^{1'}, \theta^{2'})=\d^{-2}\cdot\b_{A}(u,\ub,\theta^1, \theta^2), \quad \rho'(u', \ub', \theta^{1'}, \theta^{2'})=\d^{-2}\cdot\rho(u,\ub,\theta^1, \theta^2), $$
$$\sigma'_{A'}(u', \ub', \theta^{1'}, \theta^{2'})=\d^{-2}\cdot\sigma(u,\ub,\theta^1, \theta^2), \quad \beb'_{A'}(u', \ub', \theta^{1'}, \theta^{2'})=\d^{-2}\cdot\beb_{A}(u,\ub,\theta^1, \theta^2), $$
$$\ab'_{A'B'}(u', \ub', \theta^{1'}, \theta^{2'})=\d^{-2}\cdot\ab_{AB}(u,\ub,\theta^1, \theta^2).$$
These conclude the proposition. 
\end{proof}

\subsection{Rescaled Uniform Bounds}\label{rescale bounds}

Applying Proposition \ref{Prop rescale 1} and Proposition \ref{Prop rescale 2}, next we establish the connection to \cite{A-L}.  Take $\chih$ as an example. Applying Proposition \ref{Prop rescale 1}, estimates derived for $\mathcal{O}_{i,\infty}[\chih]$ and $u'=\d u$, we have 
\begin{equation*}
\begin{split}
&|\chih'_{A'B'}(u',\ub', \theta^{1'}, \theta^{2'})|=\d^{-1}\cdot |\chih_{AB}(u, \ub, \theta^1, \theta^2)|\leq \d^{-1}\cdot \f{\at}{|u|}=\f{\at}{\d|u|}=\f{\at}{|u'|}. 
\end{split}
\end{equation*}
In the same fashion, we have
\begin{equation*}
|\chibh'_{A'B'}(u',\ub', \theta^{1'}, \theta^{2'})|=\d^{-1}\cdot |\chibh_{AB}(u, \ub, \theta^1, \theta^2)|\leq \d^{-1}\cdot \f{\at}{|u|^2}=\f{\d\at}{\d^2|u|^2}=\f{\d\at}{|u'|^2}, 
\end{equation*}
\begin{equation*}
|\tr\chi'(u',\ub', \theta^{1'}, \theta^{2'})|=\d^{-1}\cdot |\tr\chi(u, \ub, \theta^1, \theta^2)|\leq \d^{-1}\cdot \f{1}{|u|}=\f{1}{\d|u|}=\f{1}{|u'|}, 
\end{equation*}
\begin{equation*}
|\eta'_{A'}(u',\ub', \theta^{1'}, \theta^{2'})|=\d^{-1}\cdot |\eta_{A}(u, \ub, \theta^1, \theta^2)|\leq \d^{-1}\cdot \f{\at}{|u|^2}=\f{\d\at}{\d^2|u|^2}=\f{\d\at}{|u'|^2}, 
\end{equation*}
\begin{equation*}
|\etb'_{A'}(u',\ub', \theta^{1'}, \theta^{2'})|=\d^{-1}\cdot |\etb_{A}(u, \ub, \theta^1, \theta^2)|\leq \d^{-1}\cdot \f{\at}{|u|^2}=\f{\d\at}{\d^2|u|^2}=\f{\d\at}{|u'|^2}, 
\end{equation*}
\begin{equation*}
|\o'(u',\ub', \theta^{1'}, \theta^{2'})|=\d^{-1}\cdot |\o(u, \ub, \theta^1, \theta^2)|\leq \d^{-1}\cdot \f{1}{|u|}=\f{1}{\d|u|}=\f{1}{|u'|}, 
\end{equation*}
\begin{equation*}
|\omb'(u',\ub', \theta^{1'}, \theta^{2'})|=\d^{-1}\cdot |\omb(u, \ub, \theta^1, \theta^2)|\leq \d^{-1}\cdot \f{a}{|u|^3}=\f{\d^2 a}{\d^3|u|^3}=\f{\d^2 a}{|u'|^3}\boxed{\leq \f{\d\at}{|u'|^2}} \, ,
\end{equation*}
\begin{equation*}
|\tr\chib'(u',\ub', \theta^{1'}, \theta^{2'})+\f{2}{|u'|}|=\d^{-1}\cdot |\tr\chib(u, \ub, \theta^1, \theta^2)+\f{2}{|u|}|\leq \d^{-1}\cdot \f{a}{|u|^3}=\f{\d^2 a}{\d^3|u|^3}=\f{\d^2 a}{|u'|^3}\boxed{\leq \f{\d\at}{|u'|^2}} \, .
\end{equation*}
For the estimates of $\omb'$ and $\tr\chib'$, we use $|u'|\geq \d\at$. In the same manner, by Proposition \ref{Prop rescale 2} and with the help that $|u'|\geq  \d a/4$ we have
\begin{equation*}
\begin{split}
&|\b'_{A'}(u',\ub', \theta^{1'}, \theta^{2'})|=\d^{-2}\cdot |\b_{A}(u, \ub, \theta^1, \theta^2)|\leq \d^{-2}\cdot \f{\at}{|u|^2}=\f{\at}{\d^2|u|^2}=\f{\at}{|u'|^2}, 
\end{split}
\end{equation*}
\begin{equation*}
\begin{split}
&|\rho'(u',\ub', \theta^{1'}, \theta^{2'})|=\d^{-2}\cdot |\rho(u, \ub, \theta^1, \theta^2)|\leq \d^{-2}\cdot \f{a}{|u|^3}=\f{\d a}{\d^3|u|^3}=\f{\d a}{|u'|^3}, 
\end{split}
\end{equation*}
\begin{equation*}
\begin{split}
&|\sigma'(u',\ub', \theta^{1'}, \theta^{2'})|=\d^{-2}\cdot |\sigma(u, \ub, \theta^1, \theta^2)|\leq \d^{-2}\cdot \f{a}{|u|^3}=\f{\d a}{\d^3|u|^3}=\f{\d a}{|u'|^3}, 
\end{split}
\end{equation*}
\begin{equation*}
\begin{split}
&|\beb'_{A'}(u',\ub', \theta^{1'}, \theta^{2'})|=\d^{-2}\cdot |\beb_{A}(u, \ub, \theta^1, \theta^2)|\leq \d^{-2}\cdot \f{a^{\f32}}{|u|^4}=\f{\d^2 a^{\f32}}{\d^4|u|^4}=\f{\d^2 a^{\f32}}{|u'|^4}\boxed{\leq \f{\d\at}{|u'|^3}} \, , 
\end{split}
\end{equation*}
\begin{equation}\label{rescale ab}
\begin{split}
&|\ab'_{A'B'}(u',\ub', \theta^{1'}, \theta^{2'})|=\d^{-2}\cdot |\ab_{AB}(u, \ub, \theta^1, \theta^2)|\leq \d^{-2}\cdot \f{a^2}{|u|^5}=\f{\d^3 a^2}{\d^5|u|^5}=\f{\d^3 a^2}{|u'|^5}, 
\end{split}
\end{equation}
\begin{equation}\label{rescale alpha}
\begin{split}
&|\a'_{A'B'}(u',\ub', \theta^{1'}, \theta^{2'})|=\d^{-2}\cdot |\a_{AB}(u, \ub, \theta^1, \theta^2)|\leq \d^{-2}\cdot \f{\at}{|u|}=\f{\d^{-1}\at}{\d |u|}=\f{\d^{-1}\at}{|u'|}. 
\end{split}
\end{equation}

\begin{remark}\label{systematical improvement}
By above rescaling argument, we hence transfer the bounds derived in preceding sections into new bounds, holding in the spacetime region
$$\d a\leq |u'| \leq \d|u_{\infty}|, \quad 0\leq \ub' \leq \d.$$
If we focus on the region
$$\d a\leq |u'| \leq 1, \quad 0\leq \ub' \leq \d,$$
\textit{these bounds encode peeling properties, and they systematically sharpen the a priori bounds in \cite{A-L}}:
\begin{itemize}

\item For $\{\omb',\, \tr\chib'+\f{2}{|u'|},\, \beb'\}$, we improve their estimates in \cite{A-L}. For comparison, the terms boxed are the estimates obtained in \cite{A-L}.  

\item For $\{\ab', \a'\}$, their estimates are avoid in \cite{A-L} by several geometric renormalizations. \textit{But for here we have estimates for $\ab'$ and $\a$, and they respect peeling properties}.  
\end{itemize}

\noindent In summary, in \cite{A-L} we have $1\ll b\leq \at \leq \d^{-\f12}$. If we focus on the scenario $1\ll b=\at \leq \d^{-\f12}$, compared with \cite{A-L} the new approach in this paper completely avoids geometric renormalizations and still gives a systematical improvement encoding intrinsic\footnote{From conformal compactification.} peeling properties. 

\end{remark}

\begin{remark}
Since the estimates obtained above are uniform for $|u_{\infty}|$, we could keep $\d$ and let $|u_{\infty}|\rightarrow +\infty$. This extends \cite{A-L} and establish an existence result from pass null infinity.
\end{remark}

\noindent By repeating the arguments as in Section \ref{TSF}, we hence obtain one of the main conclusions in \cite{A-L} by An-Luk {\color{black}on formation of a small trapped surface}:

\begin{minipage}[!t]{0.27\textwidth}
\begin{tikzpicture}[scale=0.68]
\draw [white](3,-1)-- node[midway, sloped, below,black]{$H_{u_{\infty}}(u=u_{\infty})$}(4,0);
\draw [white](1,1)-- node[midway,sloped,above,black]{$H_{-\f{\d a}{4}}$}(2,2);
\draw [white](2,2)--node [midway,sloped,above,black] {$\Hb_{\d}(\ub=\d)$}(4,0);
\draw [white](1,1)--node [midway,sloped, below,black] {$\Hb_{0}(\ub=0)$}(3,-1);
\draw [dashed] (0, 4)--(0, -4);
\draw [dashed] (0, -4)--(4,0)--(0,4);
\draw [dashed] (0,0)--(2,2);
\draw [dashed] (0,-4)--(4,0);
\draw [dashed] (0,2)--(3,-1);
\draw [very thick] (1,1)--(3,-1)--(4,0)--(2,2)--(1,1);
\fill[black!35!white] (1,1)--(3,-1)--(4,0)--(2,2)--(1,1);
\draw [->] (3.3,-0.6)-- node[midway, sloped, above,black]{$e_4$}(3.6,-0.3);
\draw [->] (2.4,-0.3)-- node[midway, sloped, above,black]{$e_3$}(1.7,0.4);
\end{tikzpicture}
\end{minipage}
\hspace{0.01\textwidth} 
\begin{minipage}[!t]{0.65\textwidth}
{\bf Theorem \ref{thm3}}
We solve Einstein vacuum equations. Given $\M I^{(0)}$, for a fixed $\d$ there exists a sufficiently large $a_0=a_0(\M I^{(0)},\d)$.  For $0<a_0<a$, \, with initial data:
\begin{itemize}
\item $\sum_{i\leq 10, k\leq 3}a^{-\frac12}\|(\d\nab_4)^k(|u_{\infty}|\nab)^{i}\chih_0\|_{L^{\infty}(S_{u_{\infty},\ub})}\leq \M I^{(0)}$ 

\noindent along $u=u_{\infty}$,
  
\item Minkowskian initial data  along $\ub=0$,

\item $\int_0^{\delta}|u_{\infty}|^2|\chih_0|^2(u_{\infty}, \ub')d\ub'\geq \d a$ for every direction

\noindent  along $u=u_{\infty}$,
\end{itemize}
we have that $S_{-\d a/4,\d}$ is a trapped surface.
\end{minipage}   

If we further choose $a=4c\cdot\d^{-1}$, where $0< c\leq 1$ being of size $1$, we then obtain Corollary \ref{Corollary1.5}.

\section{Rescale from \cite{A-L} and Comparison}\label{rescale and comparison}
Since the results in \cite{A-L} by An and Luk are scaling-critical, first choosing $\d=1/|u_{\infty}|$ and then scaling up the length by a factor $|u_{\infty}|$, we could change the main conclusion in \cite{A-L} into a conclusion similar to Theorem \ref{main.thm2}. The full strength of the proof in \cite{A-L} ensures all the constants in the inequalities of \cite{A-L} are independent of $\d$; hence the constants in the new result after scaling up would be independent of $|u_{\infty}|$. This is similar to the proof of Theorem  \ref{main.thm2}. In the below, we will demonstrate this approach and make the comparsion. 

Fix $|u_{\infty|}$ to be a large positive constant. And set $\d=1/|u_{\infty}|$. By applying the main conclusion in \cite{A-L}, we have

\begin{minipage}[!t]{0.27\textwidth}
\begin{tikzpicture}[scale=0.90]
\draw [white](3,-1)-- node[midway, sloped, below,black]{$H_{-1}(u=-1)$}(4,0);
\draw [white](1,1)-- node[midway,sloped,above,black]{$H_{\f{-a}{4|u_{\infty}|}}$}(2,2);
\draw [white](2,2)--node [midway,sloped,above,black] {$\Hb_{\f{1}{|u_{\infty}|}}(\ub=\f{1}{|u_{\infty}|})$}(4,0);
\draw [white](1,1)--node [midway,sloped, below,black] {$\Hb_{0}(\ub=0)$}(3,-1);
\draw [dashed] (0, 4)--(0, -4);
\draw [dashed] (0, -4)--(4,0)--(0,4);
\draw [dashed] (0,0)--(2,2);
\draw [dashed] (0,-4)--(4,0);
\draw [dashed] (0,2)--(3,-1);
\draw [very thick] (1,1)--(3,-1)--(4,0)--(2,2)--(1,1);
\fill[black!35!white] (1,1)--(3,-1)--(4,0)--(2,2)--(1,1);
\draw [->] (3.3,-0.6)-- node[midway, sloped, above,black]{$e_4$}(3.6,-0.3);
\draw [->] (2.4,-0.3)-- node[midway, sloped, above,black]{$e_3$}(1.7,0.4);
\end{tikzpicture}
\end{minipage}
\hspace{0.01\textwidth} 
\begin{minipage}[!t]{0.65\textwidth}
\begin{proposition}\label{Prop 9.1}
We solve Einstein vacuum equations. Given $\M I^{(0)}>0$, for a fixed $1/|u_{\infty}|$ there exists a sufficiently large $a_0=a_0(\M I^{(0)}, 1/|u_{\infty}|)$.  For $0<a_0<a$, \, with initial data: 
\begin{itemize}
\item $\sum_{i\leq 10, k\leq 3}a^{-\frac12}\|(\f{1}{|u_{\infty}|}\nab_4)^k\nab^{i}\chih_0\|_{L^{\infty}(S_{-1,\ub})}\leq \M I^{(0)}$ 

\noindent along $u=-1$,
  
\item Minkowskian initial data  along $\ub=0$,

\item $\int_0^{\f{1}{|u_{\infty}|}}|\chih_0|^2(-1, \ub')d\ub'\geq \f{a}{|u_{\infty}|}$ for every direction

\noindent  along $u=-1$,
\end{itemize}
we have that $S_{\f{-a}{4|u_{\infty}|},\f{1}{|u_{\infty}|}}$ is a trapped surface.\\
\end{proposition}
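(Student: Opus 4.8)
\medskip

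\noindent\emph{Proof idea.} The plan is to obtain Proposition \ref{Prop 9.1} directly as a specialization of the trapped-surface formation theorem of An--Luk \cite{A-L}, by taking the short-pulse parameter there to be $\d=1/|u_{\infty}|$ and restricting attention to the scenario $b=\at$ (for which, by Remark \ref{systematical improvement}, the hierarchy of \cite{A-L} agrees with the one-parameter hierarchy used here).

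First I would recall the precise hypotheses and conclusion of \cite{A-L} in this sub-case: with parameters $1\ll b=\at\leq\d^{-\f12}$, characteristic initial data prescribed on the outgoing cone $H_{-1}$ (so that the initial sphere $S_{-1,0}$ has radius $1$) of short-pulse type with
$$\sum_{i\leq 10,\,k\leq 3}a^{-\f12}\big\|(\d\nab_4)^{k}\nab^{i}\chih_0\big\|_{L^{\infty}(S_{-1,\ub})}\leq\M I^{(0)},$$
Minkowskian data on the incoming cone $\Hb_0$, and the homogeneous lower bound
$$\int_0^{\d}|u|^2\,|\chih_0|^2(-1,\ub')\,d\ub'\geq\d a\qquad\mbox{in every direction},$$
the Einstein vacuum equations \eqref{1.1} admit a unique smooth solution in $\{-1\leq u\leq-\d a/4,\ 0\leq\ub\leq\d\}$ and $S_{-\d a/4,\,\d}$ is a trapped surface, provided $a$ exceeds a threshold $a_0=a_0(\M I^{(0)},\d)$.

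Next I would substitute $\d=1/|u_{\infty}|$. The admissibility constraint $b=\at\leq\d^{-\f12}$ then reads $a\leq|u_{\infty}|$, which is compatible with the threshold $a_0=a_0(\M I^{(0)},1/|u_{\infty}|)$ appearing in the statement. Under this substitution the solution region becomes $\{-1\leq u\leq-a/(4|u_{\infty}|),\ 0\leq\ub\leq1/|u_{\infty}|\}$; the data bound becomes verbatim the one displayed in the statement; and, because the data is posed on the sphere of radius $|u|=1$ where the weight $|u|^2$ is trivial, the lower bound becomes $\int_0^{1/|u_{\infty}|}|\chih_0|^2(-1,\ub')\,d\ub'\geq a/|u_{\infty}|$. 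The trapped surface produced by \cite{A-L} is then $S_{-a/(4|u_{\infty}|),\,1/|u_{\infty}|}$, which is exactly the asserted conclusion.

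There is no genuine analytic obstacle in this argument; the only two points that need a line of verification are: (i) that the three-parameter short-pulse hierarchy of \cite{A-L} specialized to $b=\at$ collapses, after the identification $\d=1/|u_{\infty}|$, to the single-parameter hierarchy recorded in Section \ref{rescale bounds}; and (ii) that all constants occurring in \cite{A-L} are independent of $\d$. Item (ii) is the one genuine subtlety of this section, since it is precisely this $\d$-uniformity that will later allow one to fix $a$ and send $|u_{\infty}|\to+\infty$; both facts are, however, already guaranteed by the way the main theorem of \cite{A-L} is stated. With Proposition \ref{Prop 9.1} established, the remaining step of the section will be to scale the length back up by a factor $|u_{\infty}|$, i.e.\ to run the computations of Propositions \ref{Prop rescale 1}--\ref{Prop rescale 2} in reverse, which transports the conclusion to characteristic data at $u=-|u_{\infty}|$ and a trapped surface $S_{-a/4,1}$, to be compared directly with Theorem \ref{main.thm2}.
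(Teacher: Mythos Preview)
Your proposal is correct and follows essentially the same approach as the paper: Proposition \ref{Prop 9.1} is obtained simply by invoking the main trapped-surface formation theorem of \cite{A-L} with the specific choice $\d=1/|u_{\infty}|$, and the paper's accompanying note makes exactly your point (ii) about the constants in \cite{A-L} being independent of $\d$. Your write-up is in fact more explicit than the paper's, which just states ``By applying the main conclusion in \cite{A-L}, we have'' before displaying the proposition.
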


Note: In \cite{A-L}, in the proof of Proposition \ref{Prop 9.1} we only use the largeness of $a$ and hence all the constants in the inequalities are independent of $|u_{\infty}|$.

\end{minipage}   

By applying \cite{A-L} we also derive the following bounds
\begin{equation}\label{9.1}
\begin{split}
&|\chih_{AB}(u,\ub,\theta^1, \theta^2)|\leq \at/|u|,\quad |\o(u,\ub,\theta^1, \theta^2)|\leq \at/|u|, \quad |\tr\chi(u,\ub,\theta^1, \theta^2)|\leq 1/|u|,\\
&|\chibh_{AB}(u,\ub,\theta^1, \theta^2)|\leq \at/|u_{\infty}u^2|, \quad |\eta_A(u,\ub,\theta^1, \theta^2)|\leq \at/|u_{\infty}u^2|, \quad |\etb_A(u,\ub,\theta^1, \theta^2)|\leq \at/|u_{\infty}u^2|,\\
&|\omb(u,\ub,\theta^1, \theta^2)|\leq \at/|u_{\infty}u^2|, \quad |\tr\chib(u,\ub,\theta^1, \theta^2)+\f{2}{|u|}|\leq \at/|u_{\infty}u^2|,\\
&|\b_A(u,\ub,\theta^1, \theta^2)|\leq \at/|u^2|, \quad |\rho(u,\ub,\theta^1, \theta^2)|\leq a/|u_{\infty}u^3|,\\
&|\sigma(u,\ub,\theta^1, \theta^2)|\leq a/|u_{\infty}u^3|, \quad |\beb_A(u,\ub,\theta^1, \theta^2)|\leq \at/|u_{\infty}u^3|.
\end{split}
\end{equation}

We then rescale the length from small scale to large scale: 
$$\mbox{Set} \, \,\, u'=|u_{\infty}|u, \,\, \ub'=|u_{\infty}|\ub, \,\, {\theta^1}'=|u_{\infty}|\theta^1, \,\, {\theta^2}'=|u_{\infty}|\theta^2.$$

We have the rescaled result:

\begin{minipage}[!t]{0.27\textwidth}
\begin{tikzpicture}[scale=0.82]
\draw [white](3,-1)-- node[midway, sloped, below,black]{$H'_{u_{\infty}}(u'=u_{\infty})$}(4,0);
\draw [white](1,1)-- node[midway,sloped,above,black]{$H'_{-a/4}$}(2,2);
\draw [white](2,2)--node [midway,sloped,above,black] {$\Hb'_{1}(\ub'=1)$}(4,0);
\draw [white](1,1)--node [midway,sloped, below,black] {$\Hb'_{0}(\ub'=0)$}(3,-1);
\draw [dashed] (0, 4)--(0, -4);
\draw [dashed] (0, -4)--(4,0)--(0,4);
\draw [dashed] (0,0)--(2,2);
\draw [dashed] (0,-4)--(4,0);
\draw [dashed] (0,2)--(3,-1);
\draw [very thick] (1,1)--(3,-1)--(4,0)--(2,2)--(1,1);
\fill[black!35!white] (1,1)--(3,-1)--(4,0)--(2,2)--(1,1);
\draw [->] (3.3,-0.6)-- node[midway, sloped, above,black]{$e_4$}(3.6,-0.3);
\draw [->] (1.4,1.3)-- node[midway, sloped, below,black]{$e_4$}(1.7,1.6);
\draw [->] (3.3,0.6)-- node[midway, sloped, below,black]{$e_3$}(2.7,1.2);
\draw [->] (2.4,-0.3)-- node[midway, sloped, above,black]{$e_3$}(1.7,0.4);
\end{tikzpicture}
\end{minipage}
\hspace{0.01\textwidth} 
\begin{minipage}[!t]{0.63\textwidth}

\begin{proposition}\label{Prop 9.2}

Given $\M I^{(0)}$, there exists a sufficiently large $a_0=a_0(\M I^{(0)})$.  For $0<a_0<a$, for Einstein vacuum equations with initial data:
\begin{itemize}
\item $\sum_{i\leq 10, k\leq 3}a^{-\frac12}\|\nab^{k}_4(|u_{\infty}|\nab')^{i}\chih_0\|_{L^{\infty}(S_{u_{\infty},\ub'})}\leq \M I^{(0)}$ 

\noindent along $u'=u_{\infty}$,

\item Minkowskian initial data  along $\ub'=0$,

\item $\int_0^1|u_{\infty}|^2|\chih_0|^2(u_{\infty}, \ub'')d\ub''\geq a$ for every direction   

\noindent along $u'=u_{\infty}$,
\end{itemize}
we have that $S'_{-a/4,1}$ is a trapped surface.
\end{proposition}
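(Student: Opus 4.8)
The plan is to obtain Proposition \ref{Prop 9.2} as a pure rescaling of Proposition \ref{Prop 9.1}, in exact parallel with the scaling argument of Section \ref{sec rescale}, simply carried out in the opposite direction (from the small-scale region of \cite{A-L} to the large-scale region). First I would set up the coordinate change $u'=|u_{\infty}|u$, $\ub'=|u_{\infty}|\ub$, $\theta^{A'}=|u_{\infty}|\theta^{A}$, noting as in Section \ref{sec rescale} that, because $(\theta^1,\theta^2)$ are stereographic coordinates, scaling all three spatial variables by the same factor $|u_{\infty}|$ scales the underlying $2$-spheres by $|u_{\infty}|$ and keeps the stereographic description consistent. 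Under this change the metric obeys $g'=|u_{\infty}|^2 g$, hence $g'^{-1}=|u_{\infty}|^{-2}g^{-1}$, and exactly the computation behind \eqref{Gamma Gamma'}--\eqref{D D'} gives that the Christoffel symbols and covariant derivatives are unchanged in components. Consequently the null frame rescales as $e'_3=|u_{\infty}|^{-1}e_3$, $e'_4=|u_{\infty}|^{-1}e_4$, $e'_A=|u_{\infty}|^{-1}e_A$, and the proofs of Proposition \ref{Prop rescale 1} and Proposition \ref{Prop rescale 2} apply verbatim (with $\d$ replaced by $1/|u_{\infty}|$) to yield $\Gamma'=|u_{\infty}|\cdot\Gamma$ for every Ricci coefficient $\Gamma\in\{\chih,\tr\chi,\chibh,\tr\chib,\eta,\etb,\zeta,\o,\omb\}$ and $\Psi'=|u_{\infty}|^2\cdot\Psi$ for every curvature component $\Psi\in\{\a,\b,\rho,\sigma,\beb,\ab\}$.

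Next I would translate the hypotheses and conclusion of Proposition \ref{Prop 9.1} through these identities. The initial data on $H_{-1}$ becomes initial data on $H'_{u_{\infty}}$: since $\chih_0'=|u_{\infty}|\chih_0$, $\nab_4'=|u_{\infty}|^{-1}\nab_4$, $\nab'=|u_{\infty}|^{-1}\nab$ and the $L^\infty$ norm on $S_{-1,\ub}$ transfers to $L^\infty$ on $S'_{u_{\infty},\ub'}$, the weighted derivative bound in \cite{A-L}'s hypothesis turns precisely into $\sum_{i\leq 10,\,k\leq 3}a^{-1/2}\|\nab^k_4(|u_{\infty}|\nab')^i\chih_0\|_{L^\infty(S_{u_{\infty},\ub'})}\leq \M I^{(0)}$. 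The Minkowskian data on $\ub=0$ is scale-invariant in the relevant sense and transfers to Minkowskian data on $\ub'=0$. The integral lower bound $\int_0^{1/|u_{\infty}|}|\chih_0|^2(-1,\ub')d\ub'\geq a/|u_{\infty}|$ becomes, using $|\chih_0'|^2=|u_{\infty}|^2|\chih_0|^2$, $d\ub'=|u_{\infty}|d\ub$ and $|u_{\infty}|$ the new radius, the statement $\int_0^1|u_{\infty}|^2|\chih_0|^2(u_{\infty},\ub'')d\ub''\geq a$. Finally, the trapped surface conclusion is manifestly invariant: negativity of $\tr\chi$ and $\tr\chib$ at $S_{-a/(4|u_{\infty}|),1/|u_{\infty}|}$ is equivalent, after multiplying by the positive factor $|u_{\infty}|$, to negativity of $\tr\chi'$ and $\tr\chib'$ at $S'_{-a/4,1}$. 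Along the way the a priori bounds \eqref{9.1} of \cite{A-L} rescale into the hierarchy \eqref{initial hierarchy}-type bounds used in Section \ref{TSF}, which reconfirms consistency but is not logically needed for the statement itself.

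The only genuine subtlety — and the step I would be most careful about — is the uniformity in $|u_{\infty}|$. Proposition \ref{Prop 9.1} is invoked with $\d=1/|u_{\infty}|$ fixed, and its threshold $a_0$ nominally depends on $\d$; but the whole point of the \cite{A-L} proof (as emphasized in the note after Proposition \ref{Prop 9.1}) is that every constant there is controlled by the largeness of $a$ alone and is independent of $\d$, so $a_0=a_0(\M I^{(0)})$ only. This is exactly the mechanism already used in the proof of Theorem \ref{main.thm2}, where estimates uniform in $u_{\infty}$ were essential; here it guarantees that Proposition \ref{Prop 9.2}'s threshold $a_0$ does not degenerate as $|u_{\infty}|\to\infty$. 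Once this uniformity is recorded, the proof is a finite bookkeeping exercise: state the coordinate change, cite Propositions \ref{Prop rescale 1} and \ref{Prop rescale 2} (with $\d\rightsquigarrow 1/|u_{\infty}|$), check that each of the three hypotheses and the conclusion of Proposition \ref{Prop 9.1} map to the corresponding item of Proposition \ref{Prop 9.2}, and invoke the $\d$-independence of the constants. I would not reproduce the trapped-surface ODE argument of Section \ref{TSF}; instead I would point out that it is already contained in \cite{A-L} and is preserved by the scaling. Letting $|u_{\infty}|\to+\infty$ afterwards then recovers the comparison with Theorem \ref{main.thm2} announced in this section.
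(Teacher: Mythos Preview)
Your approach --- obtain Proposition~\ref{Prop 9.2} from Proposition~\ref{Prop 9.1} by the length rescaling $u'=|u_\infty|u$, $\ub'=|u_\infty|\ub$, $\theta^{A'}=|u_\infty|\theta^A$, and then invoke the $\delta$-independence of the constants in \cite{A-L} to get $a_0=a_0(\M I^{(0)})$ --- is exactly the paper's route; the paper states Proposition~\ref{Prop 9.2} as ``the rescaled result'' and then carries out precisely these computations.

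There is, however, a concrete slip in your scaling factors. Under $u'=|u_\infty|u$ the substitution in Propositions~\ref{Prop rescale 1}--\ref{Prop rescale 2} is $\delta\rightsquigarrow |u_\infty|$, not $\delta\rightsquigarrow 1/|u_\infty|$; hence $\Gamma'=|u_\infty|^{-1}\Gamma$ and $\Psi'=|u_\infty|^{-2}\Psi$, the opposite of what you wrote. This is forced already by your own correct identities $e'_\mu=|u_\infty|^{-1}e_\mu$ and $g'=|u_\infty|^2 g$: for instance $\chi'_{A'B'}=g'(D_{e'_A}e'_4,e'_B)=|u_\infty|^2\cdot|u_\infty|^{-3}\chi_{AB}=|u_\infty|^{-1}\chi_{AB}$, and indeed the paper writes $|\psi'|=|u_\infty|^{-1}|\psi|$ immediately after the proposition. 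With this correction your translation of the three hypotheses and of the trapped-surface conclusion goes through as you describe; in particular $|\chih_0'|_{\gamma'}=|u_\infty|^{-1}|\chih_0|_\gamma$ together with $d\ub'=|u_\infty|\,d\ub$ turns $\int_0^{1/|u_\infty|}|\chih_0|^2\,d\ub\geq a/|u_\infty|$ into $\int_0^1|u_\infty|^2|\chih_0'|^2\,d\ub''\geq a$, matching Proposition~\ref{Prop 9.2}.
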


Note: The statements in Proposition \ref{Prop 9.2} are the same as in Theorem  \ref{main.thm2}. In the below, we will explain that the derived bounds by these two approaches would be slightly different.  

\end{minipage}   

Proceed the same as in Section \ref{sec rescale}, via \ref{9.1}, for $\psi'\in\{\chih'_{A'B'}, \o'\}$ we have
\begin{equation*}
\begin{split}
&|\p'(u',\ub', \theta^{1'}, \theta^{2'})|=|u_{\infty}|^{-1}\cdot |\p(u, \ub, \theta^1, \theta^2)|\leq |u_{\infty}|^{-1}\cdot \f{\at}{|u|}=\f{\at}{|u_{\infty}u|}=\f{\at}{|u'|}.
\end{split}
\end{equation*}
Similarly, for $\q\in\{\chibh'_{A'B'}, \eta'_{A'}, \etb'_{A'}, \omb', \tr\chib'+\f{2}{|u'|}\}$, we have
\begin{equation*}
|\q'(u',\ub', \theta^{1'}, \theta^{2'})|=|u_{\infty}|^{-1}\cdot |\q(u, \ub, \theta^1, \theta^2)|\leq |u_{\infty}|^{-1}\cdot \f{\at}{|u_{\infty}u^2|}=\f{\at}{|u_{\infty}|^2|u|^2}=\f{\at}{|u'|^2}.
\end{equation*}
And
\begin{equation*}
|\tr\chi'(u',\ub', \theta^{1'}, \theta^{2'})|=|u_{\infty}|^{-1}\cdot |\tr\chi(u, \ub, \theta^1, \theta^2)|\leq |u_{\infty}|^{-1}\cdot \f{1}{|u|}=\f{1}{|u_{\infty} u|}=\f{1}{|u'|}.
\end{equation*}
For curvature components, with the same method as in Section \ref{sec rescale}, via \ref{9.1} we obtain
\begin{equation*}
\begin{split}
&|\b'_{A'}(u',\ub', \theta^{1'}, \theta^{2'})|=|u_{\infty}|^{-2}\cdot |\b_{A}(u, \ub, \theta^1, \theta^2)|\leq |u_{\infty}|^{-2}\cdot \f{\at}{|u|^2}=\f{\at}{|u_{\infty}u|^2}=\f{\at}{|u'|^2}, 
\end{split}
\end{equation*}
\begin{equation*}
\begin{split}
&|\rho'(u',\ub', \theta^{1'}, \theta^{2'})|=|u_{\infty}|^{-2}\cdot |\rho(u, \ub, \theta^1, \theta^2)|\leq |u_{\infty}|^{-2}\cdot \f{a}{|u_{\infty}u^3|}=\f{a}{|u_{\infty}u|^3}=\f{a}{|u'|^3}, 
\end{split}
\end{equation*}
\begin{equation*}
\begin{split}
&|\sigma'(u',\ub', \theta^{1'}, \theta^{2'})|=|u_{\infty}|^{-2}\cdot |\sigma(u, \ub, \theta^1, \theta^2)|\leq |u_{\infty}|^{-2}\cdot \f{a}{|u_{\infty}u^3|}=\f{a}{|u_{\infty}u|^3}=\f{a}{|u'|^3}, 
\end{split}
\end{equation*}
\begin{equation*}
\begin{split}
&|\beb'_{A'}(u',\ub', \theta^{1'}, \theta^{2'})|=|u_{\infty}|^{-2}\cdot |\beb_{A}(u, \ub, \theta^1, \theta^2)|\leq |u_{\infty}|^{-2}\cdot \f{\at}{|u_{\infty}u^3|}=\f{\at}{|u_{\infty}u|^3}=\f{\at}{|u'|^3}.
\end{split}
\end{equation*}
Note that these estimates are slight different from the estimates we obtained in previous sections. We don't have the proved peeling property and the $a$-weights are quite different.



\begin{thebibliography}{99} 
\bibitem[1]{An:Trapped} X. An, \textit{Formation of Trapped Surfaces from Past Null Infinity}, preprint (2012), arXiv:1207.5271. 

\bibitem[2]{An: AH} X. An, \textit{Emergence of Apparent Horizon in Gravitational Collapse}, preprint (2017), arXiv:1703.00118. 

\bibitem[3]{An:TSA} X. An, \textit{A scale-critical trapped surface formation criterion II: Applications}, in preparation. 

\bibitem[4]{A-A} X. An, N. Athanasiou, \textit{A scale-critical trapped surface formation criterion for the Einstein-Maxwell equations}, in preparation. 

\bibitem[5]{A-L} X. An, J. Luk, \textit{Trapped surfaces in vacuum arising dynamically from mild incoming radiation}, Advances in Theoretical and Mathematical Physics 21 (2017), 1-120.



\bibitem[6]{Chr.1} D. Christodoulou, \textit{The formation of black holes and singularities in spherically symmetric gravitational collapse}, Comm. Pure Appl. Math. 44 (1991), no. 3, 339--373.

\bibitem[7] {Chr.BV} D. Christodoulou, \textit{Bounded variation solutions of the spherically symmetric Einstein-scalar field equations}, Comm. Pure Appl. Math. 46 (1993), no. 8, 1131-1220.

\bibitem[8]{Chr.2} D. Christodoulou, \textit{Examples of naked singularity formation in the gravitational collapse of a scalar field}, Ann. of Math. (2) 140 (1994), no. 3, 607--653.

\bibitem[9]{Chr.3} D. Christodoulou, \textit{The instability of naked singularities in the gravitational collapse of a scalar field}, Ann. of Math. (2) 149 (1999), no. 1, 183--217.

\bibitem[10]{Chr:book} D. Christodoulou, \textit{The Formation of Black Holes in General Relativity}, Monographs in Mathematics, European Mathematical Soc. (2009). 

\bibitem[11]{Chr-Kl} D. Christodoulou, S. Klainerman,\textit{The global nonlinear stability of the Minkowski space}, Princeton mathematical series 41, (1993).

\bibitem[12]{Dafermos} M. Dafermos, \textit{The formation of black holes in general relativity}, Astrisque 352 (2013).

\bibitem[13]{D-H-R} M. Dafermos, G. Holzegel, I. Rodnianski, \textit{A scattering theory construction of dynamical vacuum black holes}, preprint (2013), arXiv: 1306.5364.

\bibitem[14]{Hol} G. Holzegel, 
\textit{Ultimately Schwarzschildean spacetimes and the black hole stability problem}, preprint (2010), arXiv:1010.3216.

\bibitem[15]{KNI:book} S. Klainerman,   F. Nicolo, \textit{The evolution problem in General Relativity}, Progress in Mathematical Physics, Birkha\"user (2003).

\bibitem[16]{KN:peeling} S. Klainerman,   F. Nicolo, \textit{Peeling properties of asymptotically flat solutions to the Einstein vacuum equations}, Class. Quantum Grav. 20 (2003), 3215-3257.


\bibitem[17]{K-L-R} S. Klainerman, J. Luk, I. Rodnianski, \textit{A fully anisotropic mechanism for formation of trapped surfaces in vacuum}, Invent. Math. 198 (2014), no.1, 1-26.

\bibitem[18]{KR:causal} S. Klainerman, I. Rodnianski, 
\textit{Causal geometry of Einstein-Vacuum 
spacetimes with finite curvature flux}, Invent. Math. 159 (2005), 437-529.  


\bibitem[19]{KR:LP} S. Klainerman,  I. Rodnianski,
\textit{A geometric approach to the Littlewood-Paley theory},
Geom. Funct. Anal. 16, (2006) no. 1, 126-163. 

\bibitem[20]{KR:Scarred} S. Klainerman, I. Rodnianski, 
\textit{On emerging scarred surfaces for the Einstein vacuum equations}, Discrete Contin. Dyn. Syst. , 28 (2010), no. 3, 1007-1031.

\bibitem[21]{KR:Trapped} S. Klainerman, I. Rodnianski, 
\textit{On the formation of trapped surfaces}, Acta Math. 208 (2012), no.2, 211-333.

\bibitem[22]{Le} P. Le, \textit{The intersection of a hyperplane with a lightcone in the Minkowski spacetime}, J. Differential Geom., 109, (2018), no.3, 497-507. 


\bibitem[23]{L-L} J. Li, J. Liu, \textit{Instability of spherical naked singularities of a scalar field under gravitational perturbations}, preprint, (2017), arXiv: 1710.02422.

\bibitem[24]{L-Y} J. Li, P. Yu, \textit{Construction of Cauchy data of vacuum Einstein field equations evolving to black holes}, Ann. of Math. 181 (2015), no.2, 699-768.


\bibitem[25]{Luk} J. Luk,
\textit{On the local existence for the characteristic initial value problem in general relativity}, Int. Mat. Res. Notices 20, (2012), 4625-4678.

\bibitem[26]{Luk2} J. Luk, \textit{Weak null singularity in general relativity}, Journal of AMS. 31 (2018) 1-63.

\bibitem[27]{L-R:Propagation} J. Luk, I. Rodnianski, 
\textit{Local propagation of impulsive gravitational waves}, Comm. Pure Appl. Math., 68 (2015), no.4, 511-624.

\bibitem[28]{L-R} J. Luk, I. Rodnianski, 
\textit{Nonlinear interactions of impulsive gravitational waves for the vacuum Einstein equations}, Cambridge Journal of Math. 5(4): 435-570, 2017.

\bibitem[29]{Penrose} R. Penrose, 
\textit{Gravitational collapse and space-time singularities}, Phys. Rev. Lett. 14 (1965), 57-59.


\bibitem[30]{R-T} M. Reiterer, E. Trubowitz, \textit{Strongly focused gravitational waves}, Comm. Math. Phys. 307 (2011), no. 2, 275-313.

\bibitem[31]{R-S} I. Rodnianski, Y. Shlapentokh-Rothman, \textit{The asymptotically self-similar regime for the Einstein vacuum equations}, Geom. Funct. Anal. Vol. 28 (2018) 755–878.

\bibitem[32]{Yu1} P. Yu, \textit{Energy estimates and gravitational collapse}, Comm. Math. Phys. 317 (2013), no. 2, 275-316.

\bibitem[33]{Yu2} P. Yu, \textit{Dynamical formation of black holes due to the condensation of matter field}, preprint (2011), arXiv: 1105.5898.  

\end{thebibliography}
\end{document}